  \tikzstyle{vertex}=[circle,fill=orange!60,minimum size=10pt,inner sep=0pt]
  \tikzstyle{tedge} = [draw,ultra thick,->,>=stealth, orange]
  \tikzstyle{esq}=[circle,fill=white,minimum size=10pt,inner sep=0pt]
  \tikzstyle{up}=[<-,>=stealth]
\definecolor{MyDarkblue}{rgb}{0,0.08,0.50}
\definecolor{Brickred}{rgb}{0.65,0.08,0}
\newtheorem*{theorem*}{Theorem}
\newtheorem{theorem}{Theorem}[section]
\newtheorem{lemma}[theorem]{Lemma}
\newtheorem{proposition}[theorem]{Proposition}
\newtheorem{corollary}[theorem]{Corollary}
\theoremstyle{definition}
\newtheorem{remark}[theorem]{Remark}
\newcommand{\invisible}[2]{%
    \ifthenelse{\isempty{#1}}
    {}
    {#2}
}
\newcommand*{\bfi}{\mathbf{i}}
\newcommand*{\bfI}{\mathbf{I}}
\newcommand*{\bfj}{\mathbf{j}}
\newcommand*{\smlPrec}{\scalebox{.65}{$\prec$}}
\newcommand*{\smlSucceq}{\scalebox{.65}{$\succeq$}}
\renewcommand{\emptyset}{\varnothing}
\newcommand{\eps}{\varepsilon}
\title{Mixing time and isoperimetry in random geometric graphs}
\author[Marcos Kiwi]{Marcos Kiwi}
\address[Marcos Kiwi]{Univ.\ Chile, Santiago, Chile}
\email{mk@dim.uchile.cl}
\thanks{Marcos Kiwi has been supported by Centro de Modelamiento Matemático (CMM) BASAL fund FB210005 for center of excellence from ANID-Chile.}
\author[Carlos Martinez]{Carlos Martinez}
\address[Carlos Martinez]{Institute for Applied Mathematics, University of Bonn, Endenicher Allee 60, 53115 Bonn, Germany}
\email{cmartine@uni-bonn.de}
\thanks{Carlos Martinez has been supported by the Deutsche Forschungsgemeinschaft (DFG, German Research Foundation) –
Projektnummer 552316285}
\author[Dieter Mitsche]{Dieter Mitsche}
\address[Dieter Mitsche]{IMC, Pontificia Universidad Católica, Santiago, Chile}
\email{dieter.mitsche@uc.cl}
\thanks{Dieter Mitsche has been supported by Fondecyt grant 1220174.}
\begin{document}

\maketitle

\begin{abstract}
    In this paper we study the mixing time of the simple random walk on the giant component of supercritical $d$-dimensional random geometric graphs generated by the unit intensity Poisson Point Process in a $d$-dimensional cube of volume $n$. With $r_g$ denoting the threshold for having a giant component, we show that for every $\eps > 0$ and any $r \ge (1+\eps)r_g$, the mixing time of the giant component is with high probability $\Theta(n^{2/d}/r^{2})$, thereby closing a gap in the literature. The main tool is an isoperimetric inequality which holds, w.h.p., for any large enough vertex set, a result which we believe is of independent interest. Our analysis also implies that the relaxation time is of the same order.
\end{abstract}
Keywords: mixing time, random walk, random geometric graphs
\noindent

MSC Class: 05C80, 60D05, 05C81

\section{Introduction}\label{sec:intro}
The mixing time - the time for a walk to approach its stationary distribution - is a fundamental parameter in the study of random walks, and it has been widely investigated across various classes of random graph models.
A prominent example is the infinite component supercritical percolation on $\mathbb{Z}^d$, where Benjamini and Mossel~\cite{benjamini2003mixing} analyzed the simple random walk on the giant component of the graph restricted to a finite cube of volume $\Theta(n^d)$ and showed that its mixing time is $\Theta(n^2)$.
The mixing time of the same walk was also studied in long-range percolation on $\mathbb{Z}^d$: the case $d=1$ was first analyzed by Benjamini, Berger and Yadin in~\cite{BenjaminiLRP}, and it was later generalized to higher dimensions by Crawford and Sly~\cite{crawford2012simple}.
In yet another random graph class, the mixing time of the simple random walk in the giant component in the supercritical $d$-dimensional percolated hypercube with parameter $p=c/d$ and~$c>1$ was very recently shown to be $\Theta(d^2)$ by Anastos, Diskin, Lichev and Zhukovskii~\cite{LyubenNew}. 

The class of random geometric graphs (RGGs) that we study also falls within the category of spatial random graph classes. RGGs serve as a model for spatial networks where vertices correspond to points of a spatial point process and edges are defined via proximity rules (for a survey of these graphs see the monograph of Penrose~\cite{Pen03}). 

Above the threshold for connectivity, RGGs are relatively homogeneous, and for such graphs the mixing time of random walks is understood (see the papers of Avin and Ercal~\cite{covertimergg} and Boyd et al.~\cite{boyd2005mixing}, see also below for details on their results). In a sparser regime below the connectivity threshold,  the graph structure is inhomogeneous, making the analysis of random walks considerably more challenging:  the mathematical difficulties stem both from the spatial randomness intrinsic to RGGs as from the variability in vertex degrees. Determining the mixing time in this case had remained an open problem; our results resolve it by successfully addressing all of the aforementioned challenges.
We determine the order of the mixing time throughout the entire supercritical regime. In the process, we also establish the order of the relaxation time and prove isoperimetric inequalities that hold with high probability for all sufficiently large vertex sets. This turns out to be harder than in supercritical percolated $\mathbb{Z}^d$, as we will explain in the outline of the proof below.

\medskip
\textbf{Main results.}
We next introduce the essential definitions required to state our main results. For a given connected graph $G=(V,E)$, we  consider the \emph{continuous-time simple random walk $(X_t)_{t\ge 0}$} with rate 1. 
Let $P$ be the transition matrix of the discrete simple random walk in $G$, that is, denoting~$\deg(x)$ the degree of a vertex~$x$, let $P$ be defined by $P(x,y)=1/\deg(x)$ if $\{x,y\}\in E$, and $0$ otherwise. 
Also, let $P_n(x,y)$ be the probability that the random walk with transition matrix $P$ is at $y$ after $n$ steps, starting from $x$.
The \emph{heat kernel} $H_t$ of the continuous time random walk is  then defined as 
\[
H_t(x,y)= \sum_{n\ge 0} \frac{t^{n}e^{-t}}{n!} P_{n}(x,y) 
\]
which represents the probability that, starting from $x$, the continuous walk is at $y$ at time $t$. This walk admits a stationary distribution $\pi$, which is a probability vector on $V$ satisfying $\pi=H_t\pi$ for all~$t\ge 0$. It is well known that $\pi(x)=\frac{\deg(x)}{2|E|}$. In case the corresponding discrete random walk has a stationary distribution, the latter coincides with the stationary distribution of the continuous-time random walk.

Given two measures $\mu$ and $\nu$ over the discrete space of vertices $V$, we define the \emph{total variation distance} as 
\[
\|\nu-\mu\|_{TV} = \frac{1}{2}\sum_{x\in V} |\nu(x)-\mu(x)|.
\]
The \emph{(continuous-time) mixing time} \( \tau_{\text{mix}}(G,\varepsilon) \) on $G$
is defined as:
\[
\tau_{\textup{mix}}(G,\varepsilon) = \inf \Big\{ t \geq 0 : \max_{x \in V} \| H_t(x, \cdot) - \pi \|_{\textup{TV}} \leq \varepsilon \Big\}.
\]
In words, this is the minimum time \( t \) such that, starting from any state, the distribution of the Markov chain is within \( \varepsilon \) total variation distance of the stationary distribution. One often  chooses $\eps=\frac14$, but  when the value of $\eps$ is not relevant (but fixed), we omit $\eps$.

Another quantity of interest associated to a random walk is its \emph{relaxation time} $\tau_{\textup{rel}}$ defined as $\tau_{\textup{rel}}(G) = (1-\lambda_2)^{-1}$
where $\lambda_2$ denotes the second-largest eigenvalue (in absolute value) of the transition matrix $P$.

The model of \emph{random geometric graphs (RGGs)} is defined as follows:
For $r>0$ and $n\in\mathbb{N}$ we denote by $\mathcal{G}_n^{r,d}$ (or rather $\mathcal{G}_n$ when the dependence on $r$ and $d$ is clear)  the RGG of radius $r$ inside the $d$-dimensional cube $\Lambda_n = \left[ \frac{-n^{1/d}}{2}, \frac{n^{1/d}}{2}\right)^d$ of volume $n$ whose vertex set $\mathcal{V}_n$ is the set of points of a Poisson Point Process (PPP) of unit intensity in $\Lambda_n$ and edge set 
\begin{equation}
    \mathcal{E}_n^r = \{\{x,y\}:x,y \in \mathcal{V}_n, \lVert x-y \rVert_2 \leq r\}.
\end{equation} 
We define the infinite counterpart $\mathcal{G}^{r,d}$ similarly, but replace $\Lambda_n$ by $\mathbb{R}^d$ with one artificial vertex added at the origin, and define then  $r_g=r_g(d)$ as the infimum of $r \ge 0$ such that 
\[
\mathbb{P}(\mbox{the origin is in an  infinite component of $\mathcal{G}^{r,d}$)} > 0.
\]
It is well 
known that $0<r_g<\infty$ for every $d\ge 2$, although its exact value is not known (see~\cite{Pen03} for estimates in $d=2$). When restricting $\mathbb{R}^d$ to $\Lambda_n$, denote  by $\mathcal{L}_n$ the largest  component of $\mathcal{G}_n$ (ties broken arbitrarily). When $r > r_g$, we will refer to this component also as the \emph{giant component} or simply giant.

We say that a sequence of events $(\mathcal{A}_n)_n$ holds with high probability, or simply w.h.p., if $\mathbb{P}(\mathcal{A}_n) \to 1$ as $n \to \infty$.

\smallskip
Our main result concerns the mixing time and the relaxation time of the random walk on  the giant component of RGGs in $d$ dimensions. 
\begin{theorem}\label{thm:main_thm_bounds_over_mixing_rgg}
Let $d \in \mathbb{N}$, let $\varepsilon > 0$  and $r\ge (1+\varepsilon)r_g$.
Then, w.h.p. 
\[
\tau_{\textup{rel}}(\mathcal{L}_n)=\Theta(\tau_{\textup{mix}}(\mathcal{L}_n)) = \Theta\left(\frac{n^{2/d}}{r^{2}}\right).
\]
\end{theorem}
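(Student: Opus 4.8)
The plan is to prove matching upper and lower bounds on the mixing time, and then argue that the relaxation time is of the same order. For the \emph{lower bound}, I would exhibit a set $S$ witnessing slow mixing: since $\mathcal L_n$ lives inside a cube of side $n^{1/d}$ and, at scale $r$, locally looks like a "thickened" version of a box in $\mathbb Z^d$ rescaled by $r$, a half-cube (say all vertices with first coordinate negative, intersected with $\mathcal L_n$) has stationary measure $\approx 1/2$ but edge boundary of size only $\Theta((n^{1/d}/r)^{d-1})$ worth of "$r$-interface," while the total volume is $\Theta(n)$. Feeding this into the standard conductance/Cheeger lower bound $\tau_{\mathrm{mix}} \gtrsim \Phi^{-1}$ (or directly into the bound $\tau_{\mathrm{rel}} \ge (1-\lambda_2)^{-1}$ via a suitable test function, e.g. the first coordinate of each vertex, using the variational characterization of $\lambda_2$ and a Dirichlet-form computation) gives $\Omega(n^{2/d}/r^2)$. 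This direction is comparatively soft, the only care needed being that w.h.p.\ the density of $\mathcal L_n$ is uniformly $\Theta(1)$ at scale $r$ across the cube, which follows from Poisson concentration plus a union bound over $\mathrm{poly}(n)$ boxes.

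For the \emph{upper bound}, which is the substantive part, I would invoke the isoperimetric inequality advertised in the abstract (established earlier in the paper): w.h.p., every vertex subset $A \subseteq \mathcal L_n$ with $|A|$ at least some polylogarithmic threshold satisfies $|\partial_E A| \gtrsim r \cdot |A|^{(d-1)/d}$ (a Faber--Krahn / $d$-dimensional isoperimetric profile, with the boundary measured through edges of length $\le r$). Since degrees in $\mathcal L_n$ are $\Theta(r^d)$ w.h.p., this translates into a lower bound on the conductance profile $\Phi(v) \gtrsim r / (r^d v^{1/d})$ for $v \ge v_0 := \mathrm{polylog}(n)/n$. The machinery of evolving sets / the Morris--Peres (or Lovász--Kannan) bound then yields
\[
\tau_{\mathrm{mix}} \;\lesssim\; \int_{v_0}^{1/2} \frac{dv}{v\,\Phi(v)^2} \;+\; (\text{contribution from scales below } v_0).
\]
The integral $\int_{v_0}^{1/2} \frac{r^{2d-2} v^{2/d}}{r^2 v}\,dv = r^{2d-4}\int_{v_0}^{1/2} v^{2/d-1}\,dv = \Theta(r^{2d-4})$ is $O(1)$ only when $d\le 2$; in general the dominant term comes from the smallest scale, giving $\tau_{\mathrm{mix}} \lesssim v_0^{-2/d} r^{2-2d}\cdot r^{2d-2}\cdot(\dots)$ — I need to organize the exponents so that plugging $v_0 = \mathrm{polylog}(n)/n$ produces exactly $\Theta(n^{2/d}/r^2)$ up to logarithmic factors, and then remove the spurious logs. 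The standard way to kill the logs at the bottom scale is a separate, crude argument: below $v_0$ the relevant vertex sets are $O(\mathrm{polylog}(n))$-size clusters near the boundary of $\mathcal L_n$, and one shows the walk escapes any such small region in time $O(\mathrm{polylog}(n))$ w.h.p.\ (e.g.\ via a direct hitting-time estimate using local expansion of $\mathcal L_n$ at scale $r$, which holds w.h.p.\ everywhere by a union bound), a quantity negligible compared to $n^{2/d}/r^2$ as long as $r$ is not too close to $n^{1/d}$ — and for $r = \Omega(n^{1/d})$ the graph is essentially an expander of bounded diameter anyway, so the bound is $O(1)$ and trivially holds.

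Finally, $\tau_{\mathrm{rel}} = \Theta(\tau_{\mathrm{mix}})$ follows from two generic facts: $\tau_{\mathrm{rel}} - 1 \le \tau_{\mathrm{mix}}$ always (so the relaxation time is $O(n^{2/d}/r^2)$), and $\tau_{\mathrm{mix}} \le \tau_{\mathrm{rel}}\cdot \log(1/\pi_{\min})$ with $\pi_{\min} \ge 1/(2|\mathcal E_n|) \ge n^{-C}$ w.h.p., giving $\tau_{\mathrm{rel}} = \Omega(\tau_{\mathrm{mix}}/\log n)$; combined with the lower bound $\tau_{\mathrm{rel}} \ge c\,n^{2/d}/r^2$ obtained directly from the test-function argument above (which does not lose a log), we get the matching order. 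The main obstacle throughout is ensuring the isoperimetric input is used at the right range of scales and that the polylogarithmic cutoff $v_0$ does not leak into the final bound — i.e.\ the small-set escape argument must be robust enough to cover all vertex sets below the isoperimetric threshold, uniformly over the w.h.p.\ geometry of $\mathcal L_n$.
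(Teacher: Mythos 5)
Your overall architecture coincides with the paper's (isoperimetric inequality plus the Lovász--Kannan average-conductance bound for the upper bound, a variance/test-function bound for the relaxation time, and the generic inequalities linking $\tau_{\textup{rel}}$ and $\tau_{\textup{mix}}$), and your lower bound is fine provided you use the Dirichlet-form/test-function route: note that the pure Cheeger bound $\tau_{\textup{mix}}\gtrsim \Phi^{-1}$ for the half-cube only yields $\Omega(n^{1/d}/r)$, not $\Omega(n^{2/d}/r^2)$, so the coordinate test function is the argument that actually works (the paper instead uses the variance of the graph distance to a vertex near the origin, via Lemma 2.2 of Benjamini--Mossel; both are legitimate). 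The genuine gap is in the upper bound. You misquote the isoperimetric input and derive a conductance profile that is dimensionally wrong: the inequality proved in the paper is $|E(A,A^c)|\gtrsim r^{d+1}|A|^{(d-1)/d}$ (not $r\,|A|^{(d-1)/d}$), and since $|E(\mathcal L_n)|=\Theta(nr^d)$ the correct profile is $\varphi(v)\gtrsim r/(nv)^{1/d}$. Your profile $\varphi(v)\gtrsim r/(r^d v^{1/d})$ contains no $n$ at all (you substituted $\pi(A)$ where $|A|$ belongs), and accordingly your integral evaluates to $\Theta(r^{2d-4})$, independent of $n$ --- which cannot be a mixing time --- leading you to the spurious conclusion that the bottom scale dominates for $d\ge 3$ and to an ad hoc ``escape-time'' patch. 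With the correct profile one gets $\int_{v_0}^{1/2}\frac{dv}{v\varphi^2(v)}=O\big(\tfrac{n^{2/d}}{r^2}\int_0^{1/2}v^{2/d-1}\,dv\big)=O(n^{2/d}/r^2)$ for every $d$, with no blow-up at the lower limit; the exponent bookkeeping you explicitly defer is precisely the step that has to be carried out, and as written your computation does not produce the stated bound.

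Two further ingredients you assume are false or missing in the sparse regime $(1+\varepsilon)r_g\le r$ with $r^d=O(\log n)$. First, degrees are not $\Theta(r^d)$ there (for constant $r$ the maximum degree is of order $\log n/\log\log n$ and the minimum degree in the giant is $1$), so converting the edge-isoperimetric bound into a bound on $Q(A,A^c)/\pi(A)$ requires an upper bound on $\sum_{x\in A}\deg(x)$ for connected $A$; the paper devotes a container-type argument (Theorem~\ref{thm:relation_StationaryMeasure_number_of_vertices_rgg}) to prove $\sum_{x\in A}\deg(x)=O(|A|r^d)$, and without such a statement your identification $\pi(A)\asymp |A|/n$ is unjustified. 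Second, for sets below the isoperimetric size threshold the simple and correct fix inside the Lovász--Kannan framework is that any nonempty $A\subsetneq\mathcal L_n$ has at least one boundary edge, so $\varphi(t)\ge 1/(2f(n)|E(\mathcal L_n)|)$ whenever $\pi(A)\le f(n)=\mathrm{polylog}(n)/n$, and the resulting contribution to the integral is polylogarithmic, hence negligible against $n^{2/d}/r^2$; a hitting-time ``escape'' estimate is not a substitute, since the conductance machinery needs a profile bound at those scales, not an escape time. (Similarly, your claim that every box of side $r$ contains $\Theta(r^d)$ points fails for constant $r$; for the lower bound one only needs a constant fraction of the $\pi$-mass to lie far from the chosen vertex, which is easy to salvage along the lines of the paper's use of Theorem~\ref{thm:concentration_vertices_edges_rgg} on subcubes.)
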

While the result is not surprising as it agrees for constant radius $r$ with the value obtained on the giant component of supercritical percolation (on a restriction to a finite large cube of $\mathbb{Z}^d$), 
our proof, especially of the upper bound on the mixing time for $d \ge 2$, is (perhaps also surprisingly) delicate. Theorem \ref{thm:main_thm_bounds_over_mixing_rgg} shows that the mixing time is throughout the whole regime the same function of the radius (at least the leading order). This is in stark contrast to the cover time of $\mathcal{L}_n$, which is the average number of steps that the walk takes to visit every vertex of the graph. Denoting by $r_c$ the connectivity threshold, the second and third author recenty proved that the latter is of order $\Theta(n\log n)$ for $r<(1-\eps)r_c$ and of order $\Theta(n\log^2n)$ when $r>(1+\eps)r_c$, thus showing a phase transition at $r_c$ (see~\cite{martinez2025jump}). 

Central to our proof for the $d\ge 2$ case is the next stated result, which includes an isoperimetric inequality that is of interest in its own right.
Isoperimetric inequalities are closely connected to several properties of random walks.  In particular, they can be used to prove bounds on the heat kernel decay (see for instance Nash's inequality in Theorem 3.3.11 of~\cite{saloff2006lectures}). Related results for RGGs (in particular, on the Cheeger constant) so far were only given above the connectivity threshold, see~\cite{MullerPenrose2020}. 

\begin{theorem}\label{thm:isoperimetric_inequality_rgg}
    Let $\varepsilon,\delta>0$, $d \ge 2$, and $r\ge (1+\varepsilon)r_g$. Then, there are constants $C_1 > 0$ and~$c_1(C_1) > 0$ such that w.h.p.~the following holds: For every connected set $A$ of vertices of $\mathcal{L}_n$ for which $C_1(r^{2d+1}(\log n)^{\frac{d}{d-1}})^d\le |A|\le(1-\delta)|\mathcal{L}_n|$, the number of edges between $A$ and~$A^c$ is at least~$c_1|A|^{(d-1)/d}r^{d+1}$.

    Moreover, there exists $C' > 0$, such that if $r^d \ge C'\log n$, for any $c_2 > 0$, the conclusion holds under the weaker hypothesis $|A| \ge c_2r^d$.
\end{theorem}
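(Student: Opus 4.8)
The plan is to prove the isoperimetric inequality via a renormalization (coarse-graining) argument, reducing the continuum statement to a discrete edge-isoperimetric inequality on a percolated lattice $\mathbb{Z}^d$. First I would tile $\Lambda_n$ by boxes of side length $\Theta(r)$ — say of side $r/(2\sqrt{d})$ so that any two points in the same box or in adjacent boxes are within distance $r$ — and declare a box \emph{good} if it contains at least one vertex of $\mathcal{L}_n$ and, more importantly, if the local geometry is ``regular'': the points inside it and in a bounded neighborhood of it form a well-connected cluster that is part of the giant, the vertex count in every such box is $\Theta(r^d)$ (true w.h.p.\ when $r^d \gg \log n$, and true for all but a negligible fraction of boxes otherwise), and the giant restricted to a constant-size window around the box is connected. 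A standard first step is to show, using the subcriticality of the complement process at density $(1+\varepsilon)r_g$ together with a Peierls-type / stochastic-domination argument (e.g.\ Liggett--Schonmann--Stacey), that the good boxes percolate and, crucially, that the giant component $\mathcal{L}_n$ is \emph{confined} to the (unique) giant cluster of good boxes plus small excursions; the ``small components are small'' lemma that must already be available for the giant-component results in the paper provides exactly the control needed — any bad region or hole has size at most polylogarithmic in $n$.

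Next, given a connected vertex set $A$ with $C_1 (r^{2d+1}(\log n)^{d/(d-1)})^d \le |A| \le (1-\delta)|\mathcal{L}_n|$, I would pass to the set $\mathcal{A}$ of good boxes that contain ``many'' vertices of $A$ (say at least half of their $\Theta(r^d)$ vertices of $\mathcal{L}_n$). Because $A$ is connected and each box holds $\Theta(r^d)$ giant-vertices, $|\mathcal{A}|$ is comparable to $|A|/r^d$; the lower bound on $|A|$ is engineered precisely so that $|\mathcal{A}|$ exceeds the polylogarithmic error terms coming from bad boxes, and so that $A$ cannot hide inside a single bad pocket. Now apply the classical edge-isoperimetric inequality in $\mathbb{Z}^d$ (or rather, the robust version for supercritical percolation clusters, e.g.\ the Deuschel--Pisztora / Benjamini--Mossel style statement that for large sets $S$ in the giant cluster one has $|\partial_E S| \gtrsim \min(|S|, |\mathcal{L}| - |S|)^{(d-1)/d}$) to the box set $\mathcal{A}$ inside the good-box cluster: this yields $\Omega(|\mathcal{A}|^{(d-1)/d})$ edges of the good-box graph leaving $\mathcal{A}$ (using $|\mathcal{A}| \le (1-\delta')|\text{giant box cluster}|$, which follows from $|A|\le(1-\delta)|\mathcal{L}_n|$ after accounting for the negligible mass outside the good-box cluster). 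Each such boundary box-edge corresponds to a pair of adjacent good boxes, one with $\gtrsim r^d/2$ vertices of $A$ and one with $\gtrsim r^d/2$ vertices of $A^c$ (inside $\mathcal{L}_n$), and by goodness all $\Theta(r^d) \times \Theta(r^d)$ pairs across the two boxes are joined by edges of $\mathcal{G}_n$; hence each boundary box-edge contributes $\Omega(r^{2d})$ edges between $A$ and $A^c$. Multiplying, the number of $A$-to-$A^c$ edges is $\Omega(r^{2d} \cdot |\mathcal{A}|^{(d-1)/d}) = \Omega(r^{2d} (|A|/r^d)^{(d-1)/d}) = \Omega(|A|^{(d-1)/d} r^{d+1})$, as claimed. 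The second part of the theorem, under $r^d \ge C' \log n$, is the cleaner case: then every box is good w.h.p., there are no bad pockets, the confinement is exact, and the same argument runs with the weaker hypothesis $|A| \ge c_2 r^d$ (enough to guarantee $|\mathcal{A}| \ge 1$ and to invoke the discrete isoperimetry, possibly after trivially handling the regime where $|A|$ is only a constant number of boxes by a direct argument).

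The main obstacle, and where the delicacy the authors warn about lives, is the sparse regime $r^d = \Theta(1)$ (or $r^d \ll \log n$) where bad boxes genuinely occur. There the set $A$ could in principle route a large fraction of its volume through atypically dense or atypically sparse regions, and one must ensure (i) that the comparison $|\mathcal{A}| \asymp |A|/r^d$ survives — i.e.\ $A$ cannot have a constant fraction of its mass in boxes where it fails the majority threshold without paying for it in boundary edges — and (ii) that the boundary box-edges really do carry $\Omega(r^{2d})$ graph-edges, which requires the \emph{two} adjacent boxes to simultaneously be good and dense; handling the possibility that one of them is bad forces either a multi-scale argument or a more careful definition of ``good'' that looks at a fixed-radius neighborhood of boxes rather than a single box. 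I would address (i) by an isoperimetric bootstrapping: isolated bad boxes carrying $A$-mass are few (their total volume is $O((\log n)^{O(1)})$ w.h.p., hence negligible against $|A| \ge C_1(r^{2d+1}(\log n)^{d/(d-1)})^d$), and larger bad clusters do not exist by the subcriticality estimate; the exponent $d/(d-1)$ and the power $r^{2d+1}$ in the hypothesis on $|A|$ are exactly what one needs so that the lost boundary from re-routing around bad regions is dominated by the main term $c_1 |A|^{(d-1)/d} r^{d+1}$. This accounting is the genuinely technical heart of the argument; the percolation-comparison and discrete-isoperimetry inputs are standard, but tuning the error terms so that the renormalized inequality closes with the stated quantitative bounds is where the real work goes.
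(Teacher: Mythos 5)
Your overall architecture (coarse-grain at scale $\Theta(r)$, call a tile good when it is dense and locally well wired into the giant, run a discrete isoperimetric inequality on the tile set where $A$ holds a majority, and convert each boundary tile-pair into $\Omega(r^{2d})$ graph edges) is essentially the paper's strategy in the regime $r^d\ge C'\log n$, and in spirit also in the intermediate regime. But as written the proposal has two genuine gaps. First, the small-radius regime $(1+\varepsilon)r_g\le r\le r'$ cannot be handled with boxes of side $r/(2\sqrt d)$ and your strong notion of goodness: such a box has expected occupancy $\Theta(r^d)=\Theta(1)$, so the probability that it contains $\Theta(r^d)$ giant vertices forming a regular local cluster is bounded away from $1$ (indeed the box is empty with constant probability), so there is no stochastic domination of a highly supercritical site process, the giant is not ``confined to good boxes plus polylogarithmic excursions,'' and the step ``each boundary box-edge carries $\Omega(r^{2d})$ edges'' collapses because adjacent boxes need not carry the required majorities of $\mathcal{L}_n$-vertices. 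The paper instead renormalizes a second time at scale $Mr$ with overlapping enlarged tiles, defines goodness through crossing components and uniqueness of a large cluster, applies Liggett--Schonmann--Stacey at that scale, and only extracts $\Omega(1)$ edges per boundary pair, which suffices since $r=\Theta(1)$ there; some analogue of this extra scale is unavoidable, and your sketch does not supply it.

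Second, in the intermediate regime ($r$ a large constant, $r^d=O(\log n)$) the two points you yourself flag as the ``technical heart'' are exactly where the paper's work lies, and deferring them leaves the argument incomplete. Tiles are not uniformly dense, so after the discrete isoperimetric step the neighbouring tile across a boundary box-edge may be nearly empty and contribute nothing; the paper resolves this by proving percolation-type lemmas guaranteeing many \emph{disjoint} boundary edges both of whose endpoints are ``normal'' tiles (its Lemmas~\ref{lem:pair_of_open_sites_boundary} and~\ref{lem:pairs_of_open_sites_no_boundary_condition}, which in turn need the complements to be controlled as lattice animals, hence the constructions $\mathfrak{L}(A)$, $\mathfrak{K}(A)$, $\mathfrak{M}(L)$), by a lattice-animal/container counting bound showing a connected $A$ cannot hide in atypically dense tiles (Lemma~\ref{lem:bounds_vertices_big_animals} and the container argument behind Theorem~\ref{thm:relation_StationaryMeasure_number_of_vertices_rgg}), and by a genuinely geometric step (Lemma~\ref{lem:starComp}, covering a tile crossed by an edge with the two balls $B_{v'}(r)\cup B_{v''}(r)$) to extract at least one $A$--$A^c$ edge per complementary component. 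Your majority-threshold accounting can be repaired along the lines of the paper's dichotomy on $|A\setminus A'|$ (vertices of $A$ in minority tiles each yield an edge to $A^c$, and the hypothesis $|A|\ge C_1(r^{2d+1}(\log n)^{d/(d-1)})^d$ makes $\Omega(|A|)$ edges sufficient), but the ``both endpoints dense'' issue and the sparse-radius renormalization are missing ideas, not just missing details.
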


The order of the lower bound stated on the size of the edge-boundary in Theorem~\ref{thm:isoperimetric_inequality_rgg} corresponds to the ``average'' number of edges connecting the vertices inside a ball to those outside. The requirement over the minimum size of the set $|A|$ is natural: due to randomness and spatial irregularities, one could in principle find sets connected to the giant component by only a single edge. Our result shows that this does not occur for large enough sets. We believe that for the regime $r^d=O(\log n)$, the order of the lower bound on $|A|$ is not optimal (although it suffices for our applications). In contrast, when~$r^d=\Omega(\log n)$, the order of the lower bound on $|A|$ becomes optimal. 

As an immediate corollary of Theorem~\ref{thm:main_thm_bounds_over_mixing_rgg} we obtain that the mixing time does not exhibit \emph{cutoff}: recall that a sequence of random walks exhibits cutoff if for all $\eps > 0$, 
$$
\lim_{|G| \to \infty}
\frac{t_{\textup{mix}}(G, \eps)}{t_{\textup{mix}}(G, 1-\eps)}=1.
$$ (in words, the total variation distance jumps rapidly from $1$ to $0$ in a very tiny interval). We obtain a corollary that, as in the case of the percolated grid, there is no cutoff on the giant component of RGGs. 

\begin{corollary}
    Let $d \in \mathbb{N}$, $\varepsilon > 0$  and $r\ge (1+\varepsilon)r_g$. The continuous-time simple random walk on~$\mathcal{L}_n$ does not exhibit cutoff.
\end{corollary}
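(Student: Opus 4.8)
The plan is to deduce the corollary directly from Theorem~\ref{thm:main_thm_bounds_over_mixing_rgg}, using the classical principle that for reversible chains a bounded ratio $\tau_{\textup{mix}}/\tau_{\textup{rel}}$ rules out cutoff (equivalently, the ``product condition'' $\tau_{\textup{mix}}\cdot(1-\lambda_2)\to\infty$ is necessary for cutoff). I would make this explicit through two standard inequalities for the continuous-time walk on the finite connected graph $\mathcal{L}_n$, which is reversible with respect to $\pi$.

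First I would record the spectral-gap lower bound on mixing. By Theorem~\ref{thm:main_thm_bounds_over_mixing_rgg}, w.h.p.\ $\tau_{\textup{rel}}(\mathcal{L}_n)=(1-\lambda_2)^{-1}=\Theta(n^{2/d}/r^2)\to\infty$, so w.h.p.\ $\lambda_2$ is positive, tends to $1$, and coincides with the second-largest eigenvalue of $P$; taking a real eigenfunction $f$ with $f\perp_\pi\mathbf{1}$ one has $H_tf=e^{-t(1-\lambda_2)}f$, and testing $\|H_t(x,\cdot)-\pi\|_{\textup{TV}}$ against $f/\|f\|_\infty$ at a vertex maximizing $|f|$ gives the usual estimate $\max_x\|H_t(x,\cdot)-\pi\|_{\textup{TV}}\ge\tfrac12 e^{-t/\tau_{\textup{rel}}(\mathcal{L}_n)}$. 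Hence
\[
\tau_{\textup{mix}}(\mathcal{L}_n,\eps)\ \ge\ \tau_{\textup{rel}}(\mathcal{L}_n)\,\log\tfrac{1}{2\eps}\qquad\text{for }0<\eps<\tfrac12 .
\]
On the other hand $\eps\mapsto\tau_{\textup{mix}}(\mathcal{L}_n,\eps)$ is non-increasing, so $\tau_{\textup{mix}}(\mathcal{L}_n,1-\eps)\le\tau_{\textup{mix}}(\mathcal{L}_n,\tfrac14)$ for $\eps\le\tfrac34$, and by Theorem~\ref{thm:main_thm_bounds_over_mixing_rgg} there is a deterministic constant $C=C(d,\varepsilon)$ with $\tau_{\textup{mix}}(\mathcal{L}_n,\tfrac14)\le C\,\tau_{\textup{rel}}(\mathcal{L}_n)$ w.h.p. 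Combining, w.h.p.\ $\tau_{\textup{mix}}(\mathcal{L}_n,\eps)/\tau_{\textup{mix}}(\mathcal{L}_n,1-\eps)\ge C^{-1}\log\tfrac{1}{2\eps}$ for all $\eps\in(0,\tfrac34)$. Fixing $\eps_0:=\tfrac12 e^{-2C}\in(0,\tfrac12)$ makes the right-hand side at least $2$, so w.h.p.\ the mixing-time ratio at $\eps_0$ is bounded away from $1$ and in particular does not converge to $1$: this is precisely the absence of cutoff.

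This is a routine argument, so there is no genuine obstacle. The two points worth a line of care are that we are dealing with the \emph{continuous-time} walk --- whence the exponential $e^{-t(1-\lambda_2)}$ rather than $\lambda_2^t$, and the remark above that in this regime $\lambda_2$ is the relevant (positive, dominant) eigenvalue --- and the bookkeeping of ``w.h.p.'', which is handled by exhibiting the single fixed tolerance $\eps_0$ for which the mixing-time ratio stays $\ge 2$ with probability tending to $1$.
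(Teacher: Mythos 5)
Your proposal is correct and is essentially the paper's own argument: the paper deduces the corollary in one line from Theorem~\ref{thm:main_thm_bounds_over_mixing_rgg} together with the standard fact (cited there as Lemma 5 of~\cite{Justin}) that cutoff forces $\tau_{\textup{mix}}/\tau_{\textup{rel}}\to\infty$, and your proof simply re-derives that fact in this setting via the spectral lower bound $\tau_{\textup{mix}}(\eps)\ge\tau_{\textup{rel}}\log\tfrac{1}{2\eps}$ together with monotonicity in $\eps$ and the upper bound $\tau_{\textup{mix}}(\tfrac14)\le C\,\tau_{\textup{rel}}$ from the theorem. The one step to phrase more carefully is the claim that positivity of $\lambda_2$ follows from $\tau_{\textup{rel}}\to\infty$ (with $\lambda_2$ defined through the absolute value this is not automatic), but this is harmless because the paper's lower bound on $\tau_{\textup{rel}}$ is obtained from the variational characterization (Lemma 2.2 of~\cite{benjamini2003mixing}), which bounds the positive spectral gap — exactly what your eigenfunction estimate for the continuous-time kernel requires.
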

\begin{proof}
This follows immediately from Theorem~\ref{thm:main_thm_bounds_over_mixing_rgg} together with the well known fact (see for example Lemma 5 of~\cite{Justin}) that any sequence $(\tau_{\textup{mix}}(\mathcal{L}_n))_n$ that exhibits cutoff must satisfy the condition that~$\tau_{\textup{rel}}$ is asymptotically of larger order than $ \tau_{\textup{mix}}.$ 
\end{proof}
We conclude this section by emphasizing a key distinguishing feature of our analysis of the mixing time of the simple random walk on random geometric graphs—namely, that unlike existing results, it applies to the full supercritical regime and remains tight under the standard parameter choice in the definition of mixing time.

\medskip\noindent%
\textbf{Proof strategy.} Most of this manuscript consists in showing the upper bound on the mixing time of Theorem~\ref{thm:main_thm_bounds_over_mixing_rgg}. At the heart of the proof for $d \ge 2$ (for $d=1$ the proof is considerably simpler) is our second main result (the isoperimetric inequality of Theorem~\ref{thm:isoperimetric_inequality_rgg}) that shows that for all large enough subsets $A$ of the giant component's vertex set (of size which is at most $(1-\delta)$-fraction of the vertices of the giant) the number of edges between~$A$ and $A^c$ is $\Omega(|A|^{(d-1)/d} r^{d+1})$. The upper bound then follows by an average conductance method pioneered by Lovász and Kannan~\cite{lovasz1999faster}. To prove this isoperimetric inequality, we divide $\Lambda_n$ into tiles of side length proportional to the radius $r$, distinguishing cases when~$A$ is shattered around many tiles and the case when~$A$ is relatively concentrated in a few tiles. To this end, the analysis is divided into three regimes according to the value of the radius.

Above the connectivity threshold, that is, for $r^d=\Omega(\log n)$, the proof is a relatively easy exercise, as the number of vertices per cell is well concentrated. 

For $r \ge r'$ with $r'$ being a large enough constant and $r^d=O(\log n)$, the proof is much more delicate: one important new ingredient is to show that the total degree of vertices incident to $A$, for $A$ relatively large, is $O(|A|r^d)$. A simple counting of lattice animals is not enough, and we develop a container method, which to the best of our knowledge is new in a geometric setup: we need to encapsulate lattice animals inside much larger tiles in order to obtain the desired bound. The analogous result is immediate on percolated $\mathbb{Z}^d$ (since the degree of every vertex is trivially bounded by $2d$) as well as in the dense case of RGGs; here we need to group cells according to their density and count ways for choosing containers of cells. Next, in a series of lemmas we have to overcome several geometric obstacles particular to  RGGs and distinguish the analysis of sets $A$ according to the fact whether $A$ is contained in relatively few tiles with many vertices belonging to $A$ or whether $A$ is rather spread out. Especially the first type of sets requires delicate geometric arguments in order to identify sufficiently many adjacent pairs of tiles containing the desired number of edges between $A$ and $A^c$. 

Finally, for $(1+\eps)r_g \le r \le r'$, we need to expand the ideas of the previous case by an additional renormalization scheme. More in detail, we define tiles of two different sizes (of length roughly of the order of the radius) and define them as good if inside them there is one unique large component inside the tile. We show that the set of good tiles then dominates a supercritical percolation process, for which we then show the desired number of edges between $A$ and $A^c$ exist w.h.p. 

Since for any $\eps > 0$, by Theorem 12.5 of~\cite{levinperes}, we have
$(\tau_{\textup{rel}}(G)-1) \log \left( \frac{1}{2\varepsilon} \right) \le t_{\mathrm{mix}}(G,\varepsilon)$, it remains to show a lower bound on $\tau_{\textup{rel}}(G)$. This proof is comparably considerably simpler and in spirit of the corresponding lower bound proven by Benjamini and Mossel~\cite{benjamini2003mixing} on the mixing time of a simple random walk on supercritical percolation clusters inside cubes in $\mathbb{Z}^d$: we show that a constant fraction of vertices is close to the origin, having thus a small graph distance to the closest vertex to the origin, whereas a constant fraction of vertices is far from the origin. Hence, the expected distance has some non-trivial variance, and by Lemma 2.2 of~\cite{benjamini2003mixing} the result follows.

\smallskip\noindent%
\textbf{Further related work.}  
 Random geometric graphs  were introduced by Gilbert~\cite{Gil61} as a model for telecommunication networks. Since then, they have attracted considerable attention, both from the theoretical side (we refer here only to the already mentioned monograph by Penrose~\cite{Pen03}) and from the perspective of applications (see for example~\cite{Aky02, Nek07, FLMPSSSvL19}). 
On the other hand, the mixing time of random walks has been studied in many different contexts (see the book of Levin and Peres~\cite{levinperes} for a comprehensive treatment). 
Preliminary results on the mixing time of a random walker on RGGs were given in the already mentioned papers of Boyd et al.~\cite{boyd2005mixing} and Avin and Ercal~\cite{covertimergg}, where it was shown that w.h.p.\ $\tau_{\textup{mix}}(\mathcal{L}_n)=O(r^{-2}\log n)$ for $r$ being asymptotically larger than the connectivity threshold (a constant factor larger than the connectivity threshold, respectively). In fact,~\cite{boyd2005mixing} gave also a corresponding lower bound of the same order, but they considered the non-standard choice~$\eps=n^{-\alpha}$ (for some $\alpha > 0$) appearing in the definition of $\tau_{\textup{mix}}$: for such a choice of the parameter $\varepsilon$ known lower bounds for mixing times immediately imply that the upper bound in~\cite{boyd2005mixing} is tight (for the standard constant $\varepsilon$ case, tightness of the bound remained open -- our main result closes precisely this gap). Their result was later also generalized to geographical threshold graphs~\cite{beveridge2011mixing} (see the paper for the relevant definition). Furthermore, the mixing time of exponential random graphs was studied in~\cite{Bhamidi}. Bounds on the mixing time via the spectral gap (that is, $1-\lambda_2$) were also proven for long-range percolation  and random hyperbolic graphs (see~\cite{BenjaminiLRP}, the already mentioned paper of~\cite{crawford2012simple}, or also~\cite{KM18}).

Besides the papers already mentioned in the introduction, the mixing time was also studied in non-geometric random graph classes, and in particular, for the simple random walk in the giant component of supercritical $\mathcal{G}(n,p)$ graphs: first, in the strictly supercritical regime, that is, in the case when $p=c/n$ for fixed $c > 1$, Fountoulakis and Reed in~\cite{fountoulakis2008evolution} and independently Benjamini, Kozma and Wormald in~\cite{benjamini2014mixing} showed that the mixing time of the giant component is of order $\log^2 n$. For critical random graphs, that is, $\mathcal{G}(n,p)$ graphs with $p=(1+\lambda n^{-1/3})/n$ with $\lambda \in \mathbb{R}$, 
Nachmias and Peres proved in~\cite{Nachmias} that throughout this critical window the
mixing time of the largest component is w.h.p.\ of order $n$. Finally, in the near-critical case, that is, for $p=(1+\eps)/n$ with $\lambda=\eps^3 n \to \infty$ and $\lambda=o(n)$, Ding, Lubetzky and Peres showed in~\cite{ding2012mixing} that the mixing time on the largest component is with high probability of order $(n/\lambda)\log^2 \lambda$. In fact, the authors therein also showed that this is w.h.p.\ the order of the largest
mixing time over all components, both in the slightly supercritical
and in the slightly subcritical regime (that is, with $p=(1-\eps)/n$, with $\lambda=\eps^3 n$ and $\lambda=o(n)$).

Further results on random walks on supercritical percolation on $\mathbb{Z}^d$, including heat kernel decays, were given by Mathieu and Remy ~\cite{mathieu2004isoperimetry} and by Barlow~\cite{barlow_percolationHQ}. Quenched invariance principles for the simple random walk on percolation clusters in $\mathbb{Z}^d$ were given by Berger and Biskup in~\cite{berger2007quenched}, showing convergence for almost every percolation configuration to Brownian motion.
In long-range percolation,  scaling limits of the corresponding random walk were then (not for all ranges of parameters) given by Crawford and Sly in~\cite{crawford_sly_scaling_limitlrp}. The picture was more recently then completed by Berger and Tokushige~\cite{berger2024scaling}. Random walks were also analyzed in correlated percolation models such as the vacant set in random interlacements by Procaccia, Rosenthal and Sapozhnikov in~\cite{procaccia2016quenched} where the authors gave quenched invariance principles as well.

\medskip\noindent%
\textbf{Conventions and notation.}
In this section we summarize the, mostly standard, conventions we will adhere to. 
All graphs considered in this paper are assumed to be finite and simple (that is, undirected graphs without loops or multiple edges).
Two vertices $u, v \in G$ are said to be \emph{adjacent} or \emph{neighbors} if~$\{u, v\} \in E(G)$ and $u, v$ are called \emph{endvertices} of $\{u,v\}$. 
Two edges of $G$ are called \emph{disjoint} if they have no endvertices in common.

The graph distance in $G$ (or simply distance) between two vertices~$u, v$ is denoted $\mbox{d}_G(u,v)$ (or simply~$\mbox{d}(u,v)$) and defined as the minimum non-negative integer $\ell$ for which there is a sequence $v_0,...,v_\ell$ of vertices of $G$
such that $u=v_0$, $v=v_\ell$, and $v_{i-1}$ is adjacent to $v_{i}$ for all~$i\in [\ell]$, and~$+\infty$ if no such sequence exists.
A subset of vertices $U \subseteq G$ is said to be a \emph{component of~$G$}
(or just \emph{component} when~$G$ is clear from context) if the distance in $G$ between any pair of vertices in $U$ is finite and $U$ is an inclusion-maximal set. Two vertices (respectively, a set of vertices) of a graph~$G$ are said to be \emph{connected} if they belong to the same component of $G$.

The \emph{$k$-transitive closure} of a graph $G$, denoted $G^k$, is a graph over the same vertex set of $G$ where an edge exists between two distinct vertices $u,v$ if 
$\mbox{d}_G(u,v)$ is at most $k$. 
We say that $L$ is a \emph{$k$-lattice animal} in $G$ if $L$ is a connected set of vertices of $G^k$. When $k=1$, we simply call $L$ a lattice animal.

We use standard asymptotic notation. Specifically, if $(a_n)_{n}$, $(b_n)_{n}$ are sequences of real numbers, we write
$a_n = O(b_n)$ if for some positive constant $C > 0$ and non-negative integer $n_0$ it holds that~$|a_n| \le C|b_n|$
for all $n > n_0$. Also, we write $a_n = \Omega(b_n)$ if $b_n = O(a_n)$, and $a_n=\Theta(b_n)$ if~$a_n = O(b_n)$ and
$a_n=\Omega(b_n)$. We write $a_n = o(b_n)$ if~$a_n/b_n\to 0$ as $n\to\infty$ and $a_n=\omega(b_n)$ if~$b_n=o(a_n)$. 
Finally, for any event $\mathcal{E}$, we use $\mathbf{1}_{\mathcal{E}}$ to denote the indicator function that is equal to 1 when $\mathcal{E}$ holds and $0$ otherwise.


\medskip\noindent%
\textbf{Organization.}
In Section~\ref{sec: Prelim}, we state preliminary results on Markov chains, on Bernoulli percolation as well as on RGGs, and we also prove auxiliary lemmas that will be used later on. In particular, we extend known results on isoperimetric inequalities for percolation clusters in cubes, and we give an upper bound on the total degree of every sufficiently large connected set in a supercritical RGG. The main section of this paper is Section~\ref{sec:Upper}, where we prove the upper bound on the mixing time for dimension $d \ge 2$  by establishing the mentioned isoperimetric inequalities. Section~\ref{sec:lowerBnd} then complements the previous section with a matching lower bound on the relaxation time of the same order. In Section~\ref{sec:1D}, we address the (arguably much simpler) case $d=1$.
%

\section{Preliminaries and auxiliary lemmas}\label{sec: Prelim}
In this section we state results that we rely on as well as prove auxiliary results that will come in handy later on. 
\subsection{Markov chains and probability}
We start with some basic results on Markov chains.
We consider the continuous-time \emph{simple random walk} with constant rate over a connected graph $G=(V,E)$ that, at each step, waits for an interval of time whose length follows an exponential distribution.
Recall that  $\pi(x)=\mbox{deg}(x)/(2|E|)$ for all $x\in V$ is the stationary distribution of the  random walk. For $A\subseteq V$, let $\pi(A)=\sum_{x\in A}\pi(x)$.
A key tool in our investigation will be the \emph{conductance function} which is defined as
\[
\varphi(t) = \min_{\{A\subseteq V \colon 0<\pi(A)\leq t\}} \frac{Q(A,A^c)}{\pi(A)\pi(A^c)}, 
\qquad \text{where} \qquad 
Q(A,A^c) = \frac{|E(A,A^c)|}{2|E|}.
\]
The value of $\varphi(t)$ at $t=1/2$ is the (classical) so called \emph{Cheeger constant}. The following upper bound on the mixing time in terms of the conductance function and Cheeger's constant first appeared in~\cite[Theorem 2.1]{lovasz1999faster} and was later corrected by Montenegro (see  of~\cite[Theorem 3.1]{montenegro2002faster}):

\begin{theorem}\label{thm:upper_bound_mixing_lovasz_kannan}
For every connected graph $G=(V,E)$ let $\pi_0=\min_{x\in V}\pi(x)$ and $\pi_1$ denote the minimum of $\pi(X)$ where $X\subseteq V$ contains a vertex $x\in V$ such that $Q(\{x\},X)>1/2$ (that is, making a step from $x$ more than half the time we stay in $X$). Then, 
\[
\tau_{\textup{mix}}(G)=O\left( \int_{\pi_0}^{1/2} \frac{1}{t \varphi^2(t)}dt + \frac{1}{\varphi(1/2)} \right).
\]
and
\[
\tau_{\textup{mix}}(G)=O\left( \int_{\pi_1}^{1/2} \frac{1}{t \varphi^2(t)}dt + \frac{1}{\varphi(1/2)} \right).
\]
\end{theorem}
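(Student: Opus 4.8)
\emph{Proof plan.} The plan is to follow the ``averaging/evolving-sets'' scheme of Lov\'asz--Kannan, adapted to the continuous-time semigroup, and to extract the sharper lower limit $\pi_1$ by a short localization argument.

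\emph{Step 1: reduce to a one-dimensional profile.} Fix a start vertex $x_0$, let $\mu_t = H_t(x_0,\cdot)$ and $g_t(x) = \mu_t(x)/\pi(x)$. Order the vertices so that $g_t$ is non-increasing, let $\emptyset = S_0 \subset S_1 \subset \cdots \subset S_{|V|} = V$ be the resulting nested sets, and let $h_t\colon[0,1]\to[0,1]$ be the piecewise-linear interpolation of the points $(\pi(S_j),\mu_t(S_j))$. Then $h_t$ is concave, $h_t(0)=0$, $h_t(1)=1$, $h_t(w)\ge w$, and
\[
\|\mu_t-\pi\|_{TV} \;=\; \max_{w\in[0,1]}\big(h_t(w)-w\big) \;=\; \max_j\big(\mu_t(S_j)-\pi(S_j)\big),
\]
so it suffices to show that $\max_w(h_t(w)-w)$ falls below $\varepsilon$ by the claimed time.

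\emph{Step 2: infinitesimal smoothing by the conductance.} From the forward equation for the continuous-time walk one computes that, for each level set $S=S_j$ of $g_t$, the excess $\mu_t(S)-\pi(S)$ decreases at rate at least $Q(S,S^c)$ times the ``height gap'' of $g_t$ across the cut $(S,S^c)$ (net outflow of excess $\ge$ flow across the cut $\times$ difference of normalized densities on the two sides). Translating this into the profile picture, using concavity of $h_t$ and $Q(S,S^c)/(\pi(S)\pi(S^c))\ge\varphi(\pi(S))$, one obtains, after the standard bookkeeping that turns the local curvature of $h_t$ at the worst quantile into a bound on $h_t-\mathrm{id}$ itself, a differential inequality of the shape
\[
\frac{d}{dt}\Big(\max_{w}\big(h_t(w)-w\big)\Big) \;\le\; -\,c\,w_t\,\varphi(w_t)^2\,(1+o(1)),
\]
where $w_t=\pi(S_{j(t)})$ is the stationary mass of the level set currently carrying the most excess; the square of $\varphi$ appears because excess is smoothed on \emph{both} sides of the cut.

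\emph{Step 3: integrate, and replace $\pi_0$ by $\pi_1$.} Integrating while $w_t$ ranges over $[\pi_0,1/2]$ gives $\tau_{\textup{mix}}(G)=O\big(\int_{\pi_0}^{1/2}(t\varphi^2(t))^{-1}\,dt\big)$, plus an additive $\varphi(1/2)^{-1}$ to absorb the final ``balanced'' phase $w_t\approx 1/2$, which is governed by the Cheeger constant rather than by the profile. For the sharper bound, note that within a constant amount of time the continuous-time walk has, from any $x$, moved into a set $X\ni x$ with $Q(\{x\},X)>1/2$ in the stated sense (the walk holds at $x$ a constant fraction of the time, and then its next move lands in such an $X$ with constant probability), so that for $t$ above a constant no bottleneck level set has stationary mass below $\pi_1$; hence the integral may be started at $\pi_1$, at the cost of an $O(1)$ additive term absorbed in $O(\cdot)$.

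\emph{Main obstacle.} I expect the delicate point to be Step 2: passing from ``excess leaves $S$ at rate $\ge Q(S,S^c)\cdot(\text{height gap})$'' to a clean inequality for $\max_w(h_t(w)-w)$ with no loss that would spoil the $1/(t\varphi^2(t))$ integrand. The two subtleties are that the level sets $S_j$ vary with $t$ — so one must differentiate $h_t$ as an envelope, or, more safely, run the argument for the lazy discrete chain and then compare with continuous time — and that one must account correctly for excess flowing both out of and into $S$, which is exactly where $\varphi^2$ (hence the integrand $1/(t\varphi^2(t))$) is produced. A route that sidesteps the envelope subtlety is the spectral-profile method: bound $\tfrac{d}{dt}\|g_t-1\|_2^2$ by the spectral profile $\Lambda$ and use the Cheeger-type inequality $\Lambda(w)\gtrsim\varphi(w)^2$, at the price of an $\ell^2$-to-$\ell^1$ conversion. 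Since the statement is quoted verbatim from \cite{lovasz1999faster,montenegro2002faster}, in the paper one may of course simply invoke it.
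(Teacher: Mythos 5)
This statement is not proved in the paper at all: it is imported verbatim from Lov\'asz--Kannan \cite{lovasz1999faster} as corrected by Montenegro \cite{montenegro2002faster}, with the continuous-time transfer also taken from Montenegro (Theorem 3.2 there). So the relevant question is whether your blind reconstruction would stand on its own, and as written it does not. Your Step 2 is exactly the place where the original Lov\'asz--Kannan argument contained the error that Montenegro had to repair: differentiating $\max_w\bigl(h_t(w)-w\bigr)$ when the level sets $S_j$ themselves move with $t$, and converting ``excess leaves $S$ at rate $\ge Q(S,S^c)\times(\text{height gap})$'' into a clean bound with integrand $1/(t\varphi^2(t))$, is precisely the ``standard bookkeeping'' you leave as a black box. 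Flagging it as the main obstacle is honest, but it means the proposal is a plan, not a proof; the safe routes you mention (lazy discrete chain plus comparison, or the spectral-profile method with $\Lambda(w)\gtrsim\varphi(w)^2$ and an $\ell^2$-to-$\ell^1$ conversion) would each need to be carried out, and each introduces its own constants and normalization issues that are exactly what Montenegro's paper handles.

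Your Step 3 justification for replacing $\pi_0$ by $\pi_1$ is also not correct as stated. For the continuous-time walk at small $t$ the distribution $H_t(x,\cdot)$ is still essentially a point mass at $x$, so the top level set of $g_t$ is $\{x\}$ with stationary mass possibly far below $\pi_1$; the assertion that ``for $t$ above a constant no bottleneck level set has stationary mass below $\pi_1$'' does not follow from the walk having made a jump with constant probability, and one must argue (as Montenegro does) that the initial phase in which the relevant level sets have mass below $\pi_1$ costs only an absorbable amount, using the defining property $Q(\{x\},X)>1/2$ quantitatively rather than qualitatively. Since the paper simply invokes the cited theorems, the correct move in this context is the one you note in your last sentence: cite \cite{lovasz1999faster} and \cite{montenegro2002faster} rather than reprove the bound; if you do want a self-contained proof, the spectral-profile route is the one where the envelope subtlety genuinely disappears, but the $\pi_1$ refinement still requires the separate localization argument from Montenegro's paper.
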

Although the above result was originally stated for the discrete time simple random walk, it also holds for the continuous time random walk (see for example Theorem 3.2 of \cite{montenegro2002faster}).

Finally, we state the following useful large deviation estimate for sums of independent identically distributed Bernoulli random variables.
\begin{lemma}\label{lem:chernoff_big_constant_exponent}
    Let $X\sim \textup{Bin}(n,p)$. For any $0<\alpha<1$ and $\alpha'>0$ there is a $p<1$ such that
    \[
    \mathbb{P}(X<\alpha\cdot n) < \exp(-\alpha' n).
    \]   
\end{lemma}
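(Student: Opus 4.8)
The claim asserts that, by taking the success probability $p$ sufficiently close to $1$ (with $p$ depending only on $\alpha$ and $\alpha'$, not on $n$), the lower tail of $\mathrm{Bin}(n,p)$ decays at an arbitrarily prescribed exponential rate. The plan is \emph{not} to invoke the usual multiplicative Chernoff bound directly, since $\mathbb{P}(X\le(1-\delta)np)\le e^{-np\delta^{2}/2}$ only yields rate $(1-\alpha)^{2}/2$ as $p\to 1$, which is too small once $\alpha'>(1-\alpha)^{2}/2$. Instead I would rewrite the event in terms of the failed trials and bound it by a crude union bound, whose exponential rate genuinely diverges as $p\to1$.

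Concretely, I would argue as follows. Write $q=1-p$ and let $Y=n-X$ be the number of failed trials, so $Y\sim\mathrm{Bin}(n,q)$. If $X<\alpha n$ then $Y>(1-\alpha)n$, hence $Y\ge k$ with $k:=\lceil(1-\alpha)n\rceil\ge (1-\alpha)n$. Since the event $\{Y\ge k\}$ is contained in the union, over all $k$-element subsets $S\subseteq[n]$, of the events ``all trials indexed by $S$ fail'', a union bound gives $\mathbb{P}(X<\alpha n)\le \binom{n}{k}q^{k}$. Using the elementary estimate $\binom{n}{k}\le(en/k)^{k}$ together with $n/k\le 1/(1-\alpha)$, this yields
\[
\mathbb{P}(X<\alpha n)\ \le\ \Big(\tfrac{eq}{1-\alpha}\Big)^{k}\ \le\ \Big(\tfrac{eq}{1-\alpha}\Big)^{(1-\alpha)n},
\]
where the second inequality uses $k\ge(1-\alpha)n$ and holds once $q$ is small enough that $eq/(1-\alpha)\le 1$. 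Finally, choosing any $q$ with $0<q<\frac{1-\alpha}{e}\,e^{-\alpha'/(1-\alpha)}$ and setting $p=1-q$ makes the base strictly less than $e^{-\alpha'/(1-\alpha)}<1$, so the right-hand side is strictly below $\big(e^{-\alpha'/(1-\alpha)}\big)^{(1-\alpha)n}=e^{-\alpha' n}$ for every $n\ge 1$, as wanted (the case $n=0$ being trivial).

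There is essentially no real obstacle here; the only point requiring attention is the choice of the right elementary tail estimate — the union-bound/binomial-coefficient bound rather than the standard multiplicative Chernoff inequality — precisely because the latter's exponential rate saturates at $(1-\alpha)^2/2$ while the former's grows like $(1-\alpha)\log(1/q)\to\infty$ as $p\to1$.
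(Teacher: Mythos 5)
Your proof is correct, and it takes a genuinely different route from the paper. The paper simply cites the relative-entropy form of the Chernoff bound, $\mathbb{P}(X<\alpha n)<\exp\bigl(-n\,D(\alpha\,\|\,p)\bigr)$ with $D(x\,\|\,y)=x\log(x/y)+(1-x)\log((1-x)/(1-y))$, and concludes by noting that $D(\alpha\,\|\,p)\to\infty$ as $p\to1$ (driven by the term $(1-\alpha)\log\frac{1-\alpha}{1-p}$), so $p$ can be chosen with $D(\alpha\,\|\,p)\ge\alpha'$. You instead pass to the failure count $Y=n-X$, apply a union bound over $k$-element subsets of failed trials, and use $\binom{n}{k}\le(en/k)^k$ to get the bound $\bigl(\tfrac{eq}{1-\alpha}\bigr)^{(1-\alpha)n}$; your choice of $q$ then gives the required rate, with the small boundary cases ($n=0$, strictness for $n\ge1$) handled correctly. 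Your opening observation is also apt: the weaker multiplicative Chernoff form $e^{-np\delta^2/2}$ would indeed saturate at rate $(1-\alpha)^2/2$, which is exactly why the paper invokes the entropy form rather than that one. In substance your union-bound estimate is a hands-on reproof of the entropy bound up to the harmless $e^k$ (Stirling-type) loss: your exponential rate $(1-\alpha)\log\frac{1-\alpha}{eq}$ and the paper's $D(\alpha\,\|\,p)\approx(1-\alpha)\log\frac{1}{1-p}$ both diverge as $p\to1$, which is the only thing that matters. What your approach buys is self-containedness (no external citation); what the paper's buys is brevity and a sharper constant, neither of which is needed for the application.
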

\begin{proof}
    The proof uses the following Chernoff type bound (see Theorem 1 of \cite{arratia1989binomial})
    \[
    \mathbb{P}(X < \alpha \cdot n) < \exp\left(-n \cdot D(\alpha \,\|\, p)\right),
\]
    where $D(x \,\|\, y)=x\log (x/y) + (1-x)\log((1-x)/(1-y))$. To conclude, observe that for any $0<\alpha<1$ 
    one can choose $p$ close enough to 1 so that $D(\alpha\,\|\, p)\ge \alpha'$.
\end{proof}


\subsection{Percolation, connectivity and isoperimetric inequalities}\label{sec:percolation}
This section contains topological and probabilistic results mainly in percolation theory which will be useful in proving statements about the conductance function of RGGs. When referring to percolation below, we always mean \emph{Bernoulli site percolation} which we describe next:  
For a positive integer $m$, let~$\Lambda_m$ 
be the $d$-dimensional (continuous) axis-parallel cube in $\mathbb{R}^d$ of volume $m$, that is,
$\Lambda_m=\left[-\frac{m^{1/d}}{2},\frac{m^{1/d}}{2}\right)$.
Let~$\Phi_m$ be the discrete counterpart of $\Lambda_m$, specifically, let $\Phi_m=\Lambda_m\cap\mathbb{Z}^d$. 
Throughout, we abuse notation and consider $\Phi_m$ as the graph with vertex set $\Lambda_m\cap\mathbb{Z}^d$ where two vertices are adjacent
if and only if they differ in a single coordinate by exactly~$1$.
Note that the order of $\Phi_m$ is approximately~$m$. In fact, 
  $|\Phi_m|/m\to 1$ as~$m\to\infty$. Vertices of~$\Phi_m$ are called \emph{sites}
and we are given $p\in [0,1]$. Each site is \emph{open} with probability~$p$ and \emph{closed} otherwise, independently of all other sites. We let~$\Phi_m^p$ be the (random) set of open sites in $\Phi_m$ and refer to it as percolation (or percolation process) in $\Phi_m$ with parameter $p$. We abuse notation and identify~$\Phi_m^p$ with the subgraph of $\Phi_m$ induced by the set of open sites.
Furthermore, we denote by~$F_m$ the largest component of $\Phi_m^p$. 

It is known that there exists a critical parameter $p_c(\mathbb{Z}^d)$ in the sense that if $p>p_c(\mathbb{Z}^d)$, then there exists a unique infinite component with probability 1~in percolation in $\mathbb{Z}^d$, and we will denote percolation with $p > p_c(\mathbb{Z}^d)$ as supercritical percolation. We denote by $\theta(p)$ the \emph{percolation probability}, that is, the probability that the origin belongs to the infinite component. The following result is well known (for the first part see Theorem 1.2 of~\cite{pisztora1996surface}): 
\begin{theorem}\label{thm:giant_Bernoulli_site_percolation}
There exists $p_c(d)$ such that w.h.p.~if $p>p_c(d)$ and $\eps>0$, then the largest component of $\Phi_m^p$ is of size at least $(\theta(p)-\eps) m$,
and the second-largest component of $\Phi_m^p$ is of order~$(\log m)^{d/(d-1)}$.
\end{theorem}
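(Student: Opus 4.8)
The first assertion is essentially a restatement of Pisztora's surface‑order large deviation estimates, so the plan is simply to invoke them. By Theorem~1.2 of~\cite{pisztora1996surface}, for $p>p_c(d)$ one has $\big|\,|F_m|/m-\theta(p)\,\big|\to 0$ in probability (indeed at a surface‑order rate); combined with $|\Phi_m|/m\to 1$ this yields $|F_m|\ge(\theta(p)-\eps)m$ w.h.p. The same circle of results also supplies, for $p>p_c(d)$, the uniqueness of a macroscopic (``glued'') open cluster in a box and the classical finite‑cluster tail bound $\exp(-C\,k^{(d-1)/d})\le\mathbb{P}_p\big(k\le|C(0)|<\infty\big)\le\exp(-c\,k^{(d-1)/d})$, whose exponent reflects the isoperimetric fact that a connected set of $k$ sites must be separated from infinity by a ``surface'' of size $\asymp k^{(d-1)/d}$. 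I will take these as known inputs; everything else is bookkeeping.

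For the upper bound on the second‑largest component I would run a first‑moment argument. If that component has at least $k:=K(\log m)^{d/(d-1)}$ vertices, then some site $x\in\Phi_m$ lies in a component of $\Phi_m^p$ of size $\ge k$ distinct from $F_m$. The key point is to rule out, w.h.p., that such a component is a piece of the infinite cluster of the ambient percolation on $\mathbb{Z}^d$ merely cut off by the boundary $\partial\Phi_m$: by the uniqueness / local‑uniqueness part of the surface‑order results, w.h.p.\ every macroscopic cluster of $\Phi_m^p$ that is not confined to an $O\big((\log m)^{O(1)}\big)$‑neighbourhood of $\partial\Phi_m$ is glued into the single cluster $F_m$. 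The surviving competitors are therefore either contained in a finite cluster of percolation on $\mathbb{Z}^d$ --- where the upper tail bound together with a union bound over the $\le m$ choices of $x$ gives failure probability at most $m\exp(-c\,k^{(d-1)/d})$, which is $o(1)$ once $K=K(p,d)$ is large --- or are confined near $\partial\Phi_m$, where the same estimate applied over the $O\big(m^{(d-1)/d}(\log m)^{O(1)}\big)$ boundary‑layer sites is even smaller.

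For the matching lower bound the plan is a direct independence argument. Partition $\Phi_m$ into $N=\Theta\big(m/(\log m)^{d/(d-1)}\big)$ pairwise disjoint cubes $B_i$ of side $L+2$ with $L\asymp(\log m)^{1/(d-1)}$, so that the boundary shell of each $B_i$ lies inside $B_i$. In a fixed $B_i$, the event that its outer shell is entirely closed while its interior contains an open cluster of size $\asymp L^d\asymp(\log m)^{d/(d-1)}$ has probability at least $\exp(-C\,L^{d-1})\ge m^{-1+\eps'}$, provided the constant hidden in $L$ is chosen small enough (here I use that the shell being closed is independent of the interior, and that for $p>p_c(d)$ a box of side $L$ contains a cluster of size $\ge c\,L^d$ with probability bounded below as $L\to\infty$). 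Such a cluster is trapped inside $B_i$, hence finite and of size $o(m)$, so it is not $F_m$. The events for distinct $B_i$ depend on disjoint sets of sites, hence are independent, and since $N\cdot m^{-1+\eps'}\to\infty$, w.h.p.\ at least one occurs, producing a component of size $\Omega\big((\log m)^{d/(d-1)}\big)$ other than the giant.

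The main obstacle --- and essentially the only delicate point --- is that for $p$ only slightly above $p_c(d)$ one cannot control ``blocking surfaces'' by naive all‑closed‑surface counting (closed sites have positive density), nor disentangle by hand the components of the finite box $\Phi_m$ from the finite clusters of $\mathbb{Z}^d$; both difficulties are exactly what the surface‑order / renormalization machinery of~\cite{pisztora1996surface} is designed to handle. Once the density and uniqueness of $F_m$ and the surface‑order finite‑cluster tail bounds are granted, the remaining union‑bound and box‑tiling steps sketched above are routine.
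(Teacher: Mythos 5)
The paper never proves this statement: it is recorded as well known, with Theorem~1.2 of~\cite{pisztora1996surface} cited for the density of the largest component and the bound on the second-largest component left entirely to the literature. Your treatment of the first part therefore coincides with the paper's. For the second part you go beyond the paper and sketch the standard derivation: a first-moment bound based on the surface-order tail $\exp(-c\,k^{(d-1)/d})$ for large finite clusters together with local uniqueness from the coarse-graining, and, for the lower bound, a tiling of $\Phi_m$ into boxes of side $\asymp(\log m)^{1/(d-1)}$ in which a closed shell traps an interior cluster of volume order $(\log m)^{d/(d-1)}$. The lower-bound construction is correct as written (independence over disjoint boxes, per-box probability at least $m^{-1+\eps'}$ for a small enough constant in the side length, and the trapped cluster cannot be $F_m$ since $|F_m|=\Theta(m)$), and the first-moment upper bound is the right skeleton.

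One step of your upper bound does not follow as stated. For a site $x$ whose box-component is large but which belongs to the infinite cluster of $\mathbb{Z}^d$, you dispose of the case ``cut off by $\partial\Phi_m$'' by applying ``the same estimate'' over the $O\big(m^{(d-1)/d}(\log m)^{O(1)}\big)$ boundary-layer sites; but the finite-cluster tail bound concerns finite clusters of the infinite-volume process and says nothing about pieces of the infinite cluster severed by the boundary of the box, and the dichotomy you invoke (glued into $F_m$ unless confined to a polylogarithmic neighbourhood of $\partial\Phi_m$) is itself the nontrivial content rather than a reduction. What is actually needed is local uniqueness in all mesoscopic sub-boxes of $\Phi_m$, including those touching the boundary (equivalently, half-space/corner analogues of the tail estimate), which is exactly what Pisztora-type renormalization provides; that step should be attributed to this machinery, not to the $\mathbb{Z}^d$ tail bound. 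A further minor point, shared with the paper: the cited surface-order results are formulated for bond/FK percolation while $\Phi_m^p$ is site percolation; since the statement only asserts the existence of some threshold $p_c(d)$ (and the paper ultimately uses $p$ close to $1$), this is harmless, but it deserves a sentence.
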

Next, we consider $\Phi_m^p$ and establish that w.h.p.~the number of open sites in any given  sufficiently large $k$-lattice animal in $\Phi_m$ is at least a constant fraction of its size.
To show this, as well as for other results we shall establish afterwards, we recall the following well known fact:
the number of~$k$-lattice animals of size $\ell$ in $\Phi_m$ is bounded above by $|\Phi_m|(c_k(d))^\ell$ for some positive constant $c_k(d)$ that depends on both the dimension $d$ and $k$ (this is an immediate consequence of Lemma 9.3 of~\cite{Pen03}). 
We now formalize the claim made at the beginning of this paragraph. Its proof is a straightforward adaptation of Proposition 2.10 of~\cite{benjamini2003mixing}.
\begin{lemma}\label{lem:percolation_connected_animals}
Fix a positive integer $k$. For any $0<\eps<1$, there exists $p^*=p^{*}(\eps)<1$ large enough and $\beta=\beta(\eps)$, such such that for percolation in $\Phi_m$ with $p > p^*$, 
with probability going to $1$ as $m\to\infty$, every $k$-lattice animal of size $\ell\ge\beta\log m$ contains at least $(1-\varepsilon)\ell$ open sites.
\end{lemma}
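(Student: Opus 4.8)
The plan is to use a first-moment (union bound) argument over all $k$-lattice animals, combined with the standard counting bound for lattice animals that the paper already recalled and the Chernoff-type estimate of Lemma~\ref{lem:chernoff_big_constant_exponent}. The key observation is that for a \emph{fixed} set $L$ of $\ell$ sites in $\Phi_m$, the number of open sites in $L$ is distributed as $\mathrm{Bin}(\ell,p)$, since sites are open independently. So the probability that a fixed animal of size $\ell$ contains fewer than $(1-\eps)\ell$ open sites is $\mathbb{P}(\mathrm{Bin}(\ell,p) < (1-\eps)\ell)$, which by Lemma~\ref{lem:chernoff_big_constant_exponent} (applied with $\alpha = 1-\eps$ and a suitable $\alpha'$ to be chosen) is at most $\exp(-\alpha' \ell)$ once $p > p^*(\eps)$ is close enough to $1$.

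The main steps are as follows. First, fix $\eps \in (0,1)$. Recall that the number of $k$-lattice animals of size $\ell$ in $\Phi_m$ is at most $|\Phi_m| (c_k(d))^\ell$. Choose $\alpha' = \alpha'(\eps,k,d)$ large enough that $\alpha' > \log(c_k(d)) + 1$, say, and then invoke Lemma~\ref{lem:chernoff_big_constant_exponent} to obtain $p^* = p^*(\eps) < 1$ such that for all $p > p^*$ and all $\ell$, the probability that a fixed animal of size $\ell$ has fewer than $(1-\eps)\ell$ open sites is at most $e^{-\alpha'\ell}$. Second, apply the union bound over all animals of all sizes $\ell \ge \beta \log m$:
\[
\mathbb{P}\big(\exists\text{ bad }k\text{-animal of size } \ge \beta\log m\big) \le \sum_{\ell \ge \beta \log m} |\Phi_m|\,(c_k(d))^\ell\, e^{-\alpha' \ell} = |\Phi_m| \sum_{\ell \ge \beta\log m} e^{-(\alpha' - \log c_k(d))\ell}.
\]
Third, since $\alpha' - \log c_k(d) > 1$, the geometric sum is bounded by a constant times $e^{-(\alpha'-\log c_k(d))\beta\log m} = m^{-(\alpha'-\log c_k(d))\beta}$, so the whole expression is at most $|\Phi_m| \cdot O(m^{-(\alpha'-\log c_k(d))\beta})$. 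Since $|\Phi_m| = O(m)$, choosing $\beta = \beta(\eps)$ large enough (any $\beta > 2/(\alpha' - \log c_k(d))$ works) makes this bound tend to $0$ as $m \to \infty$, which is exactly the claim.

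This argument is essentially routine, so there is no serious obstacle; the only point requiring a little care is the order of quantifiers: one must fix $\eps$ first, then choose $\alpha'$ large relative to the (fixed) animal-counting constant $c_k(d)$, then obtain $p^*$ from Lemma~\ref{lem:chernoff_big_constant_exponent}, and only then fix $\beta$ large enough to beat the factor $|\Phi_m|$. One should also note that the Chernoff bound of Lemma~\ref{lem:chernoff_big_constant_exponent} is stated for $\mathrm{Bin}(n,p)$ with a fixed failure probability $\alpha n$ and gives a bound $e^{-\alpha' n}$ uniformly in $n$; here we apply it with $n = \ell$ ranging over all integers $\ge \beta\log m$, which is fine since the bound is uniform in $\ell$. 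This is precisely the adaptation of Proposition~2.10 of~\cite{benjamini2003mixing} alluded to in the statement, the only difference being that we track the dependence of $p^*$ and $\beta$ on $k$ and $d$ through the constant $c_k(d)$.
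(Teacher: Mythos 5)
Your proof is correct and follows essentially the same route as the paper's: a union bound over $k$-lattice animals counted via the $|\Phi_m|(c_k(d))^\ell$ bound, with the per-animal failure probability controlled by Lemma~\ref{lem:chernoff_big_constant_exponent} applied to $\mathrm{Bin}(\ell,p)$, and $\beta$ chosen large enough to beat the factor $|\Phi_m|=O(m)$. The only cosmetic difference is your choice of $\alpha'$ slightly larger than the paper's $\log(c_k(d)+1)$, which changes nothing substantive.
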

\begin{proof}
    Observe that the number of open sites in a fixed $k$-lattice animal $L\subseteq\Phi_m$ is a binomial random variable of parameters $|L|$ and $p$. Taking $p$  close enough to 1 so that Lemma~\ref{lem:chernoff_big_constant_exponent} is applicable for~$\alpha=1-\varepsilon$ and $\alpha'=\log(c_{k}(d)+1)$, we deduce that 
    \[
    \mathbb{P}(\mbox{Bin}(|L|,p)<(1-\varepsilon)|L|) <\left(\frac{1}{c_k(d)+1}\right)^{|L|}.
    \]
    Hence, since the number of $k$-lattice animals of size~$\ell$ in $\Phi_m$ is at most $|\Phi_m|(c_k(d))^{\ell}$, by a union bound, the probability that some $k$-lattice animal $L$ of length at least $\beta\log m$ has less than $(1-\varepsilon)|L|$ open sites is
    \[
    \sum_{\ell\ge\beta\log m} |\Phi_m|\left( \frac{c_k(d)}{c_k(d)+1}\right)^{\ell} 
    = O\Big(|\Phi_m|\left(\frac{c_k(d)}{c_k(d)+1}\right)^{\beta\log m}\Big),
    \]
    which for $\beta$ large enough goes to $0$  as $m\to \infty$.
\end{proof}
Given a set $K\subseteq \Phi_m$, we denote by $\partial^{+}K$ and $\partial^{-}K$ the \emph{external} and \emph{internal vertex-boundaries} of~$K$, respectively, that is,
\[
\begin{aligned}
\partial^{+}K 
&= \{\,x \in \Phi_m \setminus K \colon \text{$\exists y\in K$, $x$ and $y$ are adjacent in $\Phi_m$}\,\},\\
\partial^{-}K 
&= \{\,x \in K \colon \text{$\exists y\in \Phi_m \setminus K$,  $x$ and $y$ are adjacent in $\Phi_m$}\,\}.
\end{aligned}
\]
Also, we denote by $E(K,K^c)$ the \emph{edge-boundary} of $K$:
\[
E(K,K^c) = \big\{\{x,y\}\in E(\Phi_m) \colon \text{$x\in K$ and $y\in K^c$}\big\}.
\]
Note that the notion of connectedness in $\Phi_m$ is determined by the $\ell_1$ distance. 
Specifically, $x,y\in \Phi_m$ are adjacent if and only if the $\ell_1$ distance between $x$ and $y$ equals $1$ (that is,~$\|x-y\|_1=1$). 
The notions of external and internal vertex-boundaries, as well as edge-boundaries can also be characterized through this lens, for example, 
$x\in\partial^+K$ if and only if there is a $y\in K$ such that $\|x-y\|_1=1$.
For technical reasons, we will need to work with the weaker notion of connectedness that arises when replacing $\ell_1$ by the $\ell_\infty$ distance.  
In the latter case, we say that $K$ is \emph{*-connected} if for every $x,y\in K$ there is a sequence of vertices $v_0,...,v_\ell\in K$ such that $u=v_0$, $v=v_\ell$ and $\|v_{i-1}-v_i\|_\infty=1$ for all $i\in [\ell]$. Clearly, if $K$ is connected, then it is also *-connected.
Also, we let
$\partial_*^+ K$ and $\partial_*^- K$ denote the \emph{external} and \emph{internal *-vertex-boundaries} of $K$, respectively, that is,
\begin{align*}
  \partial_*^+ K & = \big\{ x\in\Phi_m\setminus K \colon \exists y\in K, \|x-y\|_\infty=1\big\}, \\
  \partial_*^- K & = \big\{ x\in\Phi_m \colon \exists y\in\Phi_m\setminus K, \|x-y\|_\infty=1\big\},
\end{align*}
and define the \emph{*-edge-boundary} of $K$ as
\[
E_*(K,K^c) = \big\{\{x,y\} \colon \text{$x\in K$, $y\in K^c$ and $\|x-y\|_\infty=1$} \big\}.
\]

We will use known isoperimetric inequalities relating the size of $K\subseteq\Phi_m$ and the sizes of its internal, external and edge-boundary.
The next statement follows directly from the proof of the isoperimetric inequality in Lemma 9.9 in Penrose~\cite{Pen03} which states lower bounds only on the sizes of the internal and external boundaries. However, the latter are derived from a lower bound on the size of the edge-boundary. Since we will need the latter, we explicitly state it adapted to our notation:%
\footnote{The asymptotic (in $m$) nature of our result, in contrast to Lemma 9.9 in Penrose~\cite{Pen03}, is due to the fact that, in our context, the order of $\Phi_m$ is only approximately, but not, equal to $m$ except when $m^{1/d}/2$ is an integer.} 
\begin{lemma}\label{lem:isoperimetric_in_lattice_z_d}
Assume $m$ is large enough.  For every $0<\varepsilon<1$ there is a positive constant $\gamma$ (depending on $\varepsilon$ and $d$ but independent of $m$), such that the following holds: 
for every $K\subseteq\Phi_m$ (not necessarily connected), of size at most  
  $\frac23(1-\varepsilon)m$, we have
  \[
  |E(K,K^c)| \geq \gamma |K|^{(d-1)/d}.
  \]
  Moreover,
  \[
  \min\big\{|\partial^- K|,|\partial^+ K|\big\} \geq (\gamma/2d)|K|^{(d-1)/d}.
  \]
\end{lemma}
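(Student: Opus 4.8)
The plan is to follow the argument underlying Lemma~9.9 in~\cite{Pen03}, but to make explicit the edge-boundary estimate (which is the intermediate quantity from which Penrose deduces the vertex-boundary bounds) and to track the minor discrepancy between $|\Phi_m|$ and $m$, which is what forces the ``$m$ large enough'' qualifier. Fix $K\subseteq\Phi_m$ with $|K|\le\frac23(1-\varepsilon)m$; we may assume $K\neq\emptyset$. For $i\in[d]$ let $\pi_i\colon\mathbb{Z}^d\to\mathbb{Z}^{d-1}$ be the projection forgetting the $i$-th coordinate, and for $z\in\pi_i(\Phi_m)$ call $\{x\in\Phi_m\colon\pi_i(x)=z\}$ the \emph{$i$-fiber over $z$}; since $\Phi_m$ is a product of integer intervals, every $i$-fiber is a sub-path of $\Phi_m$ with at least $\lfloor m^{1/d}\rfloor$ vertices. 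The starting point is the discrete Loomis--Whitney inequality $|K|^{d-1}\le\prod_{i=1}^d|\pi_i(K)|$.

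First I would bound, for each direction $i$, the number $|E_i(K,K^c)|$ of boundary edges parallel to coordinate $i$. If $z\in\pi_i(K)$ and the $i$-fiber over $z$ is not entirely contained in $K$, then $K$ meets that fiber in a nonempty proper subset of a path, hence contributes at least one $i$-parallel edge of $E(K,K^c)$; these edges are distinct for distinct $z$, so $|E_i(K,K^c)|\ge|\pi_i(K)|-f_i$, where $f_i$ is the number of $i$-fibers fully contained in $K$. Such full fibers are pairwise disjoint subsets of $K$, each of size at least $\lfloor m^{1/d}\rfloor$, whence $f_i\le|K|/\lfloor m^{1/d}\rfloor$. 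Summing over $i$ and combining Loomis--Whitney with the AM--GM inequality,
\[
|E(K,K^c)|=\sum_{i=1}^d|E_i(K,K^c)|\ \ge\ \sum_{i=1}^d|\pi_i(K)|-\frac{d\,|K|}{\lfloor m^{1/d}\rfloor}\ \ge\ d\,|K|^{(d-1)/d}-\frac{d\,|K|}{\lfloor m^{1/d}\rfloor}.
\]

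It then remains to absorb the correction term. Writing $|K|=|K|^{(d-1)/d}|K|^{1/d}$, the right-hand side equals $d\,|K|^{(d-1)/d}\bigl(1-|K|^{1/d}/\lfloor m^{1/d}\rfloor\bigr)$, so it suffices to show $|K|^{1/d}/\lfloor m^{1/d}\rfloor\le 1-\gamma/d$ for a constant $\gamma=\gamma(\varepsilon,d)>0$. This is exactly where the hypothesis $|K|\le\frac23(1-\varepsilon)m$ enters: since $\lfloor m^{1/d}\rfloor=m^{1/d}(1-o(1))$, for $m$ large enough $|K|^{1/d}/\lfloor m^{1/d}\rfloor\le\bigl(\tfrac23(1-\varepsilon)\bigr)^{1/d}(1+o(1))$, which is bounded away from $1$; choosing $\gamma$ so that $(1-\gamma/d)^d>\tfrac23(1-\varepsilon/2)$ then works for all sufficiently large $m$, giving $|E(K,K^c)|\ge\gamma|K|^{(d-1)/d}$. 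For the ``moreover'' part, $\partial^-K$ (resp.\ $\partial^+K$) is exactly the set of $K$-endpoints (resp.\ $K^c$-endpoints) of edges of $E(K,K^c)$, and every vertex of $\Phi_m$ has degree at most $2d$; hence $\min\{|\partial^-K|,|\partial^+K|\}\ge|E(K,K^c)|/(2d)\ge(\gamma/2d)|K|^{(d-1)/d}$.

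I expect the only genuinely delicate point to be this last absorption step — balancing the correction term $d|K|/\lfloor m^{1/d}\rfloor$ against the main term $d|K|^{(d-1)/d}$ — since it is simultaneously where the constant $\tfrac23(1-\varepsilon)$ is used and where the asymptotic (in $m$) nature of the statement, absent from Penrose's cleaner formulation for exact cubes, becomes necessary. Everything else is the standard projection/Loomis--Whitney computation.
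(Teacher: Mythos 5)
Your proposal is correct: the fiber/projection counting, the Loomis--Whitney plus AM--GM step, the bound $f_i\le |K|/\lfloor m^{1/d}\rfloor$ on full fibers, the absorption of the correction term using $|K|\le\frac23(1-\varepsilon)m$, and the final passage from the edge-boundary to the vertex-boundaries via the maximum degree $2d$ are all sound. Note, though, that the paper does not actually write out a proof of this lemma: it simply observes that the statement is extracted from the proof of Lemma 9.9 of~\cite{Pen03}, where the internal/external vertex-boundary bounds are derived from exactly such an edge-boundary estimate, and a footnote flags the only needed adaptation, namely that $|\Phi_m|$ is only approximately $m$ (whence the ``$m$ large enough'' hypothesis). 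Your write-up therefore goes beyond the paper in a useful way: it makes the edge-boundary inequality self-contained and explicit, and your absorption step (comparing $d|K|/\lfloor m^{1/d}\rfloor$ with $d|K|^{(d-1)/d}$ and choosing $\gamma$ with $(1-\gamma/d)^d>\frac23(1-\varepsilon/2)$) is precisely where the discrepancy between $|\Phi_m|$ and $m$ is handled, i.e.\ the adaptation the paper only gestures at. The underlying mechanism (counting, in each coordinate direction, lines that are crossed by the boundary versus lines fully contained in $K$) is the same one behind Penrose's argument, so conceptually you are not on a different route, just a more explicit and quantitative one; the only point worth double-checking in a final version is the claim that every $i$-fiber of $\Phi_m=\Lambda_m\cap\mathbb{Z}^d$ has at least $\lfloor m^{1/d}\rfloor$ points, which does hold for the half-open cube $[-m^{1/d}/2,m^{1/d}/2)$ and is in any case harmless since $m^{1/d}-1$ would serve equally well in the absorption step.
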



We will also need the following fact concerning *-components whose complement is a $k$-lattice animal.

\begin{lemma}\label{lem:complement_star_connected_is_k_lattice_animal}
For a fixed positive integer $k$ and every *-connected set $K\subseteq\Phi_m$, if $K^c$ is a $k$-lattice animal, then $\partial^{+}_* K$ and $\partial^{+}K$ are $k'$-lattice animals for some $k'=k'(k)$.  
\end{lemma}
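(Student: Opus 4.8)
The plan is to argue that both $\partial^+_* K$ and $\partial^+ K$ are contained in a bounded neighbourhood (in $\ell_\infty$ distance) of the $k$-lattice animal $K^c$, and that they are themselves connected in a suitable transitive closure of $\Phi_m$; combining these two facts yields that they are $k'$-lattice animals for an appropriate $k' = k'(k)$. First I would observe that every vertex of $\partial^+_* K$ or $\partial^+ K$ lies in $K^c$ by definition, so it suffices to understand connectivity. Since $K^c$ is a $k$-lattice animal, it is a connected subset of $(\Phi_m)^k$; in particular it is connected in $(\Phi_m)^{k}$ via $\ell_\infty$-type hops of bounded length, and any two of its vertices are joined by a path in $\Phi_m$ each of whose consecutive vertices are within $\ell_\infty$-distance $\le k$. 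The external boundaries $\partial^+_* K$ and $\partial^+ K$ are subsets of $K^c$ that are, moreover, ``attached'' to $K$: each such vertex has a neighbour of $K$ within $\ell_\infty$-distance $1$.

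The key geometric step is a \emph{wrapping} argument: I would show that for any two vertices $x, y \in \partial^+ K$ (resp.\ $\partial^+_* K$), there is a path inside $K^c$ from $x$ to $y$ consisting of vertices that each lie within bounded $\ell_\infty$-distance of the boundary, so that projecting this path onto the boundary (replacing each interior vertex by a nearby boundary vertex) produces a walk in $(\Phi_m)^{k'}$ staying in $\partial^+ K$. Concretely: take a path $P$ in $(\Phi_m)^k$ inside $K^c$ joining $x$ and $y$ (exists since $K^c$ is a $k$-lattice animal). For each vertex $v$ of $P$ that is \emph{not} in $\partial^+ K$, I want to reroute locally around the part of $K$ that separates it from the boundary — but since $K^c$ is connected, a cleaner route is: because $x,y\in\partial^+K\subseteq K^c$, and $K^c$ is connected, walk along $P$; whenever the path would enter the ``interior'' of $K^c$ (vertices of $K^c$ all of whose $\ell_\infty$-neighbours lie in $K^c$), I replace that excursion. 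The point is that the set $\partial^+ K$ together with the interior of $K^c$ is exactly $K^c$, and $\partial^+K$ is precisely the ``inner rim'' of $K^c$ facing $K$. I would argue that $\partial^+ K$ inherits connectivity (in a larger closure $(\Phi_m)^{k'}$) from $K^c$ by the standard fact that the boundary of a connected set of $\mathbb{Z}^d$ is itself ``close-packed'' connected — this is the combinatorial-topology content underlying Lemma~\ref{lem:isoperimetric_in_lattice_z_d} and Lemma 9.9 of Penrose~\cite{Pen03}, which I would invoke in the same spirit.

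The cleanest route, and the one I would actually carry out, is to bound $|\partial^+_* K|$ and $|\partial^+ K|$ distances directly by the diameter of $K^c$ in $(\Phi_m)^k$: since $K^c$ is a $k$-lattice animal, any two of its sites are connected through $K^c$, and every site of $\partial^+ K$ is within $\ell_\infty$-distance $1$ of a site of $K$, hence within $\ell_\infty$-distance $k+1$ of any site of $K^c$ adjacent to it in the animal structure. Chaining these observations along a spanning structure of $K^c$ shows that consecutive boundary sites encountered along a traversal of $K^c$ are at bounded $\ell_\infty$-distance, giving a path in $(\Phi_m)^{k'}$ within $\partial^+ K$. Taking $k' = 2k+2$ (or any constant of that order, to be fixed precisely in the write-up) suffices, and the identical argument with $\ell_\infty$-adjacency in place of $\ell_1$-adjacency handles $\partial^+_* K$.

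The main obstacle I anticipate is making the ``rerouting'' step fully rigorous: one must ensure that when moving between two nearby boundary vertices one never has to pass \emph{through} $K$, i.e.\ that the detour stays inside $K^c$. This is exactly where the hypothesis that $K^c$ (not $K$) is the lattice animal is used — it guarantees $K^c$ is connected, so the detour can always be taken within $K^c$ — but turning this into a clean bound on the hop length in $(\Phi_m)^{k'}$ requires a careful local analysis of how a $*$-connected set $K$ meets a small $\ell_\infty$-ball, and this is the part of the proof that needs genuine case-checking rather than soft arguments. I expect the final constant $k'$ to depend only on $k$ and $d$ through this local analysis.
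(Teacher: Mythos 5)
Your plan has a fatal gap: nowhere does it use the hypothesis that $K$ is *-connected, and the statement is false without that hypothesis, so no argument of the kind you sketch can work. Concretely, take $K=\{x_0,y_0\}$ to be two single sites far apart in the bulk of $\Phi_m$. Then $K^c$ is connected in $\Phi_m$, hence a $k$-lattice animal for every $k\ge 1$, yet $\partial^{+}K$ consists of two clusters of $2d$ sites at unbounded distance from each other, so it is not a $k'$-lattice animal for any fixed $k'$. Your ``cleanest route'' uses only the connectivity of $K^c$ (a path in $(\Phi_m)^k$ inside $K^c$, a spanning traversal of $K^c$), so it would prove this false statement as well. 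The specific step that breaks is the projection/chaining: interior sites of $K^c$ need not lie within any bounded distance of $\partial^{+}K$ (think of the huge ``outside'' region far from $K$), so ``replacing each interior vertex by a nearby boundary vertex'' is not possible, and two boundary sites visited consecutively by a traversal of $K^c$ can be arbitrarily far apart (the traversal can dive into the interior of $K^c$ and resurface on the far side of $K$). Relatedly, the ``standard fact'' you invoke — that the boundary of a connected subset of $\mathbb{Z}^d$ is *-connected — is false as stated; the correct discrete-topology statement (Lemma 9.6 of Penrose, not the isoperimetric Lemma 9.9) requires \emph{both} the set \emph{and its complement} to be connected, and it is precisely to verify the complement condition that the *-connectedness of $K$ is needed.

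For comparison, the paper's proof decomposes $K^c$ into its *-components $\mathfrak{S}$, notes that $\partial^{+}_*K=\bigcup_{S\in\mathfrak{S}}\partial^{-}_*S$, and applies Penrose's Lemma 9.6 to each $S$: the hypothesis that $K$ is *-connected guarantees that $S^c=K\cup\bigcup_{S'\neq S}S'$ is *-connected (each $S'$ attaches to $K$ because its external *-boundary lies in $K$), whence each $\partial^{-}_*S$ is *-connected. A separate elementary claim shows that the pairwise distances (at most $k$, since $K^c$ is a $k$-lattice animal) between components of $K^c$ are realized at internal *-boundary sites, so the union of these *-connected boundary pieces is a $k'$-lattice animal with $k'=k'(k,d)$. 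If you want to salvage your write-up, you must restructure it around this component decomposition and the two-sided connectivity hypothesis of Penrose's boundary lemma rather than around a traversal of $K^c$; also note that $k'$ must depend on $d$ as well (a *-connected set is only a $2d$-lattice animal), so a bound like $k'=2k+2$ cannot be correct as stated.
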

\begin{proof}
Let $\mathfrak{S}$ be the collection of *-components of $K^c$. 
Since~$K^c$ is a $k$-lattice animal, for any $S\in\mathfrak{S}$, it holds that 
\[
\mbox{d}_{\Phi_m} (S, K^c\setminus S)=\min\{\mbox{d}_{\Phi_m}(x,y) \colon x\in S, y\in K^c\setminus S\} \leq k.
\] 
We claim that for any $S\in\mathfrak{S}$, there is an $S'\in\mathfrak{S}$ and $x\in \partial^{-}_* S$ and $y \in \partial^{-}_* S'$ for which $\mbox{d}_{\Phi_m}(x,y)=\mbox{d}_{\Phi_m}(S,K^c\setminus S)\le k$. Indeed, if the distance between sets $S$ and $K^c\setminus S$ is attained at~$(x,y)$ with $x\in S \setminus \partial^{-}_*S$ and $y\in K^c\setminus S$, all neighbors of $x$ would belong to $S$ and necessarily one of them must be at distance $\mbox{d}_{\Phi_m}(x,y)-1$ from $y$, contradicting the minimality of $\mbox{d}_{\Phi_m}(x,y)$. 
Thus, it must hold that~$x\in\partial_*^- S$.
Similarly, one can show that $y\in\partial_*^-(K^c\setminus S)$. This establishes the claim.

Now, observe that $\partial^{+}_*K= \bigcup_{S\in\mathfrak{S}} \partial^{-}_* S$.
By definition of $\mathfrak{S}$, we know that $S\in\mathfrak{S}$ is *-connected.
Also, $S^c=K\cup(\cup_{S'\in\mathfrak{S}\setminus\{S\}}S')$ is~*-connected (because $K\cup S'$ is *-connected for every $S'\in\mathfrak{S}$).
Thus, Lemma~9.6 of~\cite{Pen03} implies that~$\partial^-_* S$ is *-connected.
(Note: Although the cited lemma is stated for connected sets, the proof is valid if the connectedness assumption is replaced by *-connectedness). 
Then, by recalling that a *-connected set is also a $2d$-lattice animal, we see that $\partial^+_*K$ is a~$k'$-lattice animal for some suitable $k'(k)>k$, because it is the union of *-connected sets that are at distance at most $k$ in $\Phi_m$ from each other. The assertion regarding $\partial^{+}K$ follows by the same idea. 
\end{proof}

Next, we establish an analog of the previous result. 
It provides a lower bound, in terms of the size of a connected set $K\subseteq\Phi_m$,  for the number of disjoint edges on the edge-boundary of $K$ all of whose end-vertices are open sites.

\begin{lemma}\label{lem:pair_of_open_sites_boundary}
    Fix a positive integer $k$. For percolation in $\Phi_m$ with parameter $p>p^*$ for some~$p^*<1$ large enough,  there exist $\delta, \delta'>0$ 
    such that with probability going to $1$ as $m\to\infty$, the following holds: for all *-connected sets $K\subseteq \Phi_m$ for which $K^c$ is a $k$-lattice animal and $\min\{|K|,|K^c|\}>\delta (\log m)^{d((d-1)}$,
    there are at least~$\delta'\min\{|K|^{(d-1)/d},|K^c|^{(d-1)/d}\}$ disjoint edges in~$E(K,K^c)$ each of whose endvertices are open sites. 
\end{lemma}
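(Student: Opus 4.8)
\textbf{Strategy.} The plan is to combine the edge-isoperimetric inequality for arbitrary subsets of $\Phi_m$ (Lemma~\ref{lem:isoperimetric_in_lattice_z_d}) with the ``most sites in a large lattice animal are open'' statement (Lemma~\ref{lem:percolation_connected_animals}), and then pass from a lower bound on the \emph{size} of the edge-boundary to a lower bound on the number of \emph{disjoint} boundary edges both of whose endpoints are open. The first step is to apply Lemma~\ref{lem:isoperimetric_in_lattice_z_d} to $K$ (if $|K|\le \tfrac23(1-\varepsilon)m$) or to $K^c$ (otherwise; note $|E(K,K^c)|=|E(K^c,K)|$), obtaining $|E(K,K^c)|\ge \gamma \min\{|K|,|K^c|\}^{(d-1)/d}$. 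At the same time, by Lemma~\ref{lem:complement_star_connected_is_k_lattice_animal}, $\partial^+ K$ (and symmetrically $\partial^+ K^c = \partial^- K$) is a $k'$-lattice animal; since $K^c$ is a $k$-lattice animal and $K$ itself is $*$-connected (hence a $2d$-lattice animal), all four of the relevant vertex sets — $K$, $K^c$, $\partial^- K$, $\partial^+ K$ — are $k''$-lattice animals for a single $k''=k''(k,d)$. Then Lemma~\ref{lem:percolation_connected_animals}, applied with this $k''$ and with a suitably small $\varepsilon'$ (to be fixed), tells us that w.h.p.\ every such animal of size $\ge \beta(\varepsilon')\log m$ has at least a $(1-\varepsilon')$-fraction of open sites. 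The size thresholds $|K|,|K^c| > \delta(\log m)^{d/(d-1)}$ guarantee via the moreover-part of Lemma~\ref{lem:isoperimetric_in_lattice_z_d} that $|\partial^- K|,|\partial^+ K| \ge (\gamma/2d)\min\{|K|,|K^c|\}^{(d-1)/d}$ exceed $\beta\log m$ for $\delta$ large, so the open-density statement applies to the boundaries.

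\textbf{From boundary size to disjoint open--open edges.} Write $m_* = \min\{|K|,|K^c|\}$, so $|E(K,K^c)|\ge \gamma m_*^{(d-1)/d}$ and $|\partial^- K|, |\partial^+ K| \ge (\gamma/2d) m_*^{(d-1)/d}$. Discard from $E(K,K^c)$ every edge incident to a closed site. Since the number of edges of $\Phi_m$ incident to $\partial^- K$ is at most $2d\,|\partial^- K|$ and similarly for $\partial^+ K$, the number of edges of $E(K,K^c)$ killed by a closed endpoint is at most $2d(\text{\# closed sites in }\partial^- K) + 2d(\text{\# closed sites in }\partial^+ K) \le 2d\varepsilon'(|\partial^- K| + |\partial^+ K|)$. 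But $|\partial^- K|, |\partial^+ K| \le |E(K,K^c)|$ (each boundary vertex is an endpoint of at least one boundary edge), so the number of surviving open--open edges in $E(K,K^c)$ is at least $\big(1 - 4d\varepsilon'\big)|E(K,K^c)| \ge \tfrac12 |E(K,K^c)| \ge \tfrac{\gamma}{2} m_*^{(d-1)/d}$, provided $\varepsilon'$ is chosen with $4d\varepsilon' \le \tfrac12$. Finally, to extract a large \emph{matching} from this set of open--open edges: every edge of $\Phi_m$ shares an endpoint with at most $2(2d-1) = 4d-2$ others, so the ``conflict graph'' on these edges has maximum degree $<4d$, and a greedy argument yields a matching of size at least $\tfrac{1}{4d}\cdot\tfrac{\gamma}{2}m_*^{(d-1)/d}$. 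Setting $\delta' = \gamma/(8d)$ gives the claimed bound $\delta' \min\{|K|^{(d-1)/d},|K^c|^{(d-1)/d}\}$.

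\textbf{Main obstacle.} The routine parts are the two applications of cited lemmas; the one point that needs care is the bookkeeping in passing from ``$\ge (1-\varepsilon')$-fraction of boundary sites open'' to ``$\ge \tfrac12$ of boundary \emph{edges} are open--open'', because a single closed site can destroy up to $2d$ boundary edges, so one must confirm that the loss $4d\varepsilon'|E(K,K^c)|$ is genuinely a small fraction of $|E(K,K^c)|$ rather than of a potentially larger quantity — this is exactly why the inequalities $|\partial^{\pm}K|\le |E(K,K^c)|$ are used. The other subtlety is making sure a \emph{single} choice of $k''$ works simultaneously for $K$, $K^c$, and both boundaries, and that the small constant $\varepsilon'$ (hence $p^*$) can be fixed \emph{before} the radius/animal-constant $k''$ is determined — but since $k''$ depends only on $k$ and $d$, and the Chernoff exponent needed in Lemma~\ref{lem:percolation_connected_animals} depends only on $c_{k''}(d)$, there is no circularity: choose $k''$ first, then $\varepsilon' = \min\{1/(8d),\ \varepsilon''\}$ for whatever $\varepsilon''$ the size-threshold computation needs, then $p^* = p^*(\varepsilon')$.
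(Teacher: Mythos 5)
Your argument is correct in outline but follows a genuinely different route from the paper, and it has one step that needs repair. The paper does not reuse Lemma~\ref{lem:percolation_connected_animals} here: it enumerates the possible boundary data $(\partial^-K,\partial^+K)$ (``feasible pairs''), uses Lemma~\ref{lem:complement_star_connected_is_k_lattice_animal} only to show that $\partial^+K$ is a $k'$-lattice animal (so there are at most $|\Phi_m|(4^dc_{k'}(d))^\ell$ pairs with $|\partial^+K|=\ell$, the factor $4^{d\ell}$ absorbing all choices of $\partial^-K$), fixes a canonical maximal disjoint family $D(F_-,F_+)\subseteq E(K,K^c)$ determined by the pair, applies the Chernoff bound of Lemma~\ref{lem:chernoff_big_constant_exponent} to $\mathrm{Bin}(|D(F_-,F_+)|,p^2)$, and finishes with a union bound over feasible pairs. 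You instead condition on the single w.h.p.\ event of Lemma~\ref{lem:percolation_connected_animals} applied to the boundary animals and then argue deterministically (each closed site kills at most $2d$ boundary edges, $|\partial^{\pm}K|\le|E(K,K^c)|$, greedy matching with loss factor $1/(4d)$). That reduction is legitimate and arguably cleaner, and your bookkeeping — the $4d\varepsilon'$ loss being measured against $|E(K,K^c)|$, the choice of $\delta$ so that the boundaries exceed $\beta\log m$, and the non-circular order of fixing $k''$, $\varepsilon'$, $p^*$ — is sound.

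The step that fails as written is the claim that $\partial^-K$ is a $k'$-lattice animal ``symmetrically'' via Lemma~\ref{lem:complement_star_connected_is_k_lattice_animal} applied to $K^c$, using $\partial^+K^c=\partial^-K$. That lemma requires the set it is applied to to be *-connected, and $K^c$ is only assumed to be a $k$-lattice animal, which for $k\ge 2$ does not imply *-connectedness; so its hypotheses are not met. (The paper sidesteps this entirely, since it never needs $\partial^-K$ to be an animal.) The gap is easy to close: every vertex of $\partial^-K$ is adjacent to a vertex of $\partial^+K$, and conversely every vertex of $\partial^+K$ is adjacent to a vertex of $\partial^-K$; since $\partial^+K$ is a $k'$-lattice animal by the legitimate application of Lemma~\ref{lem:complement_star_connected_is_k_lattice_animal} to $K$, it follows that $\partial^-K$ (and likewise $\partial^-K\cup\partial^+K$) is a $(k'+2)$-lattice animal, which is all your density argument needs. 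With that substitution your proof goes through.
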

\begin{proof}
We say that $K$ is \emph{dangerous} if it satisfies the following conditions: $K$ is *-connected,  $K^c$ is a $k$-lattice animal, and $\min\{|K|,|K^c|\}>\delta (\log m)^{d/(d-1)}$. We also say that $(F_-,F_+)$ is a \emph{feasible} pair if there is a dangerous~$K$ such that $\partial^-K=F_-$ and $\partial^+K=F_+$. If in addition, the size of $F_+$ is $\ell$, we say that $(F_-,F_+)$ is an~$\ell$-feasible pair.

Let $\gamma$ be as in Lemma~\ref{lem:isoperimetric_in_lattice_z_d}.
Not that if $K$ is dangerous, applying Lemma~\ref{lem:isoperimetric_in_lattice_z_d} to the smaller set among $K$ and $K^c$ 
and using the fact that 
$\partial^+K=\partial^-K^c$ and $\partial^-K=\partial^+K^c$, we get
\begin{equation}\label{eqn1:lem:pair_of_open_sites_boundary}
\min\{|\partial^{-}K|,|\partial^{+}K|\}\geq (\gamma/2d)\min\{|K|^{(d-1)/d},|K^c|^{(d-1)/d}\}.
\end{equation}
Also, observe that there must be at least one edge from $E(K,K^c)$ incident to each vertex in $\partial^+K$.
Thus,
\begin{equation}\label{eqn2:lem:pair_of_open_sites_boundary}
|E(K,K^c)| 
\geq |\partial^+K|.
\end{equation}

Now, fix $\delta'=\gamma/(16d^3)$.
Let $\mathcal{B}_{(F_-,F_+)}$ be the event ``there is a dangerous $K$ such that $(\partial^-K,\partial^+K)=(F_-,F_+)$ and there are less than $\delta'\min\{|K|^{(d-1)/d},|K^c|^{(d-1)/d}\}$ disjoint edges in $E(K,K^c)$ each of whose endvertices are open sites''.
Define $\mathcal{B}$ (for ``bad'') as the union over all feasible pairs $(F_-,F_+)$ of the events $\mathcal{B}_{(F_-,F_+)}$.
Our goal is to show that the probability that $\mathcal{B}$ occurs goes to $0$ as $m\to\infty$.

We next establish four separate claims which together will imply that the probability of the event~$\mathcal{B}$ tends to $0$ as $ m \to \infty$.

The first claim is that the number of $\ell$-feasible pairs is at most $|\Phi_m|(4^dc_{k'}(d))^\ell$. Indeed, let $(F_-,F_+)$ be an $\ell$-feasible pair. 
Observe that $F_+$ is a $k'$-lattice animal for a suitable~$k'$ (since by definition of feasible pair there is a *-connected set $K$ 
such that $K^c$ is a $k$-lattice animal, so we may apply Lemma~\ref{lem:complement_star_connected_is_k_lattice_animal} to~$K$ and conclude that~$F_+=\partial^+K$ is a $k'$-lattice animal for some $k'=k'(k)$).
Now, recall that the number of $k'$-lattice animals of size $\ell$ in~$\Phi_m$ is at most $|\Phi_m|(c_{k'}(d))^\ell$ so there are at most this many possible choices for~$F_+$.
Moreover, there are at most $2^{2d|F_+|}=4^{d\ell}$ choices for~$F_-$ given $F_+$ of size $\ell$ (because at most $2d$ edges are incident to any vertex of~$\Phi_m$ and any vertex in~$F_-$ must be adjacent to a vertex in $F_+$ by definition of feasible pair).
Hence, the number of~$\ell$-feasible pairs is at most~$|\Phi_m|(4^dc_{k'}(d))^\ell$.

The second claim is that if $(F_-,F_+)$ is an $\ell$-feasible pair, then $\ell\geq\beta\log m$ for 
$\beta=\delta^{(d-1)/d}\gamma/(2d)$.
Indeed, if $(F_-,F_+)$ is an $\ell$-feasible pair, then there is a dangerous $K$ such that $\partial^+K=F_+$.
Since~$K$ is dangerous, we have $\min\{|K|,|K^c|\}\ge\delta(\log m)^{d/(d-1)}$, so by~\eqref{eqn1:lem:pair_of_open_sites_boundary}, we get
\[
\ell=|F_+|=|\partial^+K|\geq\min\{|\partial^{-}K|,|\partial^{+}K|\}\geq (\gamma/2d)\min\{|K|^{(d-1)/d},|K^c|^{(d-1)/d}\}
\ge \beta\log m.
\]
In order to state the next claim, we need to introduce additional notation. 
Assume the edges of~$\Phi_m$ follow some fixed total order. For $K\subseteq\Phi_m$, let $D(K,K^c)$ be the set of edges in $E(K,K^c)$ chosen sequentially according to the aforementioned ordering and, after selecting one edge $\{x,y\}$, removing from $E(K,K^c)$ any other edge incident to either~$x$ or~$y$.
Observe that at most $4d$ edges are deleted for each edge added to $D(K,K^c)$
(because at most~$2d$ edges are incident to any one vertex of $\Phi_m$). 
Hence, by construction, $D(K,K^c)$ is a disjoint collection of edges contained in the edge-boundary of~$K$ and $|D(K,K^c)|\geq\frac{1}{4d}|E(K,K^c)|$.
The third claim is that for any fixed $\ell$-feasible pair $(F_-,F_+)$,
\[
\mathbb{P}(\mathcal{B}_{(F_-,F_+)})\leq \mathbb{P}\big(\mbox{Bin}(|D(F_-,F_+)|,p^2)<(1/(2d))|D(F_-,F_+)|\big).
\]
By definition of $\mathcal{B}_{(F_-,F_+)}$, if $\mathcal{B}_{(F_-,F_+)}$ holds, then there is a dangerous $K$ such that $(\partial^-K,\partial^+K)=(F_-,F_+)$ and, moreover, the number of edges in 
$D(K,K^c)$ whose endvertices are open sites is less than
$\delta'\min\{|K|^{(d-1)/d},|K^c|^{(d-1)/d}\}$.
By disjointness, the latter number of edges is distributed like $\mathrm{Bin}(|D(F_-,F_+)|,p^2)$.
Moreover, by monotonicity of the probability, 
recalling that $\delta'=\gamma/(16d^3)$, and 
using~\eqref{eqn1:lem:pair_of_open_sites_boundary}
and~\eqref{eqn2:lem:pair_of_open_sites_boundary}, we obtain
\[
\delta'\min\{|K|^{(d-1)/d},|K^c|^{(d-1)/d}\}
\leq \frac{1}{8d^2}\min\{|\partial^-K|,|\partial^+K|\}
\leq \frac{1}{8d^2}|E(K,K^c)|
\leq \frac{1}{2d}|D(K,K^c)|.
\]
The claim follows observing that $D(K,K^c)=D(\partial^-K,\partial^+K)=D(F_-,F_+)$.

The fourth and last claim is that for any fixed $\ell$-feasible pair $(F_-,F_+)$
there is a $p<1$ for which
\[
\mathbb{P}\Big(\mbox{Bin}(|D(F_-,F_+)|,p^2)<(1/(2d))|D(F_-,F_+)|\Big)
\le \Big(\frac{1}{4^dc_{k'}(d)+1}\Big)^\ell.
\]
This follows directly from  Lemma~\ref{lem:chernoff_big_constant_exponent} setting $\alpha=1/(2d)$ and $\alpha'=\log(4^dc_{k'}(d)+1)$ and since 
$|D(F_-,F_+)|= |F_+|=\ell$  (since $D(F_-,F_+)$ is a set of disjoint edges so no more than one edge can be incident to a vertex in $F_+$ and, since $(F_-,F_+)$ is feasible, there is a nonempty $K$ such that $\partial^-K=F_-$ and $\partial^+K=F_+$, so by definition of internal/external-boundary at least one edge must be incident to every vertex of $F_+$).

\smallskip
We now have all the necessary ingredients to conclude: by a union bound,
\[
\mathbb{P}(\mathcal{B})\leq\sum_{(F_-,F_+)}\mathbb{P}(\mathcal{B}_{(F_-,F_+)}),
\]
where the summation is over all $\ell$-feasible pairs. 
The four claims proved above together imply that
\[
\mathbb{P}(\mathcal{B})
\le 
\sum_{\ell\ge\beta\log m}|\Phi_m|\Big(\frac{4^dc_{k'}(d)}{4^dc_{k'}(d)+1}\Big)^{\ell}
\le (4^dc_{k'}(d)+1)|\Phi_m|\Big(\frac{4^dc_{k'}(d)}{4^dc_{k'}(d)+1}\Big)^{\beta\log m},
\]
which for $\beta$ large enough goes to $0$ as $m\to\infty$.
\end{proof}

Our next result is similar in spirit to  Lemma~\ref{lem:pair_of_open_sites_boundary}. 
However, it does not impose connectivity conditions on the complement of the components to which it applies. 
\begin{lemma}\label{lem:pairs_of_open_sites_no_boundary_condition}
    For $0<\varepsilon<1$ and a percolation process in $\Phi_m$ with parameter $p>p^*$ for some $p^*<1$ large enough, there exist $\delta,\delta''>0$ such that, with probability going to 1 as $m\to \infty$, the following holds: for any *-connected set $K\subseteq \Phi_m$ with  $|F_m|>|K\cap\Phi_m^p|$ and $|K|>\delta(\log m)^{d/(d-1)}$, 
   there are at least $\delta''|K|^{(d-1)/d}$ disjoint edges in $E(K,K^c)$ each of whose endvertices are open sites.
\end{lemma}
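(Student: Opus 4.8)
The plan is to mimic the structure of the proof of Lemma~\ref{lem:pair_of_open_sites_boundary}, replacing the role played there by the hypothesis ``$K^c$ is a $k$-lattice animal'' with the weaker hypothesis ``$|F_m| > |K \cap \Phi_m^p|$'', and correspondingly using Lemma~\ref{lem:isoperimetric_in_lattice_z_d} (which has no connectivity requirement on $K$) instead of invoking the animal-counting bound on $F_+ = \partial^+ K$. Concretely, I would first observe that since $K$ is $*$-connected it is a $2d$-lattice animal, so the number of candidate sets $K$ of size $\ell$ is at most $|\Phi_m| (c_{2d}(d))^\ell$; this is the counting step, and it is now applied directly to $K$ rather than to its boundary. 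Next I would extract the size lower bound: the hypothesis $|K| > \delta(\log m)^{d/(d-1)}$ forces, via Lemma~\ref{lem:isoperimetric_in_lattice_z_d} applied to $K$ itself, that $|E(K,K^c)| \ge \gamma |K|^{(d-1)/d} \ge \gamma \delta^{(d-1)/d} \log m$, so that $\ell := |K| \ge \beta' \log m$ for a suitable $\beta' = \beta'(\delta)$ — this replaces the ``$\ell$-feasible pair'' size bound and lets the union bound converge.

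The key point requiring the hypothesis $|F_m| > |K \cap \Phi_m^p|$ is the application of the isoperimetric inequality: Lemma~\ref{lem:isoperimetric_in_lattice_z_d} needs $|K| \le \tfrac{2}{3}(1-\varepsilon) m$, but a priori $K$ could be almost all of $\Phi_m$. This is where I expect the main (mild) obstacle to lie, and the resolution is as follows. If $|K| \le \tfrac{2}{3}(1-\varepsilon)m$ we apply Lemma~\ref{lem:isoperimetric_in_lattice_z_d} to $K$ directly. If $|K| > \tfrac{2}{3}(1-\varepsilon)m$, then $|K^c| < (1 - \tfrac{2}{3}(1-\varepsilon))m$ is small, so we apply Lemma~\ref{lem:isoperimetric_in_lattice_z_d} to $K^c$ and get $|E(K,K^c)| = |E(K^c,K)| \ge \gamma |K^c|^{(d-1)/d}$; but the hypothesis $|F_m| > |K \cap \Phi_m^p|$ together with Theorem~\ref{thm:giant_Bernoulli_site_percolation} (which gives $|F_m| \ge (\theta(p) - \eps)m$) and $|\Phi_m^p| \le |\Phi_m| \approx m$ forces $|K^c \cap \Phi_m^p|$, hence $|K^c|$, to be at least a constant fraction of $m$ — indeed $|K \cap \Phi_m^p| < |F_m|$ means at least $(\theta(p)-\eps)m$ open sites lie outside $K$, so $|K^c| \ge (\theta(p)-\eps)m$, which is $\Omega(m) = \Omega(|K|^{(d-1)/d} m^{1/d})$; in particular $|E(K,K^c)| = \Omega(|K|^{(d-1)/d})$ in this regime too (here one uses $|K| \le |\Phi_m| = O(m)$ so that $|K|^{(d-1)/d} = O(m^{(d-1)/d})$ while $|E(K,K^c)| = \Omega(m^{(d-1)/d})$ as well). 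In both regimes we conclude $|E(K,K^c)| \ge c|K|^{(d-1)/d}$ for a constant $c > 0$ depending on $\varepsilon, \delta, d$ and $p$.

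Given this, the remainder follows exactly as in Lemma~\ref{lem:pair_of_open_sites_boundary}: extract a disjoint subfamily $D(K,K^c) \subseteq E(K,K^c)$ of size at least $\tfrac{1}{4d}|E(K,K^c)| \ge \tfrac{c}{4d}|K|^{(d-1)/d}$, note that the number of these disjoint edges whose two endpoints are both open is distributed as $\mathrm{Bin}(|D(K,K^c)|, p^2)$, set $\delta'' = \tfrac{c}{8d}$ so that the ``bad'' event for a fixed $K$ forces $\mathrm{Bin}(|D(K,K^c)|, p^2) < \tfrac12 |D(K,K^c)| \le \tfrac12 |E(K,K^c)|/(... )$ — more precisely $\mathrm{Bin}(|D|,p^2) < \tfrac12|D|$ — then apply Lemma~\ref{lem:chernoff_big_constant_exponent} with $\alpha = 1/2$ and $\alpha' = \log(c_{2d}(d) + 1)$ to get a bound of $(c_{2d}(d)+1)^{-\ell}$ per set, and finish with a union bound over the at most $|\Phi_m|(c_{2d}(d))^\ell$ sets $K$ of each size $\ell \ge \beta'\log m$, which sums to $o(1)$ once $\beta'$ (equivalently $\delta$) is taken large enough. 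The only genuinely new ingredient compared to Lemma~\ref{lem:pair_of_open_sites_boundary} is the two-regime argument above showing that the hypothesis on $|F_m|$ substitutes for both the animal structure of $K^c$ and the size cap needed by Lemma~\ref{lem:isoperimetric_in_lattice_z_d}.
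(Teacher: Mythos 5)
There is a genuine gap, and it is in the final union bound. Your per-set failure probability comes from Lemma~\ref{lem:chernoff_big_constant_exponent} applied to $\mathrm{Bin}(|D(K,K^c)|,p^2)$, so with $\alpha'=\log(c_{2d}(d)+1)$ it is $(c_{2d}(d)+1)^{-|D(K,K^c)|}$, \emph{not} $(c_{2d}(d)+1)^{-|K|}$. Since the only lower bound available on $|D(K,K^c)|$ is of order $|E(K,K^c)|=\Omega(|K|^{(d-1)/d})$, the exponent scales like $\ell^{(d-1)/d}$ with $\ell=|K|$, while the number of *-connected sets of size $\ell$ you sum over is of order $|\Phi_m|(c_{2d}(d))^{\ell}$, exponential in $\ell$. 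The resulting sum $\sum_\ell |\Phi_m|\,c_{2d}(d)^{\ell}\,e^{-c\,\ell^{(d-1)/d}}$ diverges for every $d\ge 2$, so the scheme collapses exactly where you claim it "follows exactly as in Lemma~\ref{lem:pair_of_open_sites_boundary}". That earlier lemma evades this mismatch by enumerating \emph{boundaries} (the feasible pairs $(F_-,F_+)$), whose count is exponential only in the boundary size $\ell=|F_+|$, matching the Chernoff exponent; and that enumeration is only possible because the hypothesis "$K^c$ is a $k$-lattice animal" makes $\partial^+K$ a $k'$-lattice animal via Lemma~\ref{lem:complement_star_connected_is_k_lattice_animal}. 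For an arbitrary *-connected $K$ with no condition on $K^c$, the external boundary splits into pieces lying in different components of $K^c$ that can be far apart, so it is not a bounded-parameter lattice animal and cannot be counted this way — this is precisely the obstruction the lemma is designed to circumvent. The paper instead avoids any direct union bound over $K$: it decomposes $K^c$ into its components meeting $F_m$, shows each relevant edge set $E(K,M)$ contributes open--open edges either trivially (one per component, via a path of $F_m$) or by applying Lemma~\ref{lem:pair_of_open_sites_boundary} to each sufficiently large component $M$ (whose complement \emph{is} *-connected/animal-like), and uses the hypothesis on $|F_m|$ to show these components carry $\Omega(m)$ sites in total.

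A secondary, fixable inaccuracy: from $|K\cap\Phi_m^p|<|F_m|$ alone you cannot conclude that "at least $(\theta(p)-\eps)m$ open sites lie outside $K$"; the hypothesis only bounds the open sites of $K$ by $|F_m|$, so the open sites outside $K$ are at least $|\Phi_m^p|-|F_m|$, and turning that into $\Omega(m)$ requires an upper bound on $|F_m|$ relative to $|\Phi_m^p|$ (or the stronger form of the hypothesis, $|K\cap\Phi_m^p|\le(1-\eps)|F_m|$, which is what the paper's proof actually uses). But even with that repaired, the counting problem above is fatal to the proposed route.
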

\begin{proof} 
Consider all components of $K^c$ (note that they might contain open and closed sites), and denote by $\mathfrak{M}(K)$ the subset of these that contain at least one vertex of $F_m$.

For $X, Y \subseteq \Phi_m$, $X\cap Y=\emptyset$, denote by $D_{\mathrm{o}}(X,Y)$ a maximum cardinality subset of $E(X,Y)$ of edges whose endvertices are both open sites.

We claim that $|D_o(K,K^c)|\ge |\mathfrak{M}(K)|$ if $|\mathfrak{M}(K)|>1$. Indeed, consider a component~$M\in\mathfrak{M}(K)$ and 
an open site $y\in M\cap F_m$, and let $x\in F_m$ belong to another component $M'\in \mathfrak{M}(K)$. 
Since $x, y\in F_m$ there is a path in $F_m$ connecting 
$x$ and $y$. 
Such a path must contain an edge belonging to  $E(K,M)\subseteq E(K,K^c)$. Since the edge is also an edge of $F_m$ both of its endvertices are open sites.
Since $E(K,M)$ and~$E(K,M')$ are disjoint for distinct $M,M'\in\mathfrak{M}(K)$, the claim follows.

If $|\mathfrak{M}(K)|\ge \delta'|K|^{(d-1)/d}$ for $\delta'>0$ as in Lemma~\ref{lem:pair_of_open_sites_boundary}, the lemma follows taking $\delta''=\delta'$.
Henceforth, we may thus assume $|\mathfrak{M}(K)|< \delta' |K|^{(d-1)/d}$.
Denote by $\mathfrak{M}_{\smlSucceq}(K)$ the set of  components in $\mathfrak{M}(K)$ of size at least $\delta (\log m)^{d/(d-1)}$  for $\delta$ as in  Lemma~\ref{lem:pair_of_open_sites_boundary}. 
We will need to show that the union of the components in $\mathfrak{M}_{\smlSucceq}(K)$ cover a constant fraction of $\Phi_m$.
Specifically, we claim that 
\begin{equation}\label{eq:lem_4-5_first_eq}
\sum_{M\in 
\mathfrak{M}_{\smlSucceq}(K)}|M|\geq \varepsilon Cm/2.
\end{equation}
where $C>0$ is such that w.h.p.~$|F_m|\ge Cm$ (the existence of such $C$ follows from Theorem~\ref{thm:giant_Bernoulli_site_percolation}).
Indeed, if~\eqref{eq:lem_4-5_first_eq} does not hold, 
\[
\sum_{M \notin \mathfrak{M}_{\smlSucceq}(K)}|M| 
\ge |F_m \setminus K| 
-
\sum_{M\in\mathfrak{M}_{\smlSucceq}(K)}|M| 
> |F_m \setminus K| - \varepsilon Cm/2.
\]
Since $K\cap F_m\subseteq K\cap\Phi_m^p$, by the upper bound assumption on the cardinality of $K\cap\Phi_m^p$, we have that 
\[
|F_m\setminus K| 
= |F_m|-|K\cap F_m|\ge |F_m|-|K\cap\Phi_m^p|
\geq \varepsilon |F_m| \ge \varepsilon Cm,
\]
where the last inequality holds w.h.p. So, plugging this estimate into the above bound, w.h.p.,
$$
\sum_{M \notin \mathfrak{M}_{\smlSucceq}(K)}|M| > \varepsilon Cm/2.
$$
However, 
\[
\sum_{M \notin \mathfrak{M}_{\smlSucceq}(K)} |M| \le \delta|\mathfrak{M}(K)|(\log m)^{d/(d-1)} 
\le \delta\delta'|K|^{(d-1)/d}(\log m)^{d/(d-1)},
\]
which is $o(m)$ as $m\to\infty$ (because $|K|\leq m$)
contradicting the above. This concludes the proof of~\eqref{eq:lem_4-5_first_eq}.

\smallskip
Our goal is to show that $|D_o(K,K^c)|\ge \delta'' |K|^{(d-1)/d}$ for some $\delta''>0$. To do so, we leverage the fact that for any~$M \in \mathfrak{M}(K)$ we have that $M$ is connected and $M^c$ is *-connected (the latter follows since $K$ is *-connected, $M^c=K\cup\bigcup_{M'\neq M}M'$, each $M'\in\mathfrak{M}(K)$ is connected and $\partial^{+}M'\subseteq K$), so $M$ meets two of the hypothesis required in order to apply Lemma~\ref{lem:pair_of_open_sites_boundary}. 
We divide the rest of the analysis into two cases. 

\medskip\noindent
\textbf{Case 1 (there is an $M_*\in \mathfrak{M}_{\smlSucceq}(K)$ such that $|M^c_*|<|M_*|$):} 
Then, $|M^c_*|\leq |\Phi_m|/2$. Hence,
we can apply Lemma~\ref{lem:pair_of_open_sites_boundary}, since $|M^c_*|\ge |K|\ge \delta (\log m)^{d/(d-1)}$ (because $K\cap\Phi_m^p\subseteq K\subseteq M^c_*$ and $|K\cap\Phi_m^p|\ge\delta(\log m)^{d/(d-1)}$), and conclude that
\[
|D_o(K,K^c)|\ge |D_o(K,M_*)| =  |D_{\mathrm{o}}(M_*,M_*^c)| \geq \delta' |M^c_*|^{(d-1)/d} \geq \delta'|K|^{(d-1)/d}.
\]

\medskip\noindent
\textbf{Case 2
(for every $M \in \mathfrak{M}_{\smlSucceq}(K)$ we have that $|M|\leq |M^c|$):} Then, $|M|\le |\Phi_m|/2$ for every $M\in\mathfrak{M}_{\smlSucceq}(K)$. 
Since, by definition of $\mathfrak{M}_{\smlSucceq}(K)$, all elements of 
$\mathfrak{M}_{\smlSucceq}(K)$ have size large enough so we can apply Lemma~\ref{lem:pair_of_open_sites_boundary} to each of them, we get that
\[
 \sum_{M\in \mathfrak{M}_{\smlSucceq}(K)}|D_{\mathrm{o}}(M,M^c)| \geq \sum_{M\in \mathfrak{M}_{\smlSucceq}(K)}\delta'|M|^{(d-1)/d} 
 \geq\delta'\Big(\sum_{M\in\mathfrak{M}_{\smlSucceq}(K)}|M|\Big)^{(d-1)/d}
\geq \delta'(\eps Cm/2)^{(d-1)/d},
\]
where the last identity is by~\eqref{eq:lem_4-5_first_eq}.

Since any vertex appears at most $2d$ times as endvertex of an edge in $\bigcup_{M\in \mathfrak{M}(K)}D_{\mathrm{o}}(M,M^c)$, we have 
\[
|D_{\mathrm{o}}(K,K^c)| \geq \frac{1}{4}\sum_{M\in \mathfrak{M}(K)}|D_{\mathrm{o}}(M,M^c)| \geq \frac{\delta'}{4}(\eps C m/2)^{(d-1)/d}.
\]
Setting $\delta'' = \min\{ \frac{\delta'}{4}(\eps C/2)^{(d-1)/d} ,\delta'\}$ and observing that $|K|\le m$ (because $K\subseteq\Phi_m$), we arrive at the desired conclusion.


\end{proof}


\subsection{Preliminaries on random geometric graphs}\label{sec:RGG}
%

In this final subsection as well as throughout the entire following section we will always consider RGGs with $d \ge 2$,  assuming tacitly always that $r^d=O(n)$. While the latter assumption is justified by the fact that for $r^d= cn$ with $c$ large enough we already have a complete graph, the first assumption simplifies statements in the sequel, and we deal separately with $d=1$ in the final section.

We first collect the following two basic results on RGGs:


\begin{theorem}{(\cite[Theorem 10.9]{Pen03} and \cite[Lemma 2.10]{martinez2025jump})}\label{thm:concentration_vertices_edges_rgg}
Let $\eps > 0$ be arbitrarily small, and let $r\ge (1+\eps)r_g$. Then w.h.p. $|\mathcal{L}_n|=\Theta(n)$ and $|E(\mathcal{L}_n)|=\Theta(n r^d)$.
\end{theorem}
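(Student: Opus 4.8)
The plan is to establish the vertex count and the edge count together, deriving the easy upper bounds from first‑moment and concentration estimates and the two lower bounds from a single block renormalization.

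\textbf{Upper bounds.} Since $|\mathcal V_n|\sim\mathrm{Poisson}(n)$, w.h.p.\ $|\mathcal V_n|=(1+o(1))n$, hence $|\mathcal L_n|\le|\mathcal V_n|=O(n)$. For the edges, the Mecke equation gives $\mathbb E|\mathcal E_n|=\tfrac12\int_{\Lambda_n}|B(x,r)\cap\Lambda_n|\,dx$, and since $r^d=O(n)$ one has $2^{-d}\omega_d r^d\le|B(x,r)\cap\Lambda_n|\le\omega_d r^d$ for every $x$ (where $\omega_d$ is the volume of the unit $d$-ball), so $\mathbb E|\mathcal E_n|=\Theta(nr^d)$. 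A Poincaré (Efron–Stein) bound for Poisson functionals, using that inserting a point $x$ changes $|\mathcal E_n|$ by $|\mathcal V_n\cap B(x,r)|$, yields $\mathrm{Var}(|\mathcal E_n|)\le\int_{\Lambda_n}\mathbb E[\,|\mathcal V_n\cap B(x,r)|^2\,]\,dx=O(nr^{2d})$; Chebyshev then gives $|\mathcal E_n|=\Theta(nr^d)$ w.h.p., and in particular $|E(\mathcal L_n)|\le|\mathcal E_n|=O(nr^d)$ w.h.p.

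\textbf{Lower bounds.} I would run a standard renormalization. Fix a large constant $K=K(d,\varepsilon)$ and tile $\Lambda_n$ into $\Theta(n/r^d)$ cubic blocks of side $Kr$. Call a block \emph{good} if (i) its points induce a unique ``crossing'' cluster which, together with the crossing clusters of adjacent good blocks, lies in a single component of $\mathcal G_n$, and (ii) the central radius-$r/2$ ball of the block contains at least $c\,r^d$ points for a small $c=c(d)>0$. Condition (ii) has probability bounded below uniformly in $n$ (it is the event that a $\mathrm{Poisson}(\omega_d(r/2)^d)$ variable exceeds half its mean). Because $r\ge(1+\varepsilon)r_g$ is \emph{strictly} supercritical, $K$ can be chosen so large that (i) too holds with probability exceeding $p_c(d)$; since the good-block event depends on the PPP only in a bounded neighbourhood of the block, the family of good blocks stochastically dominates a Bernoulli site percolation on $\mathbb Z^d$ (restricted to a cube with $\Theta(n/r^d)$ sites) with parameter $p>p_c(d)$. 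By Theorem~\ref{thm:giant_Bernoulli_site_percolation} the largest cluster $\mathcal C$ of good blocks contains $\Theta(n/r^d)$ blocks w.h.p. By (i), all RGG-vertices in blocks of $\mathcal C$ lie in one component of $\mathcal G_n$, which must be $\mathcal L_n$; by (ii), each such block contributes at least $c\,r^d$ vertices to $\mathcal L_n$, all contained in a ball of radius $r/2$ and hence pairwise adjacent, giving at least $\binom{c r^d}{2}=\Omega(r^{2d})$ edges of $\mathcal L_n$. Summing over the $\Theta(n/r^d)$ blocks of $\mathcal C$ yields $|\mathcal L_n|=\Omega(n)$ and $|E(\mathcal L_n)|=\Omega(nr^d)$ w.h.p.

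\textbf{Main obstacle.} The delicate point is verifying that the good-block event in (i) has probability exceeding $p_c(d)$ for a block of side $Kr$, \emph{uniformly for all} $r\ge(1+\varepsilon)r_g$, in particular for $r$ just above the threshold. This is the usual finite-size rescaling phenomenon of continuum percolation (see Penrose~\cite[Chapters~7 and~10]{Pen03}): a crossing of a single fixed block does not occur with probability close to $1$ near $r_g$, so one must work with blocks whose side is a large multiple of $r$ depending on $\varepsilon$ and invoke sharpness of the phase transition to push the crossing probability past $p_c(d)$; once this is secured, the remaining steps are routine bookkeeping. In fact both asserted estimates already appear in the literature — the vertex count in \cite[Theorem~10.9]{Pen03} and the edge count in \cite[Lemma~2.10]{martinez2025jump} — so in the paper one may simply cite these.
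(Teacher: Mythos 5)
Your upper bounds are fine, but the lower-bound renormalization has two genuine gaps, and both bite precisely in the hard window $(1+\varepsilon)r_g\le r\le r'$ with $r=\Theta(1)$. First, the domination step. Your good-block event is the intersection of (i) and (ii), and (ii) asks for $\Omega(r^d)$ points in a ball of radius $r/2$, i.e.\ for a Poisson variable of \emph{constant} mean to exceed half its mean when $r$ is a constant near $r_g$; this has probability bounded away from $1$ (roughly $1-e^{-\omega_d(r/2)^d}$), and no choice of $K$ or of a smaller $c$ changes that. Consequently the marginal probability of a good block cannot be driven close to $1$, and the claim that the finitely dependent field of good blocks ``stochastically dominates a Bernoulli site percolation with parameter $p>p_c(d)$'' is unsupported: for dependent fields the relevant criterion is not that the marginal exceeds $p_c(d)$ but the Liggett--Schonmann--Stacey mechanism \cite{liggett1997domination}, which requires marginals close to $1$ (this is exactly how the paper proceeds in its small-radii section, where goodness is defined \emph{without} any density requirement and its probability is pushed to $1$ via \cite{friedrich2013diameter}).

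Second, even granting a giant cluster $\mathcal C$ of good blocks, the sentence ``by (i), all RGG-vertices in blocks of $\mathcal C$ lie in one component of $\mathcal G_n$'' is false as stated: (i) only controls the crossing clusters, and near the threshold a positive fraction of the Poisson points lie in small components that never attach to them. In particular the $c\,r^d$ points of the central ball in (ii) need not belong to the crossing cluster, so the per-block contributions of $c\,r^d$ vertices and $\binom{cr^d}{2}$ edges to $\mathcal L_n$ do not follow; this is the step that would fail. A repaired argument needs a case split: for $(1+\varepsilon)r_g\le r\le r'$ drop (ii), count only the crossing clusters (each has Euclidean diameter $\Omega(Kr)$, hence $\Omega(K)$ vertices and edges), which already gives $\Omega(n/r^d)=\Omega(n)=\Omega(nr^d)$ since $r=\Theta(1)$; for $r\ge r'$ large, (ii) does hold with probability $1-e^{-\Omega(r^d)}$ and one must additionally argue that the dense local cliques attach to the spanning structure (e.g.\ via uniqueness of large-diameter components in enlarged blocks, as in the paper's ``good/useful tile'' definition). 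Note finally that the paper itself offers no proof of this statement: it is quoted from \cite[Theorem~10.9]{Pen03} (vertex count) and \cite[Lemma~2.10]{martinez2025jump} (edge count), as you point out at the end, so the citation route is the intended one.
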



We also know that w.h.p.\ all other  components of $\mathcal{G}^r_n$ distinct from $\mathcal{L}_n$ are small, as the following well-known result shows:
\begin{theorem}{(\cite[Theorem 10.18]{Pen03})}\label{thm:size_second_component_rgg} 
    Let $\eps > 0$ be arbitrarily small, and let $r\ge (1+\eps)r_g$. Then, w.h.p.\ every  component of  $\mathcal{G}_n$ of size $\omega((\log n)^{d/(d-1)})$ belongs to $\mathcal{L}_n$.
\end{theorem}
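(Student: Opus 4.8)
The plan is to deduce the statement from the analogous fact for supercritical Bernoulli site percolation --- the bound on the second-largest component in Theorem~\ref{thm:giant_Bernoulli_site_percolation} --- by a static renormalization of $\mathcal{G}_n$. First I would reduce to the case $r=O(1)$: if $r^d=\Omega(\log n)$ then $\mathcal{G}_n$ is above the connectivity threshold, hence connected w.h.p., so the statement is vacuous, while the remaining range is handled by the same renormalization performed at scale $\Theta(r)$. Assuming then $r=O(1)$, fix a large constant $\lambda=\lambda(\eps)$, tile $\Lambda_n$ into axis-parallel cubes $\{Q_z\}_{z\in\Phi_m}$ of side $\lambda$ (so $m=\Theta(n)$ and $\log m=\Theta(\log n)$), and for $z\in\Phi_m$ put $N_z=\bigcup_{\|z'-z\|_\infty\le K}Q_{z'}$ for a suitable constant $K$. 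Declare $Q_z$ \emph{good} if the RGG induced on the Poisson points of $N_z$ has a \emph{unique} component $\mathcal{C}_z$ meeting every sub-cube of $N_z$ with $|\mathcal{C}_z\cap Q_z|\ge\rho|Q_z|$ for a suitable constant $\rho>0$, and \emph{bad} otherwise. Goodness of $Q_z$ depends only on the Poisson points in $N_z$, so the field of good cubes has finite range of dependence. Since $r\ge(1+\eps)r_g$ lies strictly inside the supercritical continuum-percolation regime, uniqueness and positive density of the infinite cluster of $\mathcal{G}^{r,d}$, together with a standard crossing estimate, give $\mathbb{P}(Q_z\text{ good})\to1$ as $\lambda\to\infty$; fixing $\lambda$ large, a standard stochastic-domination theorem for fields of finite range of dependence shows that the good cubes dominate an i.i.d.\ site percolation on $\Phi_m$ with parameter $p>p_c(d)$, with $p$ as close to $1$ as we wish, and equivalently that the bad cubes are dominated by a strongly subcritical i.i.d.\ percolation.

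Next I would transfer this structure back to $\mathcal{G}_n$. By Theorem~\ref{thm:giant_Bernoulli_site_percolation} and the domination, w.h.p.\ the good cubes contain a component $\mathcal{H}$ with $|\mathcal{H}|\ge(\theta(p)-o(1))m$ cubes. The gluing observation is that whenever $Q_z$ and $Q_{z'}$ are $\ell_\infty$-adjacent good cubes, $N_z$ and $N_{z'}$ overlap in a box large enough that the parts of $\mathcal{C}_z$ and of $\mathcal{C}_{z'}$ inside the overlap each have positive density there; a uniqueness-in-a-box argument then forces these parts to coincide, so $\mathcal{C}_z$ and $\mathcal{C}_{z'}$ lie in the same component of $\mathcal{G}_n$. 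Consequently $\bigcup_{z\in\mathcal{H}}\mathcal{C}_z$ is a single connected subgraph of $\mathcal{G}_n$, and since the sets $\mathcal{C}_z\cap Q_z$ ($z\in\mathcal{H}$) are pairwise disjoint, $\big|\bigcup_{z\in\mathcal{H}}\mathcal{C}_z\big|\ge\sum_{z\in\mathcal{H}}|\mathcal{C}_z\cap Q_z|\ge\rho\sum_{z\in\mathcal{H}}|Q_z|=\Theta(n)$; hence it is contained in $\mathcal{L}_n$, which itself has $\Theta(n)$ vertices by Theorem~\ref{thm:concentration_vertices_edges_rgg}.

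It then remains to bound any component $C\neq\mathcal{L}_n$; such a $C$ is disjoint from $\bigcup_{z\in\mathcal{H}}\mathcal{C}_z$. Arguing as in Penrose's proof, the cubes meeting $C$ cannot be joined to $\mathcal{H}$ without crossing a $*$-connected set $S$ of bad cubes that encloses all cubes meeting $C$ and is disjoint from $\mathcal{H}$. Since the bad cubes are dominated by a strongly subcritical i.i.d.\ percolation, a Peierls-type union bound shows that w.h.p.\ every $*$-connected set of bad cubes has $O(\log m)=O(\log n)$ elements (the number of $*$-connected sets of $t$ cubes through a fixed cube is at most $C_0^{\,t}$, such a set is entirely bad with probability at most $q^{\,t}$ where $C_0q<1$, and $\sum_{t\ge c\log n}m\,(C_0q)^t\to0$). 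By the isoperimetric inequality in $\mathbb{Z}^d$ (Lemma~\ref{lem:isoperimetric_in_lattice_z_d}), a region whose boundary lies inside $S$ spans at most $O(|S|^{d/(d-1)})=O((\log n)^{d/(d-1)})$ cubes, and since each cube carries $O(1)$ vertices we obtain $|C|=O((\log n)^{d/(d-1)})$; that is, every component of $\mathcal{G}_n$ of size $\omega((\log n)^{d/(d-1)})$ equals $\mathcal{L}_n$.

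I expect the main obstacle to be the trapping and gluing steps of the last two paragraphs: deriving from ``$C\neq\mathcal{L}_n$'' the existence of an enclosing $*$-connected surface of bad cubes disjoint from $\mathcal{H}$, and making the uniqueness-in-a-box argument that merges the crossing clusters of adjacent good cubes fully rigorous. These are precisely the points where the geometry of continuum percolation is more delicate than its $\mathbb{Z}^d$ counterpart, and since they are carried out in detail in Penrose's monograph we simply invoke the result from there rather than reproving it.
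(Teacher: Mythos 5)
The paper gives no proof of Theorem~\ref{thm:size_second_component_rgg}: it is imported wholesale from Penrose~\cite[Theorem 10.18]{Pen03}, and your closing paragraph in effect does the same, so at the level of what is actually invoked your treatment coincides with the paper's. Your renormalization sketch is the standard route to the result and closely parallels the machinery the paper itself develops for the small-radii regime (good/useful tiles of side $\Theta(r)$, finite-range dependence, domination of a supercritical Bernoulli field as in Lemmas~\ref{lem:renormalization_probability_good_cube} and~\ref{lem:lower_bound_parameter_dominated_bernoulliPercolation}, and the gluing of crossing clusters in Lemma~\ref{lem:vertex_interior_good_cube_in_giant}).

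If the sketch were meant to stand on its own, two steps need repair. First, ``each cube carries $O(1)$ vertices'' is not true uniformly: among the $\Theta(n)$ cubes of constant side length the fullest one contains $\Theta(\log n/\log\log n)$ Poisson points w.h.p., so converting an enclosed region of $O((\log n)^{d/(d-1)})$ cubes into $O((\log n)^{d/(d-1)})$ vertices loses a logarithmic factor unless you add a concentration bound for the number of points in connected unions of cubes (exactly what Lemma~\ref{lem:bounds_vertices_big_animals} and Remarks~\ref{rem:bounds_vertices_big_animals}--\ref{rem:bound_vertices_big_animals_2} supply); without it you only get $O((\log n)^{d/(d-1)}\log n)$, which is weaker than the statement. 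Second, the opening reduction ``$r^d=\Omega(\log n)$ implies connectivity w.h.p.'' is only valid for $r^d\ge C\log n$ with $C$ sufficiently large; with an unspecified implied constant one is still below the connectivity threshold, though this is harmless since your scale-$\Theta(r)$ renormalization covers that range anyway. The genuinely delicate points --- producing an enclosing $*$-connected surface of bad cubes disjoint from $\mathcal{H}$, and ruling out that a large component threads through a good cube without meeting its crossing cluster --- you correctly identify and defer to Penrose, which is consistent with how the paper handles the theorem.
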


We next describe the concept of a \emph{tiling} that will be important throughout.
For $\rho>0$ (which throughout will always be of the order of $r$) consider the tiling of $\mathbb{R}^d$ by (hyper)cubes of side length $\rho$, that is, consider $\{\tau_{\bfi} \colon \bfi\in\mathbb{Z}^d\}$ where 
\[
\tau_{\bfi} = \rho\cdot\bfi + \Big[\,-\frac{\rho}{2}, \frac{\rho}{2}\,\Big)^d.
\]
We refer to $\bfi$ as the \emph{center} of tile $\tau_{\bfi}$.
Also, we let~$\Lambda^{\rho}_n$ be the collection of all such tiles that are contained in~$\Lambda_n$.
Thus, all tiles in $\Lambda_n^\rho$ have the same volume which we henceforth
denote by $\mathrm{vol}_\rho$.  
Observe that~$\mathrm{vol}_\rho$ is proportional to~$\rho^d$ where the proportionality constant depends only on the dimension $d$.

In order to avoid confusions and clearly distinguish between situations where we count vertices or points in $\mathbb{R}^d$ from those where we count tiles in a given set, in what follows, for a collection of tiles~$S\subseteq\Lambda_n^{\rho}$ we denote its cardinality by $|S|_t$.

Now, let $m=|\Lambda^\rho_n|_t$.
Observe that $(n^{1/d}-2\rho)^d\le m\cdot\mathrm{vol}_\rho\le n$. 
Thus, for $\rho=\frac12n^{1/d}-\Omega(1)$, we have~$m=\Theta(n/\rho^d)$ and also $m=\Theta(n/r^d)$ since $\rho$ is always of order $r$.
Throughout, we identify~$\Lambda^{\rho}_n$ with $\Phi_m$ in the natural way by associating each tile in $\Lambda_n^{\rho}$ to the vertex $\Phi_m$ corresponding to its center. Via this identification the concepts and notation concerning $\Phi_m$ (e.g., edges, adjacency, order,  components, etc.) can and will, be transferred and used when dealing with $\Lambda_n^\rho$.

One particular collection of tiles we will often consider is  $\Lambda^{r_d}_n$, 
where $r_d=r_d(n)$ is the maximum real value such that $n^{1/d}/r_d$ is an odd integer and such that $r_d\leq r/(2\sqrt{d})$. 
Concretely, 
\[
r_d=n^{1/d}/\Big(2\Big\lceil\frac{\sqrt{d}n^{1/d}}{r}-\frac12\Big\rceil+1\Big).
\]
In particular, $r_d/r$ tends to $1/(2\sqrt{d})$ from below as $n\to\infty$.
There are two key reasons for this choice of side length of tiles. First, $\Lambda_n^{r_d}$ partitions $\Lambda_n$ into tiles of equal volume.
The second reason is that the choice of $r_d$ guarantees that the largest distance between any two points in a tile is at most $r/2$, which also implies that two vertices of $\mathcal{G}_n$ inside adjacent tiles (whose topological closure intersect in a~($d-1$)-dimensional face) of $\Lambda_n^{r_d}$ are neighbors in $\mathcal{G}_n$ (equivalently, the distance between any two points of adjacent tiles is at most $r$). The latter will turn out to be important further on. 

In what follows, we will often deal simultaneously with  collections of tiles, say $T\subseteq\Lambda_n^\rho$, and 
subsets of vertices, say $A\subseteq\mathcal{G}_n$.
We will abuse notation and view $T$ both as a collection of tiles and the subset of $\Lambda_n$ comprised by the union of all tiles in $T$ and denote the latter subset also by~$T$.
Moreover, we will also abuse language and talk about the vertices of $A$ in $T$ in order to refer to the vertices of $A$ identified with points that belong to some tile in $T$ and, making use of our abuse of notation, denote the collection of such vertices by $T\cap A$.

For a set $A \subseteq \mathcal{G}_n$ we let $L_A\subseteq\Lambda^{\rho}_n$ be the collection of tiles of $\Lambda_n^\rho$ that contain at least one vertex of~$A$, that is
\[
L_A = \big\{ \tau\in\Lambda^{\rho}_n \colon A\cap\tau\neq\emptyset\big\}.
\]
If $A$ is connected, then $L_A$ is a $k$-lattice animal for $k=\lceil \frac{r}{\rho\sqrt{d}}\rceil d$. 
So, when $\rho$ is proportional to $r$, as is always the case in this manuscript, the value of $k$ depends solely on the ratio $r/\rho$ and the dimension~$d$.
From now on, throughout all of what remains of this section, whenever we refer to a $k$-lattice animal it is to be understood that we have fixed $k$ equal to $\lceil \frac{r}{\rho\sqrt{d}}\rceil d$ for whatever~$\rho$ we are working with (typically,~$\rho=r_d$).

\begin{lemma}\label{lem:bounds_vertices_big_animals}
    There exists $\gamma>0$ and $r'>r_g$ being a sufficiently large  constant,
    so that for $r > r'$ the following holds w.h.p.: 
    every connected set $A\subseteq \mathcal{G}_n$ with $|A|>2\lceil\gamma\log n\rceil\mathrm{vol}_{r_d}$  satisfies, for $L_A\subseteq\Lambda_n^{r_d}$, that $|L_A|_t> |A|/(2\mathrm{vol}_{r_d})$.
\end{lemma}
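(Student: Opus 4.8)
The plan is to show that if a connected set $A$ has too many vertices concentrated in too few tiles of $\Lambda_n^{r_d}$, then some tile must contain an abnormally large number of vertices of $A$, an event which is exponentially unlikely and can be ruled out by a union bound over all $k$-lattice animals (playing the role of the possible $L_A$'s). First I would set $v = \mathrm{vol}_{r_d} = \Theta(r^d)$ and suppose, for contradiction, that there is a connected $A$ with $|A| > 2\lceil \gamma \log n\rceil v$ but $|L_A|_t \le |A|/(2v)$. Then the average number of vertices of $A$ per tile in $L_A$ is at least $2v$, so by pigeonhole there is a tile $\tau \in L_A$ with $|A \cap \tau| \ge 2v$. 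Recall that, conditioned on the Poisson point process restricted to $\tau$, the number of points in $\tau$ is Poisson with mean $v$; since $|A\cap \tau|$ is at most the total number of points of $\mathcal V_n$ in $\tau$, we get a point count of at least $2v$ in a tile of volume $v$. A standard Poisson tail bound gives $\mathbb{P}(\mathrm{Pois}(v) \ge 2v) \le e^{-cv}$ for an absolute constant $c>0$ (e.g.\ $c = 2\log 2 - 1 > 0$).

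Next I would make the union bound quantitative. The set $L_A$ is a $k$-lattice animal in $\Lambda_n^{r_d}\cong\Phi_m$ (with $k=\lceil r/(r_d\sqrt d)\rceil d$ fixed, $m = \Theta(n/r^d)$), and we only care about animals $L$ with $|L|_t \le |A|/(2v)$; but crucially $|L_A|_t \ge |A|/(kv) = \Theta(|A|/v)$ trivially since each tile holds at most $O(v\cdot \mathrm{stuff})$... rather, the cleaner route is: the bad event implies the existence of a single tile with $\ge 2v$ points, and there are only $m = O(n)$ tiles in $\Lambda_n^{r_d}$ in total. So actually no lattice-animal union bound is needed at all for this version of the statement — I would simply take a union bound over the $m \le n$ tiles:
\[
\mathbb{P}\big(\exists\,\tau\in\Lambda_n^{r_d}\colon |\mathcal V_n\cap\tau| \ge 2v\big) \le n\, e^{-cv}.
\]
Since $r > r'$ with $r'$ a sufficiently large constant, $v = \mathrm{vol}_{r_d} \ge c' r^d \ge c'(r')^d$ can be made larger than $(2/c)\log n$... wait — $v$ is only constant-order when $r$ is constant. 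Here is the fix and the place where the hypothesis $|A| > 2\lceil\gamma\log n\rceil v$ enters: the pigeonhole tile $\tau$ has $|A\cap\tau| \ge 2v$ only on average; to get a $\log n$ gain I should instead argue that $\sum_{\tau \in L_A}|A\cap\tau| = |A| > 2\lceil\gamma\log n\rceil v$ while $|L_A|_t \le |A|/(2v)$ forces, by an averaging argument, many tiles to be heavy, or a single tile to be heavy by a $\log n$ factor is not guaranteed — so the correct statement to prove is that no $k$-lattice animal $L$ with $|L|_t \le (\text{something})$ can have $\sum_{\tau\in L}|\mathcal V_n \cap \tau| > 2|L|_t v$. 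I would therefore union-bound over $k$-lattice animals $L$ of each size $\ell$: there are at most $m\,(c_k(d))^\ell$ of them; for fixed such $L$, $\sum_{\tau\in L}|\mathcal V_n\cap\tau| \sim \mathrm{Pois}(\ell v)$, and $\mathbb{P}(\mathrm{Pois}(\ell v) > 2\ell v) \le e^{-c\ell v}$. Summing over $\ell \ge \lceil\gamma\log n\rceil$ (which is where $|A| > 2\lceil\gamma\log n\rceil v$ and $|L_A|_t \ge |A|/(2v) > \lceil\gamma\log n\rceil$ come from, after noting that if $|L_A|_t \le |A|/(2v)$ then the heavy-animal event for $L = L_A$ occurs) gives
\[
\sum_{\ell \ge \lceil\gamma\log n\rceil} m\,(c_k(d))^\ell e^{-c\ell v} \le m\sum_{\ell\ge\lceil\gamma\log n\rceil}\big(c_k(d)e^{-c v}\big)^\ell,
\]
which, since $v \ge c'(r')^d$ can be taken large enough that $c_k(d)e^{-cv} < e^{-1}$ (here $r'$ must be chosen after $k$, which depends only on $d$, and after $c_k(d)$), is at most $m\, e^{-\lceil\gamma\log n\rceil} = O(n^{1-\gamma})$, tending to $0$ for $\gamma$ large enough.

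The main obstacle — and the reason the statement is phrased with $r > r'$ for a large constant $r'$ rather than all $r > r_g$ — is exactly the competition in the exponent: the number of $k$-lattice animals of size $\ell$ grows like $c_k(d)^\ell$, and to beat this by the Poisson tail $e^{-c\ell v}$ we need $v = \mathrm{vol}_{r_d}$ to exceed a constant threshold depending on $c_k(d)$, hence on $d$; this forces $r$ to exceed a constant $r'$. One must also be careful that $k = \lceil r/(r_d\sqrt d)\rceil d$ and $r_d/r \to 1/(2\sqrt d)$, so $k$ converges to a constant depending only on $d$ as $n\to\infty$, ensuring $c_k(d)$ is a genuine constant and the choice of $r'$ is legitimate. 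The remaining steps — the Poisson tail bound, the conditional independence of point counts across disjoint tiles, and the final union bound — are routine.
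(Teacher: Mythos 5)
Your core argument --- a union bound over $k$-lattice animals of size $\ell$ (at most $m\,(c_k(d))^\ell$ of them), the Poisson/Chernoff tail $e^{-c\ell\,\mathrm{vol}_{r_d}}$, and choosing $r'$ large enough that $c_k(d)e^{-c\,\mathrm{vol}_{r_d}}$ is small --- is exactly the paper's proof of the auxiliary w.h.p.\ event ``every $k$-lattice animal of size $\ell\ge\lceil\gamma\log n\rceil$ contains at most $2\ell\,\mathrm{vol}_{r_d}$ points of $\mathcal{G}_n$''. The gap is in the final deduction. Arguing by contradiction you assume $|L_A|_t\le|A|/(2\mathrm{vol}_{r_d})$, but your union bound only controls animals of size at least $\lceil\gamma\log n\rceil$, and your justification that $L_A$ lies in this range, namely ``$|L_A|_t\ge|A|/(2v)>\lceil\gamma\log n\rceil$'', is precisely the conclusion of the lemma, which you cannot invoke while assuming its negation. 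Under the contradiction hypothesis there is no deterministic lower bound on $|L_A|_t$: a priori $A$ could be packed into fewer than $\lceil\gamma\log n\rceil$ tiles, each carrying a huge Poisson count, and --- as you yourself observed when discarding the single-tile union bound --- the tail $e^{-cv}$ for a bounded number of tiles is not $o(1/n)$ when $r$ (hence $v$) is a constant, so this case cannot be killed by the same estimate.

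The missing step, which is how the paper handles the case $|L_A|_t<\lceil\gamma\log n\rceil$ (and the real reason the hypothesis $|A|>2\lceil\gamma\log n\rceil\mathrm{vol}_{r_d}$ appears in the statement), is a padding argument: enlarge $L_A$ to a $k$-lattice animal $L'_A\supseteq L_A$ of size exactly $\lceil\gamma\log n\rceil$ inside $\Lambda_n^{r_d}$. Then $|L'_A\cap\mathcal{G}_n|\ge|A|>2\lceil\gamma\log n\rceil\mathrm{vol}_{r_d}=2|L'_A|_t\,\mathrm{vol}_{r_d}$, so $L'_A$ is heavy, contradicting the w.h.p.\ event that your union bound does cover. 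With that case added, your argument coincides with the paper's proof.
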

\begin{proof}
   Let $L\subseteq\Lambda_n^{r_d}$ be a $k$-lattice animal of size $\ell$. Since $|L\cap\mathcal{G}_n|$ is a Poisson random variable with mean $\ell\mathrm{vol}_{r_d}$, by Chernoff bounds for Poisson random variables, the probability that  $|L\cap\mathcal{G}_n|> 2\ell\mathrm{vol}_{r_d}$ is at most $\exp(-\beta\ell\mathrm{vol}_{r_d})$ where $\beta$ is a positive constant. 

   We claim that there exists $\gamma>0$ such that, w.h.p., every $k$-lattice animal in $\Lambda_n^{r_d}$ of size $\ell \ge \gamma\log n$ contains at most $2\ell\mathrm{vol}_{r_d}$ vertices of $\mathcal{G}_n$.
   Indeed, recall that the number of $k$-lattice animals in $\Lambda_n^{r_d}$ of size $\ell$ is bounded above by $|\Lambda_n^{r_d}|_t(c_k(d))^\ell$.
   Then, by a union bound over all possible $k$-lattice animals of size at least $\gamma \log n$, the probability that some $k$-lattice animal in $\Lambda_n^{r_d}$, say $L$, is such that  $|L|_t\ge\gamma\log n$ and $|L\cap\mathcal{G}_n|>2|L|_t\mathrm{vol}_{r_d}$ is at most
    \[
    |\Lambda_n^{r_d}|_t\sum_{\ell\ge\gamma \log n} c_k(d)^\ell \exp(-\beta\ell\mathrm{vol}_{r_d})= |\Lambda_n^{r_d}|_t\sum_{\ell\ge\gamma \log n} \left[\exp\big(\log(c_k(d))-\beta\mathrm{vol}_{r_d}\big)\right]^{\ell}.
    \]
    Since $\mathrm{vol}_{r_d}=\Theta(r^d)$, there exists $r'>r_g$
    such that for all $r\geq r'$ 
    it holds that $\log(c_k (d))< \beta\mathrm{vol}_{r_d}$. Thus, because $|\Lambda_n^{r_d}|_t=O(n)$ for $r>r_g$,
    for a sufficiently large $\gamma>0$ the right-hand side of the last display tends to $0$ as $n$ goes to infinity.
    This concludes the claim's proof.

    Next, we prove the lemma's statement.
    By hypothesis $A$ is connected in~$\mathcal{G}_n$, so~$L_A$ is a $k$-lattice animal in $\Lambda^{r_d}_n$. 
    Thus, if $r>r'$ and $|L_A|_t\ge\gamma\log n$, by the discussion above, w.h.p., $|L_A\cap\mathcal{G}_n|\leq 2|L_A|_t\mathrm{vol}_{r_d}$. 
    But trivially $A\subseteq L_A\cap\mathcal{G}_n$, so  we get $2|L_A|_t\mathrm{vol}_{r_d} \ge |A|$, or equivalently, 
    $|L_A|_t\ge |A|/(2\mathrm{vol}_{r_d})$ w.h.p.
    On the other hand, if $|L_A|_t\leq \gamma \log n$, then there exists a $k$-lattice animal $L'_A$ in $\Lambda^{r_d}_n$
    of size~$\lceil\gamma \log n\rceil$ that contains~$L_A$ and, again by our claim above, the cardinality of $L'_A\cap\mathcal{G}_n\supseteq L_A\cap\mathcal{G}_n\supseteq A$ is, w.h.p., at most~$2\lceil\gamma \log n\rceil\mathrm{vol}_{r_d}$ which is in contradiction with the statement's hypothesis concerning the lower bound on the size of $A$.
    \end{proof}

    \begin{remark}\label{rem:bounds_vertices_big_animals}
        Note that in the proof of Lemma~\ref{lem:bounds_vertices_big_animals}, the connectedness of $A$ was used solely to ensure that $L_A$ is a $k$-lattice animal. Therefore, the conclusion of the lemma remains valid for any set $A\subseteq \mathcal{G}_n$ such that $L_A$ is a $k$-lattice animal, even if $A$ is not connected. In particular, w.h.p., given any $k$-lattice animal $L$ such that $|L\cap\mathcal{G}_n|> 2\lceil \gamma \log n\rceil \mbox{vol}_{r_d}$ it holds that $|L|_t=\Omega(|L\cap \mathcal{G}_n|/r^d)$.
    \end{remark}
    \begin{remark}\label{rem:bound_vertices_big_animals_2}
        For a fixed value of $k$ (we will choose $k=d$), the statement of Remark~\ref{rem:bounds_vertices_big_animals} can be extended to tessellations of $\Lambda_n$ with side length different from $r$. We claim that we can obtain the following statement for the tessellation $\Lambda_n^{\rho}$, if $\rho$ is chosen large enough: w.h.p., given any $d$-lattice animal $L\subseteq \Lambda_n^{\rho}$ satisfying $|L\cap \mathcal{G}_n|>2\lceil \gamma\log n\rceil \mathrm{vol}_{\rho}$, it holds that $|L|_t=\Omega(|L\cap \mathcal{G}_n|/\mathrm{vol}_{\rho})$: indeed, the proof of this statement is a simple adaptation of the proof of Lemma~\ref{lem:bounds_vertices_big_animals}: choosing $\rho$ large enough ensures that $\log(c_d(d))<\beta \mathrm{vol}_{\rho}$ and the rest of the proof is analogous (just setting $k=d$ and $\rho$ large enough).
    %
    \end{remark}
    
    In the next fundamental theorem  we establish a relation between $\pi(A)$ and $|A|$.  
 
    \begin{theorem}\label{thm:relation_StationaryMeasure_number_of_vertices_rgg}
   Let $d \ge 2$, let $\eps > 0$ arbitrarily small with $r \ge (1+\eps)r_g$. There exists $\eta > 0$ such that w.h.p.\ the following holds: for every connected set $A$ of vertices of $\mathcal{L}_n$ such that $\pi(A)>(\log n)^\zeta/n$ for some $\zeta > 6$, we have
    \[
     \sum_{x\in A} \deg(x) < \eta |A|r^d.
    \]
    
\end{theorem}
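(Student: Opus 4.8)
The goal is to bound $\sum_{x\in A}\deg(x)$ by $O(|A|r^d)$ for connected $A$ with $\pi(A)$ not too small. The strategy is to pass from vertices to tiles. Working with the tessellation $\Lambda_n^{r_d}$, consider the lattice animal $L_A$ of tiles meeting $A$. Every edge incident to a vertex of $A$ has both endpoints within graph distance $1$, hence within Euclidean distance $r$, so its other endpoint lies in a tile that is within a bounded number of tiles (in $\ell_\infty$) of some tile of $L_A$; call the resulting ``thickening'' $L_A^+ \supseteq L_A$, still a $k'$-lattice animal for a suitable $k'=k'(d, r/r_d)$ with $|L_A^+|_t = \Theta(|L_A|_t)$. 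Then
\[
\sum_{x\in A}\deg(x) \le \sum_{x \in L_A^+\cap\mathcal{G}_n}\deg(x) \le (\text{max points in a ball of radius }r)\cdot |L_A^+\cap\mathcal{G}_n| ,
\]
so it suffices to show that $|L_A^+\cap\mathcal{G}_n| = O(|A|r^d / \mathrm{vol}_{r_d})\cdot\mathrm{vol}_{r_d}$-type bounds, i.e.\ that the total number of Poisson points in the thickened animal, and more importantly the total degree, is $O(|A| r^d)$. The natural two regimes are: when $|A|$ is large enough that Lemma~\ref{lem:bounds_vertices_big_animals} (and Remark~\ref{rem:bounds_vertices_big_animals}) applies, giving $|L_A|_t = \Theta(|A|/\mathrm{vol}_{r_d})$; and the complementary regime of ``small'' $A$, where $\pi(A) > (\log n)^\zeta/n$ still forces $|A|$ to be polylogarithmically large (using $|E(\mathcal{L}_n)| = \Theta(nr^d)$ from Theorem~\ref{thm:concentration_vertices_edges_rgg} and $\deg(x)\ge 1$, so $|A| \ge 2|E(\mathcal{L}_n)|\pi(A) \gtrsim r^d (\log n)^\zeta$).

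For the main contribution one wants: w.h.p., for \emph{every} $k'$-lattice animal $L$ in $\Lambda_n^{r_d}$ with $|L|_t$ at least polylogarithmic, the total degree $\sum_{x\in L\cap\mathcal{G}_n}\deg(x)$ is $O(|L|_t \mathrm{vol}_{r_d}^2) = O(|L|_t r^{2d})$ — which combined with $|L_A^+|_t = O(|A|/\mathrm{vol}_{r_d})$ would give the wrong power. So the degree bound must be sharper: one needs $\sum_{x\in L\cap\mathcal{G}_n}\deg(x) = O(|L|_t \mathrm{vol}_{r_d}) = O(|L|_t r^d)$, i.e.\ the \emph{average} degree over the animal is $O(r^d)$, not merely the trivial per-vertex bound $O(r^d)$ times the number of vertices (which could be as large as $|L|_t r^d$, already giving $|L|_t r^{2d}$). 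The resolution, and the crux of the argument, is a union bound over lattice animals weighted by the actual number of points inside: group tiles of $L$ by their point-count, and use that the number of ways to choose an animal of $\ell$ tiles is $O(n)\cdot c_{k'}(d)^\ell$ while the probability that a fixed union of $\ell$ tiles (total volume $\ell\,\mathrm{vol}_{r_d}$) contains more than $t\ell\,\mathrm{vol}_{r_d}$ points decays like $e^{-\Theta(t\ell\,\mathrm{vol}_{r_d})}$ for $t$ large; since $\mathrm{vol}_{r_d} = \Theta(r^d)$ is large whenever $r \ge (1+\eps)r_g$... but here is exactly the subtlety: $r$ is only a large \emph{constant} at worst, so $\mathrm{vol}_{r_d}$ need not dominate $\log c_{k'}(d)$. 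This is why the theorem (implicitly) needs $r$ bounded below, or a more careful ``container'' argument as sketched in the proof-strategy section of the introduction. For the present statement — where $r \ge (1+\eps)r_g$ only — one instead counts, w.h.p.\ via the first-moment method over animals, the number of ``dense'' tiles (those containing $\gg r^d$ points) inside any polylogarithmic-size animal and shows it is a vanishing fraction, so that the total point count, and then the total degree (each dense tile contributing at most (its size)$\times$(points in an $r$-ball around it)), stays $O(|A|r^d)$.

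Concretely, the steps I would carry out are: (1) reduce $\sum_{x\in A}\deg(x)$ to $\sum_{x\in L_A^+\cap\mathcal{G}_n}(\text{points within distance }r\text{ of }x)$, and note each such inner count is itself a sum over a bounded neighborhood of tiles of their point counts; so the whole quantity is $O\!\big(\sum_{\tau\in L_A^{++}} N_\tau^2/\text{something}\big)$ — more cleanly, $\sum_{x\in A}\deg(x) \le C\sum_{\tau,\tau' \text{ adjacent-ish}} N_\tau N_{\tau'}$ where $N_\tau$ is the number of points in tile $\tau$ and the sum ranges over pairs of tiles both near $L_A$; (2) establish the w.h.p.\ event that for every $k'$-lattice animal $L$ with $|L|_t \ge \beta\log n$ one has $\sum_{\tau\in L} N_\tau \le 2|L|_t\mathrm{vol}_{r_d}$ (this is Lemma~\ref{lem:bounds_vertices_big_animals} / Remark~\ref{rem:bounds_vertices_big_animals}, valid for $r>r'$; for $(1+\eps)r_g \le r \le r'$ one needs the refined container bound from the paper's toolkit, but since that is ``stated earlier'' only in spirit, I would invoke whatever form of it is available) and also the complementary-regime estimate $|A|\gtrsim r^d(\log n)^\zeta$ to make $|L_A|_t$ exceed $\beta\log n$; (3) handle the max-points-in-an-$r$-ball factor: w.h.p.\ no $r$-ball contains more than $C'r^d$ points (standard Poisson concentration plus union bound over an $O(n)$-net of ball centers, using $r^d = O(n)$), so each $\deg(x) \le C' r^d$ \emph{deterministically} on this event; (4) combine: $\sum_{x\in A}\deg(x) \le C'r^d \cdot |L_A^+\cap\mathcal{G}_n| \le C'r^d\cdot 2|L_A^+|_t\mathrm{vol}_{r_d}$, and since $|L_A^+|_t = \Theta(|L_A|_t)$ and $|L_A|_t = \Theta(|A|/\mathrm{vol}_{r_d})$ in the large regime (and $|L_A|_t \le |A|$ always, with the small regime absorbed because there $|A|/\mathrm{vol}_{r_d}$ is still the right order once $|A|\gtrsim r^d(\log n)^\zeta$), this is $O(|A|r^d)$.

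The main obstacle is step (2)--(4)'s reconciliation: the naive per-vertex bound $\deg(x) = O(r^d)$ immediately gives only $\sum_{x\in A}\deg(x) = O(|A|r^d)$ — wait, that is in fact already the claimed bound. So the \emph{real} subtlety is only that Lemma~\ref{lem:bounds_vertices_big_animals} requires $r>r'$; for the full range $r\ge(1+\eps)r_g$ one cannot use the crude lattice-animal count and must instead invoke the paper's container method (encapsulating animals in larger tiles) to still get $|L_A^+\cap\mathcal{G}_n| = O(|A|)$ w.h.p., equivalently that the point-density of any sufficiently large connected vertex set's tile-support is bounded. I expect that container argument — controlling $\sum_{\tau\in L}N_\tau$ uniformly over animals $L$ when $\mathrm{vol}_{r_d}$ is only a constant, by grouping tiles by density and counting choices of ``container'' supertiles — to be the genuinely hard part; everything else (the $r$-ball max-degree estimate and the translation between vertices, tiles, and their thickenings) is routine Poisson concentration and combinatorics.
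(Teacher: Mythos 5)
There is a genuine gap, and it sits exactly at the point your proposal declares routine. In step (3) you claim that w.h.p.\ no $r$-ball contains more than $C'r^d$ points, so that $\deg(x)\le C'r^d$ for every vertex, and in step (4) you conclude that the per-vertex bound $\deg(x)=O(r^d)$ ``already'' gives the claim. This is false precisely in the regime the theorem is about: for $r^d=O(\log n)$, and in particular for constant $r\ge(1+\eps)r_g$, an $r$-ball carries a Poisson variable of mean $\Theta(r^d)$, and the maximum over $\Theta(n)$ such balls is $\Theta(\log n/\log\log n)$, which is $\omega(r^d)$ whenever $r^d=o(\log n/\log\log n)$. (The union bound you invoke only yields $\deg(x)=O(r^d)$ uniformly when $r^d\ge c\log n$, i.e.\ essentially above the connectivity threshold, where the paper dispatches the statement in one line.) Because degrees are not uniformly $O(r^d)$, the theorem is genuinely a statement about the \emph{average} degree over a connected set, and the paper proves it by a density-stratified container argument: it uses $2^d$ shifted tessellations of side $\approx 2r$, cuts $A$ into lattice animals of size $\Theta((\log n)^4)$, calls a tile $j$-bad when it holds between $2^jr^d$ and $2^{j+1}r^d$ points, and shows via containers of side $\Theta(10^j r)$ (balancing the animal/container entropy against the Poisson tail) that any such animal has at most $\lceil\ell/(2^{2j}j^2r^d)\rceil$ $j$-bad tiles; summing $2^{j+1}r^d$ times these counts over $j$ gives the $O(|A\cap L|r^d)$ bound. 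You correctly sense in your second paragraph that a first-moment count over animals fails when $\mathrm{vol}_{r_d}$ is only a constant and that some container/density argument is needed, but you never carry it out: your concrete steps (1)--(4) abandon it in favor of the false max-degree shortcut, and your fallback for $(1+\eps)r_g\le r\le r'$ is to ``invoke whatever form of it is available,'' which is precisely the part that has to be proved.

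Two smaller points. First, your derivation of the preliminary lower bound on $|A|$ is backwards: from $\deg(x)\ge 1$ one gets $\pi(A)\ge |A|/(2|E(\mathcal{L}_n)|)$, i.e.\ an \emph{upper} bound $|A|\le 2|E(\mathcal{L}_n)|\pi(A)$; the correct route (as in the paper) is $\pi(A)\le |A|\Delta/(2|E(\mathcal{L}_n)|)$ with $\Delta=O(\log n)$ w.h.p., which still yields a polylogarithmic lower bound on $|A|$. Second, the identity $|L_A|_t=\Theta(|A|/\mathrm{vol}_{r_d})$ via Lemma~\ref{lem:bounds_vertices_big_animals} is only available for $r>r'$, and for constant $r$ it can fail by a $\log n/\log\log n$ factor for the same reason as above, so it cannot be used to cover the full range $r\ge(1+\eps)r_g$ either.
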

\begin{proof}
First observe that for $r^d=\omega(\log n)$, w.h.p., all degrees are $\Theta(r^d)$, and so the statement follows easily in this case. Assume thus from now on $r^d=O(\log n)$.
Observe that the minimum degree of vertices in $A$ is $1$ (because $\pi(A)>0$ implies that $A\neq\emptyset$ and $A\subseteq\mathcal{L}_n$).
Also, w.h.p.~the maximum degree in $\mathcal{G}_n$ is $O(\log n)$ and the total degree is $\Omega(n)$ by our assumption on $r$. 
Thus, since $\pi(A) > (\log n)^\zeta/n$ with $\zeta>6$, it follows that
$|A| > \log^{5} n$. 

Consider the tessellation $\Lambda^{\rho}_n$ of $\Lambda_n$ where $\rho$ is the smallest real value for which $n^{1/d}/\rho$ is an integer and $\rho\ge 2r$. In other words, let $\rho=n^{1/d}/\lfloor n^{1/d}/(2r)\rfloor$. 
Note that $\rho\le 2r/(1-2r/n^{1/d})=(1+o(1))2r$.
Since w.h.p.~every tile contains $O(\log n)$ vertices by our assumption of $r^d = O(\log n)$, the set $A$ can be partitioned into $k$-lattice animals of size between $(\log n)^4$ and $2(\log n)^4$. 
In fact, we will consider~$2^d$ tessellations of the same side length, where tessellation $\bfi=(i_1,...,i_d)\in\{0,1\}^d$ is the same as 
tessellation $\mathbf{0}=(0,...,0)$, with all tiles shifted to the right by $\rho$ in those dimensions where~$i_j=1$. In this way we can ensure that every edge of the graph is contained in at least one tile of one tessellation. 
By a union bound over all tessellations, 
it therefore suffices to show that, w.h.p., for every such lattice animal $L$ of one fixed tessellation corresponding to tiles that contain at least one vertex of~$A$, with~$\tau_x$ denoting the tile containing $x$ and $B_x(r)$ the Euclidean ball of radius $r$ centered at $x$, we have~$\sum_{x \in A \cap L} |\tau_x\cap B_x(r)| < (\eta/2^d)|A \cap L| r^d$; this will clearly imply the statement.

Let $\ell$ be a positive integer such that  $\log^4 n\le \ell \le 2\log^4 n$. Call a tile $j$-bad, if the number of vertices in it is in $[2^j r^d, 2^{j+1}r^d)$, for some $j_0=j_0(d) \le j < j_1$ (where $j_0$ is a large enough constant, depending on $d$, and where $j_1$ is the smallest integer such that 
$\ell/(2^{2j}j^2r^{d}) \le 1$. 
Observe that, since $2^{2j_1}j_1^2r^d=\Theta(\ell)=\Theta(\log^4 n)$, $r^d=O(\log n)$ and $r>r_g$, we have $j_1\ge(\frac32-o(1))\log\log n$ and $j_1\le (2-o(1))\log\log n$. 
Also, note that there are, in expectation, $O(r^d)=O(\log n)$ vertices in each tile, and by Chernoff bounds, w.h.p.~all tiles in all tessellations contain $O(\log n)$ vertices. Hence, for $n$ large we always may assume $j \le j_1$, as w.h.p.~in all tessellations all tiles contain at most $2^{j_1}r^d$ vertices, since $2^{j_1}r^d=\omega(\log n)$.
The probability that there exists a tessellation, and a  $k$-lattice animal of size $\ell$ of that tessellation with  more than $\lceil \ell/(2^{2j}j^2 r^d) \rceil$ tiles that are $j$-bad for some $j_0 \le j < j_1$, 
is at most 
\begin{align*}
& \sum_{j_0 \le j < j_1}  O\big(n (10^{dj})^{\lceil \ell / 10^j\rceil}\big)
\binom{\ell (2k+1)^d 10^{dj-1}}{\lceil \frac{\ell}{2^{2j}j^2r^{d}} \rceil} 2^jr^d \Big(e^{-\rho^d} \frac{(\rho^d)^{2^j r^d}}{(2^j r^d)!}\Big)^{\lceil \frac{\ell}{2^{2j}j^2r^{d}} \rceil} 
\\
& \qquad =O\big(n \log \log n\big) \max_{j_0 \le j < j_1 } O\big((10^{dj})^{\lceil \ell / 10^j\rceil}\big)\binom{\ell (2k+1)^d 10^{dj-1}}{\lceil \frac{\ell}{2^{2j}j^2r^{d}}\rceil} 2^jr^d \Big(e^{-\rho^d} \frac{(\rho^d)^{2^j r^d}}{(2^j r^d)!}\Big)^{\lceil\frac{\ell}{2^{2j}j^2r^{d}}\rceil}
\end{align*}
where the first factor of the general term of the summation in the first line is due to the fact that there are $O(n)$ choices of starting tiles for a $k$-lattice animal in all tessellations and that we can cover a~$k$-lattice animal of size~$\ell$ by $\lceil \ell/10^j \rceil$ containers, which are axis-parallel (hyper)cubes of side length~$(2k+1)10^{j}(2r)$, centered around the current tile and completely containing the next $10^j$ tiles of the~$k$-lattice animal, the second factor accounts for the $(2k+1)^d10^{dj}$ tiles that can be chosen as the next center of the next container (already counted tiles should be ignored, of course); then we bound the number of ways in which~$\lceil \ell/(2^{2j}j^2r^{d}) \rceil$ tiles that are $j$-bad can be chosen among the tiles in $\lceil\ell/10^j\rceil$ containers each comprised of  $(2k+1)^d10^{dj}$ tiles, the third term is because there are at most~$2^jr^d$ values among~$[2^jr^d,2^{j+1}r^d)$ for being $j$-bad, 
and the last term is the maximal probability for a Poisson random variable with parameter $\rho^d$ among these $2^j r^d$ values.
in the second line we used that the sum in the first line has $O(\log \log n)$ terms.
Taking logarithms, we see that the contribution of the  term in the second displayed line above is at most 
\[
\begin{split}
(1+o(1))\log n+\max_{j_0 \le j < j_1} \Big( (1+o(1))\frac{\ell}{10^j}dj\log(10)+(1+o(1))\frac{\ell}{2^{2j}j^2r^{d}}\log(10^{dj-1}2^{2j}j^2r^{d}(2k+1)^d) \\
-(1-o(1))\frac{\ell}{ 2^{2j}j^2r^{d}}2^{j}r^d\log (2^j/(e2^d))\Big). 
\end{split}
\]
Observe that for the expression inside the maximum, for $j_0$ large enough, the first and the second term are in absolute value at most one third of the third term, and plugging in these bounds for the first and second term, the expression inside the maximum is then increasing in $j$. Hence, the maximum is attained for $j=j_1$, and by definition of $j_1$, is at most $-(1-o(1)) 2^{j_1}r^d\log(2^{j_1}/(e2^d))/15$ (we used that~$\ell/(2^{2j_1}j_1^2r^d) > 1/5$). This expression is smaller than~$-c\log^2 n$ for sufficiently small $c > 0$ 
(because~$2^{j_1}j_1r^d =r^{d/2}\sqrt{2^{2j_1}j_1^2r^d}=\Omega(\log^2 n)$). 
Hence, the whole expression is negative, and so the original expression is $o(1)$. 

Since we aim for a statement that holds w.h.p., we may condition under this. For $ j_0 \le j < j_1$, let $A_j$ be the set of elements of $A$ inside $j$-bad tiles of a $k$-lattice animal $L$ of size~$\ell$ (recall that $\log^4 n \le \ell \le 2\log^4 n$). We get
\begin{align*}
\sum_{x \in A \cap L} |\tau_x\cap B_x(r)| 
\le 2^d\Big(2^{j_0}|A \cap L| r^d + \sum_{j_0 \le j < j_1} 2^{j+1}r^d |A_j|\Big) 
 =
O( |A \cap L|r^d)+2^{d+1}\sum_{j_0 \le j < j_1} 2^j r^d |A_j|.
\end{align*}
Note that as there are at most ${\lceil 
\ell/(2^{2j}j^2r^d)\rceil}$ tiles that are $j$-bad, for $j_0 \le j < j_1$, the contribution of each summand $2^{j} r^d|A_j|$ is at most 
\[
 2^{j} r^d \Big\lceil \frac{\ell}{2^{2j}j^2r^d} \Big\rceil 2^{j+1} r^d \le 2\Big(\frac{\ell}{j^2} + 2^{2j}r^d\Big)r^d 
 \le \frac{12}{j^2}\ell r^d,
\]
where for the last inequality we used that $j<j_1$ and $5\ell/(2^{2j_1}j_1^2r^d)>1$. 
Hence, 
$$
2^{d+1}\sum_{j_0 \le j < j_1}2^j r^d |A_j|\le 2^{d+1}\sum_{j_0 \le j < j_1}\frac{12}{j^2} \ell r^d.
$$
Observe now that $|A \cap L|\ge \ell$, as by definition at least one element of each tile of $L$ has to belong to~$A$. Thus,
$$
2^{d+1}\sum_{j_0 \le j < j_1}\frac{12}{j^2}\ell r^d=O(\ell r^d)=O(|A \cap L|r^d),
$$
and the theorem follows.
\end{proof}
Finally, when degrees are not necessarily concentrated and under some conditions, we show that a positive fraction of vertices of $\mathcal{L}_n$ have degrees close to the average degree.
\begin{proposition}\label{prop:vertices_with_degree_r_d}
    Let $\eps > 0$, and let $r \ge (1+\varepsilon)r_g$, and assume also $r^d=O(\log n)$. There exist positive constants $\underline{c}, \overline{c} > 0$ and $0 < \xi<1$ such that, if $\mathcal{X}_n$ denotes the set of vertices of $\mathcal{L}_n$ with degree between $\underline{c}\,r^d$ and $\overline{c}\,r^d$, then w.h.p.
    \[
    |\mathcal{X}_n| \ge \xi \,|\mathcal{L}_n| \, .
    \]
\end{proposition}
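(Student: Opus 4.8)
The plan is to estimate, for a point $x$ deep in the interior of $\Lambda_n$, the Palm probability of the underlying Poisson process that $x$ lies in $\mathcal{L}_n$ \emph{and} has degree comparable to $r^d$, and then to transfer this to a statement about $|\mathcal{X}_n|$ via the Mecke equation and a concentration estimate. Throughout, $\omega_d$ denotes the volume of the unit Euclidean ball and $R_n:=\lceil(\log n)^3\rceil$; since $r^d=O(\log n)$ we have $r=O((\log n)^{1/d})$, so the set of \emph{interior} points — those at $\ell_\infty$-distance $>R_n+r$ from $\partial\Lambda_n$ — has volume $n-o(n)$. For interior $x$, Slivnyak's theorem gives that, under the Palm distribution at $x$, the degree of $x$ in $\mathcal{G}_n$ equals $|\mathcal{V}_n\cap B_x(r)|$ and is distributed as $\mathrm{Poisson}(\omega_d r^d)$, with $\omega_d r^d\ge\omega_d((1+\eps)r_g)^d=:\mu_0>0$. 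I would fix $\underline c:=\omega_d/20$ and extract from standard Poisson tail bounds two constants: $g_1=g_1(\eps,d)>0$ with $\mathbb{P}(\mathrm{Poisson}(\omega_d r^d)\ge\underline c r^d)\ge g_1$ for every admissible $r$, and, for any prescribed $\eta_0>0$, a constant $\overline c=\overline c(\eps,d,\eta_0)$ with $\mathbb{P}(\mathrm{Poisson}(\omega_d r^d)>\overline c r^d)\le\eta_0$ for every admissible $r$.

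The key device is a surrogate for the (non-monotone) event $\{x\in\mathcal{L}_n\}$. For interior $x$, let $\tilde E_x$ be the event that the component of $x$ in $\mathcal{G}_n$ \emph{restricted to $B_x(R_n+r)$} contains a vertex at Euclidean distance $\ge R_n$ from $x$; this event is increasing and depends only on the Poisson process inside $B_x(R_n+r)$. Tracing a path realising percolation of $x$ in the infinite model $\mathcal{G}^{r,d}$ and truncating it at its first exit of $B_x(R_n+r)$ shows $\mathbb{P}(\tilde E_x)\ge\mathbb{P}(x\text{ lies in an infinite component of }\mathcal{G}^{r,d})$, which is non-decreasing in the radius and hence at least the same quantity for $\mathcal{G}^{(1+\eps)r_g,d}$; call this $\theta^*>0$ (positivity is the definition of $r_g$). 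Conversely, on $\tilde E_x$ the component of $x$ in $\mathcal{G}_n$ has a vertex at distance $\ge R_n$, hence at least $R_n/r=\omega((\log n)^{d/(d-1)})$ vertices (using $d\ge 2$), so by Theorem~\ref{thm:size_second_component_rgg} there is an event $\Omega_1$ with $\mathbb{P}(\Omega_1)=1-o(1)$ on which $\tilde E_x$ forces $x\in\mathcal{L}_n$, simultaneously for all interior $x$. Since $\tilde E_x$ and $\{\deg(x)\ge\underline c r^d\}$ are both increasing, the Harris--FKG inequality for Poisson processes together with the Mecke equation gives, for $n$ large,
\[
\mathbb{E}\big[\,|\{x\in\mathcal{V}_n:\ x\ \text{interior},\ \tilde E_x,\ \deg(x)\ge\underline c r^d\}|\,\big]\ \ge\ \theta^*g_1\,(n-o(n))\ \ge\ \tfrac12\theta^*g_1\,n.
\]

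To upgrade this to a w.h.p.\ bound, write $G'$ for the quantity inside the expectation. Whether a point is counted by $G'$ is determined by the Poisson process in a ball of radius $R_n+r=O((\log n)^3)$; adding one point $p$ can only increase $G'$, and only through points of $\mathcal{V}_n\cap B_p(R_n+r)$, so the add-one-point difference satisfies $0\le D_pG'\le 1+|\mathcal{V}_n\cap B_p(R_n+r)|$ and thus $\mathbb{E}[(D_pG')^2]=O((\log n)^{6d})$. The Poincaré inequality for functionals of a Poisson process then yields $\mathrm{Var}(G')\le\int_{\Lambda_n}\mathbb{E}[(D_pG')^2]\,dp=O(n(\log n)^{6d})=o(n^2)$, so $G'\ge\tfrac14\theta^*g_1\,n$ w.h.p.\ by Chebyshev. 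On $\Omega_1$ every point counted by $G'$ lies in $\mathcal{L}_n$ with degree $\ge\underline c r^d$, hence w.h.p.\ $|\{x\in\mathcal{L}_n:\deg(x)\ge\underline c r^d\}|\ge G'\ge\tfrac14\theta^*g_1\,n$. Applying the same local concentration argument (now with balls of radius $r$) to $|\{x\in\mathcal{V}_n:\deg(x)>\overline c r^d\}|$, whose mean is $\le\eta_0 n$ once $\overline c$ is chosen with $\eta_0:=\tfrac1{16}\theta^*g_1$, this count is $\le\tfrac18\theta^*g_1\,n$ w.h.p.; subtracting and using $|\mathcal{L}_n|\le n$ gives $|\mathcal{X}_n|\ge\tfrac18\theta^*g_1\,n\ge\tfrac18\theta^*g_1\,|\mathcal{L}_n|$ w.h.p. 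Thus the proposition follows with $\underline c=\omega_d/20$, $\overline c$ as above, and $\xi=\tfrac18\theta^*g_1\in(0,1)$.

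The one genuinely delicate point — and the reason for introducing $\tilde E_x$ — is the Harris--FKG step: one cannot simply bound $\mathbb{P}(x\in\mathcal{L}_n,\ \deg(x)\ge\underline c r^d)$ below by $\mathbb{P}(x\in\mathcal{L}_n)-\mathbb{P}(\deg(x)<\underline c r^d)$, because near the threshold $\mathbb{P}(x\in\mathcal{L}_n)$ is small while $\mathbb{P}(\deg(x)<\underline c r^d)$ stays bounded away from $0$; and $\{x\in\mathcal{L}_n\}$ — referring to the \emph{largest} component — is not an increasing event, so FKG does not apply to it directly. Substituting the increasing, local event $\tilde E_x$, which agrees with $\{x\in\mathcal{L}_n\}$ up to an event of probability $o(1)$, is what makes the correlation argument work; the remaining ingredients (Palm calculus, Poisson tail bounds, the Poincaré inequality, and the negligibility of the boundary layer) are routine.
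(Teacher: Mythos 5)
Your proof is correct, up to trivia ($|\mathcal{L}_n|\le n$ only holds in the form $|\mathcal{L}_n|\le(1+o(1))n$ w.h.p., costing a harmless adjustment of $\xi$; and $R_n/r=\omega((\log n)^{d/(d-1)})$ uses $d\ge 2$, which is indeed the standing assumption of this subsection), but it takes a genuinely different route from the paper's. The paper also starts from the Mecke formula, writing $\mathbb{E}|\mathcal{X}_n|=n\,\mathbb{P}_{\mathbf 0}\big(\mathbf 0\in\mathcal{L}_n,\ \deg(\mathbf 0)\in[\underline c\,r^d,\overline c\,r^d]\big)+o(n)$, but then lower-bounds the joint Palm probability by exactly the step you avoid: it uses $\mathbb{P}_{\mathbf 0}(\mathbf 0\in\mathcal{L}_n)\ge\gamma$ uniformly in $n,r$, asserts $\mathbb{P}_{\mathbf 0}(\deg(\mathbf 0)\in[\underline c\,r^d,\overline c\,r^d])\ge 1-\delta$ with $\delta<\gamma$, and concludes via the law of total probability; concentration is then a Chebyshev/second-moment bound exploiting independence of degrees at Euclidean distance $>2r$ and the $O(\log n)$ maximum degree. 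Your substitute --- the increasing, local surrogate $\tilde E_x$ for $\{x\in\mathcal{L}_n\}$ (justified through Theorem~\ref{thm:size_second_component_rgg}), Harris--FKG to decouple it from the increasing degree event, and the Poincar\'e inequality for Poisson functionals --- is heavier machinery, but it is more robust precisely at the point you flag: for $r$ near $(1+\varepsilon)r_g$ the atom $\mathbb{P}(\deg(\mathbf 0)=0)=e^{-\Theta(r_g^d)}$ caps how small $\delta$ can be made, so the paper's requirement $\delta<\gamma$ is delicate as written (it can be repaired by noting $\{\mathbf 0\in\mathcal{L}_n\}\subseteq\{\deg(\mathbf 0)\ge 1\}$, so that only $\mathbb{P}(1\le\deg(\mathbf 0)<\underline c\,r^d)$ --- which vanishes once $\underline c\,r^d\le 1$ and is exponentially small in $1/\underline c$ otherwise --- needs to be beaten by $\gamma$); your FKG step removes this issue entirely, and the Poincar\'e bound likewise sidesteps the positive correlation between $\{x\in\mathcal{L}_n\}$ and $\{y\in\mathcal{L}_n\}$ that the paper's second-moment computation passes over. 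What the paper's argument buys in exchange is brevity: it needs nothing beyond Palm calculus, Poisson tails and Chebyshev.
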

\begin{proof}
We condition on the existence of the giant, all the calculations below are done considering this conditioning (this does not affect the result as the giant exists w.h.p). We denote by $\mathbb{P}_{\mathbf{0}}$ the Palm probability, that is, the probability obtained by adding one virtual vertex at the origin $\mathbf{0}\in\mathbb{R}^d$. By the Mecke formula we obtain
\begin{equation}\label{eq:mecke_formula_first_moment_X_n}
\mathbb{E}(|\mathcal{X}_n|) =  n\mathbb{P}_{\mathbf{0}}(\mathbf{0}\in \mathcal{L}_n,\deg(\mathbf{0})\in [\underline{c}r^d,\overline{c}r^d]) +  o(n),
\end{equation}
where the term $o(n)$ results from boundary effects. First, recall that $\mathbb{P}_{\mathbf{0}} (\mathbf{0}\in \mathcal{L}_n)\ge \gamma$ for some positive $\gamma$ that does not depend on $n,r$. We claim that $\mathbb{P}_{\mathbf{0}}(\deg (\mathbf{0})\in [\underline{c}r^d,\overline{c}r^d] \mid \mathbf{0}\in \mathcal{L}_n)$ is uniformly bounded from below by a constant independent of $r,n$. We know that
\begin{equation}
    \mathbb{P}_{\mathbf{0}}(\deg(\mathbf{0})\in[\underline{c}r^d,\overline{c}r^d]) \ge 1-\delta,
\end{equation}
where $\delta > 0$ can be made arbitrarily small by setting $\underline{c},\overline{c}$ small and large enough, respectively. By the law of total probability we clearly have
\begin{align}
    \mathbb{P}_{\mathbf{0}}(\deg (\mathbf{0})\in [\underline{c}r^d,\overline{c}r^d] \mid\mathbf{0}\in \mathcal{L}_n)\mathbb{P}_{\mathbf{0}}(\mathbf{0} \in \mathcal{L}_n) + \mathbb{P}_{\mathbf{0}}(\deg (\mathbf{0})\in [\underline{c}r^d,\overline{c}r^d] \mid \mathbf{0}\notin \mathcal{L}_n)\mathbb{P}_{\mathbf{0}}(\mathbf{0} \notin \mathcal{L}_n)
    \\ 
    = \mathbb{P}_{\mathbf{0}}(\deg (\mathbf{0})\in [\underline{c}r^d,\overline{c}r^d]).
\end{align}
Thus, if $1-\gamma$ is smaller than $1-\delta$, the left hand side term of the sum of the first row is positive, and hence for $\delta > 0$, the claim is proven.

Substituting this into~\eqref{eq:mecke_formula_first_moment_X_n}, we obtain $\mathbb{E}(|\mathcal{X}_n|)\ge C'n$ for some positive $C'$ that does not depend on $n,r$. Now, we calculate the second moment of $|\mathcal{X}_n|$:
\begin{align}
\mathbb{E}(|\mathcal{X}_n|^2) 
& = \mathbb{E}\Big(\sum_{x,y \in \mathcal{L}_n} \mathbf{1}_{\{\deg(x) \in [\underline{c}r^d,\overline{c}r^d]\}} \mathbf{1}_{\{\deg(y)\in [\underline{c}r^d,\overline{c}r^d]\}} \Big) 
\\
& = \mathbb{E}\Big(\!\sum_{\substack{x,y\in \mathcal{L}_n\\ |x-y|>2r}}\!\!  \mathbf{1}_{\{\deg(x) \in [\underline{c}r^d,\overline{c}r^d]\}} \mathbf{1}_{\{\deg(y)\in [\underline{c}r^d,\overline{c}r^d]\}} + \!\!\sum_{\substack{x,y\in \mathcal{L}_n\\ |x-y|\le 2r}}\!\! \mathbf{1}_{\{\deg(x) \in [\underline{c}r^d,\overline{c}r^d]\}} \mathbf{1}_{\{\deg(y)\in [\underline{c}r^d,\overline{c}r^d]\}} \Big)
\\ 
& \le \left(\mathbb{E} (|\mathcal{X}_n|)\right)^2 + \mathbb{E}(|\mathcal{X}_n|)O(\log n),
\end{align}
where the first term of the last row follows from  the fact that the degrees of vertices at distance at least $2r$ are independent variables, and the second term comes from the fact that as $r^d=O(\log n)$, the maximum degree is $O(\log n)$ w.h.p. By Chebyshev's inequality,
$$
\mathbb{P} \left(\big||\mathcal{X}_n|-\mathbb{E}(|\mathcal{X}_n|)\big|\ge \lambda  \sqrt{\mathbb{E}(|\mathcal{X}_n| )O(\log n)}\right) \le \frac{1}{\lambda^2}.
$$
Choosing say $\lambda = \log n$ we see that w.h.p. $|\mathcal{X}_n|\ge \xi n$ for some $\xi>0$.
\end{proof}

\section{Upper Bound on the mixing time}\label{sec:Upper}
The main ingredient of the proof of the upper bound of our main result is an isoperimetric inequality which, for large enough connected sets of vertices $A\subseteq\mathcal{G}_n$  (see next result for a precise statement), gives a lower bound on the number of edges between $A$ and $\mathcal{L}_n\setminus A$.

Throughout this whole section, we use asymptotic notation whenever no ambiguity can arise and use explicit constants in the more delicate estimates, so that dependencies are clear.

Before proving the preceding result, we will show next that the upper bound on the mixing time in our main result follows from Theorem~\ref{thm:isoperimetric_inequality_rgg}.
\begin{lemma}\label{lem:lower_bound_varphi}
Let $\varepsilon > 0$, $r\ge (1+\varepsilon)r_g$, and $r^d=O(\log n)$.
Then, for 
$\pi_0\leq  t\leq 1/2$ where $f(n)=(\log n)^{5d}r^d/n$,
w.h.p., 
    \[
    \varphi(t) = \Omega\Big(\min\Big\{\frac{1}{nf(n)r^d}, \frac{r}{(nt)^{1/d}}\Big\}\Big).
    \] 
\end{lemma}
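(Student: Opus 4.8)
The plan is to bound $\varphi(t)$ from below by relating the combinatorial quantities $Q(A,A^c)$ and $\pi(A)\pi(A^c)$ to the vertex counts $|A|$ and $|E(A,A^c)|$, and then applying the isoperimetric inequality of Theorem~\ref{thm:isoperimetric_inequality_rgg} together with the degree bound of Theorem~\ref{thm:relation_StationaryMeasure_number_of_vertices_rgg}. First I would recall that, by Theorem~\ref{thm:concentration_vertices_edges_rgg}, w.h.p.\ $|\mathcal{L}_n| = \Theta(n)$ and $|E(\mathcal{L}_n)| = \Theta(nr^d)$, so $\pi(A^c) = \Theta(1)$ whenever $\pi(A) \le 1/2$ and $\pi(x) = \deg(x)/(2|E(\mathcal{L}_n)|) = \Theta(\deg(x)/(nr^d))$. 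Hence
\[
\frac{Q(A,A^c)}{\pi(A)\pi(A^c)} = \Theta\!\left(\frac{|E(A,A^c)|}{\sum_{x\in A}\deg(x)}\right),
\]
so it suffices to lower bound $|E(A,A^c)| \big/ \sum_{x\in A}\deg(x)$ over all connected (it suffices to consider connected $A$, since $\varphi$ is attained on such sets, or one reduces to components) sets $A$ with $0 < \pi(A) \le t$.

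The next step is to invoke the two key ingredients. By Theorem~\ref{thm:relation_StationaryMeasure_number_of_vertices_rgg}, for connected $A$ with $\pi(A) > (\log n)^\zeta/n$ (here $\zeta = 5d > 6$ suffices, matching the threshold $f(n) = (\log n)^{5d}r^d/n$ up to the harmless $r^d$ factor — I would check this converts correctly, using $r^d = O(\log n)$), we have $\sum_{x\in A}\deg(x) < \eta|A|r^d$ w.h.p. For the numerator, I would apply the first part of Theorem~\ref{thm:isoperimetric_inequality_rgg} (valid since $d \ge 2$, $r \ge (1+\eps)r_g$): for connected $A$ with $C_1(r^{2d+1}(\log n)^{d/(d-1)})^d \le |A| \le (1-\delta)|\mathcal{L}_n|$, we get $|E(A,A^c)| \ge c_1|A|^{(d-1)/d}r^{d+1}$. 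Combining, for such $A$,
\[
\frac{|E(A,A^c)|}{\sum_{x\in A}\deg(x)} \ge \frac{c_1|A|^{(d-1)/d}r^{d+1}}{\eta|A|r^d} = \Omega\!\left(\frac{r}{|A|^{1/d}}\right).
\]
Finally, since $\pi(A) \le t$ and $\pi(A) = \Theta(\sum_{x\in A}\deg(x)/(nr^d)) = \Theta(|A|r^d/(nr^d)) = \Theta(|A|/n)$ — using the degree bound to get $\sum_{x\in A}\deg(x) = \Theta(|A|r^d)$, with the lower bound $\sum_{x\in A}\deg(x) \ge |A|$ being too weak, so instead I'd note $\sum_{x\in A}\deg(x) \ge |A|$ always and combine with $\pi(A)\le t$ to get $|A| \le \sum_{x\in A}\deg(x) = \Theta(nr^d\pi(A)) \le O(nr^d t)$; hmm, this gives $|A| \le O(nr^dt)$, so $|A|^{1/d} \le O((nr^dt)^{1/d})$, hence $r/|A|^{1/d} = \Omega(r/(nr^dt)^{1/d}) = \Omega(r/(nt)^{1/d} \cdot r^{-1}) $ — I need to be careful here; actually with $\pi(A) = \Theta(|A|/n)$ valid only in the regime where degrees are $\Theta(r^d)$ which need not hold, the clean route is: $|A| \le 2|E(\mathcal{L}_n)|t$ since $\pi(A) \ge |A| \cdot \pi_0 \cdot$... no. The honest bound is $\pi(A) \ge \sum_{x\in A}1/(2|E|) = |A|/(2|E|)$, giving $|A| \le 2|E|\,\pi(A) \le 2|E|\,t = O(nr^d t)$. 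Then $r/|A|^{1/d} = \Omega(r/(nr^dt)^{1/d}) = \Omega(n^{-1/d}t^{-1/d} r^{1 - 1}) = \Omega((nt)^{-1/d})$, which is off by a factor of $r$ from the claimed $r/(nt)^{1/d}$. To recover the extra $r$, I must instead use the upper bound $\sum_{x\in A}\deg(x) < \eta|A|r^d$, giving $|A| > \pi(A)\cdot 2|E|/(\eta r^d) = \Omega(nt\pi(A)/(t r^d))$... the cleanest is: from $\pi(A) \le t$ and $\pi(A) = \sum_{x\in A}\deg(x)/(2|E|) \ge |A|/(2|E|)$ we only get $|A| = O(nr^dt)$; but pairing the numerator bound $|E(A,A^c)| \ge c_1|A|^{(d-1)/d}r^{d+1}$ directly with $Q/\pi\pi \ge Q/(2\pi(A)) \ge Q/(2t)$ and $Q = |E(A,A^c)|/(2|E|) = \Omega(|A|^{(d-1)/d}r^{d+1}/(nr^d)) = \Omega(|A|^{(d-1)/d}r/n)$ gives $\varphi(t) = \Omega(|A|^{(d-1)/d}r/(nt))$, and then using $|A| = \Omega(\text{its lower threshold})$ is the wrong direction — I want $|A|$ small. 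I would reconcile this by using both: $\varphi(t) \ge \min$ over the two estimates $\Omega(r/|A|^{1/d})$ (from dividing edge-boundary by degree sum) and noting $|A|/n \le \pi(A)\cdot(2|E|/n)\cdot$ but cleanly $\pi(A) = \Theta(|A|r^d/n)$ does hold \emph{on $\mathcal{X}_n$-heavy sets}; the safe statement is $|A| \le n\pi(A)/\pi_{\min\text{-ish}}$... The robust fix: $t \ge \pi(A) \ge |A|\cdot \frac{1}{2|E(\mathcal L_n)|}$ so $|A| \le 2|E(\mathcal L_n)|t = \Theta(nr^d t)$, BUT also we may assume $|A| \le (1-\delta)|\mathcal L_n|$, and for the complement-side one runs the same argument on $A^c$; for the stated bound $r/(nt)^{1/d}$ one uses $|A| \le \Theta(nt)$ which follows because $\pi(A) \ge |A|\cdot \min_x \pi(x)$ is too weak but $\pi(A) = \Theta(|A|/n)$ when the relevant $A$ has average degree $\Theta(r^d)$ — and indeed by Theorem~\ref{thm:relation_StationaryMeasure_number_of_vertices_rgg} the degree sum is $\Theta(|A|r^d)$ so $\pi(A) = \Theta(|A|r^d/(nr^d)) = \Theta(|A|/n)$, hence $|A| = \Theta(n\pi(A)) = O(nt)$. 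This gives $r/|A|^{1/d} = \Omega(r/(nt)^{1/d})$, as required.

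The first term $\Omega(1/(nf(n)r^d))$ handles the small-$\pi(A)$ regime: when $\pi(A) \le f(n)$, Theorem~\ref{thm:relation_StationaryMeasure_number_of_vertices_rgg} does not apply and $A$ may be as small as the threshold in Theorem~\ref{thm:isoperimetric_inequality_rgg}, but $|E(A,A^c)| \ge 1$ trivially (since $A \ne \emptyset$ and $A \subsetneq \mathcal{L}_n$ is connected to the rest), while $\sum_{x\in A}\deg(x) \le O(\log n)\cdot|A| \le O(\log n)\cdot O(nf(n))= O(n f(n)\log n)$; then $Q/(\pi(A)\pi(A^c)) = \Omega(|E(A,A^c)|/\sum_{x\in A}\deg(x)) = \Omega(1/(nf(n)\log n))$, which I would absorb into $\Omega(1/(nf(n)r^d))$ noting $r^d \le O(\log n)$ — actually this needs $1/(nf(n)r^d) \le 1/(nf(n)\log n)$, i.e.\ $r^d \ge \Omega(\log n)$, the wrong direction again, so I'd instead carry $\Omega(1/(nf(n)r^d))$ by using the sharper $\sum_{x\in A}\deg(x) = O(|A|r^d) = O(nf(n)r^d)$ which holds once $|A|$ exceeds the log-threshold (Lemma~\ref{lem:bounds_vertices_big_animals} / Remark, valid in a suitable regime) or by the crude maximum-degree bound being replaced in the relevant range. \textbf{The main obstacle} I anticipate is exactly this bookkeeping at the two boundary regimes — matching the $\zeta > 6$ threshold of Theorem~\ref{thm:relation_StationaryMeasure_number_of_vertices_rgg} against the definition $f(n) = (\log n)^{5d}r^d/n$, and getting the degree sums to be $\Theta(|A|r^d)$ rather than merely $\le O(|A|\log n)$ or $\ge |A|$ uniformly, so that the two terms in the $\min$ come out with exactly the claimed powers of $r$, $n$, and $t$. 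The rest is a direct substitution of the two cited theorems into the definition of $\varphi$.
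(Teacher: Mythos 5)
Your overall strategy matches the paper's (lower bound $\varphi_A\ge Q(A,A^c)/\pi(A)$, then feed in Theorem~\ref{thm:isoperimetric_inequality_rgg} and Theorem~\ref{thm:relation_StationaryMeasure_number_of_vertices_rgg}), but the step where you convert $|A|$ into $nt$ has a genuine gap. After dividing the isoperimetric bound by the degree sum you arrive at $\Omega(r/|A|^{1/d})$, and to finish you need $|A|=O(nt)$, which you justify by asserting that Theorem~\ref{thm:relation_StationaryMeasure_number_of_vertices_rgg} gives $\sum_{x\in A}\deg(x)=\Theta(|A|r^d)$. That theorem only gives the \emph{upper} bound $\sum_{x\in A}\deg(x)<\eta|A|r^d$; in the sparse regime $r^d=O(\log n)$ there is no matching lower bound of order $|A|r^d$ (the giant contains many low-degree vertices), so $\pi(A)=\Theta(|A|/n)$ is unavailable and, as your own intermediate computations show, the honest bound $|A|\le 2|E(\mathcal{L}_n)|t$ loses exactly a factor $r$. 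The fix is not a degree lower bound but a different allocation of the same inequalities, which is what the paper does: keep $\pi(A)\le t$ in the denominator, i.e.\ $\varphi_A\ge c_1|A|^{(d-1)/d}r^{d+1}/(2\pi(A)|E(\mathcal{L}_n)|)$, and use the degree-sum \emph{upper} bound in the form $|A|>2\pi(A)|E(\mathcal{L}_n)|/(\eta r^d)$ to lower bound the numerator; since $|A|$ appears there with positive exponent, this yields $\varphi_A=\Omega\big(r^2/(\pi(A)|E(\mathcal{L}_n)|)^{1/d}\big)=\Omega\big(r/(nt)^{1/d}\big)$ with no lower bound on degrees needed. Your small-$\pi(A)$ case has the same flavor of problem: the bound you actually derive, $\Omega(1/(nf(n)\log n))$ (or worse, once you track $|A|\le 2|E|f(n)$ correctly), is weaker than the claimed $\Omega(1/(nf(n)r^d))$ since $r^d=O(\log n)$, and your proposed repair is vague; the correct argument is immediate from the case hypothesis itself, $\varphi_A\ge Q(A,A^c)/\pi(A)\ge (2|E(\mathcal{L}_n)|)^{-1}/f(n)$, using only $|E(A,A^c)|\ge1$.

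A second, smaller omission: you apply Theorem~\ref{thm:isoperimetric_inequality_rgg} for all $A$ with $\pi(A)\le 1/2$, but its hypothesis $|A|\le(1-\delta)|\mathcal{L}_n|$ does not follow from $\pi(A)\le1/2$ in the inhomogeneous regime (a set of low-degree vertices can exhaust more than a $(1-\delta)$-fraction of $\mathcal{L}_n$ while carrying at most half the stationary mass). The paper verifies this hypothesis only for $\pi(A)\le t_0$ with $t_0$ a small constant, using Proposition~\ref{prop:vertices_with_degree_r_d} to show $|\mathcal{X}_n\setminus A|=\Omega(|\mathcal{L}_n|)$, and then handles $t\in[t_0,1/2]$ separately through monotonicity of $\varphi$ and the fact that $\varphi(1/2)$ is attained on sets with both $A$ and $A^c$ connected, applying the isoperimetric inequality to the smaller side. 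Your passing remark that one may ``run the same argument on $A^c$'' does not substitute for this, since $A^c$ need not be connected and need not satisfy the size hypotheses.
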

\begin{proof}
    For $A\subseteq\mathcal{L}_n$ such that $0<\pi(A)<1$, 
      let $\varphi_A=Q(A,A^c)/(\pi(A)\pi(A^c))$.
    Since $\pi(A^c)\leq 1$ for all~$A\subseteq\mathcal{L}_n$, 
    we have 
    \begin{equation}\label{eqn:lowBnd_varphiA}
    \varphi_A \geq \frac{Q(A,A^c)}{\pi(A)}.
    \end{equation}
    First, consider the case where $A\subseteq\mathcal{L}_n$ is such that $0<\pi(A)\leq f(n)$.
    Since $A\neq\emptyset$ (because $\pi(A)> 0$), $A^c\neq\emptyset$ (because $\pi(A^c)\ge 1-f(n)\ge 1/2$), and $\mathcal{L}_n$ is connected, it follows that $E(A,A^c)\neq \emptyset$.
    Thus, $Q(A,A^c)\geq 1/(2
    |E(\mathcal{L}_n)|)$ which, by~\eqref{eqn:lowBnd_varphiA}, together with our case
    assumption yields 
    \begin{equation}\label{eqn:varphiA1}
    \varphi_A \geq \frac{Q(A,A^c)}{\pi(A)} 
    \geq \frac{1}{2f(n)|E(\mathcal{L}_n)|}.
    \end{equation}
    
    Next, consider the case where $f(n)<\pi(A)\leq t\le t_0$, for some constant $t_0>0$ to be established later. 
    Then, by Theorem~\ref{thm:relation_StationaryMeasure_number_of_vertices_rgg}, for some $\eta > 0$, w.h.p., 
    \begin{equation}\label{eqn:lowerBnd_piA}
    f(n)< \pi(A) 
    < \frac{\eta |A|r^d}{2|E(\mathcal{L}_n)|}. 
    \end{equation}
    Hence, we get $|A|=\Omega(f(n)|E(\mathcal{L}_n)|/r^d)$. Thus, recalling that $d\geq 2$ and $r^d=O(\log n)$,
    since w.h.p.~$|E(\mathcal{L}_n)|=\Theta(nr^d)$
    (by Theorem~\ref{thm:concentration_vertices_edges_rgg})
    and 
    \[
    (\log n)^{5d}
    =\omega\big((r^{2d}(\log n)^{\frac{d}{d-1}}\big)^d\big),
    \]
    by definition of $f(n)$, we get that 
    $|A|=\omega((r^{2d+1}(\log n)^{d/(d-1)})^d)$.
    So, the only hypothesis remaining to apply Theorem~\ref{thm:isoperimetric_inequality_rgg} is $|\mathcal{L}_n\setminus A|=\Omega(|\mathcal{L}_n|)$. This can be achieved by making $t_0$ small enough: indeed, denote by $\mathcal{X}_n$ the set of vertices of $\mathcal{L}_n$ with degree between $\underline{c}r^d$ and $\overline{c}r^d$, with $\underline{c},\overline{c}$ as in Proposition~\ref{prop:vertices_with_degree_r_d}. By the hypothesis over $\pi(A)$,
    $$
    \sum_{x\in A} \deg(x) \le 2t_0 |E(\mathcal{L}_n)| \le 2C't_0|\mathcal{L}_n|r^d\le \eps'|\mathcal{L}_n|r^d
    $$
    where we used that $|E(\mathcal{L}_n)|\le C'|\mathcal{L}_n|r^d$ for some constant $C'$ (see Theorem~\ref{thm:concentration_vertices_edges_rgg}). Furthermore, $\eps'$ can be made small enough by making $t_0$ small. Next, we derive a lower bound on $|\mathcal{X}_n\setminus A|$. 
    First, note that 
    $$
    \sum_{x\in\mathcal{X}_n\setminus A}\deg(x) = \sum_{x\in \mathcal{X}_n} \deg(x) - \sum_{x\in A\setminus \mathcal{X}_n} \deg(x) \ge \underline{c}|\mathcal{X}_n|r^d - \eps'|\mathcal{L}_n|r^d = \Omega(|\mathcal{L}_n|r^d),
    $$
    where we used Proposition~\ref{prop:vertices_with_degree_r_d} to bound $|\mathcal{X}_n|$ and made $\eps'$ small enough such that the last equality holds. As the maximum degree of a vertex in $\mathcal{X}_n$ is $\overline{c}r^d$ by definition, we deduce that $|\mathcal{L}_n\setminus A|\ge |\mathcal{X}_n\setminus A|=\Omega(|\mathcal{L}_n|)$. 
        So, 
    we can apply Theorem~\ref{thm:isoperimetric_inequality_rgg} to such $A$ with $\pi(A)\le t_0$  and obtain that 
    \[
    \varphi_A  \geq \frac{|E(A,A^c)|}{2\pi(A)|E(\mathcal{L}_n)|} =\Omega\left( \frac{|A|^{(d-1)/d}\cdot r^{d+1}}{\pi(A)|E(\mathcal{L}_n)|}\right)
    =\Omega\left(\frac{r}{(\pi(A)|E(\mathcal{L}_n)|/r^d)^{1/d}} \right),
    \]
    where the  last equality follows from~\eqref{eqn:lowerBnd_piA}.
    Hence, by our case assumption,
    \begin{equation}\label{eqn:varphiA2}
    \varphi_A =\Omega\left(\frac{r}{(t|E(\mathcal{L}_n)|/r^d)^{1/d}} \right),
    \end{equation}
    By Theorem~\ref{thm:concentration_vertices_edges_rgg}, using once more that $|E(\mathcal{L}_n)|=\Theta(nr^d)$ w.h.p., from~\eqref{eqn:varphiA1} and~\eqref{eqn:varphiA2}, we conclude that for all $A\subseteq\mathcal{L}_n$ such that 
    $0<\pi(A)\le t_0$, w.h.p.,
    \[
    \varphi_A = \Omega\Big(\min\Big\{\frac{1}{nf(n)r^d}, \frac{r}{(nt)^{1/d}}\Big\}\Big).
    \]
    The claimed result follows by definition of $\varphi(t)$. To finish the proof of the lemma, if $t_0\leq t\leq\frac12$, it suffices to prove that $\varphi(t)=\Omega(r/n^{1/d})$. To see that the latter is true, note that the value of $\varphi(1/2)$ is attained when both $A$ and $\mathcal{L}_n\setminus A$ are connected (see Lemma~2.5 of~\cite{benjamini2003mixing}). An argument similar to the one used to handle the case $f(n)\le t\le t_0$ implies the claimed bound on $\varphi(1/2)$: indeed, as both $A$ and $A^c$ are connected, we may apply Theorem~\ref{thm:isoperimetric_inequality_rgg} to the smaller of them, as it must have at most half of the vertices of $\mathcal{L}_n$. This yields a lower bound of the correct order for $E(A,A^c)$, which in turn provides the desired bound $\varphi(1/2)=\Omega(r/n^{1/d})$.
    As $\varphi$ is a monotone non-increasing function on $t$, we use the trivial bound $\varphi(t)\ge \varphi(1/2)=\Omega(r/n^{1/d})$ for $t\in[t_0,\frac{1}{2}]$. 
\end{proof}

\begin{lemma}\label{lem:lower_bound_varphi2}
There exists $C' > 0$ such that for every $c>0$, if $r^d\ge C'\log n$ and $cr^d/n\le t\le 1/2$, then w.h.p.,
\[
\varphi(t) = \Omega\Big(\frac{r}{(nt)^{1/d}}\Big).
\]
\end{lemma}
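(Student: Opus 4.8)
The argument follows the shape of the proof of Lemma~\ref{lem:lower_bound_varphi}, but is simpler: in the dense regime $r^{d}\ge C'\log n$ there is no need for the auxiliary threshold $f(n)$. The key observation is that once $t\ge cr^{d}/n$ one has $r/(nt)^{1/d}\le c^{-1/d}$, a constant; hence small vertex sets (to which Theorem~\ref{thm:isoperimetric_inequality_rgg} does not directly apply) only need to be shown to have $\varphi_{A}=\Omega(1)$, which is easy here. I would first fix $C'$ large enough that, w.h.p., every vertex of $\mathcal{G}_{n}$ has degree in $[\underline{c}r^{d},\overline{c}r^{d}]$ for suitable constants $0<\underline{c}\le\overline{c}$ (Chernoff bounds for Poisson variables plus a union bound over the $\Theta(n)$ vertices, which needs $r^{d}\ge C'\log n$ with $C'$ large), and simultaneously that the second part of Theorem~\ref{thm:isoperimetric_inequality_rgg} holds for the choice $c_{2}:=\underline{c}/2$ and $\delta:=1/4$, and that $|\mathcal{L}_{n}|=\Theta(n)$, $|E(\mathcal{L}_{n})|=\Theta(nr^{d})$ (Theorem~\ref{thm:concentration_vertices_edges_rgg}); everything below is deterministic on this event. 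For $A\subseteq\mathcal{L}_{n}$ with $0<\pi(A)<1$, writing $\varphi_{A}:=Q(A,A^{c})/(\pi(A)\pi(A^{c}))$ and using $\pi(A^{c})\le1$,
\[
\varphi_{A}\ \ge\ \frac{Q(A,A^{c})}{\pi(A)}\ =\ \frac{|E(A,A^{c})|}{\sum_{x\in A}\deg(x)},
\]
so it suffices to bound the right-hand side below by $\Omega\bigl(r/(nt)^{1/d}\bigr)$; and since $r/(nt)^{1/d}$ is bounded above by a constant, any bound of the form $\varphi_{A}=\Omega(1)$ is also acceptable.

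Two elementary building blocks feed the argument. \emph{Local expansion}: if $B\subseteq\mathcal{L}_{n}$ has $|B|<\underline{c}r^{d}/2$, then each vertex of $B$ has at least $\underline{c}r^{d}$ neighbours, at most $|B|-1<\underline{c}r^{d}/2$ of which lie in $B$, so each vertex of $B$ has at least $\underline{c}r^{d}/2$ neighbours in $\mathcal{L}_{n}\setminus B$; hence $|E(B,B^{c})|\ge(\underline{c}/2)|B|r^{d}$ and $\varphi_{B}\ge\underline{c}/(2\overline{c})$. \emph{Component decomposition}: for any $A\subseteq\mathcal{L}_{n}$, if $A_{1},\dots,A_{k}$ are the vertex sets of the connected components of $\mathcal{L}_{n}[A]$, there are no edges between distinct $A_{i}$'s, so $|E(A,A^{c})|=\sum_{i}|E(A_{i},A_{i}^{c})|$ with each $A_{i}$ connected and $|A_{i}|\le|A|$; to each $A_{i}$ we apply Theorem~\ref{thm:isoperimetric_inequality_rgg} when $|A_{i}|\ge c_{2}r^{d}$ (legitimate provided $|A_{i}|\le\tfrac34|\mathcal{L}_{n}|$), and local expansion when $|A_{i}|<c_{2}r^{d}$.

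The proof then splits into three cases according to the size of the side $A$ whose measure is at most $t$. If $|A|<\underline{c}r^{d}/2$, local expansion gives $\varphi_{A}\ge\underline{c}/(2\overline{c})=\Omega(1)$. If $\underline{c}r^{d}/2\le|A|\le|\mathcal{L}_{n}|/2$, note first that $\pi(A)\le t$ together with the degree bounds forces $|A|\le\tfrac{\overline{c}}{\underline{c}}t|\mathcal{L}_{n}|=O(nt)$; decomposing as above, each big component contributes $|E(A_{i},A_{i}^{c})|\ge c_{1}|A_{i}|^{(d-1)/d}r^{d+1}=\Omega\bigl(|A_{i}|r^{d+1}/(nt)^{1/d}\bigr)$ (using $|A_{i}|\le|A|=O(nt)$), while each small component contributes $|E(A_{i},A_{i}^{c})|\ge(\underline{c}/2)|A_{i}|r^{d}=\Omega\bigl(|A_{i}|r^{d+1}/(nt)^{1/d}\bigr)$, the extra factor $r$ being absorbed because $nt\ge cr^{d}$; summing gives $|E(A,A^{c})|=\Omega\bigl(|A|r^{d+1}/(nt)^{1/d}\bigr)$, and dividing by $\sum_{x\in A}\deg(x)\le\overline{c}|A|r^{d}$ yields $\varphi_{A}=\Omega\bigl(r/(nt)^{1/d}\bigr)$. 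Finally, if $|A|>|\mathcal{L}_{n}|/2$, then $|A^{c}|<|\mathcal{L}_{n}|/2$, $\sum_{x\in A}\deg(x)\le2t|E(\mathcal{L}_{n})|$, and, since $\pi(A)\le1/2$, $\sum_{x\in A^{c}}\deg(x)\ge|E(\mathcal{L}_{n})|$, which forces $|A^{c}|=\Omega(n)$; decomposing $A^{c}$ and letting $S$, $S'$ be the total sizes of its big, resp.\ small, components ($S+S'=|A^{c}|=\Omega(n)$), if $S'\ge|A^{c}|/2$ then local expansion gives $|E(A,A^{c})|\ge(\underline{c}/2)r^{d}S'=\Omega(nr^{d})$, and if $S\ge|A^{c}|/2$ then Theorem~\ref{thm:isoperimetric_inequality_rgg} together with subadditivity of $x\mapsto x^{(d-1)/d}$ gives $|E(A,A^{c})|\ge c_{1}r^{d+1}S^{(d-1)/d}=\Omega\bigl(r^{d+1}n^{(d-1)/d}\bigr)$; in either sub-case, dividing by $2t|E(\mathcal{L}_{n})|=\Theta(tnr^{d})$ and using $r^{d}\le n$ and $t\le\tfrac12$ gives $\varphi_{A}=\Omega\bigl(r/(nt)^{1/d}\bigr)$. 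Taking the minimum of the finitely many constants produced, $\varphi(t)=\min_{0<\pi(A)\le t}\varphi_{A}=\Omega\bigl(r/(nt)^{1/d}\bigr)$.

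I expect the main obstacle to be the bookkeeping in the third case: there $|A|>|\mathcal{L}_{n}|/2$ only tells us that $A^{c}$ has linear size, but $A^{c}$ may be shattered into many tiny components, so the "shattered'' and "has a large piece'' alternatives must be handled separately, and it is precisely the standing assumption $r^{d}=O(n)$ (equivalently $r\le n^{1/d}$) that makes the shattered alternative give a bound of the correct order. A secondary point needing care is to ensure, in every appeal to Theorem~\ref{thm:isoperimetric_inequality_rgg}, that the component in question has at most $(1-\delta)|\mathcal{L}_{n}|$ vertices, which is why one always decomposes the \emph{smaller} of $A$ and $A^{c}$; the degree concentration (valid since $r^{d}\ge C'\log n$) is what lets one pass freely between $\pi(A)$ and $|A|$, and between $\sum_{x\in A}\deg(x)$ and $|A|r^{d}$, throughout.
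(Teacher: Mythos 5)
Your proof is correct, and its core mechanism is the same as the paper's: in the dense regime degree concentration lets you pass freely between $\pi(A)$ and $|A|$, and the isoperimetric inequality then gives $\varphi_A=\Omega(r/|A|^{1/d})=\Omega(r/(nt)^{1/d})$. Where you diverge is in how the hypotheses of Theorem~\ref{thm:isoperimetric_inequality_rgg} are met. The paper's two-line sketch applies the isoperimetric bound directly to an arbitrary set $A$ with $\pi(A)\le t$; this is legitimate only because the statement actually proved in the dense regime (Proposition~\ref{prop:smal_radii_regime}) requires neither connectivity nor $|A|\le(1-\delta)|\mathcal{L}_n|$, but only $\pi(A)\le\tfrac12$ and $|A|\ge Cr^d$. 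Working from the theorem exactly as stated, you instead decompose $A$ (or $A^c$ when $|A|>|\mathcal{L}_n|/2$) into connected components, treat components of size below $c_2r^d$ by the $\Omega(1)$ local-expansion bound (which suffices precisely because $r/(nt)^{1/d}=O(1)$ once $nt\ge cr^d$), and reassemble via subadditivity of $x\mapsto x^{(d-1)/d}$; all the individual steps (disjointness of the component boundaries, $|A|=O(nt)$ from $\pi(A)\le t$, the use of $r^d=O(n)$ and $t\le\tfrac12$ in the third case) check out, and your case analysis in fact covers the sets of very small measure that the paper's sketch does not explicitly address. What the paper's route buys is brevity, inheriting the computation from Lemma~\ref{lem:lower_bound_varphi} and the stronger form of the dense-regime inequality; what yours buys is a self-contained argument that needs Theorem~\ref{thm:isoperimetric_inequality_rgg} only in its stated form, at the cost of the three-case bookkeeping.
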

\begin{proof}
The proof is similar to that of Lemma~\ref{lem:lower_bound_varphi}, so we only give a rough sketch. 
Set~$f(n)=2c r^d/n$ and proceed as in the mentioned proof in the case where $f(n)< \pi(A)\le t<\frac12$.
Then, since standard concentration arguments imply that w.h.p.~$\pi(A)\le 2|A|/n$, for $\pi(A)>f(n)$ we get that 
$|A|\geq c r^d$ so we can apply Theorem~\ref{thm:isoperimetric_inequality_rgg} and obtain that, w.h.p., 
\[
\varphi_A \geq \frac{|E(A,A^c)|}{2\pi(A)|E(\mathcal{L}_n)|}
= \Omega\Big(\frac{|A|^{(d-1)/d}r^{d+1}}{\pi(A)|E(\mathcal{L}_n)|}\Big)
= \Omega\Big(\frac{r}{(nt)^{1/d}}\Big),
\]
where for the last equality we used Theorem~\ref{thm:concentration_vertices_edges_rgg}.
\end{proof}

We can now give the proof of the upper bound of our main theorem, assuming for now that Theorem~\ref{thm:isoperimetric_inequality_rgg} holds:
\begin{proof}[Proof (of upper bound on the mixing time in Theorem~\ref{thm:main_thm_bounds_over_mixing_rgg})]
Assume that $r^d=O(\log n)$. 
Let~$f(n)$ and~$\varphi(t)$ be as in the statement of Lemma~\ref{lem:lower_bound_varphi}.
Define~$g(n)=(nf(n)r^{d+1})^d/n$ and observe that by Lemma~\ref{lem:lower_bound_varphi}, $\varphi(t)=\Omega(r/(nt)^{1/d})$ if $t\ge g(n)$ and~$\varphi(t)=\Omega(1/(nf(n)r^d))$ if $0<t< g(n)$.

Recall that $\pi(x) \ge 1/2(|E(\mathcal{L}_n)|)$ for all $x\in\mathcal{G}_n$. Thus, partitioning the range of integration of the integral 
in the statement of Theorem~\ref{thm:upper_bound_mixing_lovasz_kannan} we get  
\[
\tau_{\text{mix}}(\mathcal{L}_n) = O\left( \int_{1/(2|E(\mathcal{L}_n)|)}^{g(n)}\frac{dt}{t\varphi^{2}(t)} + \int_{g(n)}^{1/2} \frac{dt}{t\varphi^{2}(t)} +\frac{1}{\varphi(1/2)}\right). 
\]
By our observation from above,  w.h.p.,~we have 
\[
\int_{g(n)}^{1/2} \frac{dt}{t\varphi^{2}(t)} = O\left( \int_{g(n)}^{1/2} \frac{(nt)^{2/d}}{r^{2}t} dt \right) = O\left( \int_{g(n)}^{1/2} \frac{n^{2/d}}{r^{2}t^{(d-2)/d}}dt \right) = O\left( \frac{n^{2/d}}{r^2} \right)
\]
and 
\[
\frac{1}{\varphi(1/2)} = O\left(\frac{n^{1/d}}{r} \right) =O\left( \frac{n^{2/d}}{r^2} \right).
\]
On the other hand, again by the above observation and by Theorem~\ref{thm:concentration_vertices_edges_rgg}, w.h.p., 
\begin{align*}
\int_{1/(2|E(\mathcal{L}_n)|)}^{g(n)} \frac{dt}{t\varphi^{2}(t)} 
& = O\Big(\int_{1/(2|E(\mathcal{L}_n)|}^{g(n)} \frac{(nf(n)r^d)^2}{t} dt\Big) \\
& = O\Big(\big(nf(n)r^d\big)^2\log\Big((nf(n)r^{d+1})^d\frac{|E(\mathcal{L}_n)|}{n}\Big)\Big) \\
& = O\left( \frac{n^{2/d}}{r^2} \right),
\end{align*}
where the last identity holds because 
$nf(n) r^d=(\log n)^{5d}r^{2d}$ and 
$|E(\mathcal{L}_n)|=\Theta(nr^d)$  w.h.p., with room to spare since $r^d=O(\log n)$. 

To extend the upper bound on the mixing time to the regime $r^{d} \ge C'\log n$ for $C'$ large, recall that in this case the degree is highly concentrated around $r^d$, this is, w.h.p. $\deg(x) =\Theta(r^d)$ for every vertex in $\mathcal{G}_n$, and as a consequence there is a $c>0$ such that $\pi_1\ge cr^d/n$. The result follows directly by application of Theorem~\ref{thm:upper_bound_mixing_lovasz_kannan} together with Theorem~\ref{thm:isoperimetric_inequality_rgg}. 
\end{proof}

We divide the proof of Theorem~\ref{thm:isoperimetric_inequality_rgg} into three parts according to the value of $r$. First, we address the \emph{large radii regime} where $r^d\ge C'\log n$ for $C'$ large enough, then the \emph{intermediate radii regime} where $r'<r$ for a large enough but fixed $r'$ and $r^d=O(\log n)$ 
(the exact condition on $r'$ might depend on the particular statement where it appears, but we choose $r'$ large enough so that it applies everywhere). Finally, via a renormalization argument we consider the \emph{small radii regime} where $(1+\varepsilon)r_g\le r\le r'$ with $\varepsilon>0$. 

\subsection{Large radii regime}
This regime, that is, the case when $r^d\ge C'\log n$ for $C'>0$ large, is relatively simple to handle.
In this case, standard large deviation bounds together with a straightforward union bound suffice to show that every tile in~$\Lambda_n^{r_d}$ contains, up to a lower order factor, 
the same number of vertices, namely the expected number of vertices of $\mathcal{G}_n$  in a region of volume $\mathrm{vol}_{r_d}$.
Then, the analysis is divided into two cases depending on the set $A \subseteq \mathcal{L}_n$.
In the first one, we assume the vertices of $A$ are, mostly, distributed in tiles that have an abundance of vertices not belonging 
to $A$. The choice of $r_d$ then guarantees that most vertices $v \in A$ are incident to $\Omega(r^d)$ vertices not belonging to $A$ but in the same tile as $v$, therefore significantly contributing to $E(A, A^c)$. On the other hand, if vertices in $A$ are packed into tiles each containing few vertices not belonging to $A$, 
we rely on isoperimetric bounds 
from Section~\ref{sec:percolation} to show that sufficiently many tiles containing many vertices of $A$ are adjacent to tiles containing many vertices not in~$A$ so that each such pair contributes many edges to $E(A,A^c)$.

\begin{proposition}\label{prop:smal_radii_regime}
If $r^d\ge C'\log n$ for $C'>0$ large enough, then for any $C > 0$
there is a constant $c>0$ so that w.h.p.~the following holds: 
For every $A\subseteq\mathcal{L}_n$ (not necessarily connected) such that 
$|A|\ge Cr^d$ and $\pi(A)\le \frac{1}{2}$, the number of edges between $A$ and $A^c$ is at least $c|A|^{(d-1)/d}r^{d+1}$.
\end{proposition}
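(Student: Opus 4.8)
The plan is to work with the tiling $\Lambda_n^{r_d}$, whose tiles have volume $\mathrm{vol}_{r_d}=\Theta(r^d)$, and to exploit the two defining features of $r_d$: tiles all have equal volume, and any two points lying in a pair of adjacent tiles are within distance $r$, so that all vertices in two adjacent tiles form a clique in $\mathcal{G}_n$. First I would record the ``concentration'' event $\mathcal E$: since $r^d\ge C'\log n$, a Chernoff bound for Poisson random variables together with a union bound over the $O(n/r^d)$ tiles gives that w.h.p.\ every tile of $\Lambda_n^{r_d}$ contains between $\tfrac12\mathrm{vol}_{r_d}$ and $2\mathrm{vol}_{r_d}$ vertices of $\mathcal{G}_n$; in particular every vertex of $\mathcal{G}_n$ has degree $\Theta(r^d)$ and, by Theorem~\ref{thm:size_second_component_rgg}, w.h.p.\ $\mathcal L_n$ contains \emph{all} vertices (since there are no components of size $\omega((\log n)^{d/(d-1)})$ outside it and every tile alone already has $\omega(\log n)$ vertices, all of which are mutually adjacent). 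Work on this event from now on, so ``$A\subseteq\mathcal L_n$'' just means ``$A\subseteq\mathcal V_n$''.

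Next I would set a small threshold $\kappa>0$ (to be fixed) and, for the given set $A$, classify each tile $\tau\in\Lambda_n^{r_d}$ as \emph{heavy} if $|A\cap\tau|\ge\kappa\,\mathrm{vol}_{r_d}$ and \emph{light} otherwise. Let $H$ be the collection of heavy tiles and split $A=A_{\mathrm{light}}\cup A_{\mathrm{heavy}}$ according to whether a vertex lies in a light or heavy tile. I would then run the two-case analysis announced in the paragraph preceding the proposition.

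\textbf{Case 1: $|A_{\mathrm{light}}|\ge\frac12|A|$.} Every vertex $v\in A$ in a light tile $\tau$ sees at least $|\tau\cap\mathcal{G}_n|-|A\cap\tau|\ge\tfrac12\mathrm{vol}_{r_d}-\kappa\,\mathrm{vol}_{r_d}=\Omega(r^d)$ vertices of $A^c$ inside $\tau$ itself (pick $\kappa<\tfrac14$), and each such vertex contributes an edge of $E(A,A^c)$. Each such edge is counted from at most its two endpoints, so
\[
|E(A,A^c)|=\Omega\!\big(|A_{\mathrm{light}}|\,r^d\big)=\Omega\big(|A|\,r^d\big)\ge c|A|^{(d-1)/d}r^{d+1},
\]
the last inequality because $|A|\ge Cr^d$ gives $|A|r^d=|A|^{1/d}\cdot|A|^{(d-1)/d}r^d\ge C^{1/d}|A|^{(d-1)/d}r^{d+1}$.

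\textbf{Case 2: $|A_{\mathrm{heavy}}|\ge\frac12|A|$.} Here I pass to the lattice $\Phi_m$ identified with $\Lambda_n^{r_d}$, where $m=\Theta(n/r^d)$. Since each heavy tile holds $\le 2\mathrm{vol}_{r_d}$ vertices, $|H|_t\ge\frac{|A_{\mathrm{heavy}}|}{2\mathrm{vol}_{r_d}}=\Omega(|A|/r^d)=\Omega(C)$, and $|H|_t\le m/2+o(m)$ because $\pi(A)\le\tfrac12$ forces (via $\deg=\Theta(r^d)$, so $\pi(A)=\Theta(|A|/n)$) $|A|\le(\tfrac12+o(1))n$, hence at most $(\tfrac12+o(1))m$ tiles can be heavy once $\kappa$ is fixed. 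Apply the lattice isoperimetric inequality, Lemma~\ref{lem:isoperimetric_in_lattice_z_d}, to $K=H\subseteq\Phi_m$ (note $|H|_t\le\tfrac23(1-\eps)m$ holds for a suitable absolute $\eps$ once $\pi(A)\le\tfrac12$): this yields $|E(H,H^c)|\ge\gamma|H|_t^{(d-1)/d}$, i.e.\ at least $\gamma|H|_t^{(d-1)/d}$ pairs of adjacent tiles $(\tau,\tau')$ with $\tau$ heavy and $\tau'$ not heavy. For each such pair, $\tau$ contains $\ge\kappa\mathrm{vol}_{r_d}$ vertices of $A$, while $\tau'$ contains $\ge|\tau'\cap\mathcal G_n|-\kappa\mathrm{vol}_{r_d}\ge(\tfrac12-\kappa)\mathrm{vol}_{r_d}=\Omega(r^d)$ vertices — but not necessarily of $A^c$. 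To handle this I would make a second split of each such adjacent pair: either $\tau'$ contains $\ge\tfrac{\kappa}{2}\mathrm{vol}_{r_d}$ vertices of $A^c$, in which case the clique on $\tau\cup\tau'$ gives $\Omega(r^{2d})$ edges of $E(A,A^c)$; or $\tau'$ contains $\ge\Omega(r^d)$ vertices of $A$, so $\tau'$ is ``almost heavy''. In the latter situation I would instead appeal to the internal boundary bound: rather than using $H$ directly, I would apply Lemma~\ref{lem:isoperimetric_in_lattice_z_d} to the \emph{smaller} of $H$ and its complement and use $\min\{|\partial^-H|,|\partial^+H|\}\ge(\gamma/2d)|H|_t^{(d-1)/d}$ together with the cleaner observation that among the adjacent pairs realizing $\partial^+H$, since $H^c$ by definition contains all tiles with $<\kappa\mathrm{vol}_{r_d}$ vertices of $A$ — and every tile has $\ge\tfrac12\mathrm{vol}_{r_d}$ vertices total — each such boundary tile $\tau'\notin H$ genuinely has $\ge(\tfrac12-\kappa)\mathrm{vol}_{r_d}=\Omega(r^d)$ vertices of $A^c$. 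This removes the ``almost heavy'' ambiguity entirely, and counting the clique edges on $\tau\cup\tau'$ over the $\Omega(|H|_t^{(d-1)/d})$ boundary tiles (each tile of $\Phi_m$ incident to $\le 2d$ others, so multiplicities are bounded) gives
\[
|E(A,A^c)|=\Omega\!\big(|H|_t^{(d-1)/d}\,r^{2d}\big)=\Omega\!\big((|A|/r^d)^{(d-1)/d}r^{2d}\big)=\Omega\!\big(|A|^{(d-1)/d}r^{d+1}\big),
\]
which is the desired bound. Combining the two cases (and shrinking $c$ if needed) proves the proposition.

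\textbf{Main obstacle.} The delicate point is Case 2: a heavy tile adjacent (in $\Phi_m$) to a ``light but still $A$-rich'' tile need not supply any edges of $E(A,A^c)$ at all, so one cannot naively read the edge-boundary of $A$ off the edge-boundary of the set of heavy tiles. The fix above — defining ``light'' purely by the $A$-count while exploiting that \emph{every} tile is uniformly dense (total count $\Theta(\mathrm{vol}_{r_d})$ on the event $\mathcal E$), so that a non-heavy tile automatically has $\Omega(r^d)$ vertices of $A^c$ — is what makes the argument go through, and it is exactly the place where the large-radius hypothesis $r^d\ge C'\log n$ (guaranteeing the uniform density of \emph{every} tile) is essential.
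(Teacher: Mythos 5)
Your overall architecture is the same as the paper's: condition on concentration of tile occupancies in $\Lambda_n^{r_d}$ (possible since $r^d\ge C'\log n$), split according to whether $A$ sits mostly in tiles with few or many $A$-vertices, harvest within-tile clique edges in the first case, and in the second case apply the lattice isoperimetric inequality (Lemma~\ref{lem:isoperimetric_in_lattice_z_d}) to the set of $A$-rich tiles and extract $\Omega(r^{2d})$ edges per boundary pair of adjacent tiles. However, there is a genuine gap in your Case 2 bookkeeping. With your heaviness threshold $\kappa\,\mathrm{vol}_{r_d}$ for a small $\kappa<\tfrac14$ and the loose occupancy window $[\tfrac12\mathrm{vol}_{r_d},2\mathrm{vol}_{r_d}]$, the constraint $\pi(A)\le\tfrac12$ does not give $|H|_t\le(\tfrac12+o(1))m$: all you can extract is $|A|\ge\kappa\,\mathrm{vol}_{r_d}|H|_t$, hence $|H|_t\le |A|/(\kappa\,\mathrm{vol}_{r_d})\approx m/(2\kappa)$, which exceeds $m$ when $\kappa<\tfrac14$ and is vacuous; in particular the hypothesis $|H|_t\le\tfrac23(1-\eps)m$ of Lemma~\ref{lem:isoperimetric_in_lattice_z_d} is not verified. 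The failure is not merely formal: if every tile carries a fraction slightly above $\kappa$ of its occupancy in $A$, then $\pi(A)<\tfrac12$, there are no light tiles (so your Case 1 is silent), every tile is heavy, $E(H,H^c)=\emptyset$, and your argument produces no boundary edges at all, even though such an $A$ has a huge edge boundary coming from within-tile edges. (A related, smaller issue: ``$\deg(x)=\Theta(r^d)$, hence $\pi(A)=\Theta(|A|/n)$'' only yields $|A|\le(\tfrac12+o(1))n$ if the implicit constants are $1+o(1)$, which needs tight two-sided degree concentration, not just $\Theta$; boundary vertices in particular have smaller expected degree.)

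The fix is exactly the parameter choice the paper makes: tighten the occupancy window (e.g.\ to $[\tfrac{19}{20}\mathrm{vol}_{r_d},\tfrac{21}{20}\mathrm{vol}_{r_d}]$, available because $C'$ is large) and place the dense/heavy threshold close to full occupancy (the paper uses $\tfrac{9}{10}\mathrm{vol}_{r_d}$). Then a non-dense tile still has $\Omega(r^d)$ vertices outside $A$, so both of your cases go through, while the high threshold forces each dense tile to contribute almost $\mathrm{vol}_{r_d}$ vertices to $A$, so $\pi(A)\le\tfrac12$ genuinely caps the number of dense tiles strictly below the $\tfrac23(1-\eps)m$ required by Lemma~\ref{lem:isoperimetric_in_lattice_z_d}. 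Separately, your appeal to Theorem~\ref{thm:size_second_component_rgg} to conclude that $\mathcal{L}_n$ contains all vertices is both unnecessary and not justified as written (a single tile contributes only $\Theta(r^d)=O(\log n)$ mutually adjacent vertices, not $\omega((\log n)^{d/(d-1)})$); you do not need this claim, since any vertex lying in the same tile as, or in a tile adjacent to, a vertex of $A\subseteq\mathcal{L}_n$ is automatically in $\mathcal{L}_n$.
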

\begin{proof}
Say that a tile $\tau\in\Lambda_n^{r_d}$ is \emph{normal} if it contains at least $\frac{19}{20}\mbox{vol}_{r_d}$ and at most $\frac{21}{20}\mbox{vol}_{r_d}$ vertices of~$\mathcal{G}_n$, that is, $\frac{19}{20}\mbox{vol}_{r_d}\leq |\tau\cap\mathcal{G}_n|\leq \frac{21}{20}\mbox{vol}_{r_d}$. 
By choosing $C'$ large enough and by the hypothesis $r^d\ge C'\log n$, by Chernoff bounds and a union bound over the $|\Lambda_n^{r_d}|_t=O(n/r^d)=o(n)$ tiles in $\Lambda_n^{r_d}$ we may assume throughout the proof that w.h.p.~every tile in~$\Lambda_n^{r_d}$ is normal.

Again, because $r^d\ge C'\log n$ with $C'$ large enough, by Chernoff bounds and a union bound, we may also assume throughout the proof that,  w.h.p., the degree of every vertex of $\mathcal{G}_n$ is sufficiently close to its expected value $\mbox{vol}_{r_d}$ so that for every $A\subseteq\mathcal{G}_n$ it holds that $\pi(A)\ge \frac{10}{11}(|A|/n)$.

Now, note that
\begin{align}\label{eqn:partition}
\big|E(A,A^c)\big| & 
\ge \sum_{\tau\in\Lambda_n^{r_d}}\big|E(\tau\cap A,\tau\cap A^c)\big| 
+ 
\sum_{(\tau,\widehat{\tau})\in (\Lambda_n^{r_d})^2
\colon \{\tau,\widehat{\tau}\}\in E(\Lambda_n^{r_d}) }
\big|E(\tau\cap A,\widehat{\tau}\cap A^c)\big|.
\end{align}
Say that a tile $\tau\in\Lambda_n^{r_d}$ is \emph{$A$-sparse} if it contains at least one and at most $\frac{9}{10}\mbox{vol}_{r_d}$ elements of $A$, that is,~$0<|\tau\cap A|\leq \frac{9}{10}\mbox{vol}_{r_d}$. 
Let $S_A\subseteq\Lambda_n^{r_d}$ be the collection of $A$-sparse tiles.
By our assumption about all tiles being normal and the definition of $A$-sparse, every $\tau\in S_A$ contains at least $\mbox{vol}_{r_d}/20$ vertices in~$\tau\cap(\mathcal{G}_n\setminus A)$.

Thus,  since any two vertices belonging to the same tile are at distance at most $\sqrt{d}r_d\leq r/2$ (by definition of $r_d$) from each other, for every $A$-sparse tile $\tau$, 
\[
\big|E(\tau\cap A,\tau\cap A^c)\big| 
= |\tau\cap A|\cdot |\tau\cap (\mathcal{G}_n\setminus A)| \geq |\tau\cap A|\cdot\mbox{vol}_{r_d}/20.
\]
Hence, the first term in the RHS of~\eqref{eqn:partition} is at least
\begin{align*}
\sum_{\tau\in S_A}\big|E(\tau\cap A,\tau\cap A^c)\big|  
& 
\geq (\mbox{vol}_{r_d}/20)\sum_{\tau\in S_A} |\tau\cap A| 
= (\mbox{vol}_{r_d}/20)\cdot| A\cap S_A|
= \Omega(r^d|A\cap S_A|).
\end{align*}
So, if $|A\cap S_A|\ge \frac12 C^{1/d}|A|^{(d-1)/d}r$, then the statement of the proposition follows. 
Thus, assume otherwise, that is, 
$|A\cap S_A|<\frac12 C^{1/d}|A|^{(d-1)/d}r
\le \frac12 |A|$ where the last inequality is because by hypothesis $|A|\ge Cr^d$. 
Now, say that tile $\tau\in\Lambda_n^{r_d}$ is \emph{$A$-dense} if it contains more than $\frac{9}{10}\mbox{vol}_{r_d}$ vertices of $A$. Let~$D_A\subseteq\Lambda_n^{r_d}$ be the collection of $A$-dense tiles.
Since $S_A$ and $D_A$ are disjoint and $A=(A\cap S_A)\cup (A\cap D_A)$, it follows that~$|A\cap D_A|\ge \frac12 |A|$.
Recalling that, by assumption, each tile contains at most~$\frac{21}{20}\mbox{vol}_{r_d}=\Theta(r^d)$ vertices of $\mathcal{G}_n$, we get that
\[
\big|D_A\big|_t
\geq \frac{|A|}{2\frac{21}{20}\mbox{vol}_{r_d}}
= \Omega(|A|/r^d).
\]
By definition of $A$-dense, if $|D_A|_t>\frac{11}{18}|\Lambda_n^{r_d}|_t$,  then 
\[
\pi(A) \geq \frac{10}{11}\cdot\frac{|A|}{n} 
\geq \frac{10}{11}\cdot \frac{9}{10}\mbox{vol}_{r_d}\frac{|D_A|_t}{n}> \frac{1}{2},
\]
contradicting the upper bound hypothesis on $\pi(A)$.
Thus, it must hold that $|D_A|_t\leq \frac{11}{18}|\Lambda_n^{r_d}|_t$. Then, we can apply Lemma~\ref{lem:isoperimetric_in_lattice_z_d}, and obtain that

\[
\big|E(D_A,D^c_A)\big|=\Omega(|A|^{(d-1)/d}/r^{d-1}).
\]
Now, let $\{\tau,\widehat{\tau}\}\in E(D_A,D^c_A)$ where, without loss of generality, 
$\tau\in D_A$ (thus, $\widehat{\tau}\in D^c_A$).
Since $\widehat{\tau}$ is not $A$-dense, it contains at most $\frac{9}{10}\mbox{vol}_{r_d}$ elements of $A$ (it might not contain any), so by our assumption of all tiles being normal, $\widehat{\tau}$ must contain at least
$\mbox{vol}_{r_d}/20$ elements not in $A$. Hence,  
\[
\big|E(\tau\cap A,\widehat{\tau}\cap A^c)\big|
= |\tau\cap A|\cdot |\widehat{\tau}\cap A^c|
\geq (9\mbox{vol}_{r_d}/10)\cdot(\mbox{vol}_{r_d}/20\big)^2
= \Omega(r^{2d}).
\]
Summing over all  $\{\tau,\widehat{\tau}\}\in E(D_A,D^c_A)$ we get  the desired conclusion since 
the second term in the RHS of~\eqref{eqn:partition}
must be at least
\[
|D_A|_t\cdot \Omega(r^{2d}) 
=\Omega(|A|r^d) 
=\Omega\big(|A|^{(d-1)/d}r^{d+1}),
\]
where the last equality follows from our assumption of $|A|=\Omega(r^d)$. 
The proposition follows.
\end{proof}

\subsection{Intermediate radii regime}
We next deal with smaller radii, that is, we assume throughout this section that $r^d \le C'\log n$ for some $C'>0$, but we still assume $r > r'$ for $r'$ being a sufficiently large constant. The argument of the previous subsection does not work anymore, since not all tiles contain the expected number of vertices, and a lot of extra work is needed to obtain the applicable isoperimetric inequality:
\begin{proposition}\label{prop:isoperimetric_inequality_rgg_large_r}
Let $0<\delta<1$ be a constant and let $r\ge r'$ for $r'$ being a large enough constant and $r^d \le C'\log n$ for $C'>0$ as in Proposition~\ref{prop:smal_radii_regime}. 
There exist constants $c_1, C_1>0$ such that w.h.p.~the following is satisfied: for every connected set $A\subseteq \mathcal{L}_n$ such that $C_1(r^{2d+1}(\log n)^{\frac{d}{d-1}})^d < |A| \le (1-\delta)|\mathcal{L}_n|$ it holds that $|E(A,A^c)\ge c_1|A|^{(d-1)/d}r^{d+1}$.

\end{proposition}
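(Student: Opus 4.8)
The plan is to project onto the tiling $\Lambda_n^{r_d}\cong\Phi_m$ with $m=|\Lambda_n^{r_d}|_t=\Theta(n/r^d)$, exploiting the two special features of the side length $r_d$: every tile induces a clique in $\mathcal{G}_n$, and any two vertices lying in $\ell_1$-adjacent tiles are adjacent in $\mathcal{G}_n$. Writing $a_\tau=|A\cap\tau|$ and $b_\tau=|(\mathcal{L}_n\setminus A)\cap\tau|$, this gives
\[
|E(A,A^c)|\ \ge\ \sum_{\tau}a_\tau b_\tau\ +\!\!\sum_{\{\tau,\widehat\tau\}\in E(\Phi_m)}\!\!\bigl(a_\tau b_{\widehat\tau}+a_{\widehat\tau}b_\tau\bigr),
\]
all terms on the right being realised by pairwise disjoint sets of genuine $A$--$A^c$ edges. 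A key elementary observation is that any tile with $a_\tau\ge1$, and any tile adjacent to such a tile, has all of its vertices in $\mathcal{L}_n$ (each is joined to an $A$-vertex), so for those tiles $b_\tau=|\tau\cap\mathcal{G}_n|-a_\tau$. Finally, the hypothesis $|A|\ge C_1(r^{2d+1}(\log n)^{d/(d-1)})^d$ is exactly what is needed so that, w.h.p., Lemma~\ref{lem:bounds_vertices_big_animals} applies to the $k$-lattice animal $L_A$ (giving $|L_A|_t=\Omega(|A|/\mathrm{vol}_{r_d})$), Theorem~\ref{thm:relation_StationaryMeasure_number_of_vertices_rgg} applies to $A$ (giving $\sum_{x\in A}\deg x=O(|A|r^d)$), every tile has $O(\log n)$ vertices w.h.p., and $|\mathcal{L}_n\setminus A|\ge\delta|\mathcal{L}_n|=\Omega(n)$.

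Next I would set up a renormalised percolation: call $\tau$ \emph{dense} if $|\tau\cap\mathcal{G}_n|\ge\kappa\,\mathrm{vol}_{r_d}$. Tile populations are i.i.d.\ Poisson with mean $\mathrm{vol}_{r_d}=\Theta(r^d)$, which is a large constant because $r\ge r'$; so choosing $r'$ large and $\kappa$ small makes the density probability exceed any prescribed value, in particular $p_c(d)$ and all the thresholds $p^*$ of Lemmas~\ref{lem:percolation_connected_animals}, \ref{lem:pair_of_open_sites_boundary} and \ref{lem:pairs_of_open_sites_no_boundary_condition}. Hence the dense tiles form a supercritical site percolation on $\Phi_m$, with giant component $F_m$ of $\Theta(m)$ tiles and all other components of $O((\log n)^{d/(d-1)})$ tiles by Theorem~\ref{thm:giant_Bernoulli_site_percolation}; since $F_m$ is $\ell_1$-connected and each dense tile is a clique, the vertices in the tiles of $F_m$ form one connected subgraph of $\mathcal{G}_n$ of size $\Omega(n)$, hence lie in $\mathcal{L}_n$ by Theorem~\ref{thm:size_second_component_rgg}. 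Combining Lemma~\ref{lem:percolation_connected_animals} (a large $k$-lattice animal is a $(1-\eps)$-fraction dense), Remark~\ref{rem:bounds_vertices_big_animals}, and a further lattice-animal count bounding how many tiles of $L_A$ can lie in sub-giant dense components, I would show that all but a small constant fraction of the vertices of $A$ lie in dense tiles of $F_m$; call those tiles \emph{good} and from now on absorb the $A$-vertices in non-good tiles into the error.

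The heart of the proof is a dichotomy on the good tiles meeting $A$, which then carry $\ge\tfrac34|A|$ vertices of $A$. Split them into \emph{$A$-heavy} tiles ($a_\tau>\tfrac12|\tau\cap\mathcal{G}_n|$) and \emph{$A$-light} tiles ($0<a_\tau\le\tfrac12|\tau\cap\mathcal{G}_n|$); a light good tile, and also a good tile adjacent to an $A$-heavy one but not itself $A$-heavy, has $b_\tau\ge\tfrac12|\tau\cap\mathcal{G}_n|\ge\tfrac12\kappa\,\mathrm{vol}_{r_d}=\Omega(r^d)$. In the \emph{spread-out case}, when the light good tiles carry at least $c_A|A|^{(d-1)/d}r$ vertices of $A$, the within-tile term already gives $|E(A,A^c)|\ge\sum_{\tau\text{ light good}}a_\tau b_\tau=\Omega(r^d\cdot|A|^{(d-1)/d}r)=\Omega(|A|^{(d-1)/d}r^{d+1})$, and we are done. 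Otherwise the $A$-heavy good tiles $\mathcal{H}$ carry $\ge\tfrac12|A|$ vertices of $A$; since the $a_\tau$ vertices of $A$ in a tile are pairwise adjacent we have $\deg x\ge a_{\tau_x}-1$ for $x\in A$, so $\sum_{\tau\in\mathcal{H}}a_\tau^2=O\bigl(\sum_{x\in A}\deg x\bigr)=O(|A|r^d)$, and Cauchy--Schwarz against $\sum_{\tau\in\mathcal{H}}a_\tau=\Omega(|A|)$ yields $|\mathcal{H}|_t=\Omega(|A|/r^d)$. Applying Lemma~\ref{lem:pair_of_open_sites_boundary} or Lemma~\ref{lem:pairs_of_open_sites_no_boundary_condition} to a suitable *-connected enlargement of $\mathcal{H}$ inside $L_A$ (all tiles dense, hence ``open'') then yields $\Omega\bigl(|\mathcal{H}|_t^{(d-1)/d}\bigr)=\Omega\bigl((|A|/r^d)^{(d-1)/d}\bigr)$ pairwise disjoint tile-edges, each joining a dense tile near $\mathcal{H}$ to a dense tile that is not $A$-heavy; by the geometry above each such tile-edge carries $\Omega(r^d)\cdot\Omega(r^d)=\Omega(r^{2d})$ distinct $A$--$A^c$ edges, and these contributions are pairwise disjoint, so $|E(A,A^c)|=\Omega\bigl((|A|/r^d)^{(d-1)/d}r^{2d}\bigr)=\Omega(|A|^{(d-1)/d}r^{d+1})$.

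The hard parts are the two most geometric ones. First, bounding the $A$-mass outside good tiles needs a container-type lattice-animal argument (in the spirit of the proof of Theorem~\ref{thm:relation_StationaryMeasure_number_of_vertices_rgg}) rather than a naive animal count, since within a single tile the population is not concentrated. Second, in the concentrated case one must verify that $\mathcal{H}$ (or its enlargement) genuinely meets the hypotheses of Lemmas~\ref{lem:pair_of_open_sites_boundary}/\ref{lem:pairs_of_open_sites_no_boundary_condition} — in particular *-connectedness and the size condition $|F_m|_t>|K\cap\Phi_m^p|$, which is tight precisely when $A$ and $\mathcal{L}_n\setminus A$ are each a constant fraction of $\mathcal{L}_n$ and $r^d$ is of order $\log n$; in that sub-case one feeds the plain lattice isoperimetric inequality (Lemma~\ref{lem:isoperimetric_in_lattice_z_d}) into the same tile-edge/$\Omega(r^{2d})$ bookkeeping, which is legitimate because $|\mathcal{H}|_t=\Omega(m)$ there. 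One must also keep the relevant tile sets below a $(1-\delta')$-fraction of $\Phi_m$ throughout, which is where the hypothesis $|A|\le(1-\delta)|\mathcal{L}_n|$ is used, together with the delicate identification of enough adjacent heavy/non-heavy dense tile pairs.
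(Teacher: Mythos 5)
Your overall architecture is the same as the paper's (tiling at scale $r_d$, treating dense/normal tiles as a supercritical renormalized site percolation, a spread-out versus concentrated dichotomy, and extracting $\Omega(r^{2d})$ edges from suitable adjacent tile pairs via Lemmas~\ref{lem:pair_of_open_sites_boundary} and~\ref{lem:pairs_of_open_sites_no_boundary_condition}), but two steps that you treat as routine are genuine gaps. First, you invoke Theorem~\ref{thm:relation_StationaryMeasure_number_of_vertices_rgg} to get $\sum_{x\in A}\deg x=O(|A|r^d)$, claiming the size hypothesis of the proposition is ``exactly what is needed''; it is not. That theorem requires $\pi(A)>(\log n)^{\zeta}/n$ with $\zeta>6$, and for bounded $r\ge r'$ and small $d$ the proposition only guarantees $|A|\ge C_1(\log n)^{d^2/(d-1)}r^{(2d+1)d}$ (for $d=2$ this is $\Theta((\log n)^4)$), so $\pi(A)$ may be of order $(\log n)^4/n$, well below the required threshold; nor can you pad $A$ to fix this, since the proof of that theorem uses that every tile of the covering animal contains an $A$-vertex. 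Your Cauchy--Schwarz bound $|\mathcal{H}|_t=\Omega(|A|/r^d)$ therefore has no justification as written. The paper avoids any degree-sum bound on $A$ here: it builds the augmented animals $\mathfrak{L}(A)$ (via $A'$, $A''$ and $T_A$) precisely so that each is a $k$-lattice animal containing many vertices, to which Remark~\ref{rem:bounds_vertices_big_animals} applies, yielding the tile-count lower bound $\big|\bigcup_{L\in\mathfrak{L}(A)}L\big|_t=\Omega(|A|/r^d)$ (Lemma~\ref{lem:propHats}).

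Second, the concentrated case is exactly where the work lies, and you leave it at ``apply Lemma~\ref{lem:pair_of_open_sites_boundary} or~\ref{lem:pairs_of_open_sites_no_boundary_condition} to a suitable *-connected enlargement of $\mathcal{H}$''. The heavy tiles $\mathcal{H}$ need not be *-connected, their complement need not be a $k$-lattice animal, an enlargement forced for connectivity may contain tiles with no $A$-vertices (so the ``heavy side'' of a produced boundary edge carries no $A$--$A^c$ edges), and the hypotheses $\min\{|K|,|K^c|\}$ in Lemma~\ref{lem:pair_of_open_sites_boundary} and $|F_m|>|K\cap\Phi_m^p|$ in Lemma~\ref{lem:pairs_of_open_sites_no_boundary_condition} must actually be verified; moreover your fallback to the plain lattice inequality (Lemma~\ref{lem:isoperimetric_in_lattice_z_d}) when $|\mathcal{H}|_t=\Omega(m)$ does not work, because that inequality says nothing about the tiles on the other side of the boundary being dense or containing vertices of $\mathcal{L}_n\setminus A$ -- a boundary tile can be nearly empty, which is the very reason the paper proves the ``open--open boundary edge'' versions. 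The paper resolves all of this with the $A'$, $A''$, $T_A$, $\mathfrak{L}(A)$, $\mathfrak{K}(A)$ machinery (Lemmas~\ref{lem:AprimeComponents}--\ref{lem:size_union_large_componentes_D_i}), the escape via Corollary~\ref{cor:starComp} when $|\mathfrak{K}(A)|$ is large, and a final two-case analysis over $\mathfrak{M}(L)$ with normal pairs; until you supply an argument of comparable substance for your enlargement step (and replace or reprove the degree-sum bound), the proposal is an outline of the right strategy rather than a proof.
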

The proof of Proposition~\ref{prop:isoperimetric_inequality_rgg_large_r} is rather long and somewhat subtle. So, we will break it into more manageable pieces and try to motivate each one before stating it. 
We start by observing that, if there are at least 
$|A|^{(d-1)/d}r^{d+1}$ vertices in $A$ with each one having at least one neighbor not in $A$, 
then the size of~$E(A,A^c)$ is as claimed in Proposition~\ref{prop:isoperimetric_inequality_rgg_large_r} (by setting $c_1=1$). 
This motivates us to consider the set $A'\subseteq A$ of ``problematic''
vertices in the sense that all of their neighbors are also in $A$: denoting by~$B_x(r)\subseteq\mathbb{R}^d$ the Euclidean ball of radius $r$ centered at $x\in\mathbb{R}^d$, we define (see Figure~\ref{fig:first}.a)
\[
A'=\{x \in A \colon B_x(r)\cap \mathcal{G}_n \subseteq A\},
\]
and focus on the case where $|A\setminus A'|<|A|^{(d-1)/d}r^{d+1}$.
We first show that the latter assumption already implies that the size of the $k$-lattice animal in $\Lambda_n^{r_d}$ of tiles that contain vertices of~$A$ -- which we define as $L_A$ (see Figure~\ref{fig:first}.b) -- is not so large, reflecting that the vertices in most of the tiles in $L_A$ must all be in~$A$ and that typical tiles contain $\Theta(r^d)$ vertices of $\mathcal{G}_n$.

\begin{lemma}\label{lem:upperBndSizeOfLA}
Let $r\geq r'$ for $r'>0$ large enough be such that $r^d \le C'\log n$ with $C'>0$ as in Proposition~\ref{prop:smal_radii_regime}. For $C_1>0$ large enough, there exists a positive constant $c_2$, such that the following holds w.h.p.:
For every $L_A\subseteq\Lambda_n^{r_d}$ where $A\subseteq\mathcal{G}_n$ is connected, $|A|>C_1\cdot \max\{r^{(2d+1)d},r^d\cdot \log n\}$, and $|A\setminus A'|<|A|^{(d-1)/d}r^{d+1}$, we have
\[
|L_A|_t< \frac{c_2|A|}{r^d}.
\]
\end{lemma}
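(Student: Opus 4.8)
The plan is to split $L_A$ into three classes of tiles and bound each separately, exploiting the hypothesis $|A\setminus A'|<|A|^{(d-1)/d}r^{d+1}$ together with a lattice-animal counting argument. Call a tile $\tau\in\Lambda_n^{r_d}$ \emph{poor} if $|\tau\cap\mathcal{G}_n|<\kappa r^d$, where $\kappa>0$ is a small constant chosen so that $\kappa r^d<\tfrac12\mathrm{vol}_{r_d}$ (possible since $\mathrm{vol}_{r_d}=\Theta(r^d)$). The first step is to show that w.h.p.\ every $k$-lattice animal contains few poor tiles. Since $|\tau\cap\mathcal{G}_n|$ is Poisson with mean $\mathrm{vol}_{r_d}=\Theta(r^d)$, a Chernoff bound gives that a fixed tile is poor with probability at most $e^{-cr^d}$ for some $c=c(d)>0$; because $r\ge r'$ with $r'$ a large constant, this is at most $e^{-c(r')^d}$, which can be made as small as we like by enlarging $r'$. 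Recalling that $L_A$ is a $k$-lattice animal (with $k=\lceil r/(r_d\sqrt d)\rceil d$ a fixed constant, since $A$ is connected) and that the number of $k$-lattice animals of size $\ell$ in $\Lambda_n^{r_d}$ is at most $|\Lambda_n^{r_d}|_t c_k(d)^\ell=O(n)c_k(d)^\ell$, a union bound over all such animals and over the $\binom{\ell}{\lceil a\ell\rceil}$ choices of which $\lceil a\ell\rceil$ tiles of an animal are poor (the poorness events of distinct tiles being independent, as they depend on disjoint regions of the PPP) yields: for a small enough constant $a>0$, with $r'$ chosen large enough that $c_k(d)(e/a)^a e^{-ac(r')^d}<1$, and for $\beta=\beta(d)$ large enough, w.h.p.\ every $k$-lattice animal $L$ contains at most $a|L|_t+\beta\log n$ poor tiles (the bound being trivial for animals of size $<\beta\log n$, and the geometric series over $\ell\ge\beta\log n$ summing to $o(1)$ otherwise). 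This is the step where the real work lies, and the only place $r\ge r'$ is needed: poor tiles cannot be classified geometrically, since a vertex of $A$ in a sparsely populated tile may still have its entire radius-$r$ neighbourhood contained in $A$, so they must be controlled by enumeration.

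I would then condition on this w.h.p.\ event and partition $L_A$ into: the set $P$ of poor tiles; the set $D$ of \emph{dense} tiles, i.e.\ non-poor tiles with $|\tau\cap A|\ge\tfrac{\kappa}{2}r^d$; and the set $S$ of the remaining \emph{rich-sparse} tiles, i.e.\ non-poor tiles with $1\le|\tau\cap A|<\tfrac{\kappa}{2}r^d$. Since the sets $\tau\cap A$ are pairwise disjoint, $|D|_t\le|A|/(\tfrac{\kappa}{2}r^d)=\tfrac{2}{\kappa}|A|/r^d$. For $\tau\in S$ one has $|\tau\cap(\mathcal{G}_n\setminus A)|\ge\kappa r^d-\tfrac{\kappa}{2}r^d=\tfrac{\kappa}{2}r^d\ge1$, so $\tau$ contains some $y\notin A$; as any two points of a tile of $\Lambda_n^{r_d}$ lie within distance $r/2<r$, every $x\in\tau\cap A$ has $y\in B_x(r)\cap\mathcal{G}_n$ with $y\notin A$, hence $x\in A\setminus A'$. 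Thus $\tau\cap A\subseteq A\setminus A'$ for each $\tau\in S$, and disjointness gives $|S|_t\le\sum_{\tau\in S}|\tau\cap A|\le|A\setminus A'|<|A|^{(d-1)/d}r^{d+1}$. Finally $|P|_t\le a|L_A|_t+\beta\log n$ by the first step.

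Adding these, $|L_A|_t=|P|_t+|S|_t+|D|_t\le a|L_A|_t+\beta\log n+|A|^{(d-1)/d}r^{d+1}+\tfrac{2}{\kappa}|A|/r^d$, so $(1-a)|L_A|_t\le\beta\log n+|A|^{(d-1)/d}r^{d+1}+\tfrac{2}{\kappa}|A|/r^d$. To finish I would absorb the first two terms into the third using the size hypotheses: from $|A|>C_1 r^d\log n$ we get $\beta\log n<(\beta/C_1)|A|/r^d$, and from $|A|>C_1 r^{(2d+1)d}$ (i.e.\ $|A|^{1/d}>C_1^{1/d}r^{2d+1}$) we get $|A|^{(d-1)/d}r^{d+1}=|A|\cdot r^{d+1}/|A|^{1/d}<C_1^{-1/d}|A|/r^d$. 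Hence for $C_1$ large enough, $(1-a)|L_A|_t\le\tfrac{3}{\kappa}|A|/r^d$, which gives the claimed bound $|L_A|_t<c_2|A|/r^d$ with $c_2:=\tfrac{3}{\kappa(1-a)}$, after fixing $C_1$ larger than, say, $\max\{\kappa^{-d},\beta/(\kappa c_2)\}$ so that every estimate above is valid. Apart from the lattice-animal bound on poor tiles, every step is routine bookkeeping.
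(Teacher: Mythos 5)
Your argument is correct and follows essentially the same route as the paper: a union bound over $k$-lattice animals shows that under-populated tiles are rare in $L_A$ (the paper gets this from Lemma~\ref{lem:percolation_connected_animals} together with Lemma~\ref{lem:bounds_vertices_big_animals}, while you re-derive it inline with an additive $\beta\log n$ slack), and the same diameter-at-most-$r/2$ observation converts the hypothesis on $|A\setminus A'|$ into a bound on the remaining tiles, after which both proofs finish by counting vertices of $A$ against the size hypotheses on $|A|$. The remaining differences are presentational: your three-class partition with a direct summation versus the paper's contradiction argument with the threshold $3|A|/(\xi\,\mathrm{vol}_{r_d})$.
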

\begin{proof}
Observe that for every set $A$ that is connected, $L_A$ is a $k$-lattice animal. We may consider an arbitrary such $A$ below meeting the conditions in the lemma's statement.
Let $p^*$ be as in Lemma~\ref{lem:percolation_connected_animals}.
For any tile $\lambda$ of $\Lambda_n^{r_d}$, since $|\lambda\cap\mathcal{G}_n|$ is a Poisson random variable with mean~$\mathrm{vol}_{r_d}$, by Chernoff bounds for Poisson random variables, since $r \ge r'$ and $r'$ is large enough, there exists $\xi>0$ such that~$\mathbb{P}(|\lambda\cap\mathcal{G}_n|\ge\xi\mathrm{vol}_{r_d}) > p^*$.
In particular, since $|A|>C_1 r^d\log n$ we may apply Lemma~\ref{lem:bounds_vertices_big_animals} by making $C_1$ large enough 
and obtain $|L_A|_t \ge |A|/(2\mathrm{vol}_{r_d})>c_3\log n$ w.h.p. for some constant $c_3$ that is large if $C_1$ is large. Hence, by Lemma~\ref{lem:percolation_connected_animals}, at least $1/2$ of the tiles of~$L_A$ contain at least $\xi \mathrm{vol}_{r_d}$ vertices of $\mathcal{G}_n$.

Since by hypothesis $|A\setminus A'|<|A|^{(d-1)/d}r^{d+1}$, by definition of $L_A$, at most $|A|^{(d-1)/d}r^{d+1}$ tiles of~$L_A$ may contain vertices in~$A\setminus A'$. However, every tile $\lambda$ in~$L_A$ contains elements of $A$, so if it does not contain vertices in~$A\setminus A'$, by definition of~$A'$, it must be the case that $\lambda\cap\mathcal{G}_n\subseteq A$.
It follows that at least
\begin{equation}
    \frac12 |L_A|_t -|A|^{(d-1)/d}r^{d+1} 
\end{equation}
tiles of~$L_A$ must contain at least $\xi\mathrm{vol}_{r_d}$ vertices of~$A$. If we had $|L_A|_t \ge 3|A|/(\xi \mathrm{vol}_{r_d})$, then $L_A$ would contain at least (recall the hypothesis $|A|>C_1 r^{(2d+1)d}$ and make $C_1$ large) 
\begin{equation}
 \left(\frac{3}{2}\frac{|A|}{\xi \mathrm{vol}_{r_d}}  - |A|^{(d-1)/d}r^{d+1}\right)\cdot \xi \mathrm{vol}_{r_d} >|A|
\end{equation}
vertices of $A$ which is clearly not possible, since trivially the number of vertices of $A$ inside $L_A$ equals~$|A|$. We conclude by setting $c_2=\frac{3r^d}{\xi\mathrm{vol}_{r^d}}$, which is a constant for any $r>0$.
\end{proof}

For $A\subseteq\mathcal{G}_n$ satisfying our running assumption, that is, $|A\setminus A'|<|A|^{(d-1)/d}r^{d+1}$, 
not only does~$A'$ contain almost all vertices of $A$, but we will establish that in fact almost all vertices of $A'$ belong to relatively large  components induced by $A'$ in $\mathcal{G}_n$.
Specifically, let $\mathcal{A}'$ be the  components induced by $A'$ in~$\mathcal{G}_n$ (see Figure~\ref{fig:second}.a), 
and define (see Figure~\ref{fig:second}.b) 
\[
\mathcal{A'}_{\smlSucceq} = \big\{S\in \mathcal{A}' \colon |S|\ge \mu\cdot(\log n)^{d/(d-1)}\cdot \mathrm{vol}_{r_d}\big\}.
\]
where $\mu$ is a large enough constant such that all calculations that follow apply. Define $A''$ to be the collection of vertices that belong to some $S\in\mathcal{A}'_{\smlSucceq}$, that is, $A''=\bigcup_{S\in\mathcal{A}'_{\smlSucceq}} S\subseteq A'$ (see Figure~\ref{fig:second}.c).
We next show that, under our running assumptions and mild constraints on the size of $A$, almost all vertices of $A$ belong to $A''$.

\begin{figure}
\begin{center}
  \tikzset{internal/.style={inner sep=0, outer sep=0, line cap=rect}}
  \begin{tikzpicture}[scale=3.8]
    \csvreader[head to column names]{csvs/rggT1.csv}{1=\f, 2=\a, 3=\b, 4=\l}{%
      \ifthenelse{\f=1}{
        \filldraw[magenta!20] (\a,\b) rectangle (\a+\l,\b+\l);
      }{
        \filldraw[cyan!30] (\a,\b) rectangle (\a+\l,\b+\l);        
      }
    }
    \csvreader[head to column names]{csvs/rggG1.csv}{1=\x}{%
      \draw[dotted,gray!50] (\x,0) -- (\x,4);
      \draw[dotted,gray!60] (0,\x) -- (4,\x);
    }
    \csvreader[head to column names]{csvs/rggV1.csv}{1=\n, 2=\x, 3=\y, 4=\gnt, 5=\A, 6=\Ap}{%
      \coordinate (v\n) at (\x,\y);
    }
    \csvreader[head to column names]{csvs/rggE1.csv}{1=\i, 2=\j}{%
      \draw[gray!50] (v\i) -- (v\j);       
    }
    \csvreader[head to column names]{csvs/rggV1.csv}{1=\n, 2=\x, 3=\y, 4=\gnt, 5=\A, 6=\Ap}{
      \ifthenelse{\gnt=0}{
        \draw[gray!50,fill=white] (v\n) circle (0.1pt);
        }{%
          \ifthenelse{\Ap=1}{
            \draw[blue,fill] (v\n) circle (0.2pt); 
          }{
            \ifthenelse{\A=1}{
              \draw[red,fill] (v\n) circle (0.2pt); 
              }{
              \draw[fill=white] (v\n) circle (0.2pt);
            }
          }
        }
    }
  \end{tikzpicture}
\end{center}
\caption{Vertices of $\mathcal{G}_n$ are represented as circles. Small circles in gray represent vertices that do not belong to $\mathcal{L}_n$.
(a) Elements of $A'$ are shown as blue circles while elements of $A\setminus A'$ are shown in red. (b)
The tiles that are colored (cyan or magenta) are the tiles in $L_A$. 
(c) Tiles in $T_A$ are colored in magenta.}\label{fig:first}
\end{figure}
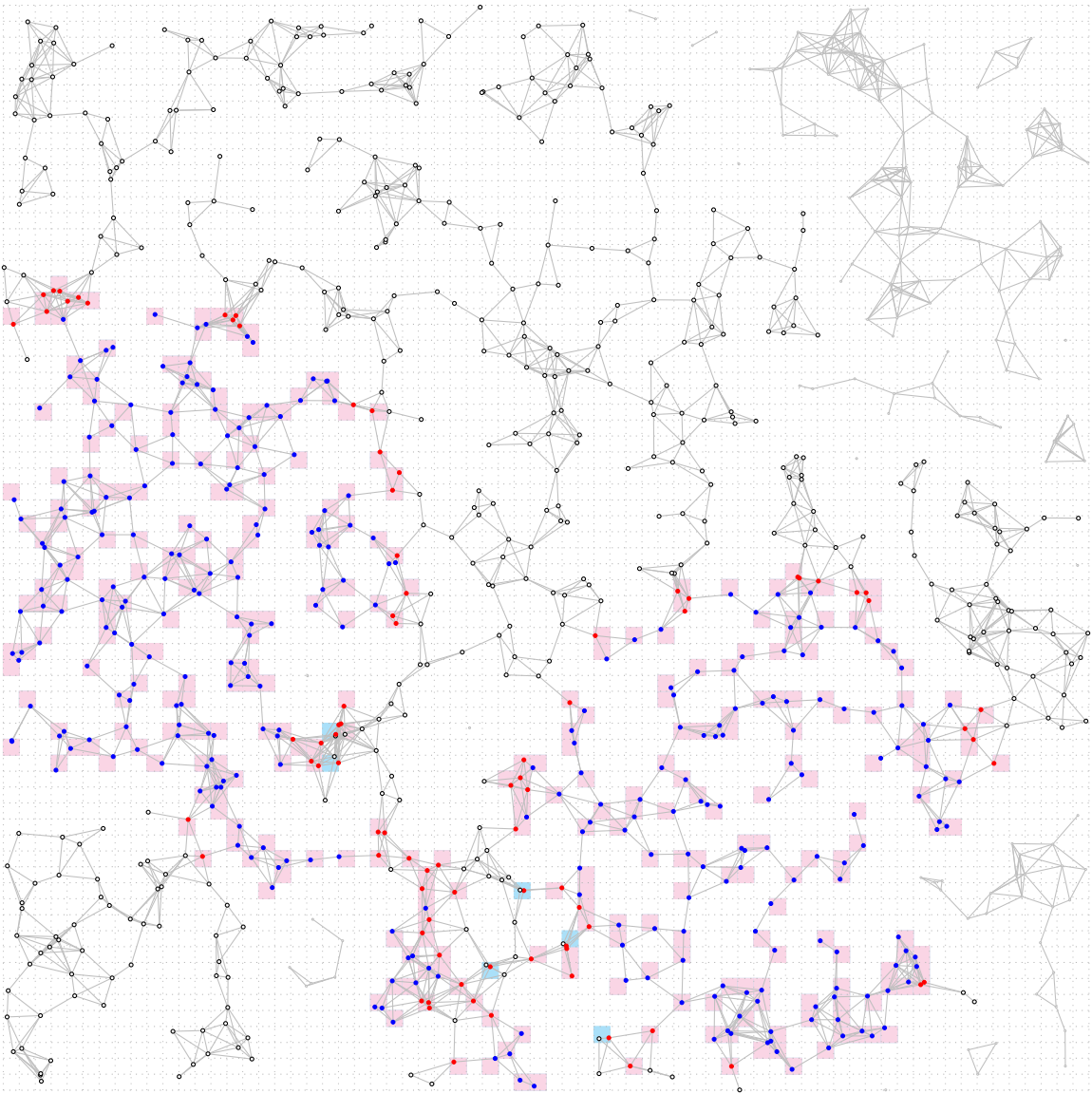

\begin{lemma}\label{lem:AprimeComponents}
Let $A \subseteq \mathcal{G}_n$ be arbitrary. For any $0<\eps<1$, there exist positive constants $C_1$ and $n_0$ such that if $n>n_0$, $A\subseteq\mathcal{G}_n$ 
    with $|A|>C_1 r^{(2d+1)d}(\log n)^{d^2/(d-1)}$ 
    and
    $|A\setminus A'|<|A|^{(d-1)/d}  r^{d+1}$, then $|A''|\ge(1-\eps)|A|$.
\end{lemma}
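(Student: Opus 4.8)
The plan is to reduce the statement to an estimate on the total size of the ``small'' components of $\mathcal G_n[A']$ and then bound that size by charging against $Z:=A\setminus A'$. Since $A''$, $A'\setminus A''$ and $Z$ partition $A$, and since the hypothesis forces $|A|$ to be far larger than the allowed bound $|Z|<|A|^{(d-1)/d}r^{d+1}$, it suffices to show that the union $W:=A'\setminus A''$ of all components $S$ of $\mathcal G_n[A']$ with $|S|<M:=\mu(\log n)^{d/(d-1)}\mathrm{vol}_{r_d}$ satisfies $|W|\le(\eps/2)|A|$. I would work throughout under the w.h.p.\ events that every tile of $\Lambda_n^{r_d}$ holds $O(\log n)$ vertices and $\max_x\deg(x)=O(\log n)$; the conclusion $|L_A|_t<c_2|A|/r^d$ of Lemma~\ref{lem:upperBndSizeOfLA} (applicable since the present hypothesis on $|A|$ is stronger); the conclusion of Remark~\ref{rem:bounds_vertices_big_animals}; and, crucially using $r\ge r'$, that the ``light'' tiles (those with fewer than $\xi\,\mathrm{vol}_{r_d}$ vertices for a small $\xi$, in particular all empty ones) form a \emph{subcritical} site percolation on $\Lambda_n^{r_d}\cong\Phi_m$, so that every cluster of light tiles has $O(\log n)$ tiles.

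The structural input is that $r_d\le r/(2\sqrt d)$ makes every tile, and every pair of adjacent tiles of $\Lambda_n^{r_d}$, a clique of $\mathcal G_n$. Hence: (i) any tile meeting $A'$ has all its vertices in $A$, so it is either \emph{pure} (all vertices in $A'$) or lies in $\mathcal T_Z:=\{\tau:\tau\cap Z\ne\emptyset\}$; (ii) for every component $S$ of $\mathcal G_n[A']$, every tile adjacent to $L_S$ has all its vertices in $Z$ (possibly none), hence is in $\mathcal T_Z$ or empty; (iii) adjacent pure tiles meet the same component of $\mathcal G_n[A']$. Since a tile meeting $W$ is pure or in $\mathcal T_Z$, the $W$-vertices in $\mathcal T_Z$-tiles number at most $|\mathcal T_Z|_t\cdot O(\log n)\le|Z|\cdot O(\log n)=o(|A|)$, so it remains to bound $\sum_{S:\,|S|<M}|S|$.

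For the main estimate I would split the small components $S$ by $|L_S|_t$. For $|L_S|_t$ below a suitable threshold, say $|L_S|_t\le(\log n)^{1/(d-1)}$: by (ii) and connectivity of $\mathcal L_n$ such an $S$ is adjacent in $\mathcal G_n$ (within distance $r$) to some $z\in Z$, so $L_S$ lies within bounded lattice-distance $k_0$ of $\mathcal T_Z$; hence there are only $O((2k_0{+}1)^d|Z|)=O(|Z|)$ such components, each of size $O(|L_S|_t\log n)\le O((\log n)^{d/(d-1)})$, contributing a total $O(|Z|(\log n)^{d/(d-1)})=o(|A|)$ by the hypothesis on $|A|$. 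For $|L_S|_t$ above the threshold I would use a charging argument: Lemma~\ref{lem:isoperimetric_in_lattice_z_d} gives $|\partial^+L_S|_t\ge(\gamma/2d)|L_S|_t^{(d-1)/d}$ (valid since $|L_S|_t\le|S|<M\ll m$), every boundary tile is empty or in $\mathcal T_Z$ by (ii), and each $\mathcal T_Z$-tile is counted by at most $2d$ of the pairwise disjoint sets $L_S$. To handle the empty boundary tiles I would replace $L_S$ by $\widehat{L_S}:=L_S\cup(\text{all light clusters meeting or adjacent to }L_S)$, which by the conditioning adds only $O(|L_S|_t\log n)$ tiles and makes $\partial^+\widehat{L_S}$ free of light (hence empty) tiles; every tile of $\partial^+\widehat{L_S}$ then lies within $O(\log n)$ lattice-distance of $L_S$ and is ``heavy'', so it forces a $Z$-vertex within Euclidean distance $O(r\log n)$ of $S$, and since each $Z$-vertex lies within $O(r\log n)$ of only $(\log n)^{O(1)}$ components, summing yields $\sum_{S\text{ above threshold}}|L_S|_t^{(d-1)/d}=O\!\big(|Z|(\log n)^{O(1)}\big)$. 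Separating these components into ``medium'' ones ($|L_S|_t\ge\gamma\log n$, where Remark~\ref{rem:bounds_vertices_big_animals} gives $|S|=\Theta(|L_S|_t\,\mathrm{vol}_{r_d})$ and hence $|L_S|_t=O((\log n)^{d/(d-1)})$) and the rest, and using $|S|\le 2\,\mathrm{vol}_{r_d}|L_S|_t\le 2\,\mathrm{vol}_{r_d}\big(O((\log n)^{d/(d-1)})\big)^{1/d}|L_S|_t^{(d-1)/d}$, one obtains a total of $O\!\big(r^d(\log n)^{1/(d-1)}\cdot|Z|(\log n)^{O(1)}\big)=O\!\big(|A|^{(d-1)/d}r^{2d+1}(\log n)^{O(1)}\big)$, which is $\le(\eps/4)|A|$ once $C_1$ is large, precisely because $|A|>C_1 r^{(2d+1)d}(\log n)^{d^2/(d-1)}$ leaves a budget of a power $(\log n)^{d}$ to absorb the polylogarithmic losses.

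Adding all buckets gives $|W|\le(\eps/2)|A|$, hence $|A''|=|A|-|Z|-|W|\ge(1-\eps)|A|$. The hard part is the charging step. When $r$ is a large constant a positive fraction of tiles are empty, so ``$L_S$ surrounded mostly by empty tiles'' is not rare and a naive union bound over lattice animals diverges (for animals of size $\ell$ there are $\exp(\Theta(\ell))$ many, while the relevant bad event costs only $\exp(-\Theta(\ell^{(d-1)/d}))$ or $\exp(-\Theta(r^d\log n))$). Escaping this requires exploiting subcriticality of the light tiles for $r\ge r'$ — so that empty regions have $O(\log n)$ tiles and the ``moats'' around $L_S$ can be filled in at only polylogarithmic cost — together with a careful split of the small components according to whether they are concentrated in few tiles (handled by proximity counting to $\mathcal T_Z$) or spread over many tiles (handled by the charging), and possibly a container argument encapsulating these components in larger tiles as in the proof of Theorem~\ref{thm:relation_StationaryMeasure_number_of_vertices_rgg}. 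Keeping the polylogarithmic factors within the $r^{(2d+1)d}(\log n)^{d^2/(d-1)}$ budget of the hypothesis is the main technical effort, and this is where I expect the genuine difficulty of the lemma to lie.
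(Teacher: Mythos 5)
Your reduction (bound the total size of the small components of $\mathcal G_n[A']$, add $|A\setminus A'|$, and absorb both using the lower bound on $|A|$) is the same as the paper's, and your ``first bucket'' already contains the entire proof: every small component $S$ is adjacent, within distance $r$, to some vertex of $Z=A\setminus A'$, and a single vertex $z\in Z$ can be adjacent to at most $O_d(1)$ distinct components of $\mathcal G_n[A']$ (two vertices of $A'$ in $B_z(r)$ that are within distance $r$ of each other lie in the same component, so distinct components meeting $B_z(r)$ give an $r$-separated set in a ball of radius $r$, a packing bound). Hence the number of components of $\mathcal G_n[A']$ is $O(|Z|)=O(|A|^{(d-1)/d}r^{d+1})$, and every component outside $\mathcal A'_{\smlSucceq}$ has, \emph{by definition}, size less than $\mu(\log n)^{d/(d-1)}\mathrm{vol}_{r_d}=O(r^{d}(\log n)^{d/(d-1)})$. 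Multiplying gives $|A'\setminus A''|=O(|A|^{(d-1)/d}r^{2d+1}(\log n)^{d/(d-1)})\le(\eps/2)|A|$ once $C_1$ is large; this is exactly the paper's argument, which is deterministic and needs none of your conditioning (tile occupancy, subcriticality of light tiles, Lemma~\ref{lem:upperBndSizeOfLA}, containers). There is no need to estimate $|S|$ through $|L_S|_t$ at all, since the membership threshold for $\mathcal A'_{\smlSucceq}$ already caps $|S|$.

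Your ``above threshold'' bucket, by contrast, has two genuine gaps. First, the charging step is unjustified: a heavy tile of $\partial^{+}\widehat{L_S}$ that is adjacent only to tiles of the attached light clusters (and not to $L_S$ itself) is merely non-empty; its vertices need not lie in $A$, let alone in $Z$, because the constraint $B_x(r)\cap\mathcal G_n\subseteq A$ for $x\in S$ only reaches one tile-layer around $L_S$, while the light clusters can push the boundary $\Theta(\log n)$ tiles away. So ``it forces a $Z$-vertex within Euclidean distance $O(r\log n)$ of $S$'' does not follow, and with it the bound $\sum_S|L_S|_t^{(d-1)/d}=O(|Z|(\log n)^{O(1)})$ with a small exponent collapses. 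Second, the claimed ``budget of a power $(\log n)^{d}$'' does not exist: the hypothesis $|A|>C_1r^{(2d+1)d}(\log n)^{d^2/(d-1)}$ gives exactly $|A|^{1/d}>C_1^{1/d}r^{2d+1}(\log n)^{d/(d-1)}$, i.e., only \emph{constant} slack over the quantity $r^{2d+1}(\log n)^{d/(d-1)}$ produced by the clean count above. Any per-component charging at reach $O(r\log n)$ incurs a multiplicity of order $(\log n)^{d}$ (that many tiles, hence that many components, can sit within that distance of one $Z$-vertex), and together with your conversion factor $(\log n)^{1/(d-1)}$ this exceeds the available exponent $d/(d-1)$, so the final inequality fails under the stated hypothesis. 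The repair is simply to charge at reach $r$ (direct adjacency) and use the definitional size cap, i.e., to replace the whole second bucket by the count in the first paragraph.
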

\begin{proof}
Let $\mathcal{A}'_{\smlPrec}=\mathcal{A}'\setminus\mathcal{A}'_{\smlSucceq}$.
By our hypotheses regarding the sizes of $A$ and $A\setminus A'$, we have 
\begin{equation}
|A'| =|A|-|A\setminus A'|=|A|(1-|A\setminus A'|/|A|)>|A|(1-r^{d+1}/|A|^{1/d}) \ge|A|(1-O(1/(\log n)^{d/(d-1)}).
\end{equation}
Since each vertex in $A\setminus A'$ is adjacent to a vertex of at most a constant number of different components in $\mathcal{A}$ (the constant depends only on the dimension $d$), we deduce that $|\mathcal{A}'_{\smlPrec}|\leq |\mathcal{A}'|<c'|A|^{(d-1)/d}r^{d+1}$ for some constant $c'$ that only depends on $d$.

For the sake of contradiction suppose that $|A''|\leq (1-\eps)|A|$. Then, 
\[
|A|(\eps+O(1/(\log n)^{d/(d-1)}))
\,\le\, |A'|-\big|A''|
\,=\, \sum_{S\in \mathcal{A}'_{\smlPrec}}|S|
\,\le\, |\mathcal{A}'_{\smlPrec}|\max_{S\in\mathcal{A}'_{\smlPrec}} |S|
\,\le\, |\mathcal{A}'_{\smlPrec}|\cdot \mu (\log n)^{d/(d-1)} \mathrm{vol}_{r_d},
\]
where the last inequality follows because $|S|\le \mu (\log n)^{d/(d-1)} \mathrm{vol}_{r_d}$ for all $S\in\mathcal{A}'_{\smlPrec}$.
Since $|\mathcal{A}'_{\smlPrec}| < c'|A|^{(d-1)/d}r^{d+1}$
we get that
\begin{equation}
    |\mathcal{A}'_{\smlPrec}|\cdot \mu (\log n)^{d/(d-1)} \mathrm{vol}_{r_d}\le c'' |A|^{(d-1)/d}r^{2d+1} (\log n)^{d/(d-1)}
\end{equation}
where $c''$ is a constant depending on $c',\mu$. Furthermore, by multiplying and dividing by $|A|^{1/d}$ the term to the right and using the hypothesis over the size of $A$, we obtain
\begin{equation}
    |A|(\eps+O(1/(\log n)^{d/(d-1)}))\le c''|A| \left(\frac{r^{2d+1}\cdot(\log n)^{d/(d-1)}}{|A|^{1/d}}\right) \le c''|A|\frac{1}{(C_1)^{1/d}}
\end{equation}
by setting constants $C_1,n_0$ large enough, and for $n>n_0$, we reach a contradiction, thus finishing the proof.
\end{proof}
To recap, unless Proposition~\ref{prop:isoperimetric_inequality_rgg_large_r} trivially holds, if $A$ is large enough, then most vertices in $A$ are in~$A'$, that is, most vertices in $A$ have all their neighbors also in $A$. In turn, most of the vertices in $A$ belong to relatively large components induced by $A'$ in $\mathcal{G}_n$.
However, if a vertex is in $A'$ and belongs to tile $\tau\in\Lambda_n^{r_d}$, since~$\sqrt{d}r_d\le r/2< r$, then all vertices in $\tau\cap\mathcal{G}_n$ also belong to $A$.
Lemma~\ref{lem:AprimeComponents} establishes that, under our running assumption, a significant proportion of the vertices in $A$ belongs to relatively large $k$-lattice animals that contain vertices only in $A$. 
The isoperimetric inequalities of Lemma~\ref{lem:isoperimetric_in_lattice_z_d} 
imply that there are many tiles adjacent to the aforementioned lattice animals. 
Each of these tiles must have vertices not in $A$ (otherwise all vertices in the tile would belong to $A$ and thus, typically, would belong to one of the mentioned $k$-lattice animals). 
If many of these tiles have many vertices not in $A$, say $\Omega(r^d)$, and are also adjacent to a tile~$\tau$ having many vertices in $A$, say $\Omega(r^d)$, and belonging to one of the mentioned $k$-lattice animals,  then we would have a significant number of pairs $(\tau,\widehat{\tau})$ contributing $\Omega(r^{2d})$ edges each to $E(A,A^c)$ (because any two points in adjacent tiles of $\Lambda_n^{r_d}$ are at distance at most $r$) and  we might leverage this fact to complete the proof of Proposition~\ref{prop:isoperimetric_inequality_rgg_large_r}. To succeed, the just discussed approach needs to guarantee that many tiles on the external boundary of large $k$-lattice animals whose tiles contains vertices exclusively in $A$, must not consist of vertices mostly in~$A$. 
Unfortunately, this may actually occur. Instead, we look at ``augmented'' $k$-lattice animals that contain the ``original'' $k$-lattice animals we started with. These ``augmented'' structures consist of tiles that have more than half their vertices in~$A$ and each of these tiles is reachable (in a sense that will be made clear) in $\Lambda_n^{r_d}$ from one of the ``original'' $k$-lattice animals. 
It is on the edge-boundary of these ``augmented'' $k$-lattice animals that we will later show there are many edges contributing to~$E(A,A^c)$. Next, we formalize the notion of augmented $k$-lattice animal. 

Let $T_A$ be the collection of non-empty tiles in $\Lambda_n^{r_d}$ such that at least half of the vertices in~$\mathcal{G}_n$ in the tile belong to $A$ (see Figure~\ref{fig:first}.c). More precisely, 
\[
T_A = \big\{ \tau\in\Lambda_n^{r_d} \colon |\tau\cap A|\geq |\tau\cap\mathcal{G}_n|/2 > 0\big\}.
\]
We remark that all tiles of $T_A$ contain at least one vertex of $\mathcal{G}_n$. Also, observe that a tile of $\Lambda_n^{r_d}$ that contains a vertex of $A'$ must belong to $T_A$.

Let $\mathfrak{L}(A)$ be the collection of components induced by $T_A$ in the $k$-transitive closure of~$\Lambda_n^{r_d}$ that contain some vertex of~$A''$ (see Figure~\ref{fig:second}.d). 
Formally,
\[
\mathfrak{L}(A) = \big\{ L \subseteq\Lambda_n^{r_d}{[ T_A ]} \colon 
\text{$L$ is a maximal $k$-lattice animal and $L\cap A''\neq\emptyset$} \big\}.
\]
Elements of $\mathfrak{L}(A)$ correspond to what we refereed to as ``augmented'' lattice animals in our informal discussion above.

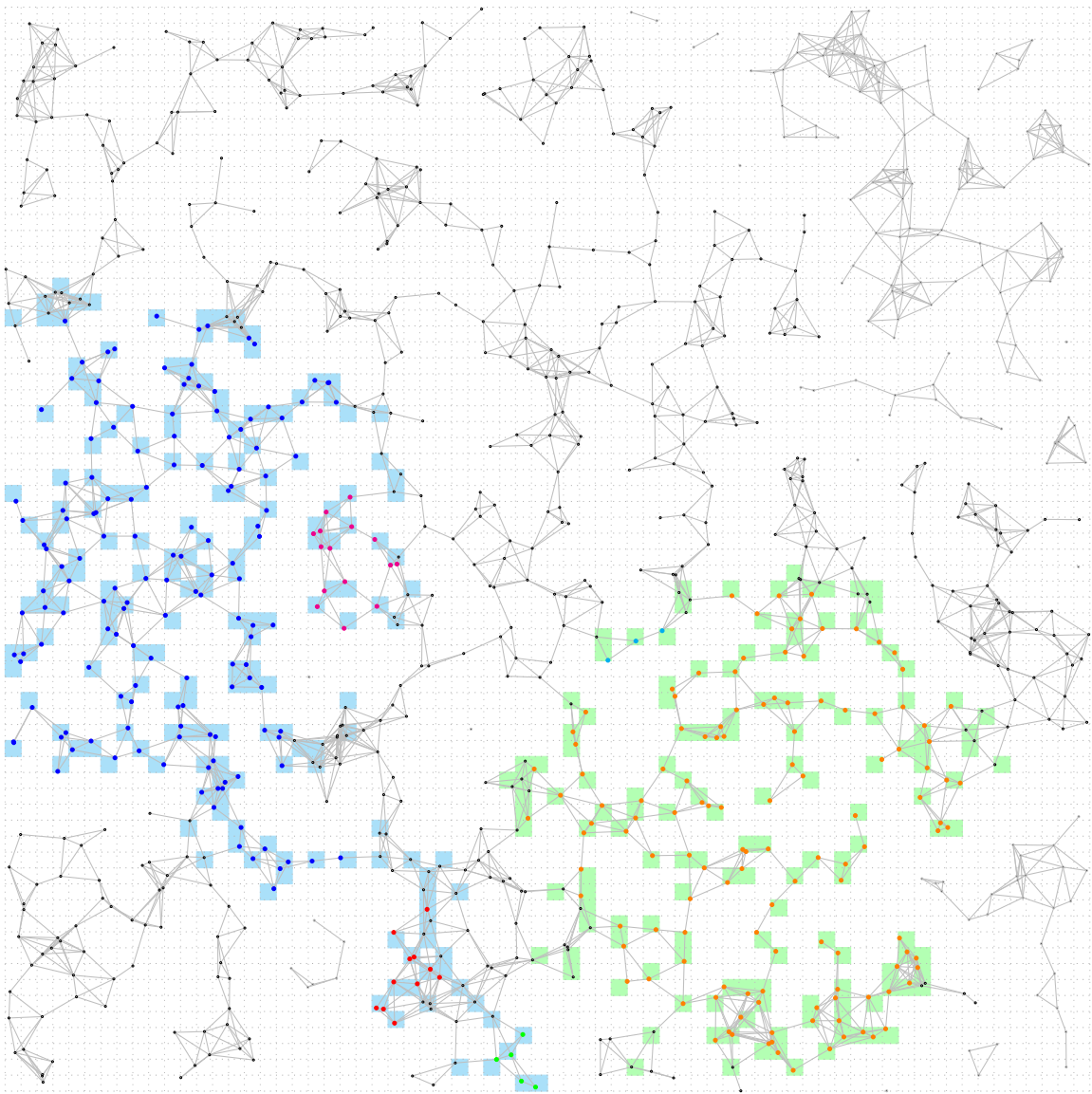
\begin{figure}
\begin{center}
  \tikzset{internal/.style={inner sep=0, outer sep=0, line cap=rect}}
  \begin{tikzpicture}[scale=3.8]
    \csvreader[head to column names]{csvs/rggAnim1.csv}{1=\f, 2=\a, 3=\b, 4=\l}{%
      \ifthenelse{\f=1}{\filldraw[cyan!30] (\a,\b) rectangle (\a+\l,\b+\l);}{}
      \ifthenelse{\f=2}{\filldraw[green!30] (\a,\b) rectangle (\a+\l,\b+\l);}{}
    }
    \csvreader[head to column names]{csvs/rggG1.csv}{1=\x}{%
      \draw[dotted,gray!50] (\x,0) -- (\x,4);
      \draw[dotted,gray!60] (0,\x) -- (4,\x);
    }
    \csvreader[head to column names]{csvs/rggV1.csv}{1=\n, 2=\x, 3=\y, 4=\gnt, 5=\A, 6=\Ap}{%
      \coordinate (v\n) at (\x,\y);
    }
    \csvreader[head to column names]{csvs/rggE1.csv}{1=\i, 2=\j}{%
      \draw[gray!50,fill=gray!50] (v\i) -- (v\j);       
    }
    \csvreader[head to column names]{csvs/rggCalAp1.csv}{1=\n, 2=\x, 3=\y, 4=\gnt, 5=\Ap, 6=\compAp}{
      \ifthenelse{\gnt=0}{
        \draw[gray!50,fill=black] (v\n) circle (0.1pt);
      }{%
          \ifthenelse{\Ap=0}{\draw[fill=white] (v\n) circle (0.1pt);}{}
          \ifthenelse{\compAp=1}{\draw[blue,fill] (v\n) circle (0.2pt); 
          }{}
          \ifthenelse{\compAp=2}{\draw[magenta,fill] (v\n) circle (0.2pt); 
          }{}
          \ifthenelse{\compAp=3}{\draw[orange,fill] (v\n) circle (0.2pt); 
          }{}
          \ifthenelse{\compAp=4}{\draw[red,fill] (v\n) circle (0.2pt); 
          }{}
          \ifthenelse{\compAp=5}{\draw[cyan,fill] (v\n) circle (0.2pt); 
          }{}
          \ifthenelse{\compAp>5}{\draw[green,fill] (v\n) circle (0.2pt); 
          }{}
        }
    }
  \end{tikzpicture}
\end{center}
\caption{
Vertices of $\mathcal{G}_n$ are represented as circles.
Small circles in gray represent vertices that do not belong to $\mathcal{L}_n$.
  Solid vertices shown in colors correspond to elements of $A'$. (a) Distinct elements of $\mathcal{A}'$ (components induced by $A'$ in $\mathcal{L}_n$) are shown in different colors.
  (b) For illustration purposes, we consider as elements of~$\mathcal{A}'_{\smlSucceq}$ components of~$\mathcal{A}'$ of size at least~$20$ which are illustrated as blue and orange colored vertices.
  (c) Vertices in colored tiles correspond to $A''$.
  (d) Again, for the sake of illustration, we consider $k=3$ and color the tiles belonging to each of the two $3$-lattice animals in~$\mathfrak{L}(A)$
  in green and cyan, respectively.
  (e) Uncolored tiles are *-connected and contain elements of $\mathcal{L}_n\setminus A$ (see Figure~\ref{fig:first} for a depiction of the set $A$). Hence, in this example, $\mathfrak{K}(A)$ contains a unique element, consisting of the collection of uncolored tiles.
(f) For the illustrated instance, for each $L\in\mathfrak{L}(A)$, it holds that $\mathfrak{M}(L)=\{L^c\}$.}\label{fig:second}
\end{figure}

The next result establishes a lower bound on the number of tiles of each element of $\mathfrak{L}(A)$ and shows that collectively  all elements of $\mathfrak{L}(A)$ contain $\Omega(|A|/r^d)$ tiles of $\Lambda_n^{r_d}$.
\begin{lemma}\label{lem:propHats}
 Let $r\geq r'$ for $r'$ large enough. For any large enough positive constant 
 $C_3>0$, there exist constants $C_1,c_2>0$ such that the following holds w.h.p.: For every  connected set~$A\subseteq\mathcal{G}_n$ of size at least 
  $C_1 (r^{2d+1}(\log n)^{\frac{d}{d-1}})^{d}$
  satisfying ~$|A\setminus A'|<|A|^{(d-1)/d} r^{d+1}$, the following holds: 
  \begin{enumerate}
  \item\label{lem:propHats:itm2} $|\bigcup_{L\in\mathfrak{L}(A)}L|_t \ge c_2|A|/r^d$.
  \medskip
  \item\label{lem:propHats:itm3}  $|L|_t\ge C_3(\log n)^{d/(d-1)}$ for all $L\in\mathfrak{L}(A)$.
  \end{enumerate}
\end{lemma}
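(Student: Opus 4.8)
The plan is to read the statement off from the two preceding lemmas, the crucial input being the tile‑diameter bound $\sqrt d\,r_d\le r/2<r$. First I would record two elementary consequences of this bound. (i) If a tile $\tau\in\Lambda_n^{r_d}$ contains a vertex $x\in A'$, then every vertex of $\mathcal{G}_n$ in $\tau$ lies in $B_x(r)$, hence in $A$ by definition of $A'$; in particular $\tau\in T_A$. (ii) Any two vertices of $A'$ lying in a common tile are at distance $<r$, hence adjacent in $\mathcal{G}_n$, hence in the same component of $\mathcal{G}_n[A']$; so each tile meets at most one member $S$ of $\mathcal{A}'$. For $S\in\mathcal{A}'$ write $L_S$ for the set of tiles of $\Lambda_n^{r_d}$ meeting $S$. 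When $\mathcal{G}_n[S]$ is connected, $L_S$ is a $k$-lattice animal, and by (i) it is contained in $T_A$; hence $L_S$ lies inside a single maximal $k$-lattice animal of $T_A$, which — whenever $S\subseteq A''$ — meets $A''$ and is therefore an element of $\mathfrak{L}(A)$. By (ii) the family $\{L_S:S\in\mathcal{A}'\}$ is pairwise disjoint.

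Next I would feed in the size estimates. Applying Lemma~\ref{lem:AprimeComponents} with $\eps=1/2$ (its size hypothesis $|A|>C_1 r^{(2d+1)d}(\log n)^{d^2/(d-1)}$ is exactly our assumption $|A|\ge C_1(r^{2d+1}(\log n)^{d/(d-1)})^d$), for $C_1$ large enough and $n$ large we get $|A''|\ge|A|/2$. Since $r>r'$, Lemma~\ref{lem:bounds_vertices_big_animals} applies; I would condition on the w.h.p.\ event it provides, namely that every connected $S\subseteq\mathcal{G}_n$ with $|S|>2\lceil\gamma\log n\rceil\mathrm{vol}_{r_d}$ has $|L_S|_t>|S|/(2\mathrm{vol}_{r_d})$. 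Fixing the constant $\mu$ in the definition of $\mathcal{A}'_{\smlSucceq}$ to be at least $2C_3$ (and large enough for Lemma~\ref{lem:AprimeComponents}), every $S\in\mathcal{A}'_{\smlSucceq}$ satisfies $|S|\ge\mu(\log n)^{d/(d-1)}\mathrm{vol}_{r_d}>2\lceil\gamma\log n\rceil\mathrm{vol}_{r_d}$ for $n$ large (using $d/(d-1)>1$), so $|L_S|_t>|S|/(2\mathrm{vol}_{r_d})$.

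Both items then follow quickly. For part~\ref{lem:propHats:itm3}: any $L\in\mathfrak{L}(A)$ contains a vertex of $A''$, i.e.\ a tile $\tau_0\in L$ with some $x_0\in\tau_0\cap S$, $S\in\mathcal{A}'_{\smlSucceq}$; then $\tau_0\in L_S$, and since $L_S$ is a $k$-lattice animal contained in $T_A$ and $L$ is the maximal such containing $\tau_0$, we get $L_S\subseteq L$, so
\[
|L|_t\ \ge\ |L_S|_t\ >\ \frac{|S|}{2\mathrm{vol}_{r_d}}\ \ge\ \frac{\mu}{2}\,(\log n)^{d/(d-1)}\ \ge\ C_3(\log n)^{d/(d-1)}.
\]
For part~\ref{lem:propHats:itm2}: the sets $L_S$, $S\in\mathcal{A}'_{\smlSucceq}$, are pairwise disjoint and each lies inside an element of $\mathfrak{L}(A)$, so
\[
\Big|\bigcup_{L\in\mathfrak{L}(A)}L\Big|_t\ \ge\ \sum_{S\in\mathcal{A}'_{\smlSucceq}}|L_S|_t\ >\ \frac{1}{2\mathrm{vol}_{r_d}}\sum_{S\in\mathcal{A}'_{\smlSucceq}}|S|\ =\ \frac{|A''|}{2\mathrm{vol}_{r_d}}\ \ge\ \frac{|A|}{4\mathrm{vol}_{r_d}},
\]
which is $\Omega(|A|/r^d)$ since $\mathrm{vol}_{r_d}=\Theta(r^d)$, i.e.\ at least $c_2|A|/r^d$ for a suitable constant $c_2>0$.

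I do not expect a genuinely hard step here: the proof is essentially careful bookkeeping with the definitions of $A'$, $A''$, $T_A$ and $\mathfrak{L}(A)$. The part demanding the most care is the geometric input in the first paragraph — verifying that each $L_S$ lies inside a single augmented animal of $\mathfrak{L}(A)$ and that distinct $L_S$ are pairwise disjoint — which is exactly where the choice $r_d\le r/(2\sqrt d)$ is used. A secondary, purely bookkeeping, point is the order of quantifiers between the given constant $C_3$ and the constant $\mu$ appearing in the definition of $A''$: one must be free to enlarge $\mu$ to at least $2C_3$ (and large enough for the earlier lemmas), so "$C_3$ large" is harmless.
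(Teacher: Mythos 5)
Your proof is correct and takes essentially the same approach as the paper: both arguments rest on Lemma~\ref{lem:AprimeComponents} together with the vertex-to-tile count conversion of Lemma~\ref{lem:bounds_vertices_big_animals}, plus the geometric observation that any tile containing a vertex of $A'$ lies in $T_A$ (so large components of $\mathcal{A}'_{\smlSucceq}$ sit inside animals of $\mathfrak{L}(A)$). The only, harmless, difference is that the paper applies the conversion (via Remark~\ref{rem:bounds_vertices_big_animals}) to the animals $L\in\mathfrak{L}(A)$ themselves, using $S\subseteq\mathcal{G}_n\cap L$, whereas you apply it to the components $S\in\mathcal{A}'_{\smlSucceq}$ and transfer the bound through $L_S\subseteq L$ and the pairwise disjointness of the $L_S$.
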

\begin{proof}
Let $A$ be a (fixed) set satisfying the hypothesis of the lemma.
First, we show that 
\begin{equation}\label{eqn:propHat1}
A''\subseteq \mathcal{G}_n\cap \bigcup_{L\in\mathfrak{L}(A)}L.
\end{equation}

Consider $v\in A''$.
Let $\tau$ be the tile of $\Lambda_n^{r_d}$ containing~$v$.
Since the side length of tile $\tau$ is $r_d$, by definition of $r_d$, any two points in $\tau$ are at distance at most $r/2$ of each other.
Hence, $\tau\subseteq B_v(r)$.
Since $v\in A''\subseteq A'$, by definition of $A'$, we have that 
$\tau\cap\mathcal{G}_n \subseteq B_v(r)\cap\mathcal{G}_n\subseteq A$.
Moreover, $\tau\cap\mathcal{G}_n$ is non-empty, since $v$ belongs to $\tau$.
Thus, $\tau$ is a non-empty tile all of whose vertices belong to $A$, so by definition of $T_A$, it follows that $\tau\in T_A$.
Hence, given that $\tau$ contains~$v$ which is a vertex in $A''$,  by definition of $\mathfrak{L}(A)$, tile $\tau$ must belong to some $L\in\mathfrak{L}(A)$ and $v\in\mathcal{G}_n\cap L$.
This concludes the proof of~\eqref{eqn:propHat1}.

Next, observe that we can apply Lemma~\ref{lem:AprimeComponents}.
By~\eqref{eqn:propHat1}, since all $L$ in $\mathfrak{L}(A)$ are disjoint and by Lemma~\ref{lem:AprimeComponents} (this works for any $C_3$ large enough) 
, we get that 
\begin{equation}\label{eqn:propHat2}
\sum_{L\in\mathfrak{L}(A)} |\mathcal{G}_n \cap L|
\ge |A''| 
= (1-\eps)|A|.
\end{equation}
Consider $L\in\mathfrak{L}(A)$.
By definition of $\mathfrak{L}(A)$, there must exist $v\in A''$ such that $v\in L$. 
Let~$S\in\mathcal{A}'_{\smlSucceq}$ be such that $v\in S$.
Since $S$ is a connected set in $\mathcal{G}_n$ (because $S\in\mathcal{A}'$ and the elements of $\mathcal{A}'$ are, by definition,  components in $\mathcal{G}_n[A']$) 
and $L$ is a $k$-lattice animal in~$\Lambda_n^{r_d}$, it follows that $S\subseteq \mathcal{G}_n\cap L$.
Recalling that $|S|\geq \mu (\log n)^{(d/(d-1)}\mathrm{vol}_{r_d}$ (because $S\in\mathcal{A}'_{\smlSucceq}$), we get that $|\mathcal{G}_n\cap L|\ge \mu (\log n)^{d/(d-1)} \mathrm{vol}_{r_d}$. Hence, by making $\mu$ large enough, we can apply Remark~\ref{rem:bounds_vertices_big_animals} and deduce that, w.h.p., for any connected set $A$ as in the statement, $|L|_t>\delta |\mathcal{G}_n\cap L|/r^d$ for some positive $\delta$.
Summing over $L\in\mathfrak{L}(A)$ and using~\eqref{eqn:propHat2} yields~\ref{lem:propHats:itm2}. 
Moreover, since $S\subseteq \mathcal{G}_n\cap L$, using again that~$|S|\geq \mu(\log n)^{d/(d-1)} \mathrm{vol}_{r_d}$, we get that Part~\ref{lem:propHats:itm3} also holds by making $\mu$ large enough.
\end{proof}

Now, let $\mathfrak{K}(A)$ be the collection of *-components 
of tiles of $\Lambda_n^{r_d}\setminus \bigcup_{L\in\mathfrak{L}(A)} L$ that contain some vertex of~$\mathcal{L}_n$ not in $A$. In other words, we look at such components before intersecting with $\mathcal{L}_n$ and then discard those that have no vertex in $\mathcal{L}_n$ (see Figure~\ref{fig:second}.e). 
Formally, 
\[
\mathfrak{K}(A) =
\Big\{K\subseteq \Lambda_n^{r_d} \colon \text{$K\subseteq \big(\bigcup_{L\in\mathfrak{L}(A)} L\big)^c$ is a *-component and $K\cap(\mathcal{L}_n\setminus A)\neq\emptyset$}\Big\}.
\]
We remark that in general $K\in \mathfrak{K}(A)$ contains tiles with vertices not in the giant component (it could even contain tiles without any vertices of $\mathcal{G}_n$). 


For the sake of argument, consider $A\subseteq\mathcal{L}_n$ and take a component $K\in\mathfrak{K}(A)$.
By definition, $K$ contains a vertex $v_K$ in $\mathcal{L}_n\setminus A$. Moreover, tiles in $\partial_*^+K$ must contain vertices in $A\subseteq\mathcal{L}_n$. 
Since $\mathcal{L}_n$ is connected, there must be a path from $v_K$ to elements of $A$ in the external *-vertex-boundary of $K$. Thus, intuitively, each $K\in\mathfrak{K}(A)$ must contribute at least one edge to $E(A,A^c)$. Thus, if $\mathfrak{K}(A)$ is large enough, then $E(A,A^c)$ must be large and Proposition~\ref{prop:isoperimetric_inequality_rgg_large_r} should follow.
The following two results formalize and make precise this paragraph's discussion.
\begin{lemma}\label{lem:starComp}
If $A \subseteq \mathcal{L}_n$, then
for each $K\in\mathfrak{K}(A)$ there is at least one edge between $A$ and $\mathcal{L}_n\setminus A$ which is incident to a vertex in $(K\cup\partial_*^{+}K)\cap\mathcal{L}_n$.
\end{lemma}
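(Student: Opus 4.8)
The plan is to argue by contradiction, exploiting that $\partial_*^{+}K$, although only one tile thick, still ``catches'' every edge of $\mathcal{L}_n$ that tries to leave the tiles of $K$. Fix $K\in\mathfrak{K}(A)$; as noted just above, $K$ contains a vertex $v_K\in\mathcal{L}_n\setminus A$ lying in a tile of $K$, and (since $K$ is a $*$-component of $\big(\bigcup_{L\in\mathfrak{L}(A)}L\big)^c$) every tile of $\partial_*^{+}K$ lies in $\bigcup_{L\in\mathfrak{L}(A)}L\subseteq T_A$ and hence contains at least one vertex of $A$. We may assume $A\neq\emptyset$. Suppose, towards a contradiction, that no edge between $A$ and $\mathcal{L}_n\setminus A$ is incident to a vertex of $\mathcal{L}_n$ lying in a tile of $K\cup\partial_*^{+}K$. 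Let $B$ be the set of vertices of $\mathcal{L}_n$ that lie in a tile of $K$ and do not belong to $A$; then $v_K\in B$, so $B\neq\emptyset$. The goal is to show that $B$ is closed under taking neighbours in $\mathcal{L}_n$: since $\mathcal{L}_n$ is connected and $v_K\in B$, this forces $B$ to be all of $\mathcal{L}_n$, i.e.\ $A=\emptyset$, contradicting our assumption.

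So let $u\in B$, sitting in a tile $\tau_u\in K$, and let $w$ be a neighbour of $u$ in $\mathcal{L}_n$, sitting in a tile $\tau_w$. If $w\in A$, then $\{u,w\}$ is an edge between $A$ and $\mathcal{L}_n\setminus A$ incident to $u$, contradicting the assumption; hence $w\notin A$. If $\tau_w\in\partial_*^{+}K$, then $\tau_w$ contains some $a\in A$, and since any two points of a tile of $\Lambda_n^{r_d}$ are within distance $r/2\le r$ of each other (by the choice of $r_d$), $\{w,a\}$ is an edge of $\mathcal{G}_n$, hence an edge between $A$ and $\mathcal{L}_n\setminus A$ incident to $w$, which lies in a tile of $\partial_*^{+}K$ --- again a contradiction. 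Thus the whole argument reduces to ruling out the case $\tau_w\notin K\cup\partial_*^{+}K$.

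So assume $\tau_w\notin K\cup\partial_*^{+}K$. The segment $[u,w]\subseteq\mathbb{R}^d$ starts in $\overline{\tau_u}$ with $\tau_u\in K$ and ends in $\overline{\tau_w}$ with $\tau_w\notin K$; the tiles whose closures meet $[u,w]$ form a $*$-connected family (consecutive ones, in the order in which the segment meets them, share a boundary point), containing a tile of $K$ and a tile not in $K$, so this family contains a tile $\sigma\in\partial_*^{+}K$, and by the above $\sigma$ contains a vertex $a\in A$. Pick $p\in[u,w]\cap\overline{\sigma}$. Since $p$ lies on the segment, $\|u-p\|_2+\|p-w\|_2=\|u-w\|_2\le r$, so $\|u-p\|_2\le r/2$ or $\|p-w\|_2\le r/2$; combined with $\|p-a\|_2\le\operatorname{diam}\overline{\sigma}\le r/2$ this yields $\|u-a\|_2\le r$ in the first case and $\|w-a\|_2\le r$ in the second. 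In the first case $\{u,a\}$ is an edge between $A$ and $\mathcal{L}_n\setminus A$ incident to $u$ (in a tile of $K$); in the second $\{w,a\}$ is such an edge incident to $a$, a vertex of $\mathcal{L}_n$ lying in $\sigma\in\partial_*^{+}K$. Either way we contradict our assumption, so $\tau_w\in K$ and $w\in B$. This proves that $B$ is closed under neighbours and completes the proof.

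I expect the last case --- an edge of $\mathcal{L}_n$ that ``jumps over'' the one-tile-thick layer $\partial_*^{+}K$ --- to be the main obstacle, because edges of $\mathcal{G}_n$ can join vertices whose tiles are at $\ell_\infty$-distance larger than $1$, so $\partial_*^{+}K$ is \emph{not} literally a separating layer for $\mathcal{G}_n$. What rescues the argument is the quantitative geometry built into the choice of $r_d$: such a jumping edge has length at most $r$, while each tile it crosses has diameter at most $r/2$, so one of its endpoints is within distance $r$ of any vertex living in an intermediate tile --- and an intermediate tile in $\partial_*^{+}K$ is guaranteed to contain a vertex of $A$. Everything else is bookkeeping with the definitions of $T_A$, $\mathfrak{L}(A)$ and $\mathfrak{K}(A)$.
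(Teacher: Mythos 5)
Your proof is correct, and it reaches the same key geometric fact as the paper but packages the argument differently. The paper picks $v_K\in K\cap(\mathcal{L}_n\setminus A)$ and a vertex $u\in A$ lying in a tile of $\partial_*^{+}K$, follows an explicit path in $\mathcal{L}_n$ from $v_K$ to $u$, isolates the first edge of the path whose segment meets $\partial_*^{+}K$, and then shows that the crossed boundary tile $\tau$ is entirely contained in $B_{v'}(r)\cup B_{v''}(r)$ (via the lens geometry of two radius-$r$ balls at distance at most $r$), so the vertex of $A$ inside $\tau$ is adjacent to $v'$ or $v''$. You instead argue by contradiction with a closure argument: if no qualifying edge existed, the set $B$ of $\mathcal{L}_n$-vertices of $K$ outside $A$ would be closed under $\mathcal{L}_n$-neighbours, forcing $B=\mathcal{L}_n$ and $A=\emptyset$; the crucial case (an edge jumping from a tile of $K$ past $\partial_*^{+}K$) is handled by the tile sequence traversed by the segment $[u,w]$ plus a plain triangle inequality ($\min\{\|u-p\|,\|p-w\|\}\le r/2$ and $\mathrm{diam}\,\overline{\sigma}\le r/2$), which is a somewhat lighter substitute for the paper's ball-covering computation while using the same bound $\sqrt{d}\,r_d\le r/2$. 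Both routes rest on the same two pillars (connectivity of $\mathcal{L}_n$, and the fact that every tile of $\partial_*^{+}K\subseteq\bigcup_{L\in\mathfrak{L}(A)}L\subseteq T_A$ contains a vertex of $A$); your version avoids choosing a target vertex $u\in A$ in advance (so it does not even need $\partial_*^{+}K\neq\emptyset$), at the cost of the explicit (and harmless) assumption $A\neq\emptyset$, which the paper's setting guarantees anyway.
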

\begin{proof}
    First, recall that, by definition of $\mathfrak{K}(A)$, for each $K \in \mathfrak{K}(A)$, there is a vertex $v\in K$ that belongs to $\mathcal{L}_n\setminus A$, and that each tile in~$\partial_*^{+}K$ has at least one vertex of $A$ (because $\partial_*^{+}K$ is contained in $\bigcup_{L\in\mathfrak{L}(A)}L$ and, by definition of $\mathfrak{L}(A)$, each tile in this union contains elements of $A$). Now pick $u \in A$ such that it belongs to some tile in $\partial_*^{+}K$.
    Since both $u$ and $v$ belong to $\mathcal{L}_n$, there is  a path in~$\mathcal{L}_n$ from $v$ to $u$. Let $v'$ be the first (starting from $v$) vertex such that  the line segment between~$v'$ and~$v''$, where~$v''$ is the vertex that follows $v'$ on the path, intersects~$\partial_*^{+}K$ (note that this does not mean that~$v' \in \partial_*^{+}K$, but $v' \in K \cup \partial_*^{+}K$).
    Observe that, for every path in $\mathcal{L}_n$ between $u$ and~$v$, this must happen at least once. 
    Now, starting from $v$ follow the path until we find a vertex of~$A$ (that belongs to the path). 
    In what follows we consider four cases.
    
    First, assume we find $u'\in A$ before $v'$ in the path, say $u'$ is the first such vertex. Since all vertices of the path before $v'$ are totally contained within $K$, we get that $u'\in K\cap A$ and the vertex $u''$ preceeding~$u'$ in the path must be in $K$ but not in $A$, so we have~$\{u',u''\}\in E(A,A^c)$ and~$u',u''\in K$. 
    
    Second, assume the first vertex on the path in $A$ is $v'$. Defining $u'$ as in the previous case, by the same argument as before, we conclude that~$\{u',v'\}\in E(A,A^c)$, $u'\in K$ and $v'\in\partial_*^+K$.  
    
    Third, assume $v'\not\in A$ and $v''\in A$, then $\{v',v''\}$ is an edge satisfying the desired condition. 
    
    The last case remaining is when 
    $v',v''\notin A$. We claim that either $v'$ or $v''$ is connected by an edge to a vertex of $A$ in some tile of $\partial_*^{+}K$. Let $\tau\in \partial_*^{+}K$ (recall that the side length of tiles is $r_d$) be a tile that intersects the line segment between $v'$ and $v''$ which we denote by $\overline{v'v''}$ (see Figure~\ref{fig:edge-through-square}). We prove that~$\tau\subseteq B_{v'}(r)\cup B_{v''}(r)$: 
    observe that any point inside $\tau$ is at distance at most $r_d\sqrt{d}$ (the distance between two antipodal vertices of $\tau$) from $\overline{v'v''}$.  On the other hand, the points on the boundary of $B_{v'}(r)\cup B_{v''}(r)$ that are closest to the segment $\overline{v'v''}$ are the ones on the intersection of the boundaries of $B_{v'}(r)$
    and $B_{v''}(r)$. All points on these boundary intersections are at distance exactly $\sqrt{r^2-(\|v'-v''\|_2/2)^2}\ge r\sqrt{3}/2\ge r_d$ from $\overline{v'v''}$ (because $\|v'-v''\|_2\le r$).
    However, by definition of $r_d$ we have that $r_d\sqrt{d}\leq r/2\leq r\sqrt{3}/2$, which proves that $\tau\subseteq B_{v'}(r)\cup B_{v''}(r)$.
    Since $\tau\in\partial^+_* K\subseteq\bigcup_{L\in\mathfrak{L}(A)}L$, by definition of $\mathfrak{L}(A)$, tile $\tau$ must contain an element of $A$, say $v'''$, and either $\{v',v'''\}$ or $\{v'',v'''\}$ is an edge in $E(A,A^c)$ satisfying the desired condition.
\end{proof}

    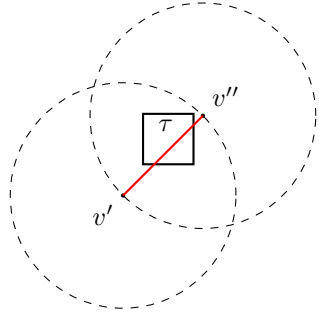
\begin{figure}    \centering
    \begin{tikzpicture}[scale=1.5]
    \def\r{1/sqrt(5)}
    \def\s{1/sqrt(2)}

    \draw[thick] ({\s/4},{\s/4+0.1}) rectangle ({\s/4+\r},{\s/4+\r+0.1});

    \node at ({\s/4+0.2},{\s/4+\r}) {$\tau$};

    \coordinate (v1) at (0,0);
    \coordinate (v2) at ({\s},{\s});
    \filldraw[black] (v1) circle (0.4pt) node[below left] {$v'$};
    \filldraw[black] (v2) circle (0.4pt) node[above right] {$v''$};

    \draw[red, thick] (v1) -- (v2);

    \draw[dashed] (v1) circle ({1});
    \draw[dashed] (v2) circle ({1});
    \end{tikzpicture}
    \caption{Vertices $v'$ and $v''$ are connected by an edge of length $r$ traversing the tile~$\tau$ of side length $r_d$, which is contained in either $B_{v'}(r)$ or $B_{v''}(r)$}\label{fig:edge-through-square}
    \end{figure}

\begin{corollary}\label{cor:starComp}
There exists a constant $\xi=\xi(d)$ such that if $A\subseteq\mathcal{L}_n$, then $|E(A,A^c)|\ge \xi|\mathfrak{K}(A)|$.
\end{corollary}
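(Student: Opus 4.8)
The plan is to derive Corollary~\ref{cor:starComp} from Lemma~\ref{lem:starComp} by a simple double-counting argument, where the only real issue is bounding the amount of overcounting. By Lemma~\ref{lem:starComp}, every $K\in\mathfrak{K}(A)$ is assigned at least one edge $e_K\in E(A,A^c)$ that is incident to a vertex of $(K\cup\partial_*^+K)\cap\mathcal{L}_n$. If these edges were all distinct, we would immediately get $|E(A,A^c)|\ge|\mathfrak{K}(A)|$; in general a single edge of $E(A,A^c)$ could be chosen by several components $K$, so I need to bound how many $K$'s can share the same edge.

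First I would observe that the components in $\mathfrak{K}(A)$ are pairwise disjoint (they are distinct *-components of the same set of tiles $(\bigcup_{L\in\mathfrak{L}(A)}L)^c$), and so are their internal $*$-vertex-boundaries inside the complement. Hence, if an edge $e=\{x,y\}\in E(A,A^c)$ is the edge assigned to $K$, then by the proof of Lemma~\ref{lem:starComp} at least one endpoint of $e$ lies in a tile of $K$ and the other lies either in a tile of $K$ or in a tile of $\partial_*^+K$ — in particular, each endpoint of $e$ lies in $K\cup\partial_*^+K$. Fix the two tiles $\tau_x,\tau_y$ of $\Lambda_n^{r_d}$ containing $x$ and $y$. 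A component $K$ can only be assigned $e$ if $\tau_x$ or $\tau_y$ belongs to $K\cup\partial_*^+K$; since the $K$'s are disjoint, at most one of them contains $\tau_x$ as one of its own tiles, and the number of components having $\tau_x$ in their external $*$-boundary is at most the number of $*$-neighbours of $\tau_x$, which is $3^d-1$. The same bound holds for $\tau_y$. Therefore at most $2\cdot 3^d$ distinct components $K$ can be assigned the same edge $e$.

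Putting this together: summing the assignment over all $K\in\mathfrak{K}(A)$ produces a multiset of edges of $E(A,A^c)$ of total size at least $|\mathfrak{K}(A)|$, in which each edge appears at most $2\cdot 3^d$ times, so
\[
|E(A,A^c)|\ \ge\ \frac{|\mathfrak{K}(A)|}{2\cdot 3^d}.
\]
Setting $\xi=\xi(d)=1/(2\cdot 3^d)$ (or any such constant depending only on $d$) gives the claim. The only mildly delicate point — and the one I would write out carefully — is the combinatorial bound on how many components of $\mathfrak{K}(A)$ can select a given edge; everything else is immediate from Lemma~\ref{lem:starComp} and the disjointness of the $*$-components. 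I do not expect any genuine obstacle here, since the structure has already been set up in the preceding lemmas.
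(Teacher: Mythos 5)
Your proof is correct and follows essentially the same route as the paper: assign to each $K\in\mathfrak{K}(A)$ an edge of $E(A,A^c)$ via Lemma~\ref{lem:starComp}, then use the pairwise disjointness of the *-components and the bounded number of tiles *-adjacent to any fixed tile to show each edge is chosen at most a constant (dimension-dependent) number of times. The only difference is cosmetic: you make the multiplicity bound explicit ($2\cdot 3^d$, via the tiles containing the edge's endpoints) where the paper simply notes that a vertex can meet at most constantly many components of $\mathfrak{K}(A)$; note also that your intermediate claim that \emph{both} endpoints lie in $K\cup\partial_*^+K$ is stronger than what Lemma~\ref{lem:starComp} guarantees, but your counting only uses the one-endpoint version, so nothing is affected.
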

\begin{proof}
For every $K\in\mathfrak{K}(A)$ let $v_K$ be a vertex in $K\cup\partial_*^+K$ for which there is an edge between~$A$ and~$\mathcal{L}_n\setminus A$ incident to it (Lemma~\ref{lem:starComp} guarantees the existence of such a vertex). 
It may happen that~$v_K=v_{K'}$ for $K'\in\mathfrak{K}(A)\setminus\{K\}$. However, since every vertex is connected to at most constantly many (depending on the dimension $d$) distinct components in $\mathfrak{K}(A)$, there can be at most a constant number of $K'$'s for which $v_K=v_{K'}$. Thus, $\Omega(|\mathfrak{K}(A)|)$ 
elements of $\mathfrak{K}(A)$ can be associated to distinct edges between $A$ and $\mathcal{L}_n\setminus A$, and we are done.
\end{proof}

Thanks to the preceding result, we can incorporate  $|\mathfrak{K}(A)|<|A|^{(d-1)/d}r^{d+1}$ as part of our running assumption, since otherwise we are done ($|E(A,A^c)|\ge \xi|A|^{(d-1)/d}r^{d+1}$). Doing so amounts to saying that there are few *- components in $(\cup_{L\in\mathfrak{L}(A)}L)^c$ containing vertices of the giant but not of $A$. On the other hand, all tiles in $\cup_{L\in\mathfrak{L}(A)}L$ have at least half their vertices in $A$ and, 
since in expectation tiles have $\mbox{vol}_{r_d}=\Theta(r^d)$ vertices, one would expect that there are $O(|A|/r^d)$ tiles
in $\cup_{L\in\mathfrak{L}(A)}L$.
Thus, provided $|A|\le (1-\delta)|\mathcal{L}_n|$ for some constant $\delta>0$, we may presume that 
$(\cup_{L\in\mathfrak{L}(A)}L)^c$ contains at least a constant fraction of all tiles in $\Lambda_n^{r_d}$. 
But this suggests, assuming $\mathfrak{K}(A)<|A|^{(d-1)/d}r^{d+1}$, that it is enough to consider just large components of $\mathfrak{K}(A)$ to cover a significant proportion of the tiles in $\Lambda_n^{r_d}$ not belonging to $\cup_{L\in\mathfrak{L}(A)}L$.
Next, we define what we mean by being a not so small component of $\mathfrak{K}(A)$ as well as specify additional conditions for which the preceding discussion's conclusion remains valid. 

Denote by $\mathfrak{K}_{\smlSucceq}(A)$ the collection of $K$'s in $\mathfrak{K}(A)$ for which $|K|>\max\{(\log n)^{d/(d-1)},2\lceil\gamma \log n\rceil\mbox{vol}_{r_d}\}$, where $\gamma$ is as in the statement of Lemma~\ref{lem:bounds_vertices_big_animals}. 
\begin{lemma}\label{lem:size_union_large_componentes_D_i}
Consider $r^d\le C'\log n$ with $C'>0$ as in Proposition~\ref{prop:smal_radii_regime}
and $r\geq r'$ for $r'>0$ large enough. Let $0<\delta<1$, there exists $c>0$ such that w.h.p.~the following holds: For every connected set $A$ of vertices of $\mathcal{L}_n$ with 
$2\lceil\gamma\log n\rceil\textup{vol}_{r_d}\le |A|\le (1-\delta)|\mathcal{L}_n|$ such that $|A\setminus A'|<|A|^{(d-1)/d}r^{d+1}$ and $|\mathfrak{K}(A)|<|A|^{(d-1)/d}r^{d+1}$, we have
$$
\displaystyle 
\sum_{K\in\mathfrak{K}_{\smlSucceq}(A)} \big|K\big|_t \ge c|\mathcal{L}_n|/r^d.
$$
\end{lemma}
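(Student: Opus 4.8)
The plan is to show that the tiles in $\Lambda_n^{r_d}\setminus\bigcup_{L\in\mathfrak{L}(A)}L$ cover a linear-in-$|\mathcal{L}_n|$ number of tiles, and then argue that all but a negligible portion of these tiles lie in components of $\mathfrak{K}(A)$, and moreover in the \emph{large} components $\mathfrak{K}_{\smlSucceq}(A)$. First I would bound $|\bigcup_{L\in\mathfrak{L}(A)}L|_t$ from above. Every tile in this union belongs to $T_A$, hence has at least half its vertices in $A$; so, summing over such tiles, the total number of vertices of $\mathcal{G}_n$ contained in $\bigcup_{L\in\mathfrak{L}(A)}L$ is at least half the number of tiles times the minimum per-tile count. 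Using Remark~\ref{rem:bounds_vertices_big_animals} (or a direct Chernoff/union bound over $k$-lattice animals, valid since each $L\in\mathfrak{L}(A)$ has $|L|_t=\Omega((\log n)^{d/(d-1)})$ by Lemma~\ref{lem:propHats}\ref{lem:propHats:itm3}, so collectively they form large animals) one gets $|\bigcup_{L\in\mathfrak{L}(A)}L\cap\mathcal{G}_n| = \Theta\bigl(r^d\,|\bigcup_{L}L|_t\bigr)$ w.h.p. On the other hand every vertex counted there is either in $A$ or is a non-$A$ vertex sharing a tile with $A$-vertices, and since tiles in $T_A$ are at least half filled by $A$, the count is $O(|A|)$; hence $|\bigcup_{L\in\mathfrak{L}(A)}L|_t = O(|A|/r^d)$. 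Combined with $|\Lambda_n^{r_d}|_t = \Theta(n/r^d) = \Theta(|\mathcal{L}_n|/r^d)$ (Theorem~\ref{thm:concentration_vertices_edges_rgg}) and the hypothesis $|A|\le (1-\delta)|\mathcal{L}_n|$, making $r'$ large enough so the implicit constant in $O(|A|/r^d)$ is controlled, we obtain
\[
\Bigl|\,\Lambda_n^{r_d}\setminus\!\!\bigcup_{L\in\mathfrak{L}(A)}\!\!L\,\Bigr|_t \ge c_0\,|\mathcal{L}_n|/r^d
\]
for some constant $c_0=c_0(\delta)>0$.

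Next I would show that a constant fraction of those ``free'' tiles actually lies in $\bigcup_{K\in\mathfrak{K}(A)}K$. The *-components of $(\bigcup_{L}L)^c$ that are \emph{not} in $\mathfrak{K}(A)$ contain no vertex of $\mathcal{L}_n\setminus A$; so every vertex of $\mathcal{G}_n$ inside such a component is either outside $\mathcal{L}_n$ or in $A$. A tile fully inside such a component and containing an $A$-vertex — but by construction of $T_A$, if a tile contains $\ge$ half its vertices in $A$ it would be in $T_A$, and if it moreover contains an $A'$-vertex it would be in some $L\in\mathfrak{L}(A)$; so the $A$-vertices here are ``peripheral''. The cleaner route: the total number of tiles in these non-$\mathfrak{K}(A)$ components that contain a vertex of $A$ is at most the number of tiles containing an $A$-vertex outside $\bigcup_L L$, and such tiles are few. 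More robustly, I would bound the number of tiles of $\Lambda_n^{r_d}\setminus\bigcup_L L$ containing \emph{any} vertex of $\mathcal{G}_n$ that lies in a non-$\mathfrak{K}(A)$ *-component: every such tile's vertices of $\mathcal{G}_n$ are in $A$ or not in $\mathcal{L}_n$; the number of tiles containing a non-$\mathcal{L}_n$ vertex is $o(n/r^d)$ w.h.p.\ (small components total $o(n)$ vertices, and a union-bound/Chernoff argument over $k$-lattice animals bounds the tiles they can occupy — alternatively use Theorem~\ref{thm:size_second_component_rgg}); and the number of tiles containing an $A$-vertex but lying outside $\bigcup_L L$ is also small, because such tiles are in $L_{A}\setminus\bigcup_L L$ and $L_A$ has $|L_A|_t = O(|A|/r^d)$ by Lemma~\ref{lem:upperBndSizeOfLA} — but that is the wrong direction. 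The correct observation is: a tile outside $\bigcup_L L$ containing an $A$-vertex cannot contain an $A'$-vertex (else it is in $T_A$ and, since it meets $A''$ — no, $A'$ need not be in $A''$). Here is the real argument: by Lemma~\ref{lem:AprimeComponents}, $|A''|\ge (1-\eps)|A|$, and \eqref{eqn:propHat1} shows $A''\subseteq\bigcup_L L$. So vertices of $A$ outside $\bigcup_L L$ number at most $\eps|A|$, hence occupy at most $\eps|A|$ tiles; choosing $\eps$ small this is a tiny fraction of $c_0|\mathcal{L}_n|/r^d$. Therefore all but $\le \eps|A| + o(n/r^d) \le \tfrac{c_0}{2}|\mathcal{L}_n|/r^d$ of the free tiles lie in components of $\mathfrak{K}(A)$, giving $\sum_{K\in\mathfrak{K}(A)}|K|_t \ge \tfrac{c_0}{2}|\mathcal{L}_n|/r^d$.

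Finally I would discard the small components. By the hypothesis $|\mathfrak{K}(A)| < |A|^{(d-1)/d} r^{d+1}$ and $\lvert K\rvert_t \le \lvert K\rvert \le \max\{(\log n)^{d/(d-1)}, 2\lceil\gamma\log n\rceil\mathrm{vol}_{r_d}\} = O(r^d\log n)$ for $K\notin\mathfrak{K}_{\smlSucceq}(A)$, the total size contributed by small components is at most
\[
|\mathfrak{K}(A)|\cdot O(r^d\log n) \le O\bigl(|A|^{(d-1)/d} r^{2d+1}\log n\bigr),
\]
and since $|A|\le |\mathcal{L}_n| = \Theta(n)$ and $r^d = O(\log n)$, this is $O(|\mathcal{L}_n|^{(d-1)/d}(\log n)^{O(1)}) = o(|\mathcal{L}_n|/r^d)$, hence at most $\tfrac{c_0}{4}|\mathcal{L}_n|/r^d$ for $n$ large. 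Subtracting, $\sum_{K\in\mathfrak{K}_{\smlSucceq}(A)}|K|_t \ge \tfrac{c_0}{4}|\mathcal{L}_n|/r^d$, which is the claim with $c = c_0/4$. The main obstacle I anticipate is the second step: carefully ruling out that a large portion of the free tiles sit in ``invisible'' *-components that meet neither $\mathcal{L}_n\setminus A$ nor $\bigcup_L L$ — this requires simultaneously controlling tiles with only non-giant vertices (via the $o(n)$-bound on small components together with a lattice-animal union bound to pass from vertex counts to tile counts) and tiles with only peripheral $A$-vertices (via $A''\subseteq\bigcup_L L$ and Lemma~\ref{lem:AprimeComponents}), and then invoking that the free tiles form a region whose remaining *-components all touch $\mathcal{L}_n\setminus A$ by definition.
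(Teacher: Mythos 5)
Your plan is a tile-counting variant of the argument (count free tiles outside $\bigcup_{L\in\mathfrak{L}(A)}L$, show most of them sit in $\mathfrak{K}(A)$, discard small components), but its central step fails quantitatively in exactly the regime this lemma is for. To show that few free tiles lie in *-components not belonging to $\mathfrak{K}(A)$, you bound the number of free tiles containing a vertex of $A$ by $|A\setminus A''|\le \eps|A|$ and assert that ``choosing $\eps$ small this is a tiny fraction of $c_0|\mathcal{L}_n|/r^d$''. But $\eps$ in Lemma~\ref{lem:AprimeComponents} is a fixed constant (and that lemma, as well as the inclusion $A''\subseteq\bigcup_{L\in\mathfrak{L}(A)}L$ from the proof of Lemma~\ref{lem:propHats}, requires $|A|$ to be polylogarithmically much larger than the lower bound $2\lceil\gamma\log n\rceil\mathrm{vol}_{r_d}$ assumed here), while $|A|$ may be as large as $(1-\delta)|\mathcal{L}_n|=\Theta(n)$ and $r^d$ may grow like $\log n$; then $\eps|A|=\Theta(n)$ swamps $c_0|\mathcal{L}_n|/r^d=\Theta(n/\log n)$, so your inequality $\eps|A|+o(n/r^d)\le \tfrac{c_0}{2}|\mathcal{L}_n|/r^d$ is simply false. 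Two further tile/vertex confusions compound this: components of $\big(\bigcup_{L\in\mathfrak{L}(A)}L\big)^c$ may contain empty tiles and tiles whose only points are outside $\mathcal{L}_n$ (the paper notes this explicitly), so your step-3 bound $|K|_t\le|K|$ for $K\notin\mathfrak{K}_{\smlSucceq}(A)$ is unjustified (a component with few vertices can occupy many tiles), and for constant $r$ the vertices outside the giant are a positive fraction of $n$, not $o(n)$, so ``the number of tiles containing a non-$\mathcal{L}_n$ vertex is $o(n/r^d)$'' is not justified as stated. (Also, in step 1 you cite Remark~\ref{rem:bounds_vertices_big_animals} for a lower bound on vertices per tile, but that remark gives the opposite direction; the needed density statement is the Lemma~\ref{lem:percolation_connected_animals}-type argument.)

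The paper avoids all of these issues by never counting tiles until the very end: the hypothesis $|A\setminus A'|<|A|^{(d-1)/d}r^{d+1}$ shows that fewer than that many tiles contain both $A$- and $(\mathcal{L}_n\setminus A)$-vertices, so (each tile carrying $O(\log n)$ vertices w.h.p.) $|\mathcal{L}_n\cap L_A|\le |A|+o(n)$ and hence $(\delta-o(1))|\mathcal{L}_n|$ vertices of $\mathcal{L}_n$ lie in tiles of $\bigcup_{K\in\mathfrak{K}(A)}K$; the hypothesis on $|\mathfrak{K}(A)|$ then shows the components outside $\mathfrak{K}_{\smlSucceq}(A)$ carry only $o(|\mathcal{L}_n|)$ of these vertices; finally Remark~\ref{rem:bounds_vertices_big_animals} is applied to each large $K$ to get $|K|_t=\Omega(|K\cap\mathcal{L}_n|/r^d)$, which is the correct direction for converting vertices into tiles. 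If you want to rescue your tile-based route, you need a bound of order $o(n/r^d)$ (not $\eps n$) on the free tiles meeting $A$, plus a lattice-animal argument that components with few vertices occupy few tiles — at which point you have essentially reproduced the paper's vertex-counting proof.
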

\begin{proof}
First, we assert that $|\mathcal{L}_n\cap (\cup_{K\in \mathfrak{K}(A)}K)|\ge (\delta-o(1))|\mathcal{L}_n|$. By the hypothesis $|A\setminus A'|<|A|^{(d-1)/d}r^{d+1}$, we see that $|L_{\mathcal{L}_n\setminus A}\cap L_{A}|_t<|A|^{(d-1)/d}r^{d+1}$. By the hypothesis on $r$, we have that w.h.p.~all the tiles of $\Lambda_n^\rho$ have $O(\log n)$ vertices of $\mathcal{G}_n$. Then, $|\mathcal{G}_n \cap (L_{\mathcal{L}_n\setminus  A}\cap L_A)|=|A|^{(d-1)/d}\cdot r^{d+1}\cdot O(\log n)=o(n)$ and this means that $|\mathcal{L}_n\cap T_A|=|A|+o(|\mathcal{L}_n|)$. Hence, almost all vertices of $\mathcal{L}_n\setminus A$ belong to a tile not in $L_A$, that is    

\begin{equation}\label{eq:lower_bound_vertices_giant_outside_L_A}
    |\mathcal{L}_n\cap (\Lambda_n^{r_d}\setminus L_A)|= |\mathcal{L}_n|- |\mathcal{L}_n\cap L_A| \ge (\delta-o(1))|\mathcal{L}_n|
\end{equation}
By definition of $\mathfrak{L}(A)$, every tile in some $L$ belonging to $\mathfrak{L}(A)$ contains vertices of $A$, so 
$\bigcup_{L\in \mathfrak{L}(A)} L\subseteq L_A$. Thus, 
\[
\mathcal{L}_n\cap (\Lambda_n^{r_d}\setminus L_A)\subseteq \mathcal{L}_n\cap \Big(\Lambda_n^{r_d} \setminus \bigcup_{L\in \mathfrak{L}(A)} L\Big)
= \mathcal{L}_n \cap \bigcup_{K\in \mathfrak{K}(A)}K,
\]
where the last equality follows by definition of $\mathfrak{K}(A)$, since the vertices of $\mathcal{L}_n$ that are not in some tile of $\bigcup_{L\in \mathfrak{L}(A)} L$ are in some tile of $\bigcup_{K\in \mathfrak{K}(A)}K$.
Hence,~\eqref{eq:lower_bound_vertices_giant_outside_L_A} implies the assertion.

Now, to prove the lemma, observe that the total number of vertices in $K$'s not in  $\mathfrak{K}_{\smlSucceq}(A)$ is at most $|\mathfrak{K}(A)|\cdot\max\{(\log n)^{d/(d-1)},2r^d \lceil \gamma \log n \rceil\}$, which by our assumption on $|\mathfrak{K}(A)|$ is at most
$$|A|^{(d-1)/d}r^{d+1}\cdot \max\{(\log n)^{d/(d-1)},2r^d \lceil \gamma \log n \rceil\}=o(|\mathcal{L}_n|),$$
and hence \[
\Big|\mathcal{L}_n\cap \bigcup_{K\in \mathfrak{K}_{\smlSucceq}(A)}K\Big|\geq (\delta-o(1))|\mathcal{L}_n|.
\]
Applying Remark~\ref{rem:bounds_vertices_big_animals} to each $K\in \mathfrak{K}_{\smlSucceq} (A)$ (the remark can be applied since by definition of $\mathfrak{K}_{\smlSucceq} (A)$, each such $K$ is large enough and *-connected, thence also a $k$-lattice animal),  there is $c_1>0$ such that
\[
  \sum_{K\in\mathfrak{K}_{\smlSucceq}(A)}\big|K\big|_t 
  \ge c_1\Big(\sum_{K\in \mathfrak{K}_{\smlSucceq}(A)} \big| K\cap\mathcal{L}_n\big|/r^d\Big) 
  = c_1 \big(\big|\mathcal{L}_n\cap\bigcup_{K\in\mathfrak{K}_{\smlSucceq}(A)} K\big|/r^d\big)\ge c_1(\delta-o(1))|\mathcal{L}_n|/r^d.    
\]
 We now pick $c=c_1\delta/2$ and the Lemma follows.
\end{proof}

Finally, we have all necessary ingredients to prove Proposition~\ref{prop:isoperimetric_inequality_rgg_large_r}.
To do so, similarly to the case of large radii,
we say that a tile $\tau\in\Lambda_n^{r_d}$ is \emph{normal} if it contains between $\frac12\mathrm{vol}_{r_d}$ and $2\mathrm{vol}_{r_d}$ vertices, that is, if $\frac12\mathrm{vol}_{r_d}\leq |\tau\cap\mathcal{G}_n|\le 2\mathrm{vol}_{r_d}$. 
Also, we say $(\tau,\tau')$ is a normal pair if $\tau$ and $\tau'$ are both normal tiles and adjacent in~$\Lambda_n^{r_d}$.
For~$L\in\mathfrak{L}(A)$, we denote by $\mathcal{N}_L$ the collection of normal pairs $(\tau,\tau')$
such that $\tau \in L$ and~$\tau'\not\in L$ (thus, by definition of $\mathfrak{L}(A)$,  since the components in $\mathfrak{L}(A)$ are inclusion-wise maximal, tile $\tau'$ does not belong to $\bigcup_{L\in\mathfrak{L}(A)}L$).
We claim that if $(\tau,\tau')\in\mathcal{N}_L$, then there are~$\Omega(r^{2d})$ edges between $A\cap\tau$ and $A^c\cap\tau'$: indeed, at least half of the vertices inside of $\tau$ belong to~$A$ and at least half the vertices inside of $\tau'$ belong to~$A^c$ (otherwise, $\tau'$ would belong to $L$).
The claim follows, since~$\tau$ and~$\tau'$ are both normal, and so 
\[
|E(A\cap\tau,A^c\cap\tau')| \ge
    \big(\tfrac12\mathrm{vol}_{r_d}\big)^2=\Omega(r^{2d}).
\]
Hence, to show Proposition~\ref{prop:isoperimetric_inequality_rgg_large_r} we only need to prove that $|\bigcup_{L\in\mathfrak{L}(A)}\mathcal{N}_L|= \Omega(|A|^{(d-1)/d}/r^{d-1})$, where the hidden constant is uniform over all $A$ meeting the required hypothesis.

\begin{proof}[Proof of Proposition~\ref{prop:isoperimetric_inequality_rgg_large_r}]
%
For every $k$-lattice animal $L\in\mathfrak{L}(A)$, 
let $\mathfrak{M}(L)$ be the collection of *-components induced in $\Lambda_n^{r_d}$ by the tiles not in $L$ (see Figure~\ref{fig:second}.f). 
Note that each component $M\in\mathfrak{M}(L)$ has the property that~$\partial_*^{+}M$ is a~$k'$-lattice animal: indeed, to see the latter, observe that $M^c = \Lambda_n^{r_d}\setminus M = L\cup (\cup_{M'\in\mathfrak{M}(L)\setminus \{M\}}M')$ is a~$k'$-lattice animal, which is the union of a $k$-lattice animal and *-components that share at least one pair of tiles; hence, by Lemma~\ref{lem:complement_star_connected_is_k_lattice_animal}, $\partial_*^-M^{c}=\partial_*^+M$ 
is a $k'$-lattice animal. 
We also note that~$\partial_*^{+}M\subseteq L$.

We remark that $K\in\mathfrak{K}_{\smlSucceq}(A)$ need not be an element of $\mathfrak{M}(L)$ when $L\in\mathfrak{L}(A)$. However, 
for every~$K\in\mathfrak{K}_{\smlSucceq}(A)$ and every $L\in\mathfrak{L}(A)$ there is an $M'\in \mathfrak{M}(L)$ such that $K\subseteq M'$: indeed, the components of $\mathfrak{L}$ might be larger as they are defined only on the complement of $L$. Next, we show that there are many normal pairs. 
Recall that we assume that $|A\setminus A'|<|A|^{(d-1)/d}r^{d+1}$ since otherwise we are done (see the discussion after the statement of Proposition~\ref{prop:isoperimetric_inequality_rgg_large_r}).

We divide the analysis into two cases. We first observe that there is an abundance of normal tiles. Indeed, considering normal and non-normal tiles as open and closed sites, respectively, defines a (site) percolation process. By concentration of Poisson random variables, the probability that a tile is normal tends to $1$ as $r\to \infty$. We henceforth fix $r'$ large enough so that for all $r>r'$ the probability that a given tile is normal is larger than $p^*$, where $p^*$ is as in Lemma~\ref{lem:pair_of_open_sites_boundary}.
We assume $r'$ large enough so Lemma~\ref{lem:size_union_large_componentes_D_i} holds.

\smallskip
\noindent\textbf{Case 1 (for every $L\in\mathfrak{L}(A)$ there is an $M\in\mathfrak{M}(L)$ such that $\big|M^c\big|_t\leq |M|_t$):} For every~$L\in\mathfrak{L}(A)$, let $M_L$ be an $M\in\mathfrak{M}(L)$ maximizing $|M|_t$ and satisfying $\big|M^c\big|_t\le |M|_t$ (ties broken arbitrarily). 
Recalling that $M^c=L\cup (\cup_{M'\in\mathfrak{M}(L)\setminus \{M\}}M')$, we have $M_L^c\supseteq L$, and since by Lemma~\ref{lem:propHats} Part~\ref{lem:propHats:itm3} we have $|L|_t>c_2(\log n)^{d/(d-1)}$ w.h.p. for $c_3$ large enough, we conclude that $|M_L|_t\ge |M_L^c|_t\ge c_3(\log n)^{d/(d-1)}$ w.h.p.
Then, as each $M_L$ is *-connected and each $M_L^c$ is a $k$-lattice animal, we can apply Lemma~\ref{lem:pair_of_open_sites_boundary} 
with $K=M_L^c$ 
and conclude that there are at least $\delta' |L|_t^{(d-1)/d}$ normal pairs $(\tau,\tau')$ such that $\tau\in M^c_L$ and $\tau'\in M_L$ (indeed, the open sites to which Lemma~\ref{lem:pair_of_open_sites_boundary} conclusion's refers to, by this proof's  first paragraph's discussion, corresponds to tiles being normal). 
Since $\tau\in\partial_*^{+}M_L \subseteq L$
and $\tau'\not\in L$ (because $\tau'\in\partial^-_*M_L$), we get that $(\tau,\tau')\in\mathcal{N}_L$. 
Clearly, $\mathcal{N}_L\cap\mathcal{N}_{L'}=\emptyset$ for all $L\neq L'$. Thus, 
\begin{equation}
    \big|\!\!\bigcup_{L\in\mathfrak{L}(A)}\!\!\mathcal{N}_L\big|
    =\sum_{L\in\mathfrak{L}(A)}\!|\mathcal{N}_L|
    \ge \delta'\Big(\sum_{L\in\mathfrak{L}(A)}\!\!|L|_t^{(d-1)/d} \Big) \ge \delta'\Big(\big|\!\!\bigcup_{L\in\mathfrak{L}(A)}\!\! L\big|_t^{(d-1)/d}\Big) \ge \delta'\big(c_2|A|\big)^{(d-1)/d}/r^{d-1},
\end{equation}
where the last equality follows from Lemma~\ref{lem:propHats} Part~\ref{lem:propHats:itm2}.

    \smallskip\noindent
    \textbf{Case 2 (Otherwise):} In this case, we assume there is an $L_*\in\mathfrak{L}(A)$ such that $\big|M^c\big|_t>|M|_t$ for all~$M\in\mathfrak{M}(L_*)$.

    First, observe that
    \[
    \big|\mathcal{N}_{L_*}\big|
    \ge \Big|\bigcup_{M\in\mathfrak{M}(L_*)}\big\{(\tau,\tau')\in\mathcal{N}_{L_*} \colon \tau'\in M\big\}\Big| = \sum_{M\in\mathfrak{M}(L_*)}\big|\{(\tau,\tau')\in\mathcal{N}_{L_*} \colon \tau'\in M\}\big|. 
    \]
    
    Recall that for every $K\in\mathfrak{K}_{\smlSucceq}(A)$ there exists an $M\in\mathfrak{M}(L_*)$ such that $K\subseteq M$. We apply Lemma~\ref{lem:pair_of_open_sites_boundary} to each  $M\in\mathfrak{M}(L_*)$ that contains at least one $K\in\mathfrak{K}_{\smlSucceq}(A)$ (recall that both $M$ is *-connected and its complement is a $k$-lattice animal and obtain (even when $K,K' \subseteq M$, the edges counted below are disjoint)
\begin{align*}
\big|\mathcal{N}_{L_*}\big|
& \ge\delta'\Big(
  \sum_{\substack
    {M\in\mathfrak{M}(L_*) \colon \\ \exists K \in \mathfrak{K}_{\smlSucceq}(A),
     K \subseteq M}
  }
    \big|M\big|_t^{(d-1)/d}
\Big) 
\\ & 
\ge \delta'\Big(
  \Big(\sum_{\substack
    {M\in\mathfrak{M}(L_*) \colon \\ \exists K \in \mathfrak{K}_{\smlSucceq}(A),
     K \subseteq M}
  }
      \big|M\big|_t
  \Big)^{(d-1)/d}
\Big) 
\\ &  
\ge \delta'\Big(
  \Big(\,
    \sum_{K \in \mathfrak{K}_{\smlSucceq}(A)} \big|K\big|_t
  \Big)^{(d-1)/d}
\Big).
\end{align*}
To conclude, note that either 
$|\mathfrak{K}(A)|\le |A|^{(d-1)/d}r^{d+1}$ and 
Lemma~\ref{lem:size_union_large_componentes_D_i} implies that
$$|\mathcal{N}_{L_*}|=\delta'c^{(d-1)/d}|\mathcal{L}_n|^{(d-1)/d}/r^{d-1}\ge \delta' \Big(\frac{c|A|}{1-\delta}\Big)^{(d-1)/d}/r^{d-1},$$ or
$|\mathfrak{K}(A)|>|A|^{(d-1)/d}r^{d+1}$, and we are done by Corollary~\ref{cor:starComp} (in this case we obtain $|E(A,A^c)|\ge \xi |A|^{(d-1)/d}r^{d+1}$).

The proof is finished by setting $c_1=\min\{1,\xi,\delta'c_2^{(d-1)/d},\delta'\big(\frac{c}{1-\delta}\big)^{(d-1)/d}\}$.
\end{proof}

\subsection{The small radii regime}
Here we extend the isoperimetric inequality (that is, the statement of Proposition~\ref{prop:isoperimetric_inequality_rgg_large_r}) to the RGG in the regime $(1+\varepsilon)r_g \le r<r'$ for arbitrarily small but fixed $\varepsilon > 0$. We use a renormalization argument to overcome the fact that, for $r<r'$, the proof used in the intermediate radii regime does not apply directly.
Our main goal is to establish the following result: 
\begin{proposition}\label{prop:iso}
    Let $\varepsilon>0$ and $ 0<\delta<1$. There exists $r'$ large enough so that for every $(1+\varepsilon)r_g\le r\le r'$ there exists $c_1,C_1>0$ such that w.h.p.~the following is satisfied : for every connected set~$A \subseteq \mathcal{L}_n$  
    such that $C_1(\log n)^{d/(d-1)}\le |A|\le (1-\delta)|\mathcal{L}_n|$ it holds that $|E(A,A^c)| \ge c_1|A|^{(d-1)/d}$. 
\end{proposition}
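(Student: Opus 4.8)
The plan is to reduce the small-radii regime $(1+\varepsilon)r_g\le r\le r'$ to the intermediate regime already handled in Proposition~\ref{prop:isoperimetric_inequality_rgg_large_r} by a renormalization (block) argument. First I would fix two scales: a coarse scale $\rho_1$, a large constant multiple of $r$, and a finer scale $\rho_2=\rho_1/K$ for a large integer $K$, so that $\Lambda_n$ is tiled by $\rho_1$-tiles each subdivided into $K^d$ sub-tiles of side $\rho_2$. Call a coarse tile $\tau$ \emph{good} if, inside the slightly enlarged region $\tau$ together with its neighbouring coarse tiles, the RGG restricted to that region has a unique ``crossing'' component --- one large component that touches every face --- and this component contains a constant fraction of all the vertices of $\mathcal{G}_n$ in $\tau$, while all other components in $\tau$ are small (of size $O(1)$ in expectation, or at most polylogarithmic). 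Because $r\ge(1+\varepsilon)r_g$ is strictly supercritical, standard facts about RGGs above threshold (the sprinkling/renormalization arguments underlying $r_g$, cf.\ Penrose~\cite{Pen03}; see also the local uniqueness results used implicitly in Theorems~\ref{thm:concentration_vertices_edges_rgg} and~\ref{thm:size_second_component_rgg}) give that a single coarse tile is good with probability $q(K)$ that can be made arbitrarily close to $1$ by taking $\rho_1/r$ large. Moreover goodness of $\tau$ depends only on the PPP inside $\tau$ and its neighbours, so the good-tile indicators are a finite-range (hence, by Liggett--Schonmann--Stacey, stochastically dominating a Bernoulli) site percolation on the coarse lattice $\Lambda_n^{\rho_1}\cong\Phi_{m}$ with $m=\Theta(n/r^d)$, with parameter as close to $1$ as we wish.

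The key structural step is that good coarse tiles ``glue'' correctly: if $\tau$ and $\tau'$ are adjacent good coarse tiles, their crossing components must coincide (both are the unique crossing component of the overlap region), so the union of crossing components over any connected set of good tiles lies in a single component of $\mathcal{G}_n$; since this union has $\Omega(n)$ vertices when the good cluster is of size $\Omega(m)$, it must be $\mathcal{L}_n$. This lets me transfer the isoperimetric problem on $\mathcal{L}_n$ to one on the good-tile percolation. Concretely, given a connected $A\subseteq\mathcal{L}_n$ with $C_1(\log n)^{d/(d-1)}\le |A|\le(1-\delta)|\mathcal{L}_n|$, I would define a coarse-tile set $\widehat A$ = the good tiles $\tau$ such that $A$ occupies at least half of the crossing-component vertices of $\tau$. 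The plan is to show: (i) $|\widehat A|_t=\Theta(|A|/r^d)$ and $|\widehat A^c\cap F_m|_t=\Omega(m)$, using Lemma~\ref{lem:bounds_vertices_big_animals} / Remark~\ref{rem:bounds_vertices_big_animals} (to control vertex counts per tile both above and below) together with the fact that all but an $o(n)$ fraction of $\mathcal{L}_n$ lies in crossing components of good tiles; (ii) via Lemma~\ref{lem:pairs_of_open_sites_no_boundary_condition} (or Lemma~\ref{lem:pair_of_open_sites_boundary}) applied to the supercritical good-tile percolation, there are $\Omega(\min\{|\widehat A|_t,|\widehat A^c|_t\}^{(d-1)/d})=\Omega((|A|/r^d)^{(d-1)/d})$ disjoint edges of the coarse lattice joining a good tile in $\widehat A$ to a good tile in $\widehat A^c$, \emph{both endpoints good}; (iii) for each such adjacent good pair $(\tau,\tau')$ with $\tau\in\widehat A$, $\tau'\notin\widehat A$, there is at least one edge of $\mathcal{G}_n$ in $E(A,A^c)$ associated to it --- because $A$ holds a majority of the crossing vertices of $\tau$ and a minority of those of $\tau'$, yet the two crossing components are connected through the overlap, so some edge of the crossing path switches from $A$ to $A^c$ --- and these edges can be chosen essentially disjointly (each vertex lies in boundedly many coarse tiles). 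Summing gives $|E(A,A^c)|=\Omega((|A|/r^d)^{(d-1)/d})=\Omega(|A|^{(d-1)/d})$ since $r\le r'=O(1)$ in this regime, which is exactly the claimed bound (absorbing the constant $r'$-dependent factors into $c_1$). The lower bound $|A|\ge C_1(\log n)^{d/(d-1)}$ is what guarantees $|\widehat A|_t$ exceeds the polylog thresholds required by Lemmas~\ref{lem:pair_of_open_sites_boundary}--\ref{lem:pairs_of_open_sites_no_boundary_condition} (note $(\log n)^{d/(d-1)}/r^d$ is still $\Omega((\log m)^{d/(d-1)})$), and the upper bound $|A|\le(1-\delta)|\mathcal{L}_n|$ guarantees $|\widehat A^c\cap F_m|_t=\Omega(m)$.

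The main obstacle I expect is step (iii) combined with establishing the ``unique crossing component'' property robustly enough: I need that goodness is genuinely a local event with the gluing property, that the crossing components of adjacent good tiles really are forced to be the same component of $\mathcal{G}_n$ (not merely ``large''), and that when passing from a majority-in-$A$ good tile to a minority-in-$A$ adjacent good tile one can \emph{locate} an actual edge of $E(A,A^c)$ rather than just infer its existence abstractly --- and do so with enough disjointness to sum without losing more than a constant factor. A secondary technical point is making the two-scale bookkeeping ($\rho_1$ vs.\ $\rho_2$, and the shifts needed so every $\mathcal{G}_n$-edge lies inside some coarse or overlap region) precise without the constants circling back on each other; this is the analogue of the ``augmented lattice animal'' care taken in the intermediate regime, and I would mirror that structure. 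Once the good-tile percolation is set up and shown supercritical with the gluing property, the isoperimetric transfer is largely a repackaging of Lemmas~\ref{lem:isoperimetric_in_lattice_z_d}, \ref{lem:pair_of_open_sites_boundary}, and \ref{lem:pairs_of_open_sites_no_boundary_condition} exactly as in the intermediate case, so I would phrase the proof to reuse Proposition~\ref{prop:isoperimetric_inequality_rgg_large_r}'s machinery verbatim wherever possible, applied to the coarse lattice in place of $\Lambda_n^{r_d}$.
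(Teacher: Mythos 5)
Your overall strategy coincides with the paper's: a coarse tiling at scale $\Theta(r)$, ``good/useful'' tiles defined through unique crossing components, stochastic domination of the useful-tile field by a highly supercritical Bernoulli site percolation via Liggett--Schonmann--Stacey, a gluing lemma showing that the large components of adjacent good tiles merge (and contain every vertex of $\mathcal{L}_n$ in the tile), and then a transfer of the discrete isoperimetric statements (Lemmas~\ref{lem:isoperimetric_in_lattice_z_d}, \ref{lem:pair_of_open_sites_boundary}, \ref{lem:pairs_of_open_sites_no_boundary_condition}) back to $E(A,A^c)$ with bounded overlap of the enlarged regions. The paper implements exactly this, with overlapping enlarged tiles $\tau'$ playing the role of your ``tile plus neighbours''.

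However, two steps of your plan have genuine gaps. First, your step (i) asserts $|\widehat A|_t=\Theta(|A|/r^d)$ for $\widehat A$ defined as the good tiles in which $A$ holds at least half of the crossing-component vertices; the lower bound is false in general, since a connected $A$ can occupy only a small minority of the crossing component in every tile it touches, making $\widehat A$ empty. In that situation the boundary is large for a different, local reason: each useful tile containing a vertex of $A$ and a vertex of $\mathcal{L}_n\setminus A$ already forces an edge of $E(A,A^c)$ inside the enlarged tile (by the gluing/uniqueness property), and this is precisely the case split the paper makes (either $|U_{\mathcal{L}_n\setminus A}\cap T_A|_t\ge|A|^{(d-1)/d}$, done directly, or not, and then one proceeds with the percolation argument). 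Your proposal needs this dichotomy; as written, (ii)--(iii) produce nothing when $\widehat A$ is small. Second, Lemmas~\ref{lem:pair_of_open_sites_boundary} and~\ref{lem:pairs_of_open_sites_no_boundary_condition} require the set of sites they are applied to to be *-connected (and, for the former, to have a lattice-animal complement), but your $\widehat A$, being defined by a majority condition, need not be *-connected even though $A$ is connected in $\mathcal{G}_n$. The paper avoids both problems by applying Lemma~\ref{lem:pairs_of_open_sites_no_boundary_condition} to $T_A$, the set of \emph{all} coarse tiles containing at least one vertex of $A$, which is automatically *-connected because the coarse side length exceeds $r$, and by handling separately (via the component family $\mathfrak{D}$ of $F_m\setminus T_A$ and Lemma~\ref{lem:size_vertices_outside_giantof_usefultiles}) the verification that enough of the percolation giant lies outside $T_A$, which is where the hypothesis $|A|\le(1-\delta)|\mathcal{L}_n|$ enters. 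With those two repairs (the case dichotomy and replacing $\widehat A$ by $T_A$), your argument becomes the paper's proof.
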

To prove the previous theorem, we use ideas developed in the renormalization section of~\cite{mathieu2004isoperimetry}. Before going into the actual proof, we introduce some necessary preliminaries. We will work with $\Lambda_n^{\rho}$ 
where $\rho=2(\frac{3M}{10}-1)r$  
and $M=M(\varepsilon)$ is a large positive constant.
Recall that if $\tau$ is a tile of $\Lambda_n^\rho$, say of center $\bfi\in \Lambda_n\cap\mathbb{Z}^d$, we have 
\[
\tau = \tau_{\bfi} = \rho\cdot\bfi + \Big[-\frac{\rho}{2},\frac{\rho}{2}\Big)^d.
\]
We associate to $\tau$ an \emph{enlarged} tile, denoted $\tau'$ of side length $Mr$, also centered at $\bfi$ and defined as
\[
\tau' = \rho\cdot\bfi + \left[-\frac{Mr}{2}, \frac{Mr}{2}\right)^d
= 2\Big(\frac{3Mr}{10}-1\Big)\cdot\bfi + \left[-\frac{Mr}{2}, \frac{Mr}{2}\right)^d.
\]
Note that the tiles $\tau$ do not overlap, whereas the tiles $\tau'$ do. 

Given a parallelepiped $U\subseteq \Lambda_n$ we say that it has a \emph{crossing component}
of $\mathcal{G}_n$ if there exists a component of the graph induced by $U\cap\mathcal{G}_n$ in $\mathcal{G}_n$ with the property that for each of the $2d$ faces of~$U$ there is at least one vertex of the component at distance at most $r$ from the face. The \emph{Euclidean diameter} of a  component is defined as the maximum Euclidean distance between any two vertices of the component.

Next we introduce a key concept. Given a tile $\lambda$, we say that it is \emph{good} if the following properties hold:
\begin{enumerate}
    \item When dividing the tile $\lambda$ into $2^d$ equal tiles of half the side length of $\lambda$, each of the resulting smaller tiles (intersected with $\Lambda_n$) has a crossing component.
    \item There is a unique giant component in $\lambda$ that contains any other component of Euclidean diameter at least $1/5$ of the side length of $\lambda$. 
\end{enumerate}
We observe that both, the choice of $\Lambda_n^{\rho}$ and the definition of goodness, are similar as the one introduced in~\cite{pisztorapenrose}.

We say that tile $\tau$ of $\Lambda_n^\rho$
is \emph{useful} if both $\tau$ and $\tau'$ are good tiles.

\begin{figure}[htbp]
\centering
\includegraphics[width=0.45\linewidth]{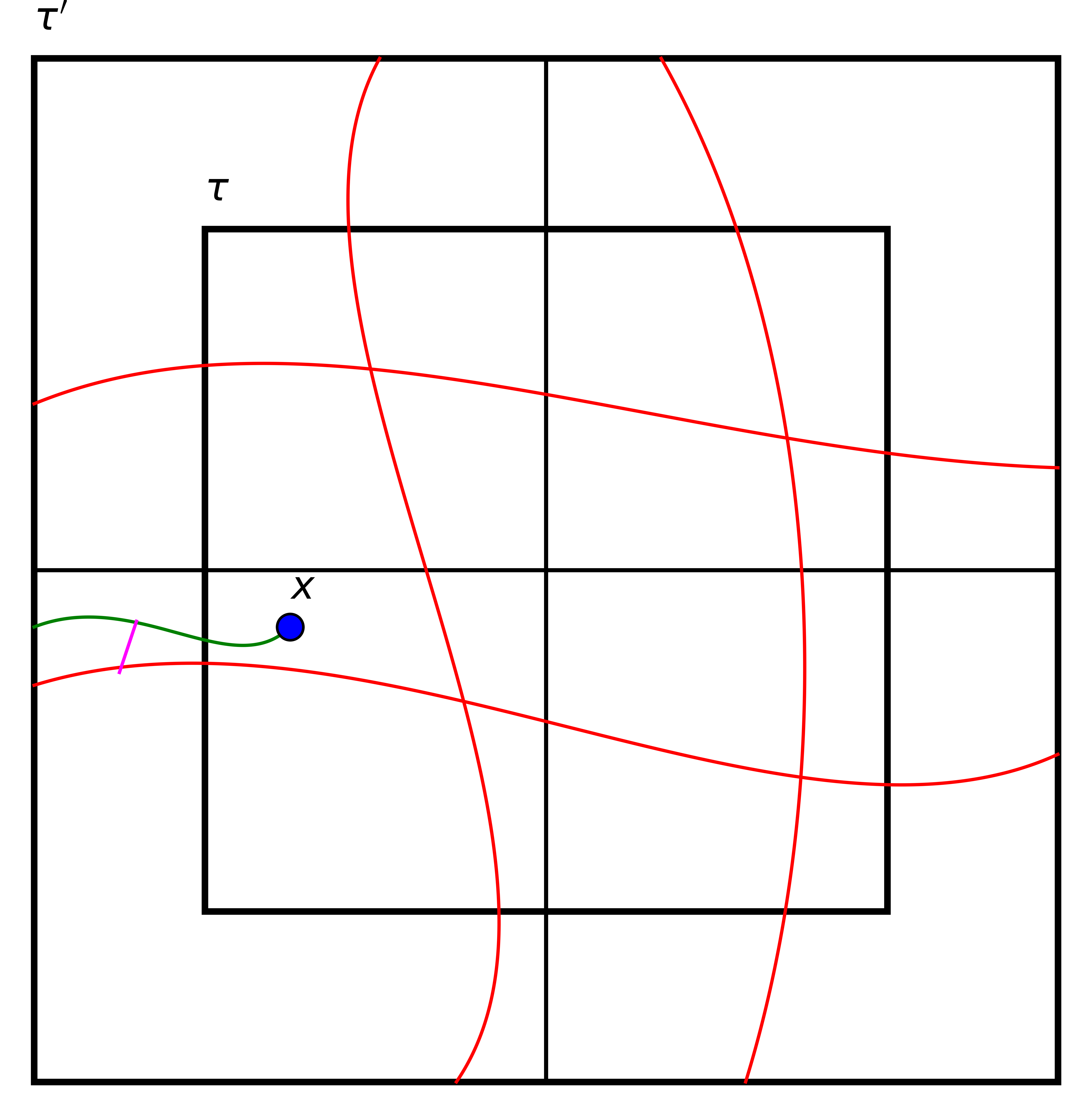}
\caption{Tile $\tau$ is useful, implying that both $\tau$ and $\tau'$ are good. The red lines represent the crossings of the smaller sub-tiles, and thus a part of the largest component inside $\tau'$. $x$ is a vertex of $\mathcal{L}_n$, the green line represents a path to the outside of the tile~$\tau'$. As the part of this path in $\tau'\setminus \tau$ is of length at least one fifth of the side length of $\tau'$, it must belong to the largest component of $\tau'$ (by definition of good tile): the connection to the largest component of $\tau'$ is represented by the magenta line.}
\label{fig:x_in_giant_inside_good_tile}
\end{figure}

\begin{lemma}\label{lem:renormalization_probability_good_cube}
Let $\varepsilon>0$ be constant, $M=M(\varepsilon)$ be sufficiently large,  and $(1+\varepsilon)r_g\le r<r'$ for $r'$ large enough. 
    If $\tau\in\Lambda_n^{\rho}$, then 
    \[
    \mathbb{P}(\text{$\tau$ is useful})\geq 1-\exp(-c'M), 
    \]
    for some constant $c'=c'(\eps,d,r')>0$.
\end{lemma}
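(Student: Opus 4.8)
The plan is to reduce the statement to a union bound over a fixed (independent of $n$) number of \emph{local} events, each failing with probability at most $\exp(-\Omega(M))$. By definition, $\tau$ is useful iff $\tau$ and $\tau'$ are both good, and goodness of a tile $\lambda$ is the conjunction of $2^d$ crossing events (one per halved sub-tile of $\lambda$, condition (1)) and one uniqueness event (condition (2)); thus $\tau$ fails to be useful only if one of at most $2^{d+1}+2$ such events fails. Since the events defining goodness of $\lambda$ depend only on the Poisson points inside $\lambda$, and by the restriction property of the Poisson process the law of $\mathcal{G}_n$ inside any box agrees with that of continuum percolation (the Poisson Boolean model) in $\mathbb{R}^d$, we may analyse all these events in the translation-invariant, $n$-free model, which is \emph{strictly} supercritical because $r\ge(1+\varepsilon)r_g$.

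The two inputs I would invoke are the standard surface-order large-deviation estimates for strictly supercritical continuum percolation (Penrose and Pisztora~\cite{pisztorapenrose}, see also~\cite{Pen03}): there are $c_0=c_0(\varepsilon,d)>0$ and $L_0=L_0(\varepsilon,d)$ such that for all $L\ge L_0$ and every axis-parallel box $U$ whose side lengths all lie in $[Lr,10Lr]$, (i) $U$ has a crossing component except with probability $\exp(-c_0L^{d-1})$, and (ii) all components of $\mathcal{G}^{r,d}$ inside $U$ of Euclidean diameter at least $(Lr)/5$ are contained in a single component of $U$, except with probability $\exp(-c_0L^{d-1})$. Since $d\ge 2$ we have $L^{d-1}\ge L$, so both probabilities are also at most $\exp(-c_0L)$, which is all we need. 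For boxes truncated by $\partial\Lambda_n$ I would apply (i)--(ii) to an axis-parallel sub-box of $U$ contained in $\Lambda_n$: because $\tau$ itself lies inside $\Lambda_n$, the truncated versions of $\tau$ and $\tau'$ still contain a box of side $\Theta(Mr)$, at the cost only of weakening $c_0$.

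Applying this: $\tau$ has side length $\rho=2(\tfrac{3M}{10}-1)r=\Theta(M)r$, so each of its $2^d$ halved sub-tiles has side length $\tfrac{\rho}{2}=(\tfrac{3M}{10}-1)r$; taking $L$ of order $M$ (above $L_0$ once $M$ is large in terms of $\varepsilon,d$), estimate (i) and a union bound over the $2^d$ sub-tiles give condition (1) for $\tau$ except with probability $2^d\exp(-\Omega(M))$, while estimate (ii) applied to $\tau$ (a component of diameter $\ge\rho/5=\Theta(M)r$ is ``large'' on scale $L\asymp M$) gives condition (2) for $\tau$ except with probability $\exp(-\Omega(M))$. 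The identical arguments applied to the enlarged tile $\tau'$, of side length $Mr$ with halved sub-tiles of side length $Mr/2=\Theta(M)r$, bound the probability that $\tau'$ is not good by $(2^d+1)\exp(-\Omega(M))$. A final union bound then yields $\mathbb{P}(\tau\text{ not useful})\le(2^{d+1}+2)\exp(-\Omega(M))\le\exp(-c'M)$ for a suitable $c'=c'(\varepsilon,d,r')>0$, using that $d$ is fixed to absorb the prefactor.

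The main obstacle here is not conceptual but quantitative bookkeeping: one must pin down the crossing and uniqueness estimates (i)--(ii) for the supercritical Poisson Boolean model with an \emph{exponential} rate in the box side length, and check that they hold uniformly over the (finitely many types of) boxes arising — including those truncated by $\partial\Lambda_n$. Once that is in place, everything else is the routine union bound over a bounded number of events described above, with $M=M(\varepsilon)$ taken large enough to clear the threshold $L_0$ and to leave a strictly positive exponential rate.
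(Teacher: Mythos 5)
Your proposal is correct and follows essentially the same route as the paper: the paper's proof is exactly a union bound over the goodness events for $\tau$, $\tau'$ and their $2^d$ subdivisions, invoking a known supercritical crossing/uniqueness estimate with exponential error in the scale (the paper cites Lemma~11 of~\cite{friedrich2013diameter}, which plays the role of the Penrose--Pisztora estimates you quote). Your additional care with tiles truncated by $\partial\Lambda_n$ is consistent with the definition of goodness and does not change the argument.
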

\begin{proof}
This follows by a union bound and a straightforward application of Lemma~11 of~\cite{friedrich2013diameter} to both~$\tau$, $\tau'$ and each of their $2^d$ subdivisions, respectively.
\end{proof}

We define a random field with state space in $\{0,1\}^{\Lambda_n^{\rho}}$, where a value of 1 is associated with the sites whose tiles are useful, and a value of 0 with those that are not. We will refer to it as \emph{random field of useful tiles}. Note that this is a finite-range dependent percolation process on $\Lambda_n^{\rho}$,
in the sense that there exists a constant $u>0$ such that for tiles $\tau_{\bfi}, \tau_{\bfj}\in\Lambda_n^\rho$ centered at $\bfi,\bfj\in\mathbb{Z}^d$, respectively, with~$\|\bfi - \bfj\|_1>u$, the events of $\tau$ and $\tau'$ being both useful tiles are independent. Given two random fields~$\phi$ and $\phi'$ on $\{0,1\}^{\Lambda_n^\rho}$, we say that $\phi$ dominates $\phi'$ if $\phi\geq \phi'$ with probability 1. We have the following quantitative domination result.

\begin{lemma}\label{lem:lower_bound_parameter_dominated_bernoulliPercolation}
    Let $\varepsilon>0$ be a constant and $(1+\eps)r_g\leq r<r'$ for $r'$ large enough.  Then, for $M=M(\varepsilon)$ large enough, the random field of useful tiles dominates a percolation process with parameter 
    $$
    p = 1-\exp(-c''M)
    $$
    for some $c''=c''(\eps,d,r')>0$.
\end{lemma}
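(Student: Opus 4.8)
The plan is to recognize Lemma~\ref{lem:lower_bound_parameter_dominated_bernoulliPercolation} as a standard stochastic-domination statement for a finite-range dependent site percolation and to deduce it from the Liggett--Schonmann--Stacey domination theorem. Two ingredients are needed as input. The first is a marginal lower bound, which is exactly Lemma~\ref{lem:renormalization_probability_good_cube}: for $M = M(\eps)$ large enough and $(1+\eps)r_g \le r < r'$, every tile of $\Lambda_n^{\rho}$ is useful with probability at least $1 - \exp(-c'M)$, where $c' = c'(\eps,d,r') > 0$. The second is a bound on the range of dependence: whether $\tau_{\bfi}$ is useful is a measurable function of the restriction of the Poisson point process to the enlarged tile $\tau'_{\bfi}$ together with the width-$r$ collar around it used in the definition of a crossing component, a region contained in the cube $\rho\cdot\bfi + [-(Mr+2r)/2,\,(Mr+2r)/2)^d$.

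Next I would check that the resulting dependence range is a constant not depending on $M$. By the choice $\rho = 2(\tfrac{3M}{10}-1)r = (\tfrac{3M}{5}-2)r$, the ratio $(M+2)r/\rho$ tends to $5/3$ as $M \to \infty$, hence is strictly below $2$ once $M$ is large. Consequently, if $\|\bfi - \bfj\|_\infty \ge 2$ the two cubes $\rho\cdot\bfi + [-(Mr+2r)/2,\,(Mr+2r)/2)^d$ and $\rho\cdot\bfj + [-(Mr+2r)/2,\,(Mr+2r)/2)^d$ are disjoint, and by the complete independence of the Poisson point process on disjoint sets the indicators $\mathbf 1[\tau_{\bfi}\text{ useful}]$ and $\mathbf 1[\tau_{\bfj}\text{ useful}]$ are independent. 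Thus the random field of useful tiles is finite-range dependent with range bounded by the absolute constant $1$ in the $\ell_\infty$ metric, uniformly in $M$. If desired, one first extends the field from $\Lambda_n^{\rho}$ to all of $\mathbb{Z}^d$ by appending independent coordinates of the same marginal law, so that the domination theorem applies on the full lattice, and then restricts the domination back to $\Lambda_n^{\rho}$.

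Finally I would invoke the quantitative form of the Liggett--Schonmann--Stacey theorem: a $\{0,1\}$-valued field on $\mathbb{Z}^d$ with dependence range $O(1)$ and marginals at least $1 - \delta$ stochastically dominates a product Bernoulli($p$) field, where any $p<1$ can be reached once $\delta$ is below a threshold depending only on $p$ and $d$, and, more precisely, the block-renormalization estimate behind that theorem gives $1 - p \le K\,\delta^{\alpha}$ for constants $K = K(d)$ and $\alpha = \alpha(d) > 0$. Applying this with $\delta = \exp(-c'M)$ yields $1 - p \le K\exp(-\alpha c' M) \le \exp(-c''M)$ for $M$ large enough, with $c'' = \tfrac12\alpha c' = c''(\eps,d,r') > 0$, which is the asserted domination.

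The only genuinely delicate point is the bookkeeping in the second step that turns the dependence range into an absolute constant: this is precisely what the relation $\rho = 2(\tfrac{3M}{10}-1)r$ between the base side length and the enlarged side length $Mr$ is engineered to achieve, and one must remember to include the additive $O(r)$ collar coming from the crossing-component definition, which is negligible against $\rho = \Theta(Mr)$. Everything else is a direct application of the cited results.
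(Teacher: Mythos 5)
Your proposal is correct and follows essentially the same route as the paper, which simply invokes the quantitative estimate in Theorem 1.3 of Liggett--Schonmann--Stacey together with the marginal bound of Lemma~\ref{lem:renormalization_probability_good_cube}. The extra bookkeeping you supply (usefulness of $\tau_{\bfi}$ being measurable with respect to the Poisson process on a cube of side $O(Mr)$ around $\rho\cdot\bfi$, and the ratio $(M+2)r/\rho\to 5/3<2$ giving a dependence range bounded uniformly in $M$) is exactly the finite-range-dependence observation the paper records in the paragraph preceding the lemma, so the two arguments coincide in substance.
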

\begin{proof}
    This follows directly from the quantitative estimate in Theorem 1.3 of \cite{liggett1997domination} together with Lemma~\ref{lem:renormalization_probability_good_cube}.
\end{proof}
In the next lemma we provide several facts that will be useful in the proof of Proposition~\ref{prop:iso}. 
\begin{lemma}\label{lem:vertex_interior_good_cube_in_giant}  
    Let $\tau\in \Lambda_n^{\rho}$ be useful.
    \begin{enumerate}    \item\label{lem:vertex_interior_good_cube_in_giant:itm1} If $x\in\tau\cap\mathcal{L}_n$, then $x$ belongs to the largest component of $\tau'\cap\mathcal{G}_n$.
    \item\label{lem:vertex_interior_good_cube_in_giant:itm2} The largest component of $\tau\cap\mathcal{G}_n$ is contained in the largest component of $\tau'\cap\mathcal{G}_n$.
    \item\label{lem:vertex_interior_good_cube_in_giant:itm4} 
    For adjacent (in $\Lambda_n^\rho$) good tiles $\tau$ and $\widehat{\tau}$, the largest components of $\tau'\cap\mathcal{G}_n$ and $\widehat{\tau}'\cap\mathcal{G}_n$ belong to the same  component in $(\tau'\cup\widehat{\tau}')\cap\mathcal{G}_n$.
    \end{enumerate}
\end{lemma}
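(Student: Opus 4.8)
The plan is to reduce each part to the two defining properties of a good tile together with elementary geometry relating a tile to its enlargement. Throughout, recall that $\tau$ and $\tau'$ are concentric cubes of side lengths $\rho=2(\tfrac{3M}{10}-1)r$ and $Mr$, so every point of $\tau$ lies at Euclidean distance at least $\tfrac12(Mr-\rho)=\tfrac15 Mr+r$ from $\partial\tau'$. I will also use the following consequence of goodness, valid for any good tile $\lambda$: the largest component of $\lambda\cap\mathcal{G}_n$ is the unique component of $\lambda\cap\mathcal{G}_n$ of Euclidean diameter at least $\tfrac15$ of the side length of $\lambda$ (this is property~(2), read together with maximality of components), and it contains a crossing component of each of the $2^d$ dyadic sub-tiles of $\lambda$ — indeed property~(1) produces these crossings, each of diameter at least $\tfrac12(\text{side of }\lambda)-2r\ge\tfrac15(\text{side of }\lambda)$ for $M$ large, and property~(2) forces each of them into the largest component; in particular the largest component of $\lambda\cap\mathcal{G}_n$ has a vertex within distance $r$ of every face of $\lambda$.

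For part~\ref{lem:vertex_interior_good_cube_in_giant:itm1}, take $x\in\tau\cap\mathcal{L}_n$; since $\mathcal{L}_n$ has $\Theta(n)$ vertices it is not contained in the bounded box $\tau'$, so some path in $\mathcal{G}_n$ starting at $x$ leaves $\tau'$. Walking along this path from $x$, let $u$ be the last vertex before the path first leaves $\tau'$. Then $u$ belongs to the component $C$ of $\tau'\cap\mathcal{G}_n$ that contains $x$, and $u$ is within distance $r$ of $\partial\tau'$ (it has a neighbor outside $\tau'$ at distance at most $r$); combined with $\mathrm{dist}(x,\partial\tau')\ge\tfrac15 Mr+r$ this gives $\|x-u\|_2\ge\tfrac15 Mr$. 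Hence $C$ has Euclidean diameter at least $\tfrac15(\text{side of }\tau')$, so by the recollection above (applied to $\lambda=\tau'$) $C$ is the largest component of $\tau'\cap\mathcal{G}_n$. For part~\ref{lem:vertex_interior_good_cube_in_giant:itm2}, let $D$ be the largest component of $\tau\cap\mathcal{G}_n$. By the recollection applied to $\lambda=\tau$, $D$ contains a vertex within distance $r$ of each of two opposite faces of $\tau$, which are at distance $\rho$ apart, so $\mathrm{diam}(D)\ge\rho-2r\ge\tfrac15 Mr$ for $M$ large. Thus $D$ is a connected subgraph of $\tau'\cap\mathcal{G}_n$ of diameter at least $\tfrac15(\text{side of }\tau')$, so it lies in a component of $\tau'\cap\mathcal{G}_n$ of that diameter, which by property~(2) of $\tau'$ is the largest component of $\tau'\cap\mathcal{G}_n$.

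For part~\ref{lem:vertex_interior_good_cube_in_giant:itm4} we need both $\tau'$ and $\widehat{\tau}'$ to be good, which holds under the standing assumption that $\tau$, and likewise $\widehat{\tau}$, is useful; say $\tau$ and $\widehat{\tau}$ differ in the first coordinate. Since $Mr>\rho$, their enlargements overlap in a box $R:=\tau'\cap\widehat{\tau}'$ of dimensions $(Mr-\rho)\times(Mr)^{d-1}=(\tfrac25 Mr+2r)\times(Mr)^{d-1}$, every side of which exceeds $\tfrac15 Mr$, and $R$ straddles the common face of $\tau$ and $\widehat{\tau}$. The key step is to produce a connected subgraph $P\subseteq R\cap\mathcal{G}_n$ of Euclidean diameter at least $\tfrac15 Mr$. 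Granting this, $P$ is a connected subgraph of $\tau'\cap\mathcal{G}_n$ of diameter at least $\tfrac15(\text{side of }\tau')$, hence by property~(2) of $\tau'$ it lies in the largest component $H$ of $\tau'\cap\mathcal{G}_n$; by the symmetric argument it also lies in the largest component $\widehat{H}$ of $\widehat{\tau}'\cap\mathcal{G}_n$. Then $H$ and $\widehat{H}$ share the non-empty vertex set of $P$ and, both being connected inside $(\tau'\cup\widehat{\tau}')\cap\mathcal{G}_n$, lie in the same component of it, as claimed. The construction of $P$ is the main obstacle: property~(1) applied to $\tau'$ and $\widehat{\tau}'$ supplies crossing components of their dyadic sub-tiles, all of which are absorbed into $H$, respectively $\widehat{H}$, by property~(2), and which together span $R$; but an individual crossing component may leave and re-enter $R$, so one must carefully splice overlapping crossings together — again using property~(2) to merge large-diameter pieces — to obtain a single connected piece lying entirely in $R$. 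This is renormalization bookkeeping of the type carried out in~\cite{pisztorapenrose,friedrich2013diameter}, and it is the only nontrivial part of the argument.
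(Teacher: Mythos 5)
The first two parts of your proposal are correct and follow essentially the same route as the paper: the "recollection" you extract from the two goodness properties (sub-tile crossings are absorbed into the unique large component, which therefore comes within distance $r$ of every face) is exactly how the paper argues, and your exit-path argument for part one and the diameter bound $\rho-2r\ge Mr/5$ for part two match the paper's proof.

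For the third part there is a genuine gap. You correctly reduce the claim to producing a connected subgraph $P\subseteq(\tau'\cap\widehat{\tau}')\cap\mathcal{G}_n$ of Euclidean diameter at least $Mr/5$, but you never construct $P$: you call it ``the only nontrivial part'' and defer to splicing of sub-tile crossings and to external references. Moreover, the route you sketch does not close the gap as stated. In the coordinate in which $\tau$ and $\widehat{\tau}$ differ, the overlap $R=\tau'\cap\widehat{\tau}'$ occupies only an interval of length $Mr-\rho=\tfrac{2M}{5}r+2r$ at the far end of $\tau'$, whereas the dyadic sub-tiles of $\tau'$ (and of $\widehat{\tau}'$) occupy intervals of length $Mr/2$ starting at the respective centres; for large $M$ no sub-tile of either enlarged tile is contained in $R$, so their crossing components need not even meet $R$, let alone ``together span'' it, and merging large pieces via property~(2) only connects them inside $\tau'$ or $\widehat{\tau}'$ through paths that may leave $R$. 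The missing idea in the paper is a simple truncation: since the largest component $H$ of $\tau'\cap\mathcal{G}_n$ is a crossing component of $\tau'$ (a fact you already have in your recollection), it contains a vertex within distance $r$ of the face of $\tau'$ adjacent to $\widehat{\tau}$; take a path in $H$ from such a vertex towards the opposite face of $\tau'$ and stop at the last vertex before the path first leaves $\widehat{\tau}'$. This initial segment stays in $\tau'$ and in $\widehat{\tau}'$, hence in $R$, and its endpoints differ in the relevant coordinate by at least $\bigl(\tfrac{M}{2}-1\bigr)r-\bigl(\tfrac{M}{10}-1\bigr)r=\tfrac{2M}{5}r\ge Mr/5$, so it is precisely the subgraph $P$ you need; property~(2) applied in $\widehat{\tau}'$ then places it inside the largest component of $\widehat{\tau}'\cap\mathcal{G}_n$, and your concluding step goes through.
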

\begin{proof}
    We begin with~\ref{lem:vertex_interior_good_cube_in_giant:itm1}: Since $\tau$ is useful, we have that $\tau'$ is a good tile, which implies there is a unique largest component in $\tau'\cap\mathcal{G}_n$. Note that $x$ is at Euclidean distance at least $(Mr-\rho)/2=Mr/5+r$ from the boundary of $\tau'$. Since $x \in \mathcal{L}_n$, there is a path starting at $x$ and going out of $\tau'$. Hence, inside~$\tau'$, there exists $z \in \tau' \cap \mathcal{L}_n$, so that the path from $x$ to $z$ stays inside $\tau'$ and has Euclidean diameter at least $Mr/5$. Thus, because $\tau'$ is good, vertex~$x$ belongs to the largest component inside~$\tau'\cap\mathcal{G}_n$ (see Figure~\ref{fig:x_in_giant_inside_good_tile} for a visual explanation).
    
    To prove~\ref{lem:vertex_interior_good_cube_in_giant:itm2}, note that the largest component of $\tau\cap \mathcal{G}_n$ is a crossing component in $\tau$. Indeed, each crossing component of each subdivion of $\tau\cap \mathcal{G}_n$ belongs to the giant of $\tau\cap \mathcal{G}_n$ (because the Euclidean diameter of each subdivision is at least $\rho/2-2r=(\frac{3M}{10}-3)r\ge \rho/5$ for $M$ large enough). Then, for each face of the tile $\tau$, pick one subdivision of the tile with one face that is contained in the selected face of~$\tau$. Observe that the crossing component of the subdivion has one vertex at distance at most $r$ from this face. Such vertex also belongs to the largest component of $\tau\cap \mathcal{G}_n$ (by this paragraph's opening note). Then, $\tau\cap \mathcal{G}_n$ has a crossing component. 
    Thus, it has Euclidean diameter at least $\rho-2r\ge Mr/5$ for $M$ large enough. Since this is a component inside of $\tau'$ it must belong to the largest component of $\tau'$ by definition of good tile.

    
    To prove~\ref{lem:vertex_interior_good_cube_in_giant:itm4}, note that $(\tau'\cap \widehat{\tau}')\cap \mathcal{G}_n$ has one component of Euclidean diameter at least~$Mr/5$: to see this, consider the largest component of $\tau'\cap \mathcal{G}_n$. As it is a crossing component of $\tau'$, there is a path contained in~$\tau'\cap\widehat{\tau}'$ that crosses (there is one vertex of the path at distance at most $r$ from both faces) the shortest side of $\tau'\cap\widehat{\tau}'$:
    indeed, the latter path has length at least $\frac{2M}{5}r$. As this path is a component that belongs to $\tau'\cap \widehat{\tau}'$ and has Euclidean diameter at least $Mr/5$, it belongs to both largest components of $\tau'\cap \mathcal{G}_n$ and~$\widehat{\tau}'\cap \mathcal{G}_n$. Then both largest components have points in common, so they belong to the same component of $(\tau'\cup\widehat{\tau}')\cap \mathcal{G}_n$.  
\end{proof}
We denote by $F_m$  the largest component of open sites in $\Lambda_n^\rho$, $m=|\Lambda_n^\rho|_t$, of the dominated percolation process with parameter $p$, where $p$ is as in Lemma~\ref{lem:lower_bound_parameter_dominated_bernoulliPercolation}, and with abuse of notation, we will also use~$F_m$ to denote the actual tiles associated with the sites.
\begin{lemma}\label{lem:size_vertices_outside_giantof_usefultiles}
    For any $\gamma>0$, there is an $M'=M'(\gamma)$ large enough such that if $M>M'$, then $|\mathcal{G}_n\cap(\Lambda_n^\rho\setminus F_m)|\leq \gamma n$ w.h.p. 
\end{lemma}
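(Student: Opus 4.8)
The strategy is to transfer the statement into a purely percolation-theoretic one about the dominated Bernoulli process and then invoke the known fact (Theorem~\ref{thm:giant_Bernoulli_site_percolation}) that the giant of supercritical site percolation on $\Phi_m$ covers a $(\theta(p)-o(1))$-fraction of the sites, together with the fact that $\theta(p)\to 1$ as $p\to 1$. First I would fix $M$ large (to be determined) and let $F_m$ be the largest open cluster of the percolation process with parameter $p=1-\exp(-c''M)$ from Lemma~\ref{lem:lower_bound_parameter_dominated_bernoulliPercolation}, which is dominated by the random field of useful tiles. The key point is that any vertex $x\in\mathcal{G}_n$ lying in a tile $\tau\in\Lambda_n^\rho$ whose associated site is in $F_m$ need not itself lie in the ``giant of useful tiles''; what we can control is the number of tiles, not vertices, outside $F_m$. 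So the first step is to bound $|\{\tau\in\Lambda_n^\rho:\tau\notin F_m\}|_t$. Since the field of useful tiles dominates percolation with parameter $p$, the set of sites \emph{not} in $F_m$ is stochastically dominated by the complement of the giant in Bernoulli$(p)$ percolation on $\Phi_m$; hence by Theorem~\ref{thm:giant_Bernoulli_site_percolation} we get, w.h.p., $|\Lambda_n^\rho\setminus F_m|_t\le (1-\theta(p)+o(1))\,m$ (here I use that the second-largest component of the dominating process is negligible, or more simply that the complement of the giant has size at most $(1-\theta(p)+o(1))m$).

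Next I would pass from tiles to vertices. Each tile $\tau\in\Lambda_n^\rho$ has volume $\mathrm{vol}_\rho=\Theta(\rho^d)=\Theta((Mr)^d)$, and by standard Poisson concentration (Chernoff for Poisson, union bound over the $m=\Theta(n/\rho^d)$ tiles) every tile contains at most $2\,\mathrm{vol}_\rho$ vertices of $\mathcal{G}_n$ w.h.p.\ when $\mathrm{vol}_\rho=\omega(\log n)$; since $r\ge(1+\varepsilon)r_g$ is a constant and $M$ is a (large) constant, $\mathrm{vol}_\rho$ is a constant, so instead one uses that w.h.p.\ \emph{every} tile contains at most $C_d\log n$ vertices while the \emph{average} is $O(1)$ — but what we actually need is only a bound on the total number of vertices in the $o(m)$-or-$(1-\theta(p)+o(1))m$ many bad tiles, which follows from: the number of vertices of $\mathcal{G}_n$ in any fixed collection $\mathcal{T}$ of $t$ tiles is $\mathrm{Poisson}$ with mean $t\cdot\mathrm{vol}_\rho$, and summing/union-bounding shows that w.h.p.\ any collection of $(1-\theta(p)+o(1))m$ tiles contains at most $2(1-\theta(p))\,m\,\mathrm{vol}_\rho\le 2(1-\theta(p))\,n$ vertices (using $m\,\mathrm{vol}_\rho\le n$). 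Therefore
\[
|\mathcal{G}_n\cap(\Lambda_n^\rho\setminus F_m)|\le 2(1-\theta(p))\,n
\]
w.h.p.

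Finally I would conclude by choosing $M$. Since $p=1-\exp(-c''M)\to 1$ as $M\to\infty$ and $\theta(p)\to 1$ as $p\to p_c^+$... more precisely as $p\to 1$ we have $\theta(p)\to 1$, we may pick $M'=M'(\gamma)$ so large that for all $M>M'$ the corresponding $p$ satisfies $2(1-\theta(p))<\gamma$; this gives $|\mathcal{G}_n\cap(\Lambda_n^\rho\setminus F_m)|\le\gamma n$ w.h.p., as claimed. The main obstacle is the tile-to-vertex translation: one must make sure that the $o(1)$ and ``bad fraction'' error terms are controlled uniformly over \emph{which} tiles end up outside $F_m$ (a random set determined by the RGG), which is why I phrase the vertex-count bound as a statement holding w.h.p.\ simultaneously for \emph{all} sub-collections of tiles of the relevant size rather than for the specific random set $\Lambda_n^\rho\setminus F_m$; a crude union bound over $\binom{m}{\le(1-\theta(p)+o(1))m}$ collections combined with the exponential Poisson tail suffices here since $m=\Theta(n)$ and the tail is exponential in the number of vertices, which dominates. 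A secondary point to be careful about is that Theorem~\ref{thm:giant_Bernoulli_site_percolation} is stated for genuine Bernoulli percolation, so one must invoke the stochastic domination of Lemma~\ref{lem:lower_bound_parameter_dominated_bernoulliPercolation} to compare the complement of $F_m$ in the useful-tile field with the complement of the giant in Bernoulli$(p)$ percolation, using monotonicity of the event ``site $\tau$ lies outside the largest open cluster'' — which is a decreasing event — to make the domination go the right way.
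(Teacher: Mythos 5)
Your proposal is correct, but it reaches the conclusion by a different route than the paper. The paper bounds $|\mathcal{G}_n\cap F_m|$ from \emph{below}: after using Theorem~\ref{thm:giant_Bernoulli_site_percolation} to get $|F_m|_t\ge(1-\gamma_2)m$, it declares a tile ``open'' when it contains at least $(1-\delta)\mathrm{vol}_\rho$ vertices (an event of probability $\ge p^*$ once $M$ is large, by Poisson concentration) and applies Lemma~\ref{lem:percolation_connected_animals} to the connected set $F_m$ itself, so that all but a $\gamma_3$-fraction of the tiles of $F_m$ are dense; this yields $|\mathcal{G}_n\cap F_m|\ge(1-\delta)(1-\gamma_4)n$ and the lemma follows by subtraction. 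You instead bound the number of tiles outside $F_m$ by $\alpha m$ with $\alpha\le 1-\theta(p)+o(1)$ and then control the vertex count in \emph{every} collection of at most $\alpha m$ tiles via a $\binom{m}{\alpha m}$-union bound against the Poisson tail, which correctly handles the dependence between the random set $\Lambda_n^\rho\setminus F_m$ and the PPP. Your approach is more self-contained (it does not reuse Lemma~\ref{lem:percolation_connected_animals} or the connectivity of $F_m$), at the price of the entropy-versus-tail computation: the union bound closes only when $c_0\,\alpha\,\mathrm{vol}_\rho>H(\alpha)$, i.e.\ roughly $\mathrm{vol}_\rho\gtrsim\log(1/\alpha)$, and you should state explicitly that this is guaranteed because $\mathrm{vol}_\rho=\Theta((Mr)^d)$ grows with $M$ (most cleanly, cap $\alpha$ at $\gamma/2$ so that $\log(1/\alpha)$ is a constant depending only on $\gamma$ and then take $M'(\gamma)$ accordingly); your phrase ``the tail \dots dominates'' is true here but not automatic. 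One secondary remark in your write-up is off: the event that a site lies outside the largest open cluster is not decreasing (adding open sites can change which cluster is largest), so the monotonicity argument you sketch for transferring Theorem~\ref{thm:giant_Bernoulli_site_percolation} does not work as stated. It is also unnecessary: in the paper $F_m$ is by definition the largest cluster of the dominated Bernoulli($p$) process supplied by Lemma~\ref{lem:lower_bound_parameter_dominated_bernoulliPercolation}, so Theorem~\ref{thm:giant_Bernoulli_site_percolation} applies to it directly (and if one preferred the giant of the useful-tile field, the size comparison follows simply from the fact that the Bernoulli giant is contained in some useful cluster).
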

\begin{proof}
    We assume that $\Lambda_n^\rho$ tessellates $\Lambda_n$. Recall that  we denote the volume of a tile in $\Lambda_n^\rho$ by $\mbox{vol}_{\rho}$.
    Clearly, $|\Lambda_n^\rho|_t=n/\mbox{vol}_{\rho}=m$. By Theorem~\ref{thm:giant_Bernoulli_site_percolation}, $|F_m|_t\geq (\theta(p)-\gamma_1)m$ w.h.p., where $p=1-\exp(-c''M)$ (see Lemma~\ref{lem:lower_bound_parameter_dominated_bernoulliPercolation}) and $\gamma_1>0$ can be made as small as desired. Recalling the known fact that $\theta(p)\to 1$ as $p\to 1$, we can make $p$ as close to 1 as desired by making $M>M'$ for some $M'$ very large, so
    \begin{equation}
        |F_m|_t \ge (1-\gamma_2)m 
    \end{equation}
    for $\gamma_2>0$  arbitrarily small. By concentration of Poisson variables, for any positive $p'<1$ there exists~$M'$ large enough such that if $M>M'$, then for $\delta>0$,
    \begin{equation}
        \mathbb{P}(|\mathcal{G}_n\cap \tau|>(1-\delta)\mbox{vol}_{\rho}) \ge p'.
    \end{equation}
    Then, Lemma~\ref{lem:percolation_connected_animals} is applicable to $F_m$: for any fixed $0<\gamma_3<1$, w.h.p.\ there are at least $(1-\gamma_3)|F_m|_t\ge (1-\gamma_3)(1-\gamma_2)m = (1-\gamma_4) m$ tiles of $F_m$ that have at least $(1-\delta)\mbox{vol}_{\rho}$ vertices of $\mathcal{G}_n$ each. Thus, 
    \begin{equation}
    |\mathcal{G}_n\cap F_m| \ge (1-\delta)\mbox{vol}_{\rho}\left((1-\gamma_4)m\right)=(1-\delta)(1-\gamma_4) n, 
    \end{equation}
    and the latter is at least $(1-\gamma)n$ for $\gamma_4$ small enough (which can be done by choosing $\gamma_2$ and $\gamma_3$ sufficiently small). This finishes the proof of the lemma. 
\end{proof}

\begin{proof}[Proof of Proposition~\ref{prop:iso}]


In what follows, we fix $M$ large enough and $n^{1/d}/M \in \mathbb{N}$ so that $\Lambda_n^\rho$ tessellates $\Lambda_n$. Denote by~$T_A$  the subset of tiles in $\Lambda_n^\rho$ that contain at least one vertex of $A$. We note that $T_A$ is a *-component in $\Lambda_n^{\rho}$ (to this end $M$ needs to be larger than $5$ so $\rho\geq r$). By Remark~\ref{rem:bound_vertices_big_animals_2} 
(which we may apply to the dominated percolation process since $M$ is large, and for $r$ constant, we have $(\log n)^{d/(d-1)} = \omega(r^d\log n)=\Omega(\log n)$), 
there is a constant $c_2$ such that w.h.p.
\begin{equation}\label{eq:renormalization_size_upsilon_A}
|T_A|_t\ge c_2|A|.    
\end{equation}
Henceforth, let $U\subseteq\Lambda_n^\rho$ be the collection of useful tiles.
Since the probability $p$ of a tile $\tau$ being useful can be made larger than $p^*$ as in Lemma~\ref{lem:percolation_connected_animals}  (by choosing $M$ large), the random field of useful tiles in~$\Lambda_n^{\rho}$ dominates a supercritical percolation process on~$\Lambda_n^\rho$. 

From Lemma~\ref{lem:vertex_interior_good_cube_in_giant} Part~\ref{lem:vertex_interior_good_cube_in_giant:itm1}, it is clear that if $\tau$ is useful, then any two vertices of $\mathcal{L}_n\cap \tau$ are connected by a path totally contained within $\tau'$. Moreover, if $\tau\in U\cap T_A$ and there exists $y\in \mathcal{L}_n\cap \tau$ such that~$y\notin A$, then there exists at least one edge of $E(A,A^c)$ with both endvertices belonging to~$\tau'$ (to see the latter, note that the path between $x\in A\cap \tau$ and $y\in (\mathcal{L}_n\setminus A)\cap \tau$  that is totally contained within~$\tau'$ must contain one such edge). Let $U_{\mathcal{L}_n\setminus A}$ be the collection of useful tiles that contain a vertex in~$\mathcal{L}_n\setminus A$. 
Then, each tile in $U_{\mathcal{L}_n\setminus A}\cap T_A$ contributes with at least one edge to~$E(A,A^c)$. 
Note that at least a positive  fraction, say $c_3$, of the previous edges are disjoint, as there is limited overlap  between enlarged tiles (any tile $\tau'$ overlaps with only a constant amount of other enlarged tiles, with that constant depending on the dimension only). Hence, if 
we have~$|U_{\mathcal{L}_n\setminus A}\cap T_A|_t\ge |A|^{(d-1)/d}$, it follows that $|E(A,A^c)|\ge c_3 |A|^{(d-1)/d}$.

Suppose otherwise, that is, assume $|U_{\mathcal{L}_n\setminus A}\cap T_A|_t<|A|^{(d-1)/d}$. 
We know that there is a $\underline{c}$ such that w.h.p.~$|\mathcal{L}_n|\geq \underline{c}n$ (by Theorem~\ref{thm:concentration_vertices_edges_rgg}). First, we observe that for any $0<\gamma<1$ we can make $M$ large enough so that w.h.p.
\begin{equation}\label{eq:lower_bound_verticesGiant_inside_giantDominaterBernoulli}
|\mathcal{L}_n\cap F_m|\geq (1-\gamma)|\mathcal{L}_n|.
\end{equation}
Indeed, by Lemma \ref{lem:size_vertices_outside_giantof_usefultiles} it is possible to choose $M$ large enough such that, w.h.p.,
$$|\mathcal{L}_n\cap (\Lambda_n^{\rho}\setminus F_m)|\leq|\mathcal{G}_n\cap (\Lambda_n^{\rho}\setminus F_m)|\leq (\gamma/\underline{c}) \cdot n\leq \gamma |\mathcal{L}_n|,
$$
which immediately yields the assertion.

Since we are working under the assumption that $|U_{\mathcal{L}_n\setminus A}\cap T_A|_t<|A|^{(d-1)/d}$, because the hypothesis $|A|\le (1-\delta)|\mathcal{L}_n|$, and the fact that w.h.p.\ every tile of $\Lambda_n^\rho$ contains $O(\log n)$ vertices of $\mathcal{G}_n$, w.h.p. 
\begin{equation}
|\mathcal{L}_n\cap (U_{\mathcal{L}_n\setminus A} \cap T_A)| = O(|A|^{(d-1)/d}\log n) =o(|\mathcal{L}_n|).  
\end{equation}
Together with the hypothesis $|A|\leq (1-\delta)|\mathcal{L}_n|$, it follows that
$$|\mathcal{L}_n\cap (F_m\cap T_A)| \le \underbrace{|\mathcal{L}_n\cap ((F_m\cap T_A)\setminus U_{\mathcal{L}_n\setminus A})|}_{\le |A|} + |\mathcal{L}_n\cap (U_{\mathcal{L}_n\setminus A}\cap T_A)| \leq (1-\delta+o(1))|\mathcal{L}_n|, 
$$
and by~\eqref{eq:lower_bound_verticesGiant_inside_giantDominaterBernoulli}, this means that, w.h.p., 
\begin{equation}\label{eq:lower_bound_vertices_outside_tiles_TA}
|\mathcal{L}_n\cap (F_m\setminus T_A)|= |\mathcal{L}_n \cap F_m|-|\mathcal{L}_n\cap (F_m\cap T_A)| \ge (\delta-\gamma - o(1))|\mathcal{L}_n|. 
\end{equation}
We may choose $\gamma$ small enough such that the last expression is larger than $c_4n$ for some positive constant~$c_4$ for $n$ large enough. 

Denote by $\mathfrak{D}$ the collection of components of $F_m\setminus T_A$. Now, consider a pair of adjacent (in $\Lambda_n^\rho$) useful tiles $(\tau,\widehat{\tau})$ such that $\tau\in T_A$ and $\widehat{\tau}\in T_A^c$. By Lemma~\ref{lem:vertex_interior_good_cube_in_giant} Part~\ref{lem:vertex_interior_good_cube_in_giant:itm4}, there is some component of $(\tau'\cup \widehat{\tau}')\cap \mathcal{G}_n$ that contains the largest components of both $\tau'\cap \mathcal{G}_n$ and $\widehat{\tau}'\cap \mathcal{G}_n$. Hence, any vertex belonging to the largest component of $\tau'\cap \mathcal{G}_n$ is connected to any vertex of the largest component of $\widehat{\tau}'\cap \mathcal{G}_n$ by a path totally contained within $\tau'\cup \widehat{\tau}'$. 
In order to derive the desired lower bound on the number of edges from $A$ to $\mathcal{L}_n\setminus A$, suppose then that tile $\tau$ contains at least one vertex of~$A$ and the other tile $\widehat{\tau}$ does not.
Then, pick $x\in A\cap \tau$ ($x$ belongs to the largest component of $\tau'$ by Lemma~\ref{lem:vertex_interior_good_cube_in_giant} Part~\ref{lem:vertex_interior_good_cube_in_giant:itm1}) and $y\in \widehat{\tau}\cap (\mathcal{L}_n\setminus A)$ that belongs to the largest component of $\widehat{\tau}'$ (there exists such~$y$ by definition of useful tile and because $\widehat{\tau}\in T^c_A$). 
We can associate to $(\tau,\widehat{\tau})$ one edge of $E(A,A^c)$ whose endvertices are in $\tau'\cup\widehat{\tau}'$ (since $x\in A$ and $y\notin A$, such an edge exists in the path between $x$ and $y$ contained in $\tau'\cup\widehat{\tau}'$).
We remark that the edge associated to~$(\tau,\widehat{\tau})$ could 
also be associated to some other pair of adjacent useful tiles as a consequence of non-empty overlap, but this happens at most a constant number of times (with a constant depending on the dimension only).

We claim that $|\mathfrak{D}|\ge |A|^{(d-1)/d}$ implies the stated proposition. Indeed, consider $D,D'\in\mathfrak{D}$, $D\neq D'$, and a path of tiles in $F_m$ between a tile in $D$ and another in $D'$ (such tiles exist by definition of $\mathfrak{D}$). The latter path must intersect $T_A$, as $D$ and $D'$ are components of $\Lambda_n^{\rho}\setminus T_A$. Then, there is one pair $(\tau,\widehat{\tau})$ of useful tiles with $\tau \in D$ and $\widehat{\tau} \in T_A$ for each $D\in \mathfrak{D}$, and by the argument of the previous paragraph this means that there is an edge of $E(A,A^c)$ in $\tau\cup \widehat{\tau}$, and thus $|E(A,A^c)|\ge c_5|\mathfrak{D}|$ (the constant $c_5>0$ depends only on $d$, here we considered the overlap of enlarged tiles in the last inequality). This finishes the proof of the claim. 

We now address the remaining case, this is $|\mathfrak{D}|<|A|^{(d-1)/d}$. 
Recall that the at least $c_4 n$ vertices of $|\mathcal{L}_n \cap (F_m\setminus T_A)|$ (see~\eqref{eq:lower_bound_vertices_outside_tiles_TA}) are distributed among the components of $\mathfrak{D}$. Denote by $\mathfrak{D}_{\smlSucceq}$ the elements of $\mathfrak{D}$ that have at least $c_6(\log n)^{d/(d-1)}$ vertices, for some constant $c_6>0$ large enough. We claim that there is a constant $c_7>0$ such that w.h.p.
\begin{equation}\label{eq:lower_bound_size_vertices_in_giant_bigcomponents_mathfrak_D}
    \sum_{D\in \mathfrak{D}_{\smlSucceq}}|\mathcal{L}_n\cap D| \ge c_7 n.
\end{equation}
To see this, recall that w.h.p.\ every tile contains  $O(\log n)$ vertices. Hence, by hypothesis on $|A|$ and Theorem~\ref{thm:concentration_vertices_edges_rgg}, the claim follows since, w.h.p.,
\begin{equation}
    \sum_{D\in \mathfrak{D}\setminus \mathfrak{D}_{\smlSucceq}} |\mathcal{L}_n\cap D|= O(|\mathfrak{D}|\log n) = O(|A|^{(d-1)/d}\log n) =o(n).
\end{equation}
We apply Remark \ref{rem:bound_vertices_big_animals_2} to each $D\in\mathfrak{D}$ (we set $c_6$ large enough) so that $|D|_t\ge c_2|\mathcal{L}_n\cap D|\ge c_2c_6(\log n)^{d/(d-1)}$ (we used the hypothesis $r=\Theta(1)$). Recalling that elements of $\mathfrak{D}$ belong to $F_m\setminus T_A$ and are pairwise disjoint, using~\eqref{eq:lower_bound_size_vertices_in_giant_bigcomponents_mathfrak_D}, we obtain 
\begin{equation}
    |F_m\setminus T_A|_t\ge \sum_{D\in\mathfrak{D}_{\smlSucceq}}|D|_t \ge \sum_{D\in \mathfrak{D}_{\smlSucceq}} c_2|\mathcal{L}_n \cap D| \ge c_2c_7 n.
\end{equation} 
Thus, we may apply Lemma~\ref{lem:pairs_of_open_sites_no_boundary_condition} to $T_A$: there is a constant $c_8$ such that w.h.p.~we have that there are at least $c_8|T_A|_t^{(d-1)/d}$ disjoint edges in $E(T_A,T_A^c)$ both of whose endvertices are open sites. Since the random field of useful tiles dominates the latter percolation process, we also obtain at least $c_8|T_A|_t^{(d-1)/d}$ disjoint edges in $E(T_A,T_A^c)$ both of whose endvertices (tiles) are useful. 
Hence, for some constant $c_9=c_9(d)$ that accounts for the overlap of enlarged tiles, 
\[
|E(A,A^c)| \ge c_9|E(T_A,T_A^c)| \ge c_9c_8 |T_A|_t^{(d-1)/d} \ge c_9c_8c_2|A|^{(d-1)/d},
\]
where the last equality is by~\eqref{eq:renormalization_size_upsilon_A}.

To finish the proof of the proposition we set $c_1=\min \{c_3,c_5,c_9c_8c_2\}$.
\end{proof}

\section{Lower Bound on the relaxation time}\label{sec:lowerBnd}
It remains to show the lower bound of Theorem~\ref{thm:main_thm_bounds_over_mixing_rgg}: the proof is considerably easier and similar to the proof already given in~\cite{benjamini2003mixing}. The approach is based on the fact that the mixing time is bounded from below by the relaxation time, a quantity that is generally more tractable to analyze.
The next stated result is key in our derivation of the lower bound for $\tau_{\textup{mix}}(\mathcal{L}_n)$.
\begin{lemma}{\cite[Lemma 2.2]{benjamini2003mixing}}
\label{lem:lowerBnd:formula}If $G$ is a connected graph, then 
\[
\tau_{\textup{rel}}(G) \geq \max_{v \in G} \left( \pi(D^2_v) - \pi^2(D_v) \right) = \max_{v\in G} \left( \pi(\pi(D_v)-D_v)^2 \right),
\]
where $D_v(x)=d_G(v,x)$ is the graph distance in $G$ between $v$ and $x$, and $\pi(f)$ is the expected value of~$f$ with respect to the stationary distribution $\pi$ of the random walk in $G$, that is, 
\[
\pi(f) = \sum_{v\in G}\pi(v)f(v).
\]
\end{lemma}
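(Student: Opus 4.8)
The plan is to derive the inequality from the variational (Poincar\'e) characterization of the spectral gap. Write the eigenvalues of $P$ as $1=\mu_1\ge\mu_2\ge\cdots\ge\mu_N\ge-1$. Then the quantity $\lambda_2$ in the definition of $\tau_{\textup{rel}}$ is $\max\{|\mu_2|,|\mu_N|\}$, which is at least $\mu_2$, so $\tau_{\textup{rel}}(G)=(1-\lambda_2)^{-1}\ge(1-\mu_2)^{-1}$; the same lower bound is the relaxation time of the continuous-time chain, whose generator $P-I$ has its smallest nonzero eigenvalue equal to $1-\mu_2$. Hence it suffices to bound $(1-\mu_2)^{-1}$ from below.

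Recall that for the chain with transition matrix $P$, reversible with respect to $\pi(x)=\deg(x)/(2|E|)$, the Dirichlet form is
\[
\mathcal{E}(f,f)=\tfrac12\sum_{x,y\in V}\pi(x)P(x,y)\bigl(f(x)-f(y)\bigr)^2=\frac{1}{2|E|}\sum_{\{x,y\}\in E}\bigl(f(x)-f(y)\bigr)^2 ,
\]
and that the standard variational formula for the spectral gap reads
\[
1-\mu_2=\min\Bigl\{\frac{\mathcal{E}(f,f)}{\mathrm{Var}_\pi(f)}\ :\ f\colon V\to\mathbb{R},\ \mathrm{Var}_\pi(f)>0\Bigr\},\qquad \mathrm{Var}_\pi(f)=\pi(f^2)-\pi(f)^2 .
\]
Consequently $\tau_{\textup{rel}}(G)\ge \mathrm{Var}_\pi(f)/\mathcal{E}(f,f)$ for every non-constant $f\colon V\to\mathbb{R}$.

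Now fix $v\in V$ and apply this with $f=D_v$; since $G$ is connected with at least two vertices, $D_v$ vanishes at $v$, is positive elsewhere, and $\pi(v)<1$, so $\mathrm{Var}_\pi(D_v)>0$. For any edge $\{x,y\}\in E$, the triangle inequality for the graph metric gives $|d_G(v,x)-d_G(v,y)|\le d_G(x,y)=1$, hence $(D_v(x)-D_v(y))^2\le 1$ and therefore
\[
\mathcal{E}(D_v,D_v)\le\frac{1}{2|E|}\sum_{\{x,y\}\in E}1=\frac12 .
\]
Plugging this in yields $\tau_{\textup{rel}}(G)\ge 2\,\mathrm{Var}_\pi(D_v)\ge \mathrm{Var}_\pi(D_v)=\pi(D_v^2)-\pi^2(D_v)$, and taking the maximum over $v\in V$ gives the stated inequality; the final identity $\pi(D_v^2)-\pi^2(D_v)=\pi\bigl((\pi(D_v)-D_v)^2\bigr)$ is just the elementary expansion of the variance. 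Mathematically there is no genuine obstacle here: the only two points needing a little care are the passage from $\lambda_2=\max\{|\mu_2|,|\mu_N|\}$ to $\mu_2$ (which only helps), and recording that the Lipschitz property $|D_v(x)-D_v(y)|\le 1$ along edges of $G$ is exactly what makes $\mathcal{E}(D_v,D_v)$ bounded by a constant independent of $G$, so that the bound does not degrade with the size of the graph.
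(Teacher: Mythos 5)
Your proof is correct: the paper itself imports this lemma from~\cite{benjamini2003mixing} without proof, and your argument (the variational characterization of the spectral gap applied to the test function $D_v$, together with the observation that $D_v$ is $1$-Lipschitz along edges so $\mathcal{E}(D_v,D_v)\le\tfrac12$) is essentially the original proof of Lemma~2.2 there, with the passage from $\lambda_2=\max\{|\mu_2|,|\mu_N|\}$ to $\mu_2$ handled correctly. In fact your computation yields the slightly stronger bound $\tau_{\textup{rel}}(G)\ge 2\max_v\mathrm{Var}_\pi(D_v)$, which you correctly relax to the stated inequality.
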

The previous result shows that one promising way to prove a matching lower bound for $\tau_{\textup{rel}}(\mathcal{L}_n)$ requires identifying a vertex $v \in \mathcal{L}_n$ whose graph distance to a vertex $x$, sampled from the stationary distribution of the random walk in $\mathcal{L}_n$, deviates substantially from its mean.
Since any vertex of $\mathcal{L}_n$ should yield, up to constant factors, comparable bounds in Lemma~\ref{lem:lowerBnd:formula}, we choose for convenience a vertex located close to the origin (the center of $\Lambda_n$).
To estimate its graph distances to all other vertices in the giant component, we invoke the following result, which links graph distance in $\mathcal{L}_n$ to the Euclidean distance between the corresponding points in $\Lambda_n$.
\begin{lemma}{\cite[Theorem 3]{friedrich2013diameter}}\label{lem:chemical_distance_to_origin_rgg}
If $d\ge 2$ and $r>r_g$ 
for any two vertices $x,y\in\mathcal{G}_n^r$ belonging to the same component and such that $\|x - y\|_2 = \omega(\log n/r^{d-1})$, the graph distance between~$x$ and $y$ is $O(\lceil\|x -y\|_2/r\rceil)$ with probability $1-O(n^{-1})$, .
\end{lemma}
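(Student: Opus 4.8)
The statement is Theorem~3 of~\cite{friedrich2013diameter}; here we only sketch the idea of the argument. The plan is to coarse-grain $\mathcal{G}_n$ at a large constant multiple of the scale $r$, recognise inside the giant a percolation-governed ``highway network'', invoke the linear comparability of graph and Euclidean distance for supercritical percolation (in the spirit of Antal--Pisztora), and transfer the resulting path back to $\mathcal{G}_n$ at the price of a constant factor in its length. The genuine difficulty is to obtain the \emph{polynomial} error $O(n^{-1})$ even when the Euclidean separation is as small as $\omega(\log n/r^{d-1})$, a range in which the usual exponential-in-the-path-length bound is too weak.

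First I would set up the renormalization, mirroring the good/useful tiles of Section~\ref{sec:Upper}. Tile $\Lambda_n$ by cells of side $Kr$, with $K=K(d,\eps)$ a large constant, together with concentric enlarged cells, and call a cell \emph{good} when it has a unique crossing cluster that subsumes every component of the cell of macroscopic Euclidean diameter and that, for neighbouring good cells, merges with its neighbour's crossing cluster (cf.\ Lemma~\ref{lem:vertex_interior_good_cube_in_giant}). Since $r>r_g$, taking $K$ large makes a cell good with probability above any prescribed $p<1$; goodness is a finite-range-dependent event, so by the domination theorem underlying Lemma~\ref{lem:lower_bound_parameter_dominated_bernoulliPercolation} the good cells dominate a supercritical site percolation on the cell lattice. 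As in the proof of Lemma~\ref{lem:vertex_interior_good_cube_in_giant}, every vertex of the giant sits in a good cell and in its crossing cluster, and consecutive crossing clusters of good cells are joined in $\mathcal{G}_n$; hence any path of good cells from the cell of $x$ to the cell of $y$ yields a path in $\mathcal{G}_n$ between $x$ and $y$ of length at most $O(1)$ times the cell-path length.

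It then remains to exhibit a short path of (good) cells along a tube about the segment $\overline{xy}$. Chop $\overline{xy}$ into $\Theta(\|x-y\|_2/\ell)$ consecutive, slightly overlapping boxes that are roughly cubic of side $\ell\asymp\max\{r,(\log n)^{1/(d-1)}\}$, so that each has volume $\Omega(\log n)$. A standard continuum crossing estimate gives that each such box is crossed, in the direction of $\overline{xy}$, by a cluster of the giant with probability $1-n^{-2}$, the overlaps forcing successive crossing clusters to merge; an internal renormalization at scale $r$ within each box further shows that the portion of the resulting path lying in one box has length $O(\ell/r)$. Since $x$ and $y$ are assumed to lie in a common component, of Euclidean diameter $\|x-y\|_2=\omega(\log n/r^{d-1})\gg\ell$, that component is the crossing cluster in the boxes containing $x$ and $y$, so concatenating the boxes produces a path in $\mathcal{G}_n$ from $x$ to $y$ of length $\Theta(\|x-y\|_2/\ell)\cdot O(\ell/r)=O(\lceil\|x-y\|_2/r\rceil)$. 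There are at most $\|x-y\|_2/\ell\le n^{1/d}$ boxes, so a union bound over them fails with probability $O(n^{1/d}\cdot n^{-2})=O(n^{-1})$. (If $\|x-y\|_2=O(r)$, the segment lies inside a bounded cluster of good cells and the bound $\mathrm{d}_{\mathcal{G}_n}(x,y)=O(1)$ follows directly from the good-cell structure.)

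The principal obstacle is exactly this tension between two scales: the clean chemical-distance estimate for the dominated percolation only becomes useful once $\|x-y\|_2=\Omega(r\log n)$, whereas the lemma must hold down to $\|x-y\|_2=\omega(\log n/r^{d-1})$. The tube decomposition at scale $\ell\asymp\max\{r,(\log n)^{1/(d-1)}\}$ is what bridges this --- its boxes are just large enough ($\Omega(\log n)$ volume) that a per-box failure probability $n^{-2}$ is affordable, yet still so short in units of $r$ that their total contribution to the graph distance is $O(\|x-y\|_2/r)$ --- while the internal renormalization used to bound the graph diameter of each local crossing cluster by $O_K(1)$, together with the matching-up of the various crossing and domination constants, is the bulk of the technical work.
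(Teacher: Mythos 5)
This lemma is not proved in the paper at all: it is imported verbatim as Theorem~3 of~\cite{friedrich2013diameter}, so there is no internal proof to compare your sketch against; what can be judged is whether your renormalization-plus-tube argument would actually deliver the stated bound, and as written it has two genuine gaps. The first is the choice of scale $\ell\asymp\max\{r,(\log n)^{1/(d-1)}\}$, which is incompatible with the hypothesis $\|x-y\|_2=\omega(\log n/r^{d-1})$ once $r$ grows. Take any $d\ge 2$, $r\asymp(\log n)^{1/d}$ and $\|x-y\|_2\asymp r\log\log n$: the hypothesis holds, but then $\ell\gg\|x-y\|_2$, the ``tube'' consists of fewer than one box, and the best your construction can give is a path of length $O(\ell/r)$, which is polylogarithmic and much larger than the required $O(\lceil\|x-y\|_2/r\rceil)=O(\log\log n)$. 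A correct choice must tie the box side to $\log n/r^{d-1}$ (roughly $\ell\asymp\max\{r,(\log n/r)^{1/(d-1)}\}$, so that the per-box failure exponent reaches $\Theta(\log n)$ while $\ell$ stays below the allowed separation), and the regime $r^{d}\gtrsim\log n$ has to be treated separately, e.g.\ by a straight chain of cells of side $\asymp r$, each bad with probability $e^{-\Omega(r^{d})}=n^{-\omega(1)}$, or by ball-intersection arguments when $\|x-y\|_2=O(r)$; your parenthetical only covers the latter case, not the range $r\ll\|x-y\|_2\ll\ell$.

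The second gap is the pair of per-box claims with failure probability $n^{-2}$. For $d\ge 3$ and your $\ell\asymp(\log n)^{1/(d-1)}$, the estimates you call standard (box-crossing and Antal--Pisztora-type chemical-distance bounds) decay exponentially in $\ell/r\asymp(\log n)^{1/(d-1)}$, which is nowhere near polynomially small. To get $n^{-2}$ per box one needs surface-order bounds, i.e.\ failure $\exp(-c(\ell/r)^{d-1})$, and not merely for the existence of a crossing but for the stronger event ``there is a face-to-face crossing of the box whose graph length is $O(\ell/r)$''. Such a bound is obtainable --- for instance by slicing the box into $\asymp(\ell/r)^{d-2}$ disjoint slabs of thickness $\asymp r$ and running the two-dimensional short-crossing argument independently in each slab --- but your sketch simply asserts the $1-n^{-2}$ bound and delegates the length control to an unspecified ``internal renormalization'', which is essentially the lemma itself at a smaller scale. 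Since these two points are exactly where the stated error probability and the stated threshold $\omega(\log n/r^{d-1})$ come from, the proposal as it stands does not establish the lemma.
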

We are now equipped with all the necessary ingredients to prove the lower bound in the main result of this manuscript.
For a suitably small $\varepsilon >0$, we consider the set of vertices in $\mathcal{G}_n$ lying within~$\ell_\infty$ distance at most $\varepsilon n$ from the boundary of $\Lambda_n$. 
We then show that, w.h.p., $\mathcal{L}_n$ contains~$\Theta(n)$ such vertices, each of which is at graph distance at least $\xi n^{1/d} / r$ (for some $\xi > 0$) from any vertex sufficiently close to the origin.
Next, we consider vertices whose $\ell_\infty$-distance from the origin is small — but not too small - and show that, w.h.p., $\mathcal{L}_n$ also contains~$\Theta(n)$ such vertices, each at graph distance at most~$\xi'n^{1/d} / r$ (for some $\xi' > 0$ significantly smaller than $\xi$) from any vertex near the origin.
Taken together, these two observations imply that the graph distance from a fixed vertex $v \in \mathcal{L}_n$ near the origin to a vertex~$x \in \mathcal{L}_n$ sampled from the stationary distribution must deviate substantially from its mean.
\begin{proof}[Proof (of lower bound on the relaxation time in
Theorem~\ref{thm:main_thm_bounds_over_mixing_rgg})]
Recall that for $x,y\in\mathbb{R}^d$ we have $\|x-y\|_2\leq d\|x-y\|_{\infty}$ and also that $r_g$ is strictly positive for all $d$.
Hence, by Lemma~\ref{lem:chemical_distance_to_origin_rgg}, there exists $\zeta\ge 1$ (depending on $d$ and~$r$ but independent of $n$)
such that, w.h.p.~(for sufficiently large $n$), for any two vertices $x,y\in\mathcal{L}_n$ for which  
$\|x-y\|_2=\omega(\log n)$, it holds that 
$d_G(x,y)\leq \zeta\|x-y\|_\infty/r$.

Now, with hindsight, let $s=\lceil 3\zeta\rceil+2$.
Thus, $s-1-3\zeta\ge 1$.
Next, let $\bfI=\{-s,..,s\}^d$ and fix~$\rho$ so that 
  $\rho=n^{1/d}/(2s+1)$.
Consider the tessellation $\Lambda_n^\rho=\{\tau^\rho_\bfi \colon \bfi\in\bfI\}$ of $\Lambda_n$ (each~$\tau^\rho_\bfi$ is an axis aligned (hyper)cube of side length $\rho$; in particular, tile $\tau_{\mathbf{0}}$ is centered at the origin of $\mathbb{R}^d$).
Since $s$ is independent of $n$, it follows that~$|\,\bfI\,|=\Theta(1)$.

Recall that for a graph $G$ and a subset $S$ of its vertices, we denote by $G[S]$ the subgraph of $G$ induced by~$S$.
We assert that, w.h.p., for all $\bfi\in\bfI$ we have~$|E(\mathcal{L}_n[\tau_\bfi^\rho])|=\Omega(|E(\mathcal{L}_n)|)$ (in particular, $\mathcal{L}_n[\tau_\bfi^\rho]$ is a non-empty graph):
by Theorem~\ref{thm:concentration_vertices_edges_rgg} together with a union bound over all $s^d$ values of $\bfi$, w.h.p., 
there is a component $\widetilde{\mathcal{L}}_n^{\bfi}$ in $\mathcal{G}_n[\tau_\bfi^\rho]$  such that $|E(\widetilde{\mathcal{L}}_n^{\bfi})|=\Theta((n/s^d)r^d)=\Theta(nr^d)=\Theta(|E(\mathcal{L}_n)|)$. 
The $\widetilde{\mathcal{L}}_n^{\bfi}$'s and $\mathcal{L}_n$ must be 
  connected in~$\mathcal{G}_n$ (otherwise we will get a contradiction with Theorem~\ref{thm:size_second_component_rgg}).
Hence, $\widetilde{\mathcal{L}}_n^{\bfi}\subseteq\mathcal{L}_n[\tau^\rho_{\bfi}]$, implying that 
  $|E(\mathcal{L}_n[\tau^\rho_{\bfi}])|\geq |E(\widetilde{\mathcal{L}}_n^{\bfi})|=\Theta(|E(\mathcal{L}_n)|)$, which proves our assertion.

Now, let $v$ be a vertex of $\mathcal{L}_n[\tau^\rho_{\mathbf{0}}]$ where $\mathbf{0}=(0,...,0)\in\bfI$ (such a $v$ exists w.h.p.~because $\mathcal{L}_n[\tau_{\mathbf{0}}^\rho]$ is, by the above discussion, w.h.p., a non-empty graph).
Since~$v$ is in $\tau^\rho_{\mathbf{0}}$, it holds that $\|v\|_\infty\le \rho/2=n^{1/d}/(4s+2)$.  By Lemma~\ref{lem:lowerBnd:formula}, in order to find a lower bound on $\tau_2(\mathcal{L}_n)$ (thence, also on $\tau_{\text{mix}}(\mathcal{L}_n)$), it is enough to find a lower bound on $\pi(\pi(D_v)-D_v)^2$.

Define $\overline{V}$ as the set of vertices $x\in\mathcal{L}_n[\tau^\rho_{\bfi}]$ for some $\bfi\in\bfI$ such that 
$\|\bfi\|_\infty=s$. 
Thus, if $x\in\overline{V}$, then $\|x\|_{\infty}\geq (2s-1)n^{1/d}/(4s+2)$.
Since the endvertices of an edge of~$\mathcal{G}_n$ are at Euclidean distance at most~$r$ from each other, for every $x\in\overline{V}$, we have
\[
D_{v}(x) \geq \|x-v\|_2/r \geq \|x-v\|_\infty/r 
\ge (\|x\|_\infty-\|v\|_\infty)/r
\ge 
\tfrac{s-1}{2s+1}n^{1/d}/r.
\]
Now we distinguish two cases. First, assume that $\pi(D_{v})<\frac{s-1+3\zeta}{4s+2}n^{1/d}/r$. Then, for all $x\in\overline{V}$, we get that
\[
(\pi(D_{v})-D_{v}(x))^2 \geq \big(\tfrac{s-1-3\zeta}{4s+2}\big)^2 n^{2/d}/r^2 
=\Omega(n^{2/d}/r^2),
\]
and
\begin{align*}
\pi\big(\pi(D_v)-D_v\big)^2
& = \Omega\Big(\frac{n^{2/d}}{r^2}\sum_{x\in\overline{V}}\frac{\deg(x)}{2|E(\mathcal{L}_n)|}\Big) \\
& = \Omega\Big(\frac{n^{2/d}}{r^2}\sum_{\bfi\in\bfI \colon \|\bfi\|_\infty=s}\sum_{x\in\overline{V}\cap\tau^\rho_{\bfi}}\frac{\deg(x)}{2|E(\mathcal{L}_n)|}\Big)
\end{align*}
which equals $\Omega(n^{2/d}/r^2)$  since $\sum_{x\in \overline{V}\cap\tau^\rho_{\bfi}}\deg(x)\geq |E(\mathcal{L}_n[\tau^\rho_{\bfi}])|$ for all $\bfi\in\bfI$, and recalling that, w.h.p., 
$|E(\mathcal{L}_n[\tau^\rho_{\bfi}])|=\Omega(|E(\mathcal{L}_n)|)$ for all $\bfi\in\bfI$. 

Now, assume $\pi(D_{v})\ge \frac{s-1+3\zeta}{4s+2}n^{1/d}/r$.
Define $\underline{V}$ as the set of vertices $x\in\mathcal{L}_n\cap\Lambda_n^{\bfi}$ for some $\bfi\in\bfI$ such that $\|\bfi\|_\infty=2$.
Thus, if $x\in\underline{V}$, then $3n^{1/d}/(4s+2)\le\|x\|_\infty\le 5n^{1/d}/(4s+2)$.
Since $\|v\|_\infty\leq n^{1/d}/(4s+2)$, for every $x\in\underline{V}$ it holds that $\|x-v\|_2\geq \|x-v\|_\infty\ge\|x\|_\infty-\|v\|_\infty\ge \frac{1}{2s+1}n^{1/d}=\omega(\log n)$, so by our choice of $\zeta$, w.h.p., for every $x\in \underline{V}$,
\[
D_{v}(x) 
\le \zeta\|x-v\|_\infty/r
\le \zeta(\|x\|_\infty+\|v\|_\infty)/r
\le 
\tfrac{3\zeta}{2s+1}n^{1/d}/r,
\]
and 
\[
(\pi(D_{v})-D_{v}(x))^2 \geq \big(\tfrac{s-1-3\zeta}{4s+2}\big)^2n^{2/d}/r^2
= \Omega(n^{2/d}/r^2).
\]
Hence, 
\begin{align*}
\pi\big(\pi(D_v)-D_v\big)^2
= \Omega\Big(\frac{n^{2/d}}{r^2}\sum_{x\in\underline{V}}\frac{\deg(x)}{2|E(\mathcal{L}_n)|}\Big) 
= \Omega\Big(\frac{n^{2/d}}{r^2}\sum_{\bfi\in\bfI : \|\bfi\|_\infty=2}\;\sum_{x\in\underline{V}\cap \tau^\rho_{\bfi}}\frac{\deg(x)}{2|E(\mathcal{L}_n)|}\Big),
\end{align*}
which equals $\Omega(n^{2/d}/r^2)$ since w.h.p.~$\underline{V}\cap\tau^\rho_{\bfi}\supseteq \mathcal{L}_n[\tau^\rho_\bfi]$, $\sum_{x\in \underline{V}\cap\tau^\rho_{\bfi}}\deg(x)\geq |E(\mathcal{L}_n[\tau^\rho_{\bfi}])|$ for all $\bfi\in\bfI$, and recalling that, w.h.p., 
$|E(\mathcal{L}_n[\tau^\rho_{\bfi}])|=\Omega(|E(\mathcal{L}_n)|)$ for all $\bfi\in\bfI$. 
\end{proof}

\section{The one-dimensional case}\label{sec:1D}
In this section we extend our main result (Theorem~\ref{thm:main_thm_bounds_over_mixing_rgg}) to the one-dimensional setting. 
The proof of the upper bound on the mixing time for $d=1$ is considerably simpler than in the general case~$d>1$. 
On the other hand, the proof of the lower bound carries over essentially unchanged from the higher-dimensional setting.
\begin{proof}[Proof (of Theorem~\ref{thm:main_thm_bounds_over_mixing_rgg} for $d=1$)]
 First, we establish the upper bound on the mixing time. It is well known that $r_c=(1+o(1))\log n$ (see for example Theorem 1 of~\cite{AppelRusso}). Next, for $d=1$, also $r_g=(1+o(1))r_c$: to see this, let $s \in \mathbb{N}$ be arbitrarily large and consider say $r=r_c-C \log \log n$ for $C$ large. Partition $[-\frac{n}{2},\frac{n}{2})$ into $s$ intervals of size $n/s$. By standard Chernoff bounds together with a union bound, w.h.p.\ the number of vertices in each such interval is at most $2n/s$. Also, by another union bound, w.h.p., in each such interval there will be at least one isolated vertex. Thus, w.h.p., the largest component will have at most $4n/s$ vertices, and since $s$ can be made arbitrarily large, for such $r$ there is no giant component, and thus $r_g=(1+o(1))r_c$.


Assume first that $A\subseteq\mathcal{L}_n$ is such that $|A|\le \varepsilon_1 r$ for small enough $\varepsilon_1 > 0$. Clearly, every vertex has at least a constant fraction of its $\Theta(r)$ neighbors outside $A$, and thus there are also $\Theta(|A|r)$ edges between $A$ and $A^c$. 

Next, assume that $A \subseteq \mathcal{L}_n$ is a connected set consisting of consecutive vertices on the line (that is, when considering vertices from left to right, first there is a set of vertices of $A^c$, then a set of vertices in~$A$, and then again in $A^c$ (the first or third set can be potentially empty, but not both simultaneously).
Then, consider overlapping intervals of size $r-C$ for some large constant $C > 0$, such that the $i$-th such interval is at the position of the $(i-1)$-th interval shifted by $C$ to the right (recall that $r\ge (1+\eps)\log n$).  Once again by standard Chernoff bounds and a union bound over all such intervals, w.h.p.~each such interval contains at least $\eps_1 r$ vertices for some $\eps_1=\eps_1(\eps) > 0$ small enough and condition on this. 
We claim that if $\eps_1 r \le |A| \le (1-\delta)|\mathcal{L}_n|$, then the rightmost $(\eps_1/2)r$ vertices of $A$ will all connect by an edge to each of the next $(\eps_1/2)r$ vertices to the right and outside $A$ (if there are only $o(r)$ vertices outside $A$, consider the leftmost $(\eps_1/2)r$ vertices of $A$, and the argument is analogous;  since $|A|\le(1-\delta)|\mathcal{L}_n|$, at least on one side, for $\eps_1$ sufficiently small, there must be at least~$(\eps_1/2)r$ vertices of $A^c$ remaining): 
to see this, consider the first, starting from the rightmost interval and sweeping towards the left, interval $I$ among the mentioned intervals of size $r-C$ containing more than $(\eps_1/2)r$ vertices from $A$, and let $I'$ be the interval $I$ shifted by $C$ to the right. 
In $I'$, by the above, there are at least $(\eps_1/2)r$ vertices from $A^c$. 
All the at least $(\eps_1/2)r$ vertices from $A$ inside $I' \cup I$ are all connected by an edge to all vertices inside $A^c\cap I'$ (because $I\cup I'$ has length $r$), and hence, the minimum number of edges between any such set $A$ to $A^c$ is thus w.h.p.~$\Omega(r^2)$.

Now, we consider general connected sets $A$ satisfying $|A|\ge \eps_1 r$. Similarly as before, sweeping from right to left, let again $I$ be the first interval containing a vertex from $A$ that is not among the $(\eps_1 /2)r$ rightmost ones of $A$. All intervals to the right of $I$, by definition, then contain at most $(\eps_1/2)r$ vertices of $A$ and at least $(\eps_1 /2)r$ vertices of $A^c$. In particular, all vertices in $I \cap A$ are connected to all vertices in $I' \cap A^c$, where $I'$ is the interval $I$ shifted to the right by $C$, and for all vertices in $A$ that are in an interval $J$ to the right of $I$, they all have at least $(\eps_1/2)r$ vertices in $A^c\cap J$ to which they are connected.\footnote{If we were in the case $r=\Theta(n)$ and $|A|=\Theta(n)$, and $A$ consists of at least $(\eps_1/2)r$ vertices both at the beginning and at the end of the line, then the desired $\Theta(n^2)$ edges between $A$ and $A^c$ will be found by considering the first $(\eps_1/2)r=\Theta(n)$ vertices belonging to $A^c$: all of them must have $\Theta(n)$ neighbors in $A$ to the left.}
 Hence, as before, each of the rightmost $(\varepsilon_1/2)r$ vertices from $A$ all have at least $(\eps_1/2)r$ neighbors in $A^c$. Hence, the minimum number of edges between $A$ and $A^c$ is w.h.p.\ also $\Omega(r^2)$.

In conclusion, for $t =O(r/n)$, we have w.h.p.\ $\varphi(t)=\Omega(1)$, whereas by the above, for $t =\Omega(r/n)$, w.h.p.\ $\varphi(t) = \Omega(r/tn)$. In both cases, we have by concentration of degrees, w.h.p.\ each set $A$ is such that $\pi(A)=\Theta(|A|/n)$). 
By Theorem~\ref{thm:upper_bound_mixing_lovasz_kannan}, thus w.h.p.\,
$$\tau_{\textup{mix}}(G)=O\Big(\int_{\pi_0}^{1/2} \frac{1}{t\varphi^2(t)}dt + \frac{1}{\varphi(1/2)}\Big)=O(n^2/r^2),$$
where we used that $1/\varphi(1/2)=O(n/r)=O(n^2/r^2)$, $\pi_0=\Theta(r/n)$, and that for $C$ large enough we have
$$
\int_{\pi_0}^{1/2} \frac{1}{t\varphi^2(t)}dt=O\Big(\int_{\pi_0}^{C r/n}\frac{1}{t}dt+ \int_{Cr/n}^{1/2} \frac{n^2t}{r^2}dt\Big)=O(n^2/r^2),
$$
finishing the proof.

Next, we prove the lower bound on the relaxation time: we claim that the proof of the lower bound of Theorem~\ref{thm:main_thm_bounds_over_mixing_rgg} given in Section~\ref{sec:lowerBnd} applies without modification to the $d=1$ case provided, as we assert below:
first, we note that Lemma~\ref{lem:lowerBnd:formula} holds for all $d\in\mathbb{N}$.
On the other hand, 
Lemma~\ref{lem:chemical_distance_to_origin_rgg} was stated in~\cite{friedrich2013diameter} for $d\geq 2$.
For $d=1$ an even stronger version holds: indeed, for any two vertices $x$ and $y$ in the same connected component of $\mathcal{G}_n^{r,1}$, the graph distance between $x$ and $y$  is at most $2|x-y|/r+1$ (since the first and last vertex among any three consecutive ones in a shortest path between $x$ and $y$ must be at distance at least $r$ between them). Theorems~\ref{thm:concentration_vertices_edges_rgg} and~\ref{thm:size_second_component_rgg} remain valid when $d=1$, because as already observed in this proof's first paragraph, in this setting, both $r_g$ and~$r_c$ equal $(1+o(1))\log n$, and so standard concentration arguments together with a union bound imply the statement of Theorem~\ref{thm:concentration_vertices_edges_rgg}, 
while Theorem~\ref{thm:size_second_component_rgg} is obviously satisfied given that there is only one component for $r \ge (1+\eps)r_c$.
\end{proof}

\bibliographystyle{abbrv}
\bibliography{ref}

\end{document}